\theoremstyle{plain}
\newtheorem{theorem}{Theorem}[section]
\newtheorem{coro}[theorem]{Corollary}
\newtheorem{lemma}[theorem]{Lemma}
\newtheorem{prop}[theorem]{Proposition}
\newtheorem*{main}{Main Theorem}
\theoremstyle{definition}
\newtheorem{defi}[theorem]{Definition}
\newtheorem{question}[theorem]{Question}
\theoremstyle{remark}
\newtheorem{remark}[theorem]{Remark}
\newtheorem*{notation}{Notation}
\title{Elementary totally disconnected, locally compact groups of higher complexity}
\author{João V. P. e Silva}
\date{}
\begin{document}

\maketitle

\begin{abstract}
\noindent The article focuses on a class of second countable groups assembled from profinite and discrete by elementary operations. We focus on a rank associated with these groups that measure their complexity, the decomposition rank. A collection of groups acting on $\aleph_0$-regular trees is defined and used for the first construction of a group with decomposition rank $\omega^\omega+1$.
\end{abstract}

\section{Introduction}
The class of elementary totally disconnected,\ locally compact,\ second countable (t.d.l.c.s.c.) groups,\ denoted as $\mathcal{E}$,\ was first defined in \cite{elementary_first} by Phillip Wesolek. It is the smallest class of second countable groups containing all discrete and profinite groups,\ closed under the operations of increasing unions and extensions. These groups often arise in the general theory of t.d.l.c.s.c.~groups and also play an essential role in the structure of these,\ one example being the following theorem:
\begin{theorem}\cite[Theorem 1.5]{elementary_first}
Let $G$ be a t.d.l.c.s.c.~group. Then
\begin{enumerate}
    \item There exists a unique maximal closed normal subgroup $\textup{Rad}_{\mathcal{E}}(G)$ of $G$ such that $\textup{Rad}_{\mathcal{E}}(G)$ is elementary.
    \item There exists a unique minimal closed normal subgroup $\textup{Res}_{\mathcal{E}}(G)$ of $G$ such that $G/\textup{Res}_{\mathcal{E}}(G)$ is elementary.
\end{enumerate}
\end{theorem}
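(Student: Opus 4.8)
The plan is to treat the two parts in parallel, exploiting the standard permanence properties of $\mathcal{E}$ established beforehand: closure under closed subgroups, Hausdorff quotients, group extensions, and countable increasing unions of open subgroups. Uniqueness is automatic in each part once existence is shown, since a maximal (respectively minimal) element of a family is unique when it exists; so all the work lies in producing the extremal subgroups.

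For part (1), I would consider the family $\mathcal{N}$ of all closed normal elementary subgroups of $G$ and first show it is directed upward. Given $N_1,N_2\in\mathcal{N}$, the subgroup $\overline{N_1 N_2}$ is closed and normal, and it is elementary: it contains $N_1$ as a closed normal subgroup with $\overline{N_1 N_2}/N_1$ a Hausdorff quotient of $N_2$, hence elementary, so $\overline{N_1 N_2}$ is an extension of two elementary groups. Put $R=\overline{\bigcup\mathcal{N}}$. Using that $R$ is second countable (hence Lindel\"of), choose a countable subfamily whose union is dense in $R$ and, by directedness, arrange it as an increasing chain $N_1\le N_2\le\cdots$ with $\overline{\bigcup_i N_i}=R$. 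Now fix a compact open $U\le R$; each $U N_i$ is an open subgroup of $R$, it is elementary because $U N_i/N_i\cong U/(U\cap N_i)$ is profinite while $N_i$ is elementary, the $U N_i$ increase with $i$, and $\bigcup_i U N_i$ is a dense open (hence closed) subgroup, so it equals $R$. Thus $R$ is a countable increasing union of open elementary subgroups and therefore elementary; being normal and containing every member of $\mathcal{N}$, it is the desired $\textup{Rad}_{\mathcal{E}}(G)$.

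For part (2), the dual setup is to consider $\mathcal{S}=\{N\trianglelefteq G \text{ closed} : G/N\in\mathcal{E}\}$, which is directed downward: for $N_1,N_2\in\mathcal{S}$ the map $g(N_1\cap N_2)\mapsto(gN_1,gN_2)$ realizes $G/(N_1\cap N_2)$ as a closed subgroup of the elementary group $G/N_1\times G/N_2$, so $N_1\cap N_2\in\mathcal{S}$. Setting $S=\bigcap\mathcal{S}$ and using that the open set $G\setminus S$ is Lindel\"of, I would extract a countable decreasing chain $N_1\ge N_2\ge\cdots$ in $\mathcal{S}$ with $\bigcap_i N_i=S$. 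After replacing $G$ by $G/S$ one may assume $\bigcap_i N_i=1$, and the task becomes: given a decreasing sequence of closed normal subgroups with trivial intersection and elementary quotients $G/N_i$, conclude $G\in\mathcal{E}$.

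This last step is where I expect the real difficulty. The tempting move of embedding $G$ into $\varprojlim G/N_i$ and invoking closure under closed subgroups fails, because the $N_i$ need not form a neighbourhood basis of the identity, so the induced map to the inverse limit is continuous and injective but in general neither a topological embedding nor of closed image. Worse, the situation is self-similar: each $N_i$ again carries a decreasing sequence $N_j$ ($j\ge i$) with trivial intersection and elementary quotients $N_i/N_j$, so one cannot descend on the index $i$. The plan is instead to argue by transfinite induction on the decomposition rank, bounding $\xi(G)$ in terms of $\alpha=\sup_i\xi(G/N_i)$, which is a countable ordinal since the $\xi(G/N_i)$ form a countable increasing sequence of countable ordinals; concretely one analyses a compact open $U\le G$, for which $U\cap N_i\downarrow 1$ \emph{does} form a neighbourhood basis, and feeds this into the permanence properties of the rank. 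Granting that the rank is defined (a countable ordinal) exactly on $\mathcal{E}$, this yields $G\in\mathcal{E}$, and hence $G/S$ elementary, making $S=\textup{Res}_{\mathcal{E}}(G)$ the minimal closed normal subgroup with elementary quotient.
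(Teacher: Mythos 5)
This theorem is only quoted in the paper from \cite{elementary_first}, so there is no in-paper proof to compare against; measured against the argument in that reference, your skeleton is the right one (the radical as the closure of an upward-directed family of closed normal elementary subgroups plus second countability, the residual reduced to ``residually elementary implies elementary''), but two load-bearing steps are genuinely gapped. The first is your directedness claims. The assertion that $\overline{N_1N_2}/N_1$ is a Hausdorff quotient of $N_2$ is false in general: the image of $N_2$ in $G/N_1$ is $N_1N_2/N_1$, which need not be closed and need not carry the quotient topology of $N_2/(N_1\cap N_2)$. For instance, in $G=\mathbb{Z}_p\times\mathbb{Z}$ ($p$ odd) the subgroups $N_1=\{(d,d):d\in\mathbb{Z}\}$ and $N_2=\{(e,-e):e\in\mathbb{Z}\}$ (graphs taken via the dense embedding $\mathbb{Z}\hookrightarrow\mathbb{Z}_p$) are closed, discrete and normal, yet $N_1N_2$ is dense and non-closed and $\overline{N_1N_2}/N_1\cong\mathbb{Z}_p$ is not a quotient of the discrete group $N_2$. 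What you actually have is that $\overline{N_1N_2}/N_1$ contains a dense normal continuous injective image of an elementary group, and concluding elementarity from that requires the separate permanence property of $\mathcal{E}$ under dense normal images --- a nontrivial theorem of \cite{elementary_first} proved by its own induction on the rank, not a formal consequence of (i)--(v) of Definition \ref{defi:elementary groups}. The same example shows that $G/(N_1\cap N_2)\to G/N_1\times G/N_2$ need not be a closed topological embedding (its restriction to $\{0\}\times\mathbb{Z}$ already fails to be a homeomorphism onto its image), so the downward directedness of your family $\mathcal{S}$ is unjustified as written.

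The second and larger gap is the crux of part (2). You correctly diagnose that the inverse-limit trick fails and that everything reduces to showing that a group with a decreasing sequence of closed normal subgroups with trivial intersection and elementary quotients is elementary. But the proposed resolution --- transfinite induction bounding $\xi(G)$ by $\sup_i\xi(G/N_i)$ using the fact that $U\cap N_i$ is eventually inside any neighbourhood of $1$ --- is a plan, not a proof; nothing in the sketch rules out the a priori possibility that $\xi(G)$ is simply undefined, which is the entire content of the claim. This ``residually elementary implies elementary'' step is the main technical theorem behind Theorem 1.5 of \cite{elementary_first}, and its proof requires genuine structure theory of compactly generated t.d.l.c.\ groups (reduction to compactly generated open subgroups and an analysis of the discrete residual and of residually discrete groups), not just rank bookkeeping. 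In summary: right architecture, but both directedness lemmas and the residual theorem are asserted rather than proved, and the first is supported by a claim that is actually false.
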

One can associate to each elementary group a countable ordinal (possibly infinite),\ which we call the decomposition rank. Such rank is denoted as $\xi$. This rank measures the complexity of such groups,\ as it roughly tells how many steps are necessary to build the elementary group from the discrete and profinite ones. The rank also allows for induction proofs in this class. Also,\ alternatively,\ one can define elementary groups in relation to the rank: a group is elementary if,\ and only if,\ the decomposition rank is well-defined for it \cite[Theorem 4.7]{elementary_first}.\\
\indent The decomposition rank allows us to prove that any t.d.l.c.s.c.~group with a topologically simple,\ compactly generated,\ non-discrete subgroup or quotient is not elementary,\ as the rank is not well-defined for them. The Lie groups over local fields of the form $\textup{PSL}_n(\mathbb{Q}_p)$ and the Neretin groups are examples of simple compactly generated t.d.l.c.s.c.~groups,\ hence they are not elementary. Automorphisms groups of $q$-regular trees have a non-trivial,\ compactly generated simple subgroup,\ hence are also not elementary. Two questions that arise related to elementary groups are then the following:

\begin{question}\label{question:1}
Is the existence of a topologically simple,\ non-discrete,\ compactly generated subquotient necessary for a group $G$ to be non-elementary?
\end{question}

\begin{question}\label{question:2}
What is the least upper bound for the decomposition rank among the elementary groups? Is it a countable ordinal,\ or is it $\omega_1$?
\end{question}

Here we focus mostly on Question \ref{question:2}. If such a countable upper bound exists,\ there would be strong restrictions on the class of elementary groups. If there exist no upper bound,\ then there exist no SQ-universal elementary group,\ as any surjectively universal group to the elementary groups could not be second countable. No upper bound would,\ in particular,\ imply that there exist no surjectively universal t.d.l.c.s.c.~group for the class of t.d.l.c.s.c.~groups,\ answering a question of S. Gao and M. Xuan \cite{GAO2014343}.\\
\indent The ``descriptive complexity'' of whether or not a given t.d.l.c.s.c.~group is elementary depends on whether or not the rank is unbounded. Polish groups have a canonical $\sigma$-algebra on the set of closed subgroups,\ the Effros-Borel $\sigma$-algebra. Similarly to how it is proven on \cite{elemtr_amenable_descriptive} for elementary amenable groups,\ one can see that the set $\mathcal{E}$ of elementary t.d.l.c.s.c.~subgroups of Sym$(\mathbb{N})$ belong to its Effros-Borel $\sigma$-algebra if,\ and only if,\ the decomposition rank is unbounded below $\omega_1$. \\
\indent In \cite{dynamics_colin},\ a class of subgroups of Sym$(\mathbb{N})$ is defined. Such a class contains all the elementary t.d.l.c.s.c.~groups and belongs to the Effros-Borel $\sigma$-algebra. However,\ by a construction of Caprace-Wesolek \cite{counter_example_class_colin},\ this class contains non-elementary t.d.l.c.s.c.~groups. Hence the question of whether or not the class $\mathcal{E}$ belongs to the Effros-Borel $\sigma$-algebra of Sym$(\mathbb{N})$ remains open.\\
\indent Until now,\ not much is known about the upper bound. Before this article,\ only groups with rank up to $\omega+2$ were built (Proposition 9.7 \cite{elementary_groups_second}) via the construction of the groups $E_X(G,\ U)$,\ a family of groups acting on the $\aleph_0$-regular tree and fixing an end of the tree. In Section 3,\ we focus on proving results about the groups $E_X(G,\ U)$ defined in \cite{elementary_groups_second}. Such construction will be central for the proof of our main result:
\begin{main}\label{main}
Given $n\in\mathbb{N}$,\ there exist a compactly generated elementary group $G$ such that $\xi(G)=\omega^n+2$. 
\end{main}
To prove the theorem,\ we will show that,\ under a certain condition,\ restricted wreath products allow us to build a group with a higher rank than the original groups. The groups $E_X(G,\ U)$,\ built as in \cite{elementary_groups_second},\ contain a countable iteration of wreath products. Under some conditions,\ we can iterate such construction to build groups with the desired ranks. By some limitations on the conditions of the theorem,\ we are only able to iterate for groups with rank $\omega^n+2$ for $n\in\mathbb{N}$. Colin Reid,\ Simon Smith,\ and Phillip Wesolek first conjectured Main Theorem in \cite{UnpublishedWesolek}.\\
\indent It is well known that if there exist a group $G$ with decomposition rank $\xi(G)=\alpha+1$,\ then for every $\beta<\alpha$ there exist $H$ an elementary subgroup of $G$ with $\xi(H)=\beta+1$. On the other hand,\ a general way to build such a subgroup $H$ is still unknown. Section 4 introduces new concepts for t.d.l.c.s.c.~groups,\ the residual order,\ height,\ and rank. We can then use these concepts in Section 5 to prove that taking wreath products of groups as generated in Main Theorem gives us a new group whose rank is equal to the sum of the ranks of all building blocks. We will then be able to give an explicit construction of elementary groups with any given rank up to $\omega^\omega+1$. Such a work came from a suggestion given by George Willis. Initially,\ the idea was to work with an alternative rank for elementary groups. Later we noticed that such a rank could make building groups with an explicitly given rank easier.\\
\indent As in finite group theory,\ a notion of chief factors and chief series can be defined for the class of t.d.l.c.s.c.~groups. This notion in central to understanding the class of t.d.l.c.s.c.~groups. By \cite[Corollary 1.4]{chief_series},\ every compactly generated t.d.l.c. group admits an essentially chief series. Some central results about chief factors from \cite{elementary_groups_second} concern blocks that are either non-elementary or elementary of high enough rank,\ which also raises the question:
\begin{question}
What kind of chief factor with "big" decomposition rank can occur?
\end{question}
Under some conditions,\ the groups defined at \cite{elementary_groups_second} will have a minimal closed normal subgroup. This subgroup is the first explicit example of a chief factor of a specific type. Our construction allows us to build such a chief factor so that its decomposition rank is $\omega^n+1$,\ for $n$ a natural number. Proof of the existence of such chief factors is given in Section 5.

\section{Background}
This section presents some central definitions and results to understand the article. Subsections 2.1 and 2.2 focus on the general theory of t.d.l.c.s.c.~groups and elementary groups,\ giving what is essential to prove results about such groups. Subsection 2.3 focus on chief factors,\ giving what is necessary to discuss a special type of chief factor in Subsection 5.4. Subsection 2.4 gives the necessary concepts and results to define residual order,\ height,\ and rank in Section 4.

\subsection{Totally disconnected,\ locally compact groups}

Our first result is a central tool to build the topology of the groups acting on the $\aleph_0$-regular tree in such a way that the topology stays t.d.l.c.s.c.
\begin{prop}\label{prop:building groups}\cite[Chapter III,\ Proposition 1]{bourbaki2013general}
Let $G$ be a group and $\mathcal{F}$ be a family of subsets of $G$ satisfying the following:
\begin{enumerate}
    \item Given any $U\in\mathcal{F}$,\ there exist $V\in\mathcal{F}$ such that $V.V\subset U$.
    \item For all $g\in G$ and all $V\in\mathcal{F}$,\ we have $gVg^{-1}\in\mathcal{F}$.
\end{enumerate}
There exists a unique topology on $G$ such that $G$ is a topological group and $\mathcal{F}$ is a neighborhood basis of the identity element.
\end{prop}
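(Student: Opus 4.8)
The plan is to build the topology explicitly from $\mathcal{F}$ by left translation, verify the topological-group axioms by hand, and then deduce uniqueness from the fact that in any topological group the neighborhood filter at an arbitrary point is a translate of the one at the identity. Throughout I will use that $\mathcal{F}$ is a filter base whose members contain the identity, so that condition 1 forces $V\subseteq U$ after shrinking (since $e\in V$ gives $V\subseteq V\cdot V\subseteq U$) and the translates below are genuinely nested; in the intended application the members of $\mathcal{F}$ are the compact open subgroups, so the symmetry $V^{-1}=V$ and the absorption $V\cdot V=V$ come for free.

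First I would declare a set $\Omega\subseteq G$ to be \emph{open} precisely when, for every $x\in\Omega$, there is some $V\in\mathcal{F}$ with $xV\subseteq\Omega$. Closure of this family under arbitrary unions is immediate, and closure under finite intersection uses the filter-base property of $\mathcal{F}$ to select a common refinement $V\subseteq V_1\cap V_2$. With this definition the sets $xV$ for $V\in\mathcal{F}$ form a neighborhood basis at each point $x$, and in particular $\mathcal{F}$ itself is a neighborhood basis at the identity $e$, as required.

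The substantive step is continuity of the group operations. For multiplication, continuity at the identity pair $(e,e)$ is exactly condition 1: given a basic neighborhood $U$ of $e$, choose $V$ with $V\cdot V\subseteq U$, so that the product map sends $V\times V$ into $U$. Continuity at an arbitrary pair $(a,b)$ is where the real work lies: one must show that a neighborhood $abU$ of the product pulls back to a product of neighborhoods of $a$ and $b$, and to do so one has to move a left translate past $b$, rewriting $bU$ as $(bUb^{-1})b$ and replacing $bUb^{-1}$ by a member of $\mathcal{F}$. This is precisely where the conjugation hypothesis (condition 2) enters, guaranteeing $bVb^{-1}\in\mathcal{F}$ and thereby making left and right translates interchangeable up to refinement. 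Continuity of inversion then follows from symmetry of the members of $\mathcal{F}$ (automatic for subgroups), by sending $V^{-1}$ into a prescribed $U$ and, at a general point, combining symmetry with condition 2.

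Finally, for uniqueness I would invoke the general fact that in any topological group left translation $x\mapsto gx$ is a homeomorphism, so the neighborhood filter at any $g$ is the image under this map of the neighborhood filter at $e$. Hence two topologies that both make $G$ a topological group and both have $\mathcal{F}$ as a neighborhood basis at $e$ must agree everywhere. The main obstacle I anticipate is the arbitrary-point continuity of multiplication: keeping careful track of which translate (left versus right) is in play and invoking condition 2 at exactly the right moment is the only place where the argument is not purely formal.
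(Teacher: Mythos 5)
The paper offers no proof of this proposition at all --- it is quoted directly from Bourbaki --- so there is nothing internal to compare against; your argument is the standard Bourbaki construction and is essentially correct. Declaring $\Omega$ open when every $x\in\Omega$ admits $V\in\mathcal{F}$ with $xV\subseteq\Omega$, using condition 1 for continuity of multiplication at the identity, using condition 2 to move a left translate past $b$ when checking continuity at a general pair $(a,b)$ (via $aV_1bV_2=ab(b^{-1}V_1b)V_2$), and deducing uniqueness from the fact that left translations are homeomorphisms are all exactly the right steps. Two remarks. First, you correctly identify that the statement as reproduced in the paper is incomplete: Bourbaki additionally requires $\mathcal{F}$ to be a filter base of sets containing the identity and to satisfy a symmetry axiom (for each $U$ there is $V$ with $V^{-1}\subseteq U$), and without these the conclusion genuinely fails (inversion need not be continuous and the members of $\mathcal{F}$ need not be neighbourhoods of $e$). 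Importing those hypotheses from the intended application, where the members of $\mathcal{F}$ are (intersections of conjugates of) compact open subgroups and the family is explicitly closed under finite intersections, is the correct repair. Second, the one step you assert rather than verify is that each translate $xU$ is genuinely a neighbourhood of $x$ in the declared topology, i.e.\ contains an open set around $x$; this needs the short interior argument (the set of $y\in xU$ admitting $V\in\mathcal{F}$ with $yV\subseteq xU$ is open by condition 1 and contains $x$), but it is routine and does not affect the soundness of your outline.
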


\begin{theorem}[Van Dantzig]\index{van Dantzig Theorem|ndx}
Let $G$ be a totally disconnected,\ locally compact group,\ and suppose $U$ is an open neighborhood of $1$. Then there exist a compact open subgroup $V\subset U$.
\end{theorem}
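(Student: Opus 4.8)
The plan is to split the argument into a purely topological step, which produces a compact \emph{open} neighbourhood of $1$ inside $U$, and an algebraic step, which upgrades such a neighbourhood to an honest subgroup.

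First I would exploit local compactness to fix a compact neighbourhood $K$ of $1$ with $K\subseteq U$: choose any compact neighbourhood, intersect it with $U$, and shrink so that $1\in\operatorname{int}(K)$. The essential input from total disconnectedness is that $K$, being a compact Hausdorff totally disconnected space, is zero-dimensional, i.e.\ its clopen subsets form a basis for its subspace topology. (This rests on the standard fact that in a compact Hausdorff space the connected component of a point coincides with its quasi-component, the intersection of all clopen sets containing it; total disconnectedness forces these to be singletons, whence clopen sets separate points and generate the topology.) Applying this inside $K$ yields a set $W$, clopen in $K$, with $1\in W\subseteq\operatorname{int}(K)$. Being closed in the compact set $K$, the set $W$ is compact; being open in $K$ and contained in the $G$-open set $\operatorname{int}(K)$, it is open in $G$. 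Thus $W$ is a compact open neighbourhood of $1$ contained in $U$.

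The second step turns $W$ into a subgroup by a ``swallowing'' argument. By continuity of multiplication, for each $w\in W$ there are open sets $A_w\ni 1$ and $B_w\ni w$ with $A_wB_w\subseteq W$; compactness of $W$ lets me extract a finite subcover $B_{w_1},\dots,B_{w_n}$ and set $A=\bigcap_{i=1}^n A_{w_i}$, an open neighbourhood of $1$ with $AW\subseteq W$. Replacing $A$ by $A\cap A^{-1}$, I may assume $A$ is symmetric. Since $1\in W$, induction on $n$ gives $A^n\subseteq W$ for all $n\ge 1$, so the open subgroup $H=\langle A\rangle=\bigcup_{n\ge 1}A^n$ is contained in $W$. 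Finally $H$, being open, is also closed (its complement is a union of cosets, each open), and as a closed subset of the compact set $W$ it is compact. Hence $H$ is a compact open subgroup with $H\subseteq W\subseteq U$, as required.

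I expect the genuine obstacle to be the first step: everything hinges on producing a compact \emph{open} neighbourhood of the identity, and this is precisely where zero-dimensionality of the compact piece — and thus the hypothesis of total disconnectedness — is used. Once such a $W$ is available, the subgroup-building step is a soft consequence of continuity of multiplication and compactness, requiring no further appeal to disconnectedness.
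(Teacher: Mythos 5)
Your argument is correct and is the standard textbook proof of van Dantzig's theorem: zero-dimensionality of a compact totally disconnected Hausdorff space (via the coincidence of components and quasi-components) produces a compact open neighbourhood $W$ of $1$ inside $U$, and the swallowing argument $AW\subseteq W$ followed by $\langle A\rangle=\bigcup_{n\geqslant 1}A^n\subseteq W$ yields the compact open subgroup. The paper itself states this result purely as classical background and offers no proof, so there is nothing to compare against; your write-up fills that in faithfully, with the only slightly compressed step being the passage from ``quasi-components are singletons'' to ``clopen sets form a basis,'' which needs one more finite-subcover argument that you correctly locate inside the compact set $K$.
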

Knowing a neighborhood of the identity of a topological group allows us to understand the whole topology of the group,\ as multiplication by an element is a homeomorphism. The Van Dantzig theorem tells us that there exist a neighborhood of the identity of open,\ compact subgroups. We will denote $\mathcal{U}(G)$ the set of all compact open subgroups of the group $G$.

\begin{defi}\index{compactly generated|ndx}
A topological group $G$ is \textbf{compactly generated} if there exist $K$ a compact subset of $G$ such that $G=\langle K\rangle$.
\end{defi}

Given $G$ a topological group,\ we denote $\mathcal{K}(G)=\{H;\ H$ is a closed,\ compactly generated subgroup of $ G\}$.\index{$\mathcal{K}(G)$|ndx}
\begin{remark}\label{remark:comp gen}
Given a t.d.l.c.s.c.~group $G$,\ a straightforward application of the Van Dantzig Theorem allows us to see $G$ as a countable union of compactly generated open subgroups. To see this,\ let $\{g_i\}_{i\in\mathbb{N}}\subset G$ countable dense subset. This subset exists because $G$ is second countable. Given $U\in\mathcal{U}(G)$,\ we can define an increasing sequence of compactly generated open subgroups $O_n:=\langle U,\ g_0,\ g_1,\ g_2,\ \ldots,\ g_{n}\rangle$. These are all open and compactly generated by construction. It then follows that $G=\bigcup_{i\in\mathbb{N}} O_n$. Hence every t.d.l.c.s.c.~group is a countable union of open,\ compactly generated subgroups.
\end{remark} 
\indent Tychonoff's theorem tells us that product of compact groups is always a compact group. However,\ this is different for locally compact groups. To go through this problem,\ we define the local direct product. 

\begin{defi}\index{local direct product|ndx}\label{defi:local direct product}
Let $X$ a countable set and $\{G_x\}_{x\in X}$ be a collection of t.d.l.c.s.c.~groups. For each $x\in X$,\ let $U_x\leqslant G_x$ a fixed compact,\ open subgroup. Let $\mathcal{X}=\{F\subset X| \ F $ is a finite subset$\}$. Define for each $F\in\mathcal{X}$ the group $S_F:=\prod_{x\in F}G_x\times \prod_{x\in X\backslash F}U_x$. Given $F'\subset F$,\ define $i_{F',\ F}:S_{F'}\rightarrow S_F$ the embedding that maps the $x$-th coordinate of $S_{F'}$ to the $x$-th coordinate of $S_F$. We define the \textbf{local direct product} of the $\{G_x\}_{x\in X}$ with respect to $\{U_x\}_{x\in X}$ as $\varinjlim_{F\in\mathcal{X}} S_F$ with the inductive limit topology. We denote it by $\bigoplus_{x\in X}(G_x,U_x)$.
\end{defi}

As the group $\prod_{x\in X}U_x$ is a compact,\ open subgroup of $S_F$,\ for all $F\in\mathcal{X}$,\ by definition of the final topology,\ it is also a compact,\ open in $\bigoplus_X(G,\ U)$. The construction above can be generalized for any given set $X$. As we only work with second countable groups in this article,\ we restrict the definition for the case $X$ is countable.\\
\indent If there exist $G$ a t.d.l.c.s.c.~group and $U\in\mathcal{U}(G)$ such that $G_x=G$ and $U_x=U$ for all $x\in X$,\ the local direct product will be denoted as $\bigoplus_X(G,\ U)$.\\
\indent Given $\phi\in $ Sym$(X)$,\ the action of $\phi$ on $X$ induces an automorphism of $\bigoplus_{X}(G,\ U)$ by shifting the coordinates. Hence,\ given a t.d.l.c.s.c.~group $K$ acting on a countable set $X$ and $k$ an element of $K$,\ one can define $\alpha_k$ an automorphism of $\bigoplus_X(G,\ U)$ given by moving the $x$ coordinate to the $k.x$ coordinate. Under such an action,\ one can define the wreath product of t.d.l.c.s.c.~groups in a similar way to the wreath product of discrete groups. For that,\ first,\ we need to define our topological permutation groups.

\begin{defi}
Let $G$ be a t.d.l.c.s.c.~group and $X$ a countable set. Let $G$ act (left) on $X$. We denote:
\begin{itemize}
    \item Given $x\in X$ we define the \textbf{fixator of the element $x$} as $G_{(x)}:=$Fix$_G(x)=\{g\in G;\ g.x=x\}\leqslant G$.\index{fixator|ndx}\index{$\textup{Fix}_G(x)$|ndx}
    \item Given $F\subset X$ a subset we define the \textbf{fixator of the subset $F$} as $G_{(F)}:=$Fix$_G(F)=\{g\in G;\ g.x=x $ for all $x\in F\}\leqslant G$ .
    \item For $g\in G$ we define the \textbf{support of $g$} as $\text{supp}(g):=\{x\in X;\ g.x\neq x\}\subset X$.\index{support of $g$|ndx}\index{$\textup{supp}(g)$|ndx}
    \item For $H\leqslant G$ a subgroup we define the \textbf{normal closure of $H$ in $G$} as $\langle\!\langle H\rangle\!\rangle_G=\bigcap_{H\leqslant N\trianglelefteq G}N$. \index{normal closure|ndx}\index{$\langle\!\langle N\rangle\!\rangle$|ndx}
\end{itemize}
\end{defi}

\begin{defi}\label{defi:permutation group}\index{permutation group|ndx}
Let $G$ be a t.d.l.c.s.c.~group and $X$ a countable set with the discrete topology. Let $G$ act (left) on $X$. The pair $(G,\ X)$ is called a \textbf{(left) t.d.l.c.s.c.~permutation group} if the action is faithful and has compact open point fixators.
\end{defi}
\indent  Notice that a permutation group $(G,\ X)$ can be naturally embedded on Sym$(X)$ under such action.
\begin{defi}\label{defi:wr product}\index{restricted wreath product|ndx}\index{wreath product|ndx}
The \textbf{restricted wreath product|ndx} of the t.d.l.c.s.c.~group $L$ with $(K,\ X)$ over $U\in\mathcal{U}(L)$ is defined to be
$$L\wr_U (K,\ X):=\bigoplus_X (L,\ U)\rtimes K$$
with the product topology on the pair $\bigoplus_{X}(L,\ U)$ and K.
\end{defi}

Let $(K,\ X)$,\ $(L,\ Y)$ be permutation t.d.l.c.s.c.~groups and $U\in \mathcal{U}(L)$. we define the \textbf{imprimitive action} of $L\wr_{U}(K,\ X)$ on $Y\times X$ as follows: given $((l_{x})_{x\in X},\ k)$ an element of $L\wr_{U}(K,\ X)$ and $(y,\ x)$ in the set $ Y\times X$, we have $((l_{x})_{x\in X},\ k).(y,\ x)=(l_{kx}.y,\ k.x)$.\index{imprimitive action|ndx}\\
\indent Given $(L,\ X)$ a permutation group and $U\in\mathcal{U}(L)$,\ one can iterate the wreath product construction to build topological permutation groups acting on $X^n$,\ for $n\in\mathbb{N}$. For such construction we define $(L_1,\ X):=(L,\ X)$. Given that $(L_n,\ X^n)$ was defined,\ define $(L_{n+1},\ X^{n+1}):=(L\wr_U (L_n,\ X^n),\ X\times X^n)$,\ where the action of $L_{n+1}$ on $X^{n+1}=X\times X^n$ is the imprimitive action given by the wreath product.\\
\indent Other types of topological groups that will be important for this article are the following:
\begin{defi}\label{defi:general defi}
Let $G$ be a topological group. Then:
\begin{itemize}
    \item (Topologically perfect) The group $G$ is \textbf{topologically perfect} if $G=\overline{[G,G]}$.\index{topologically perfect|ndx}
    \item The group $G$ is \textbf{topologically simple} if the only closed normal subgroups of $G$ are $G$ itself and $\{1\}$.\index{topologically simple|ndx}
    \item The group $G$ is \textbf{monolithic} if the intersection of all non-trivial closed normal subgroups is non-trivial. We will call this intersection the monolith of $G$.\index{monolithic group|ndx}\index{monolith|ndx}
    \item The group $\textup{Aut}(G)$ is the group of all \textbf{continuous automorphisms} of $G$.\index{$\textup{Aut}(G)$|ndx}
\end{itemize}
\end{defi}

\subsection{Elementary groups}

Formally we define the class of elementary group as follows:

\begin{defi}\cite[Definition 1.1]{elementary_first}\label{defi:elementary groups}\index{elementary groups|ndx}
The class of \textbf{elementary groups} is the smallest class $\mathcal{E}$ of t.d.l.c.s.c.~groups such that:\index{$\mathcal{E}$|ndx}
\begin{enumerate}
    \item[(i)] $\mathcal{E}$ contains all second countable profinite groups and countable discrete groups;
    \item[(ii)] $\mathcal{E}$ is closed under taking group extensions,\ that is,\ if there exist $N\trianglelefteq H$ closed subgroup such that $N\in\mathcal{E}$ and $H/N\in\mathcal{E}$ then $H\in\mathcal{E}$;
    \item[(iii)] $\mathcal{E}$ is closed under taking closed subgroups;
    \item[(iv)] $\mathcal{E}$ is closed under taking quotients by closed normal subgroups;
    \item[(v)] If $G$ is a t.d.l.c.s.c.~group and $\bigcup_{i\in\mathbb{N}}O_i=G$,\ where $\{O_i\}_{i\in\mathbb{N}}$ is an $\subset$-increasing sequence of open subgroups of $G$ with $O_i\in\mathcal{E}$ for each $i$,\ then $G\in\mathcal{E}$. We say that $\mathcal{E}$ is closed under countable increasing union.
\end{enumerate}
\end{defi}

As the class of elementary groups is closed under countable unions, Remark \ref{remark:comp gen} shows that compactly generated groups will be central in our work.\\
\indent Equivalently,\ the class of elementary groups is defined only by (i),\ (ii),\ and (v). The statements (iii) and (iv) may be deduced from the others \cite[Proposition 3.4,\ Theorem 3.8]{elementary_first}.\\
\indent Following are some definitions that will be central to defining the decomposition rank:

\begin{defi}[$\sup^+$]\index{$\sup^+$|ndx}
Let $\{\alpha_i\}_{i\in\mathbb{N}}$ an increasing collection of ordinals,\ that is,\ $\alpha_i\leqslant \alpha_{i+1}$ for every $i\in \mathbb{N}$. We define $\sup_{i\in\mathbb{N}}^+\alpha_i=\sup_{i\in\mathbb{N}}\alpha_i$ if $\sup_{i\in\mathbb{N}}\alpha_i$ is a successor ordinal and $\sup_{i\in\mathbb{N}}^+\alpha_i=\sup_{i\in\mathbb{N}}\alpha_i+1$ otherwise.
\end{defi}

The $\sup^+$ is defined so we can avoid limit ordinals when taking supremum. The decomposition rank is never a limit ordinal and this notion of supremum is used in the definition.

\begin{defi}\cite[Subsection 2.3]{elementary_first}\label{defi:discrete residual}
The \textbf{discrete residual} of a t.d.l.c.s.c.~group $G$ is defined as:
$$\textup{Res}(G):=\bigcap \left\{ O; O\text{ is an open normal subgroup of }G \right\}.$$\index{discrete residual|ndx}\index{$\textup{Res}(G)$|ndx}
\end{defi}

\begin{defi}\cite[Lemma 4.12]{elementary_first}\label{defi:decomposition rank}
Let $G$ be an elementary group. We define the \textbf{decomposition rank} $\xi:\mathcal{E}\longrightarrow [0,\omega_1)$ as follows:\index{decomposition rank|ndx}\index{$\xi$|ndx}
\begin{enumerate}
    \item[(i)] $\xi(\{1\})=1$;
    \item[(ii)] If $G$ is a non-trivial compactly generated t.d.l.c.s.c.~group then $\xi(G)=\xi(\textup{Res}(G))+1$;
    \item[(iii)] For $G$ a t.d.l.c.s.c.~group and $\{O_i\}_{i\in\mathbb{N}}$ a $\subset$-increasing sequence of subgroups of $G$ with $O_i$ open and compactly generated for each $i$,\ then $\xi(G)=\sup_{i\in\mathbb{N}}\{\xi(\textup{Res}(O_i))\}+1=\sup^+_{i\in\mathbb{N}}\{\xi(O_i)\}$. (This sequence will always exist by Remark \ref{remark:comp gen}).
\end{enumerate}
\end{defi}

As defined above,\ one can see that the rank tells how far an elementary group is from being residually discrete. On \cite[Proposition 4.19]{elementary_first},\ Wesolek shows that the decomposition rank as defined above tells,\ up to a finite ordinal,\ how many steps it is necessary to build the elementary group from discrete and profinite ones under increasing countable unions and extensions.\\
\indent Given $\alpha$ an ordinal,\ there exist $\beta_n>\beta_{n-1}>\ldots>\beta_0\geqslant 0$ ordinals and $a_n,a_{n-1},\ldots,a_0$ non-zero natural numbers such that
$$\alpha=\omega^{\beta_n}a_n+\omega^{\beta_{n-1}}a_{n-1}+\ldots +\omega^{\beta_1}a_1+\omega^{\beta_0}a_0.$$
This is known as the Cantor normal form of $\alpha$. It helps us perform operations like sums and multiplications of ordinals. The ordinal $\beta_n$ is called the \textbf{leading exponent} of $\alpha$. 

Below are some properties of the decomposition rank that will be useful for us during the proofs. 

\begin{remark}\label{remark:decomposition rank}
Notice that,\ by definition,\ the decomposition rank of an elementary group is always a successor ordinal. Hence if $G$ is a non-trivial compactly generated group,\ then it is also the case that $\xi(G)=\alpha+2$,\ for some $\alpha$ ordinal.
\end{remark}

\indent This remark will be used multiple times throughout the article and will be assumed to be known without further comment. The following are the main results relating to the rank of an elementary group and its structure.

\begin{prop}\cite[Corollary 4.10]{elementary_first}\label{prop:decomposition rank closed subgroup} 
Let $H\leqslant G$ elementary groups,\ where $H$ is closed in $G$. Then $\xi(H)\leqslant\xi(G)$.
\end{prop}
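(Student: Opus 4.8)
The plan is to argue by transfinite induction on $\xi(G)$, establishing the inequality for every closed subgroup $H$ of $G$ simultaneously. The single algebraic fact driving the argument is a containment of discrete residuals: for any closed $H\leqslant G$ and any open normal subgroup $O\trianglelefteq G$, the subgroup $O\cap H$ is open and normal in $H$, so $\textup{Res}(H)\leqslant O\cap H$; intersecting over all such $O$ gives
$$\textup{Res}(H)\leqslant \textup{Res}(G)\cap H\leqslant\textup{Res}(G).$$
In particular $\textup{Res}(H)$ is a closed subgroup of $\textup{Res}(G)$, and both are elementary by closure under closed subgroups (Definition \ref{defi:elementary groups}(iii)), so their ranks are defined and comparable. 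I will also repeatedly use the following bookkeeping fact: if $\delta$ is a successor ordinal and $(\alpha_j)$ is a non-decreasing sequence with $\alpha_j\leqslant\delta$ for all $j$, then $\sup^+_j\alpha_j\leqslant\delta$. Indeed, writing $s=\sup_j\alpha_j\leqslant\delta$, if $s$ is a successor then $\sup^+_j\alpha_j=s\leqslant\delta$, while if $s$ is a limit then $s\neq\delta$ forces $s<\delta$ and hence $\sup^+_j\alpha_j=s+1\leqslant\delta$. Since every decomposition rank is a successor ordinal by Remark \ref{remark:decomposition rank}, this applies with $\delta=\xi(G)$ and with $\delta=\xi(O_i)$ throughout.

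The heart of the induction is an intermediate statement about compactly generated groups. Assume the theorem holds for all elementary groups of rank strictly less than $\alpha$. I first claim that whenever $G'$ is compactly generated with $\xi(G')\leqslant\alpha$ and $Q\leqslant G'$ is closed, then $\xi(Q)\leqslant\xi(G')$. We may assume $G'$ is non-trivial (the trivial case being immediate), so $\xi(G')=\xi(\textup{Res}(G'))+1$ and hence $\xi(\textup{Res}(G'))<\alpha$. If $Q$ is itself compactly generated, then $\xi(Q)=\xi(\textup{Res}(Q))+1$ and, by the displayed containment, $\textup{Res}(Q)$ is a closed subgroup of $\textup{Res}(G')$; applying the induction hypothesis to $\textup{Res}(G')$ (of rank $<\alpha$) yields $\xi(\textup{Res}(Q))\leqslant\xi(\textup{Res}(G'))$, whence $\xi(Q)\leqslant\xi(G')$. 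If $Q$ is not compactly generated, write $Q=\bigcup_j P_j$ with $P_j$ open, compactly generated, and increasing (Remark \ref{remark:comp gen}); each $P_j$ is a compactly generated closed subgroup of $G'$, so the previous case gives $\xi(P_j)\leqslant\xi(G')$, and the bookkeeping fact gives $\xi(Q)=\sup^+_j\xi(P_j)\leqslant\xi(G')$. This proves the claim.

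It remains to pass from the claim to an arbitrary $G$ with $\xi(G)=\alpha$ and an arbitrary closed $H\leqslant G$. If $G$ is compactly generated, the claim applied to $G'=G$ finishes the proof at once. If $G$ is not compactly generated, write $G=\bigcup_i O_i$ with $O_i$ open, compactly generated, and increasing, so that $\xi(G)=\sup^+_i\xi(O_i)$ and in particular $\xi(O_i)\leqslant\alpha$ for all $i$. Exhaust $H$ by open compactly generated subgroups $Q_j$; each $Q_j$ is generated by a compact set $K_j$, and since the $O_i$ form an increasing open cover of $G$, compactness of $K_j$ gives $K_j\subseteq O_{i(j)}$ for some index, hence $Q_j\leqslant O_{i(j)}$. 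Now $Q_j$ is a closed subgroup of the compactly generated group $O_{i(j)}$ with $\xi(O_{i(j)})\leqslant\alpha$, so the claim yields $\xi(Q_j)\leqslant\xi(O_{i(j)})\leqslant\alpha$, and the bookkeeping fact gives $\xi(H)=\sup^+_j\xi(Q_j)\leqslant\alpha=\xi(G)$, as desired.

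The main obstacle is precisely this last case. When $G$ is not compactly generated, a compactly generated closed subgroup $Q_j$ may well have rank equal to $\xi(G)$, so one cannot apply the induction hypothesis to $G$ or to $Q_j$ directly. The resolution is to route $Q_j$ through a compactly generated overgroup $O_{i(j)}$ and to invoke the intermediate claim there, noting that the claim consumes the induction hypothesis only at the strictly smaller rank $\xi(\textup{Res}(O_{i(j)}))$. The base case $\alpha=1$ is immediate, since the trivial group is the unique elementary group of rank $1$ and its only closed subgroup is itself.
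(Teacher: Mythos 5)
The paper does not prove this statement; it is quoted from \cite{elementary_first} (Corollary 4.10) and used as a black box. Your argument is correct and is essentially the standard proof of that cited result: the containment $\textup{Res}(H)\leqslant \textup{Res}(G)$ for closed $H\leqslant G$, transfinite induction on $\xi(G)$ consumed only at the level of discrete residuals, and the reduction of the non-compactly-generated case to compactly generated open pieces by routing each $Q_j$ into some $O_{i(j)}$ via compactness of its generating set.
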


This proposition,\ in particular,\ implies that the decomposition rank (Definition \ref{defi:decomposition rank}) doesn't depend on the choice of $\{O_i\}_{i\in\mathbb{N}}$.

\begin{theorem}\cite[Theorem 4.19]{elementary_first}\label{prop:quotient elementary 101}
Let $N\trianglelefteq G$ be elementary groups and $N$ be closed in $G$. Then $G/N$ is elementary and $\xi(G/N)\leqslant\xi(G)$.
\end{theorem}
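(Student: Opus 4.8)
The elementariness of $G/N$ is already guaranteed by closure of $\mathcal E$ under quotients (clause (iv) of Definition~\ref{defi:elementary groups}), so the whole content is the inequality $\xi(G/N)\le\xi(G)$, which I would prove by transfinite induction on $\xi(G)$. For the base case $\xi(G)=1$ one has $G=\{1\}$ (the only group of rank $1$, since by clause (iii) of Definition~\ref{defi:decomposition rank} any nontrivial group has rank at least $2$), whence $G/N=\{1\}$ and the inequality is trivial. Write $\pi\colon G\to G/N$ for the quotient map; it is continuous, surjective and open.

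The non--compactly generated case reduces to the compactly generated one. By Remark~\ref{remark:comp gen} write $G=\bigcup_{i}O_i$ with $O_i$ open, compactly generated and $\subset$--increasing. Since $\pi$ is open and each $O_i$ is open, the images $\pi(O_i)=O_iN/N$ are open in $G/N$; they are compactly generated (continuous images of compactly generated groups), $\subset$--increasing, and exhaust $G/N$. Moreover $O_iN$ is open, hence closed, so $\pi(O_i)\cong O_i/(O_i\cap N)$ is a genuine quotient of the compactly generated group $O_i$ by the closed normal elementary subgroup $O_i\cap N$. Clause (iii) of Definition~\ref{defi:decomposition rank} then gives $\xi(G/N)=\sup^+_i\xi(\pi(O_i))$ and $\xi(G)=\sup^+_i\xi(O_i)$, and since $\sup^+$ is monotone it suffices to know $\xi(\pi(O_i))\le\xi(O_i)$ for each $i$, i.e.\ the compactly generated case.

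So assume $G$ is compactly generated; then $G/N$ is too, and $\xi(G)=\xi(\textup{Res}(G))+1$, $\xi(G/N)=\xi(\textup{Res}(G/N))+1$, leaving $\xi(\textup{Res}(G/N))\le\xi(\textup{Res}(G))$ to prove. The engine is the identity $\textup{Res}(G/N)=\overline{\pi(\textup{Res}(G))}$. The inclusion $\overline{\pi(\textup{Res}(G))}\subseteq\textup{Res}(G/N)$ is routine from the bijection between open normal subgroups of $G/N$ and open normal subgroups of $G$ containing $N$. For the reverse inclusion I would show $(G/N)/\overline{\pi(\textup{Res}(G))}\cong G/\overline{\textup{Res}(G)N}$ is residually discrete: it is a quotient of $G/\textup{Res}(G)$, which is compactly generated and residually discrete, hence a SIN group by the theorem of Caprace and Monod, i.e.\ it has a countable decreasing basis $\{V_i\}$ of compact open normal subgroups with $\bigcap_iV_i=\{1\}$; pushing the $V_i$ through the open quotient map and using a nested--compactness argument shows the images again form such a basis, so the target is residually discrete. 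This forces $\textup{Res}(G/N)\subseteq\overline{\pi(\textup{Res}(G))}$.

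Granting the identity, the restriction of $\pi$ gives a continuous homomorphism $\textup{Res}(G)\to\textup{Res}(G/N)$ with dense image, and the desired bound $\xi(\textup{Res}(G/N))\le\xi(\textup{Res}(G))$ is exactly the assertion that the decomposition rank does not increase along continuous dense--image homomorphisms. This is the main obstacle: $\pi(\textup{Res}(G))$ is only \emph{dense} in $\textup{Res}(G/N)$ and, not being closed, need not be locally compact in the subspace topology, so it is not literally a quotient of $\textup{Res}(G)$ and the exhaustion argument of the second paragraph is unavailable (a merely continuous, non--open map need not carry open subgroups to open subgroups). I would therefore strengthen the induction hypothesis to the dense--image statement ``for every continuous $\phi\colon A\to B$ with $\overline{\phi(A)}=B$ and $A$ elementary, $B$ is elementary with $\xi(B)\le\xi(A)$'', which specialises to the quotient statement and which I would prove by the same transfinite induction: a dense--image homomorphism out of a compactly generated group has compactly generated target (by Van Dantzig, the closure of a dense compactly generated subgroup is compactly generated), and out of a SIN group has residually discrete target, so the compactly generated step reduces once more to the restricted map $\textup{Res}(A)\to\textup{Res}(B)$ of strictly smaller rank. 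The independence of $\xi$ from the chosen exhaustion, needed to make clause (iii) unambiguous above, is supplied by the closed--subgroup inequality of Proposition~\ref{prop:decomposition rank closed subgroup}.
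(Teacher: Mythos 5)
This theorem is not proved in the paper at all: it is quoted from \cite[Theorem 4.19]{elementary_first} and used as a black box, so there is no in-paper proof to compare against. Judged on its own terms, your outline is right up to the point you yourself flag as ``the main obstacle'', and your reductions (base case, passage from the general case to the compactly generated case via an exhaustion and monotonicity of $\sup^+$, the identity $\textup{Res}(G/N)=\overline{\pi(\textup{Res}(G))}$ via Caprace--Monod for compactly generated $G$) are all sound. The gap is in the proposed resolution of that obstacle.

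The strengthened induction hypothesis you introduce --- ``for every continuous $\phi\colon A\to B$ with $\overline{\phi(A)}=B$ and $A$ elementary, $B$ is elementary with $\xi(B)\leqslant\xi(A)$'' --- is false. Any second countable group $B$ has a countable dense subgroup $A_0$; equip $A_0$ with the discrete topology and let $\phi$ be the inclusion. Then $\phi$ is a continuous homomorphism with dense image from a countable discrete (hence elementary, rank $2$) group, so your statement would force $\xi(B)\leqslant 2$ for \emph{every} elementary $B$ --- contradicted already by the rank-$(\omega+2)$ groups of Corollary \ref{coro:group omega+2}. The same example breaks the intermediate step of your sketch (``out of a SIN group has residually discrete target''): since $\phi$ need not be open, the images $\phi(V_i)$ of a basis of compact open normal subgroups are compact but in general not open in $B$, so they do not witness residual discreteness. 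What saves the argument in Wesolek's actual proof is that the dense-image maps arising in the induction, namely $\textup{Res}(O)\to\overline{\pi(\textup{Res}(O))}$, have image $\textup{Res}(O)N/N$ which is \emph{normal} in $\pi(O)$ (hence in its closure); the correct auxiliary lemma is the dense-image statement with the additional hypothesis that $\phi(A)$ is normal in $B$, and normality is used essentially (e.g.\ $\overline{\phi(V_i)}$ is then a compact subgroup normalized by the dense subgroup $\phi(A)$, hence normal in $B$). Your counterexample above does not satisfy normality, and without recording and using that hypothesis the induction does not close. So the proposal correctly isolates where the difficulty lies but does not actually overcome it.
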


\begin{lemma}\cite[Lemma 3.7]{elementary_groups_second}\label{prop:product same rank}
Let $G_1,\cdots,\ G_n$ be a collection of elementary groups. Then  $$\xi(G_1\times G_2\times\ldots\times G_n)=\sup_{1\leqslant  i\leqslant  n}\xi(G_i).$$ 
\end{lemma}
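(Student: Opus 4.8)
The plan is to reduce to the case $n=2$ and then prove $\xi(G_1\times G_2)=\max(\xi(G_1),\xi(G_2))$ by transfinite induction on the ordinal $\gamma=\max(\xi(G_1),\xi(G_2))$. The reduction to two factors is an immediate induction on $n$: writing $G_1\times\cdots\times G_n=(G_1\times\cdots\times G_{n-1})\times G_n$ and combining the two-factor case with the inductive hypothesis gives $\xi(G_1\times\cdots\times G_n)=\max(\max_{i<n}\xi(G_i),\xi(G_n))=\max_{i}\xi(G_i)$. For the lower bound I would observe that $G_1\cong G_1\times\{1\}$ is a closed subgroup of $G_1\times G_2$ (since $\{1\}$ is closed in the Hausdorff group $G_2$), so Proposition \ref{prop:decomposition rank closed subgroup} yields $\xi(G_1)\leqslant\xi(G_1\times G_2)$, and symmetrically for $G_2$; hence $\max(\xi(G_1),\xi(G_2))\leqslant\xi(G_1\times G_2)$ with no induction required.

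The upper bound is the substance, and the crucial input is the inclusion of discrete residuals $\textup{Res}(G_1\times G_2)\subseteq\textup{Res}(G_1)\times\textup{Res}(G_2)$. This is easy in this direction: for every open normal $O_1\trianglelefteq G_1$ the subgroup $O_1\times G_2$ is open normal in the product, so $\textup{Res}(G_1\times G_2)\subseteq\bigcap_{O_1}(O_1\times G_2)=\textup{Res}(G_1)\times G_2$, and intersecting with the symmetric statement gives the claim. Now suppose first that $G_1$ and $G_2$ are both compactly generated, so that $G_1\times G_2$ is as well. Then Definition \ref{defi:decomposition rank}(ii) gives $\xi(G_1\times G_2)=\xi(\textup{Res}(G_1\times G_2))+1$, and since $\textup{Res}(G_1\times G_2)$ is a closed subgroup of $\textup{Res}(G_1)\times\textup{Res}(G_2)$, Proposition \ref{prop:decomposition rank closed subgroup} together with the inductive hypothesis applied to the pair $(\textup{Res}(G_1),\textup{Res}(G_2))$ — whose maximal rank equals the predecessor of $\gamma$, hence is $<\gamma$ — yields
\[
\xi(G_1\times G_2)\leqslant\xi(\textup{Res}(G_1)\times\textup{Res}(G_2))+1=\max(\xi(\textup{Res}(G_1)),\xi(\textup{Res}(G_2)))+1=\max(\xi(G_1),\xi(G_2)).
\]

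For the general case I would choose, via Remark \ref{remark:comp gen}, increasing sequences $\{O_i\}$ and $\{P_i\}$ of open compactly generated subgroups with unions $G_1$ and $G_2$; then $\{O_i\times P_i\}$ is an increasing sequence of open compactly generated subgroups of $G_1\times G_2$ with union $G_1\times G_2$, so by Definition \ref{defi:decomposition rank}(iii) we have $\xi(G_1\times G_2)=\sup^+_i\xi(O_i\times P_i)$. Each $O_i\times P_i$ is compactly generated with maximal factor rank $\leqslant\gamma$, so the compactly generated case above computes $\xi(O_i\times P_i)=\max(\xi(O_i),\xi(P_i))$. It then remains to verify the purely ordinal identity $\sup^+_i\max(a_i,b_i)=\max(\sup^+_i a_i,\sup^+_i b_i)$ for weakly increasing sequences of successor ordinals (the ranks are successors by Remark \ref{remark:decomposition rank}), which I would prove by setting $A=\sup_i a_i$, $B=\sup_i b_i$ with $A\geqslant B$ and splitting on whether $A$ is a successor or a limit; applied here it gives $\xi(G_1\times G_2)=\max(\xi(G_1),\xi(G_2))=\gamma$.

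The main obstacle is organizing this two-layer induction so that it is not circular: I must establish the compactly generated case at level $\gamma$ using only the \emph{full} statement at levels $<\gamma$ — which is legitimate precisely because passing to $\textup{Res}$ strictly lowers the rank — and only afterward feed that case into the general argument at level $\gamma$, where the factors $\xi(O_i),\xi(P_i)$ may themselves reach $\gamma$. The second delicate point is the $\sup^+$ bookkeeping, since the interchange of $\max$ and $\sup^+$ must be checked by hand according to whether the relevant supremum is a successor or a limit ordinal; everything else is routine once the residual inclusion and Proposition \ref{prop:decomposition rank closed subgroup} are in place.
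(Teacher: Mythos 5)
The paper does not prove this lemma: it is imported verbatim from \cite[Lemma 3.7]{elementary_groups_second}, so there is no in-paper proof to compare against. Your argument is correct and is essentially the standard one for this result: the lower bound from Proposition \ref{prop:decomposition rank closed subgroup}, the inclusion $\textup{Res}(G_1\times G_2)\leqslant \textup{Res}(G_1)\times\textup{Res}(G_2)$ driving a transfinite induction in the compactly generated case, and exhaustion by the open compactly generated subgroups $O_i\times P_i$ together with the $\sup^+$/$\max$ interchange in the general case; your explicit separation of the two layers of the induction correctly avoids circularity. The only points you leave tacit — that $G_1\times G_2$ is elementary in the first place (closure of $\mathcal{E}$ under extensions) and the degenerate case of a trivial factor, where $\xi(G)=\xi(\textup{Res}(G))+1$ fails — are routine and do not affect the argument.
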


\begin{lemma}\cite[Lemma 3.6]{elementary_groups_second}\label{lemma:short exact sequence decomposition rank}
Suppose
$$\{1\}\rightarrow N\rightarrow G \rightarrow G/N \rightarrow \{1\}$$
is a short exact sequence and $N$,\ $G/N$ are elementary groups. Then $G$ is an elementary group and we have that $\xi(G)\leqslant\xi(N)-1+\xi(G/N)$.
\end{lemma}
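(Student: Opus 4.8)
The statement that $G$ is elementary is immediate: since $N$ and $G/N$ are elementary, property (ii) of Definition \ref{defi:elementary groups} (closure under extensions) gives $G\in\mathcal{E}$. The content of the lemma is therefore the inequality $\xi(G)\leqslant\xi(N)-1+\xi(G/N)$, and I would prove it by transfinite induction on the ordinal $\beta:=\xi(G/N)$, establishing for each $\beta$ the statement for all short exact sequences whose quotient has rank at most $\beta$. The base case $\beta=1$ forces $G/N=\{1\}$, so $G=N$ and both sides equal $\xi(N)$. For the inductive step the only case needing work is $\xi(G/N)=\beta$ exactly (ranks $<\beta$ being covered by the induction hypothesis), and the key observation throughout is that, writing $\pi:G\to G/N$ for the quotient map, one has $\pi(\textup{Res}(G))\subseteq\textup{Res}(G/N)$: indeed, for every open normal $O\trianglelefteq G/N$ the preimage $\pi^{-1}(O)$ is open normal in $G$, so $\textup{Res}(G)\subseteq\pi^{-1}(O)$, and intersecting over all such $O$ gives the claim.

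I would first treat the case where $G$ is compactly generated. Then $G/N$ is compactly generated as well, so $\beta=\xi(\textup{Res}(G/N))+1$ by Definition \ref{defi:decomposition rank}(ii), and $\beta':=\xi(\textup{Res}(G/N))=\beta-1<\beta$ (recall from Remark \ref{remark:decomposition rank} that ranks are always successors, so this predecessor exists). Set $G_1:=\pi^{-1}(\textup{Res}(G/N))$, a closed normal subgroup of $G$ containing $N$ with $G_1/N\cong\textup{Res}(G/N)$. The short exact sequence $\{1\}\to N\to G_1\to\textup{Res}(G/N)\to\{1\}$ has quotient of rank $\beta'<\beta$, so the induction hypothesis yields $\xi(G_1)\leqslant\xi(N)-1+\beta'$. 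By the key observation $\textup{Res}(G)\subseteq G_1$, and since $\textup{Res}(G)$ is a closed subgroup, Proposition \ref{prop:decomposition rank closed subgroup} gives $\xi(\textup{Res}(G))\leqslant\xi(G_1)\leqslant\xi(N)-1+\beta'$. Applying Definition \ref{defi:decomposition rank}(ii) once more, $\xi(G)=\xi(\textup{Res}(G))+1\leqslant\xi(N)-1+\beta'+1=\xi(N)-1+\beta$, as required.

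For general $G$ with $\xi(G/N)=\beta$, I would write $G=\bigcup_{i}O_i$ as an increasing union of compactly generated open subgroups (Remark \ref{remark:comp gen}), so $\xi(G)=\sup^+_i\xi(O_i)$. Each $O_i$ fits in a short exact sequence $\{1\}\to O_i\cap N\to O_i\to O_iN/N\to\{1\}$ in which $O_i\cap N$ is closed in $N$ (hence $\xi(O_i\cap N)\leqslant\xi(N)$ by Proposition \ref{prop:decomposition rank closed subgroup}) and $O_iN/N$ is a closed subgroup of $G/N$ (hence $\xi(O_iN/N)\leqslant\beta$, using Theorem \ref{prop:quotient elementary 101} to see the quotient is elementary). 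If $\xi(O_iN/N)<\beta$ I apply the induction hypothesis, and if $\xi(O_iN/N)=\beta$ I apply the compactly generated case just established; either way $\xi(O_i)\leqslant\xi(O_i\cap N)-1+\beta\leqslant\xi(N)-1+\beta$, using weak monotonicity of ordinal addition in the left argument. Finally, since $\beta=\xi(G/N)$ is a successor the bound $\xi(N)-1+\beta$ is itself a successor ordinal, and a short inspection of the definition of $\sup^+$ shows that $\sup^+_i\xi(O_i)\leqslant\xi(N)-1+\beta$ whenever every $\xi(O_i)$ is bounded by this successor; this gives $\xi(G)\leqslant\xi(N)-1+\beta$ and closes the induction.

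The routine parts are the verifications that $G_1$, $O_i\cap N$, and $O_iN/N$ are genuine t.d.l.c.s.c.\ members of $\mathcal{E}$ to which the rank applies (using closure under closed subgroups and quotients) and the elementary ordinal arithmetic. The main obstacle, and the step I would be most careful about, is organizing the transfinite induction so that the compactly generated case at rank exactly $\beta$ is available when handling the general increasing union at the same rank $\beta$; this is precisely what forces the induction to be run on $\xi(G/N)$ rather than on $\xi(G)$ or $\xi(N)$, and what makes the containment $\pi(\textup{Res}(G))\subseteq\textup{Res}(G/N)$ the crucial input. The only genuinely delicate point of the arithmetic is the $\sup^+$ bookkeeping: because left ordinal addition is only weakly monotonic and $\sup^+$ can add $1$ at a limit, one must exploit that the target bound is a successor in order to absorb that extra $+1$.
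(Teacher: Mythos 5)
Your proof is correct. Note that the paper itself does not prove this lemma --- it is quoted as \cite[Lemma 3.6]{elementary_groups_second} without proof --- and your argument (transfinite induction on $\xi(G/N)$, using $\pi(\textup{Res}(G))\subseteq\textup{Res}(G/N)$ to handle the compactly generated case and then passing to an increasing union of open compactly generated subgroups, with the successor-ordinal observation absorbing the possible $+1$ from $\sup^+$) is a faithful and complete reconstruction of the standard argument from the cited source.
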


\begin{lemma}\cite[Lemma 3.10]{elementary_groups_second}\label{lemma:cocompact has same rank as group}
Suppose that $G$ is a t.d.l.c.s.c.~group and $N$ is a non-trivial closed cocompact normal subgroup of $G$. If $N$ is elementary,\ then $G$ is elementary with $\xi(G) = \xi(N)$.
\end{lemma}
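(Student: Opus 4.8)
The plan is to prove both inequalities $\xi(N)\le\xi(G)$ and $\xi(G)\le\xi(N)$ and to pin down the rank of $G$ through its discrete residual, organizing everything around an induction on $\xi(N)$ (which satisfies $\xi(N)\ge 2$ since $N\neq\{1\}$). First I would dispose of the easy parts. The group $G$ is elementary: cocompactness of $N$ makes $G/N$ a compact second countable t.d.l.c.\ group, hence profinite and thus elementary by Definition \ref{defi:elementary groups}(i), so $G$ is an extension of the elementary group $N$ by the elementary group $G/N$ and lies in $\mathcal{E}$ by closure under extensions (Definition \ref{defi:elementary groups}(ii)). The lower bound is immediate from Proposition \ref{prop:decomposition rank closed subgroup}, since $N$ is a closed subgroup of $G$: $\xi(N)\le\xi(G)$. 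The whole content is therefore the reverse inequality $\xi(G)\le\xi(N)$.

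Next I would reduce to the compactly generated case. Writing $G=\bigcup_n O_n$ as an increasing union of compactly generated open subgroups (Remark \ref{remark:comp gen}), compactness of $G/N$ forces $O_nN=G$ for all large $n$, since the images of the $O_n$ are open subgroups of the compact group $G/N$ increasing up to $G/N$. Discarding the finitely many smaller terms, $N_n:=O_n\cap N$ is a closed normal subgroup of $O_n$ with $O_n/N_n\cong G/N$ compact, so $N_n$ is cocompact normal in the compactly generated group $O_n$; moreover $N_n$ is a closed subgroup of $N$, hence elementary with $\xi(N_n)\le\xi(N)$ by Proposition \ref{prop:decomposition rank closed subgroup}. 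Granting the compactly generated case (established below), $\xi(O_n)=\xi(N_n)\le\xi(N)$, and then Definition \ref{defi:decomposition rank}(iii) gives $\xi(G)=\sup^+_n\xi(O_n)\le\xi(N)$, the $\sup^+$ failing to overshoot precisely because $\xi(N)$ is a successor ordinal.

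It remains to treat the compactly generated case. Here $N$ is cocompact in the compactly generated group $G$, hence itself compactly generated, so $\xi(G)=\xi(\textup{Res}(G))+1$ and $\xi(N)=\xi(\textup{Res}(N))+1$ by Definition \ref{defi:decomposition rank}(ii). I would first record the inclusions $\textup{Res}(N)\le\textup{Res}(G)\le N$: the right inclusion holds because $G/N$ is profinite, hence residually discrete, so the preimages of its open normal subgroups are open normal subgroups of $G$ intersecting in $N$; the left inclusion holds because $O\cap N$ is open normal in $N$ for every open normal $O\trianglelefteq G$, whence $\textup{Res}(N)\le\bigcap_O(O\cap N)=\textup{Res}(G)$. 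The goal is then exactly $\xi(\textup{Res}(G))=\xi(\textup{Res}(N))$, which together with the two rank formulas closes the argument.

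I expect this last residual computation to be the main obstacle. Passing to $\bar G:=G/\textup{Res}(N)$ (legitimate, as $\textup{Res}(N)$ is characteristic in $N$ and hence normal in $G$), one has $\bar N:=N/\textup{Res}(N)$ residually discrete, cocompact, and normal in the compactly generated group $\bar G$, with $\textup{Res}(\bar G)=\textup{Res}(G)/\textup{Res}(N)$. Everything reduces to showing this quotient is compact, i.e.\ that $\textup{Res}(N)$ is cocompact in $\textup{Res}(G)$: the inductive hypothesis, applicable since $\xi(\textup{Res}(N))<\xi(N)$, then yields $\xi(\textup{Res}(G))=\xi(\textup{Res}(N))$, while if $\textup{Res}(N)$ is trivial one argues directly that $\textup{Res}(\bar G)$ is itself trivial. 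The natural route is to take a small compact open normal subgroup $W$ of the SIN group $\bar N$, use cocompactness to see that $W$ has only finitely many $\bar G$-conjugates (its normalizer in $\bar G$ contains the cocompact $\bar N$ and is open), and intersect them to produce $\bar G$-invariant subgroups controlling $\textup{Res}(\bar G)$. The delicate point is converting ``open in $\bar N$'' into ``open in $\bar G$'' when $\bar N$ is only cocompact rather than open; this is where the interplay between cocompactness and the structure of residually discrete compactly generated groups does the real work, and it is the step I would spend the most care on.
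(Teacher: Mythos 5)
This lemma is quoted in the paper from \cite{elementary_groups_second} without proof, so there is no in-paper argument to compare against; I can only assess your proposal on its merits. The framing is sound and the routine parts are correct: elementarity of $G$ via the profinite quotient $G/N$, the lower bound from Proposition \ref{prop:decomposition rank closed subgroup}, the reduction to the compactly generated case (where you should also note that $O_n\cap N$ may be trivial, in which case $O_n\cong G/N$ is compact and $\xi(O_n)=2\leqslant\xi(N)$, so nothing is lost), and the sandwich $\textup{Res}(N)\leqslant\textup{Res}(G)\leqslant N$ together with the identification $\textup{Res}(G/\textup{Res}(N))=\textup{Res}(G)/\textup{Res}(N)$.

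However, the proposal has a genuine gap exactly where you locate it, and that gap is the entire content of the lemma. Everything you actually establish yields only $\xi(G)\leqslant\xi(N)+1$, which is already the extension bound of Lemma \ref{lemma:short exact_sequence decomposition rank} applied with $\xi(G/N)=2$; removing that ``$+1$'' is what distinguishes this lemma from the trivial estimate. Your induction closes only if $\textup{Res}(N)$ is cocompact in $\textup{Res}(G)$ (and, in the base case $\textup{Res}(N)=\{1\}$, only if $\textup{Res}(G)$ is actually trivial rather than merely compact), and neither claim is proved. The route you sketch for it does break at the point you flag: a compact subgroup of $\bar N$ that is normal in $\bar G$ and open in $\bar N$ is in general not open in $\bar G$, because $\bar N$ is only cocompact and an open normal subgroup of $\bar G$ must contain an open subgroup of a compact open subgroup $U$ of $\bar G$, whereas $U\cap\bar N$ is not open in $U$. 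Consequently such subgroups contribute nothing to $\textup{Res}(\bar G)$, and the finitely-many-conjugates argument never produces the open normal subgroups of $\bar G$ you need. Some further input is required here --- for instance an analysis of the $\bar G$-action on a Cayley--Abels graph, on which $\bar N$ acts with finitely many orbits so that the pointwise fixator of an $\bar N$-orbit is a compact subgroup of $\bar G$, open in $\bar N$, with only finitely many $\bar G$-conjugates --- but even that does not immediately yield openness in $\bar G$, and as written the proof does not go through. I would treat the compactness (respectively triviality) of $\textup{Res}(G)/\textup{Res}(N)$ as an unproved key claim rather than a deferred technicality.
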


The next two results follow directly from the definition of elementary groups,\ decomposition rank,\ and the results stated above:

\begin{lemma}\label{lemma:infinite restricted prod rank}\label{lemma: rank reduced prod}
Let $X$ be a countable set. If $\{G_x\}_{x\in X}$ is a collection of elementary groups,\ $U_x\in\mathcal{U}(G_x)$,\ for $x\in X$,\ then the group $\bigoplus_X(G_x,U_x)$ is also elementary and $\xi(\bigoplus_X(G_x,U_x))=\sup^+_{x\in X}\xi(G_x)$.
\end{lemma}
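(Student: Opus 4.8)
The plan is to realize $R := \bigoplus_X(G_x,U_x)$ as an increasing union of open, compactly generated, elementary subgroups and then read off both elementarity and the rank from Definition \ref{defi:elementary groups}(v) and Definition \ref{defi:decomposition rank}(iii). The subtlety is that the canonical subgroups $S_F=\prod_{x\in F}G_x\times\prod_{x\notin F}U_x$ from Definition \ref{defi:local direct product} are typically \emph{not} compactly generated, since the individual $G_x$ need not be; so clause (iii), which demands a compactly generated exhausting sequence, does not apply to them directly.

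First I would fix an enumeration $X=\{x_0,x_1,\dots\}$ and, invoking Remark \ref{remark:comp gen}, write each factor as $G_{x}=\bigcup_k O_{x,k}$ with $O_{x,0}\subset O_{x,1}\subset\cdots$ an increasing chain of open, compactly generated subgroups, each containing $U_x$ (the generating sets of Remark \ref{remark:comp gen} already include the chosen compact open subgroup). I would then form the diagonal subgroups $P_n := O_{x_0,n}\times\cdots\times O_{x_n,n}\times\prod_{j>n}U_{x_j}$. Each $P_n$ contains the open subgroup $\prod_x U_x$, hence is open; it is a finite product of compactly generated groups with a profinite tail, hence compactly generated; and $P_n\subset P_{n+1}$ with $\bigcup_n P_n = R$, since every element of the local direct product has all but finitely many coordinates in the $U_x$ while each of the remaining coordinates lies in some $O_{x_i,k}$. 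By Lemma \ref{prop:product same rank} each $P_n$ is elementary, so Definition \ref{defi:elementary groups}(v) yields that $R$ is elementary.

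For the rank I would apply Definition \ref{defi:decomposition rank}(iii) to get $\xi(R)=\sup^+_n\xi(P_n)$, and Lemma \ref{prop:product same rank} to evaluate $\xi(P_n)=\max(\{\xi(O_{x_i,n}):i\le n\}\cup\{\xi(\prod_{j>n}U_{x_j})\})$. Writing $\lambda_x:=\sup_k\xi(O_{x,k})$ and $\Lambda:=\sup_x\lambda_x$, a cofinality argument (for each pair $(i,k)$ choose $n\ge\max(i,k)$ and use monotonicity together with Proposition \ref{prop:decomposition rank closed subgroup}, giving $\xi(O_{x_i,n})\le\xi(G_{x_i})$) identifies $\sup_n\xi(P_n)=\Lambda$; the profinite tail, of rank at most $2$, is absorbed into the maximum once some $G_x$ is nontrivial, and the all-trivial case is immediate.

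The main obstacle, and the only genuinely delicate point, is matching the two occurrences of $\sup^+$. By Definition \ref{defi:decomposition rank}(iii) applied to each factor, $\xi(G_x)=\sup^+_k\xi(O_{x,k})\in\{\lambda_x,\lambda_x+1\}$, with the value $\lambda_x+1$ occurring exactly when $\lambda_x$ is a limit ordinal. Thus $\sup_x\xi(G_x)$ may exceed $\Lambda=\sup_n\xi(P_n)$ by one, precisely when $\Lambda$ is a limit and the outer supremum is attained. A short case analysis shows this off-by-one is exactly what $\sup^+$ erases: if $\sup_x\xi(G_x)=\Lambda$, then both sides share the same value and the same successor/limit status; while if $\sup_x\xi(G_x)=\Lambda+1$, then $\Lambda$ is a limit, so $\sup^+_n\xi(P_n)=\Lambda+1$, and $\Lambda+1$ is a successor, so $\sup^+_x\xi(G_x)=\Lambda+1$ as well. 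In every case $\xi(R)=\sup^+_n\xi(P_n)=\sup^+_x\xi(G_x)$, as claimed.
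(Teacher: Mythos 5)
Your proposal is correct, and it is essentially the argument the paper intends: the paper states this lemma without proof (``follows directly from the definition of elementary groups, decomposition rank, and the results stated above''), and your exhaustion of $\bigoplus_X(G_x,U_x)$ by the open, compactly generated subgroups $P_n$, combined with Lemma~\ref{prop:product same rank} and Definition~\ref{defi:decomposition rank}(iii), is exactly the computation being elided. Your extra care in replacing the $S_F$ (which need not be compactly generated) by the diagonal subgroups $P_n$, and the case analysis reconciling the two occurrences of $\sup^+$, correctly supply the details the paper omits.
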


\begin{coro}\label{coro:wr product is elementary}
Let $L$ an elementary group,\ $(K,\ X)$ an elementary permutation group,\ and $U\in\mathcal{U}(L)$. Then the wreath product (Definition \ref{defi:wr product}) $L\wr_U (K,\ X)=\bigoplus_{x\in X}(L,\ U)\rtimes K$ is also an elementary group.
\end{coro}



\subsection{Chief factors}

\begin{defi}
A \textbf{normal factor} of a topological group $G$ is a quotient $K/L$ such that $K$ and $L$ are distinct closed normal subgroups of $G$ with $L < K$. We say that $K/L$ is a \textbf{topological chief factor} of $G$ if there exists no closed normal subgroup $M$ of $G$ such that $L < M < K$.\\
\indent We define the centralizer of a normal factor $K/L$ in $G$ as the set:
$$C_G(K/L):= \{g \in G;\ \forall k\in K,\ [g,\ k] \in L\}.$$
\end{defi}

For simplicity,\ topological chief factors are called chief factors in this paper. 

\begin{defi}
An \textbf{essentially chief series} for a topological group $G$ is
a finite series
$$\{1\} = G_0 \leqslant G_1 \leqslant \ldots \leqslant G_n = G$$
of closed normal subgroups such that each normal factor $G_{i+1}/G_i$ is either
compact,\ discrete,\ or a chief factor of $G$.
\end{defi}

We say a chief factor $K/L$ is \textbf{non-abelian} if the group $K/L$ is not an abelian group. We say a chief factor $K/L$ is an \textbf{elementary chief factor} if the group $K/L$ is an elementary group. As the groups studied in this article are elementary,\ all their chief factors are elementary.

\begin{defi}{\cite[Definition 1.10]{chief_series}}
Given a topological group $G$,\ we say that the closed normal
factors $K_1/L_1$ and $K_2/L_2$ are \textbf{associated} if the following equations hold:
$$\overline{K_1 L_2} = \overline{K_2 L_1};\ K_1 \cap \overline{L_1 L_2} = L_1;\ K_2 \cap \overline{L_1 L_2}= L_2.$$
\end{defi}

Non-abelian chief factors $K_1/L_1$ and $K_2/L_2$ are associated if,\ and only if,\ $C_G(K_1/L_1)=C_G(K_2/L_2)$ \cite[Proposition 6.8]{reid_wesolek_le}. In particular,\ association of non-abelian chief factors is an equivalence relation. However association of abelian chief factors is not an equivalence relation in general \cite[Proposition 10.1]{reid_wesolek_le}.

\begin{defi}
 A \textbf{chief block} is an association class of non-abelian chief factors. The set of chief blocks of a group $G$ is denoted by $\mathfrak{B}_G$,\ and we denote the equivalent classes as $\mathfrak{a},\ \mathfrak{b}$.
\end{defi}

\begin{defi}{\cite[Definition 7.1]{chief_series}}
Let $\mathfrak{a}$ be a chief block of a group $G$. We define the \textbf{centralizer of $\mathfrak{a}$} as:
$$C_G(\mathfrak{a}):=\{g\in G;\ [g,L]\subset K\}$$
for some (equivalently,\ any) representative $K/L$ of $\mathfrak{a}$ \cite[Proposition 6.8]{reid_wesolek_le}. This set is a closed normal subgroup of $G$.
\end{defi}

Given $G$ a group,\ one can order the chief blocks of $G$ in relation to their centralizers. We say that $\mathfrak{a}\leqslant \mathfrak{b}$ if $C_G(\mathfrak{a})\leqslant C_G(\mathfrak{b})$.

\begin{defi}{\cite[Definition 1.15]{reid_wesolek_le}}
Let $\mathfrak{a}$ be a chief block and $N$ be a normal subgroup of $G$. We say $N$ \textbf{covers} $\mathfrak{a}$ if there exists $B\leqslant A\leqslant N$ such that $A/B\in \mathfrak{a}$.\\
\indent A block $\mathfrak{a}$ is said to be \textbf{minimally covered} if there exist a unique smallest closed normal subgroup $N$ covering the block. We denote as $\mathfrak{B}_G^{min}$ the collection of minimally covered chief blocks.
\end{defi}

Notice that given a group $G$ and $K/N$ a chief factor of $G$,\ it is only sometimes the case that $K/N$ is a simple group. Hence,\ $K/N$ usually has chief factors. We describe two types of elementary chief factors via their minimally covered chief blocks and one type by their normal subgroups.\\

\begin{defi}{\cite[Definition 8.19]{chief_series}}\label{defi:chief fac types}
Let $G$ be a t.d.l.c.s.c.~group and $K/N$ a chief factor of $G$.
\begin{enumerate}
    \item The chief factor $K/N$ is of \textbf{weak type} if $\mathfrak{B}_{K/N}^{min}=\emptyset$.
    \item The chief factor $K/N$ is of \textbf{stacking type} if $\mathfrak{B}_{K/N}^{min}\neq \emptyset$ and for all $\mathfrak{a},\ \ \mathfrak{b}\in \mathfrak{B}_{K/N}^{min}$,\ there exist $\psi\in \textup{Aut}(G)$ such that $\psi.\mathfrak{a} \subset \mathfrak{b}$.
    \end{enumerate}
\end{defi}

\begin{defi}{\cite[Definition 6.11,\ Theorem 6.12]{chief_series}}
Let $G$ be a t.d.l.c.s.c.~group and $K/N$ a chief factor of $G$. The chief factor $K/N$ is of \textbf{semisimple type} if there exist a collection $\{N_i\}_{i\in I}$ of normal subgroups of $K/N$ such that
\begin{itemize}
    \item For every $i\in I$,\ $N_i$ is non-abelian and topologically simple group.
    \item For every $i\neq j\in I$,\ $[N_i,\ N_j]=\{1\}$.
    \item $K/N=\overline{\langle N_i|\ i\in I \rangle}$.
\end{itemize}
\end{defi}

\begin{theorem}{\cite[Theorem 8.21,\ Theorem 8.23]{chief_series}}\label{thrm:chief factors for elementary}
Let $G$ be an elementary group and $K/N$ a chief factor of $G$. Then exactly one of the following hold:
\begin{enumerate}
    \item The group $K/N$ has decomposition rank $2$ or $\omega+1$.
    \item The group $K/N$ is of semisimple type and has decomposition rank greater than $\omega+1$.
    \item The group $K/N$ is a group of stacking type and decomposition rank greater than $\omega+1$
\end{enumerate}
\end{theorem}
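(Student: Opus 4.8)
The plan is to split the statement into two largely independent assertions — a \emph{rank gap}, namely that a chief factor never has decomposition rank a finite ordinal $\geq 3$ (so its rank lies in $\{2,\omega+1\}$ or strictly above $\omega+1$), and a \emph{type classification}, namely that a chief factor of rank $>\omega+1$ is of semisimple or of stacking type but not both — and then to read ``exactly one of the three cases'' off these. Two facts underpin everything: $K/N$ is itself elementary, so the rank calculus of Definition \ref{defi:decomposition rank} and Proposition \ref{prop:decomposition rank closed subgroup} apply to all of its closed subgroups and factors; and $K/N$ is topologically characteristically simple, since any closed characteristic subgroup of $K/N$ pulls back to a closed normal subgroup of $G$ between $N$ and $K$, hence is trivial or everything by the chief hypothesis.

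First I would settle the compactly generated case, which pins the bottom of the rank scale. If $K/N$ is compactly generated then $\textup{Res}(K/N)$ is characteristic, hence trivial or all of $K/N$; the latter would force $\xi(K/N)=\xi(K/N)+1$ through clause (ii) of Definition \ref{defi:decomposition rank}, so $\textup{Res}(K/N)=\{1\}$ and $\xi(K/N)=2$. Every higher rank must therefore arise through clause (iii), as a $\sup^+$ over an increasing exhaustion $\{O_i\}$ by compactly generated open subgroups. The rank gap is then the assertion that this supremum cannot settle at a finite value $\geq 3$. I would consider the characteristic subgroup $R$ of $K/N$ generated by the discrete residuals of all compactly generated open subgroups: by characteristic simplicity $R$ is trivial or everything. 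If $R$ is trivial then each piece is residually discrete and $\xi(K/N)=\sup^+\{2\}=2$; the case $R=K/N$ is where the real work lies, and I would show that the homogeneity forced by characteristic simplicity makes large residuals propagate through all scales, so that the ranks $\xi(O_i)$ are unbounded and $\sup^+$ equals $\omega+1$ or more, never an intermediate finite ordinal.

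For the type classification I would work with the set $\mathfrak{B}^{min}_{K/N}$ of internally minimally covered non-abelian blocks. The key implication is that $\xi(K/N)>\omega+1$ forces $\mathfrak{B}^{min}_{K/N}\neq\emptyset$ — equivalently, that a weak-type factor has rank $2$ or $\omega+1$ — because the absence of a minimally covered non-abelian block means there is no deep non-abelian bottom, so the residual tower of each $O_i$ is assembled over residually discrete and abelian layers whose accumulated rank cannot pass $\omega+1$. Granting $\mathfrak{B}^{min}_{K/N}\neq\emptyset$, the conjugation action of $G$ permutes its blocks, and characteristic simplicity makes this a single orbit; the dichotomy between the minimal covering subgroups pairwise commuting, which yields the commuting topologically simple pieces of the semisimple definition, and their failing to commute, which yields a proper automorphic containment $\psi.\mathfrak{a}\subset\mathfrak{b}$ as in Definition \ref{defi:chief fac types}, is precisely the semisimple/stacking split. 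Its mutual exclusivity and exhaustiveness are the content of the chief-factor trichotomy of \cite{chief_series}, which I would invoke rather than reconstruct.

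Assembling these: when $\xi(K/N)\in\{2,\omega+1\}$ only case (1) can hold, as cases (2) and (3) carry the requirement $\xi(K/N)>\omega+1$; and when $\xi(K/N)>\omega+1$ the rank gap excludes case (1), the block analysis excludes weak type, and the semisimple/stacking dichotomy selects exactly one of (2), (3). I expect the genuine obstacle to be the rank gap in tandem with the implication $\xi(K/N)>\omega+1\Rightarrow\mathfrak{B}^{min}_{K/N}\neq\emptyset$: each demands that a statement about the magnitude of the decomposition rank, computed through the inductive exhaustion of clause (iii), be converted into a structural statement about the presence of a minimally covered non-abelian block, and it is exactly here that the homogeneity coming from topological characteristic simplicity must be used to its fullest.
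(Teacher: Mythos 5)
The first thing to say is that the paper offers no proof of this statement to compare yours against: it is imported verbatim as background from \cite[Theorems 8.21 and 8.23]{chief_series} and never argued internally. Judged on its own terms, your decomposition is sensible and the preliminary observations are correct: $K/N$ is topologically characteristically simple because $G$ acts on it by topological automorphisms, $\textup{Res}(K/N)$ and the closed subgroup generated by the residuals $\textup{Res}(O_i)$ of an exhaustion are characteristic, and the compactly generated case does collapse to rank $2$ exactly as you describe.

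The gap is that the two steps you yourself flag as ``the genuine obstacle'' are asserted rather than proved, and they are precisely the entire content of the cited theorems. ``Homogeneity forced by characteristic simplicity makes large residuals propagate through all scales, so that the ranks $\xi(O_i)$ are unbounded'' is a restatement of the rank gap, not an argument for it: nothing in your sketch rules out a characteristically simple elementary group that is an increasing union of open compactly generated subgroups all of rank exactly $3$. Likewise ``the absence of a minimally covered non-abelian block means there is no deep non-abelian bottom, so the accumulated rank cannot pass $\omega+1$'' presupposes a structure theory for weak-type factors that is exactly what \cite[Theorem 8.21]{chief_series} establishes, via the ordering of $\mathfrak{B}_{K/N}$ by centralizers and an induction on rank; it does not follow from the definition of $\mathfrak{B}^{min}_{K/N}=\emptyset$ by inspection. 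Finally, for the mutual exclusivity and exhaustiveness of the semisimple/stacking split you explicitly invoke the trichotomy of \cite{chief_series} rather than reconstruct it --- but that trichotomy is (the type-classification half of) the statement being proved, so at that point the argument is circular. In short, the skeleton is right and matches how one would organize the real proof, but the load-bearing implications are missing, and what is missing is the theorem.
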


\subsection{Discrete residual}

Given $G$ a t.d.l.c.s.c.~group and $S\subset \textup{Aut}(G)$ a subset,\ we define the \textbf{discrete residual of $S$ in relation to $G$} as:
$$\textup{Res}_G(S):=\bigcap\{O;O\text{ is an open subgroup of } G \text{,\ and }\phi(O)=O \text{ for every }\phi\in S\}.$$
\indent For such a definition,\ no topology is given to $\textup{Aut}(G)$. Note that the group $\textup{Aut}(G)$ is a topological group under the Braconnier topology. It is only sometimes the case that the Braconnier topology makes $\textup{Aut}(G)$ a locally compact group.\\
\indent Similarly,\ we define $\textup{Res}_G(H)$ to be $\textup{Res}_G(\psi(H))$ on the case there is a homomorphism from $H$ to $\textup{Aut}(G)$ that is obvious from the context,\ such as the conjugation action of $H$ on $G$ inside some ambient group. Notice that for $\textup{Inn}(G)\leqslant \textup{Aut}(G)$, the subgroup of inner automorphisms of $G$, we have $\textup{Res}_G(\textup{Inn}(G))=\textup{Res}(G)$,\ where $\textup{Res}(G)$ is defined as in Definition \ref{defi:discrete residual}.\\
\indent Following are some results that will play an important role in working with the discrete residual of a subset of $G$. These will be central when working with residual height and rank in Section 4.

\begin{theorem}[\cite{residual_theorem_colin},\ Theorem  G]\label{thrm:colin result for res commuting}
Let $G$ be a t.d.l.c. group and $H\leqslant \textup{Aut}(G)$ a compactly generated subgroup. Then $\textup{Res}_{\textup{Res}_G(H)}(H)=\textup{Res}_G(H)$.
\end{theorem}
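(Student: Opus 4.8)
The plan is to prove the two inclusions separately, treating $\textup{Res}_{\textup{Res}_G(H)}(H)\subseteq\textup{Res}_G(H)$ as the routine one and concentrating the work on the reverse. Write $R:=\textup{Res}_G(H)$ and let $\mathcal{O}$ be the family of $H$-invariant open subgroups of $G$, so that $R=\bigcap\mathcal{O}$. Since each $\phi\in H$ permutes $\mathcal{O}$, the subgroup $R$ is $H$-invariant and $H$ restricts to automorphisms of $R$, so $\textup{Res}_R(H)$ is defined. For the easy inclusion I would first record a useful observation: $\mathcal{O}$ is closed under finite intersection, and for any $O\in\mathcal{O}$ every open subgroup of $O$ is open in $G$; hence the $H$-invariant open subgroups of $O$ are exactly the members of $\mathcal{O}$ contained in $O$, which form a coinitial subfamily, giving $\textup{Res}_O(H)=R$ for \emph{every} $O\in\mathcal{O}$. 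In particular $R$ itself is an $H$-invariant open subgroup of $R$, so $\textup{Res}_R(H)\subseteq R$. The content of the theorem is therefore the reverse inclusion, which is equivalent to the assertion that $R$ admits \emph{no proper} $H$-invariant open subgroup.

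The mechanism I would use to rule out proper invariant open subgroups is contraction. For $h\in H$ set $\operatorname{con}(h):=\{x\in G:\ h^{n}(x)\to 1\}$. First I would record the soft half: if $x\in\operatorname{con}(h)$ and $O$ is any $h$-invariant open subgroup, then $h^{n}(x)\in O$ for all large $n$ because $O$ is a neighbourhood of $1$, whence $x=h^{-n}(h^{n}(x))\in O$; thus $\operatorname{con}(h)\subseteq O$ for every $O\in\mathcal{O}$, and so $\overline{\langle\operatorname{con}(h):h\in H\rangle}\subseteq R$. The same pull-back argument carried out inside $R$ shows that every $H$-invariant open subgroup $P$ of $R$ contains $\operatorname{con}(h)$ for all $h\in H$, hence contains $\overline{\langle\operatorname{con}(h):h\in H\rangle}$. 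Consequently, \emph{if} one knows that $R$ is topologically generated by the contraction groups $\operatorname{con}(h)$, then any such $P$ contains $R$, and the reverse inclusion follows at once.

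The hard part will be exactly this structural description of $R$, and it is where the hypothesis that $H$ is compactly generated is indispensable. In general $R$ is \emph{not} generated by contraction groups alone: alongside the contracted directions there is a compact ``nub''-type part on which the relevant elements of $H$ act ergodically, with trivial contraction but still with no proper invariant open subgroup. I therefore expect the proof to proceed by fixing a compact open subgroup $U$ of $G$ that is (relatively) tidy for the compactly generated action of $H$, and then decomposing $R$ into its contraction part and its nub part via the tidy theory of flat actions for compactly generated groups of automorphisms. On the contraction part one finishes by the pull-back argument above; on the nub part one invokes its defining property—no proper invariant open subgroup—directly. Establishing the existence of tidy compact open subgroups for the whole of $H$ (rather than a single automorphism), together with the resulting decomposition $R=\overline{(\text{contraction part})\cdot(\text{nub part})}$ and the uniformity this demands over a compact generating set of $H$, is the substantial input; this is precisely the machinery that the cited theorem packages, and it is the step I expect to be the main obstacle. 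Finally I would note that compact generation cannot be dropped, so any correct argument must use it at exactly this point.
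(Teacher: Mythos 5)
The paper does not actually prove this statement: it is imported verbatim as Theorem~G of the cited reference, with no argument given, so there is no internal proof to compare yours against and I can only assess the proposal on its own terms.

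Your easy inclusion is correct ($R:=\textup{Res}_G(H)$ is itself an $H$-invariant open subgroup of $R$, so $\textup{Res}_R(H)\leqslant R$), and so are the two soft observations about contraction groups: $\mathrm{con}(h)$ lies in every $h$-invariant open subgroup of $G$, and likewise in every $H$-invariant relatively open subgroup of $R$. But everything after that is a statement of intent rather than a proof. The whole content of the theorem is that $R$ admits no proper $H$-invariant open subgroup, and you reduce this to two unproven inputs: (a) a decomposition of $R$ as the closed subgroup generated by contraction groups together with a ``nub'' part, and (b) the fact that this nub itself has no proper $H$-invariant open subgroup. Both are precisely the substance of the cited theorem, as you acknowledge. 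Worse, the specific mechanism you propose for (a) --- fixing a single compact open subgroup $U$ of $G$ tidy for the whole of $H$ --- presupposes that $H$ is \emph{flat}, i.e.\ that a common tidy subgroup exists for all elements of $H$ simultaneously. A general compactly generated subgroup of $\textup{Aut}(G)$ need not be flat (already the inner action of $\textup{Aut}(\mathcal{T}_{d+1})$ on itself fails this), so this step would break as stated; any correct argument must either reduce to a flat situation of finite index/cocompact type or bypass global tidiness altogether, for instance by passing to the compactly generated open envelope $E$ with $\textup{Res}(E)=\textup{Res}_G(H)$ (quoted in this paper as Theorem~\ref{thrm:colin theorem G}) and working with discrete residuals of compactly generated groups. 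As it stands your proposal is an accurate map of where the difficulty lies, but it leaves that difficulty unresolved.
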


\begin{theorem}[\cite{residual_theorem_colin},\ Theorem  B]\label{thrm:colin theorem G}
Let $G$ be a t.d.l.c. group and let $H$ be a compactly generated subgroup of $G$. Then there exist an open subgroup $E$ of $G$ with the following properties:
\begin{enumerate}
    \item  $E = H \textup{Res}_G(H)U$,\ where $U$ is a compact open subgroup of $G$;
    \item $E$ is compactly generated;
    \item $\textup{Res}_G(H) = \textup{Res}(E)$,\ so in particular,\ $\textup{Res}_G(H)$ is normal in $E$.
\end{enumerate}
\end{theorem}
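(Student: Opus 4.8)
The plan is to realise $R:=\textup{Res}_G(H)$ as the ordinary discrete residual of $E:=\langle H,U\rangle$ for a well-chosen compact open subgroup $U\leqslant G$. Fix a compact symmetric generating set $S=S^{-1}\ni 1$ of the compactly generated group $H$. For \emph{any} compact open $U$ the group $E=\langle H,U\rangle$ is open (it contains $U$) and compactly generated (it is generated by the compact set $S\cup U$), so property (2) is automatic; moreover $L:=\langle hUh^{-1}:h\in H\rangle$ is an $H$-invariant open subgroup, whence $R\subseteq L\subseteq E$, so $R$ is automatically a subgroup of $E$. I would then impose on $U$ the two conditions (i) $U$ normalises $R$, and (ii) $hUh^{-1}\subseteq UR$ for all $h\in S$. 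Condition (i) makes $UR=RU$ a subgroup and shows that $E$ normalises $R$, while (i) together with (ii) shows that $H$, and therefore $E=\langle H,U\rangle$, normalises $UR$; thus $UR\trianglelefteq E$ and $R\trianglelefteq E$. Since $R\subseteq E$ the set $UR$ lies in $E$, and as it is normal we obtain $E=\langle H,U\rangle=\langle H,UR\rangle=H\cdot UR=HRU$, which is property (1). It then remains only to prove $\textup{Res}(E)=R$, which also yields the normality clause of property (3).

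The inclusion $\textup{Res}(E)\supseteq R$ needs nothing beyond the definitions: any open normal subgroup $P$ of $E$ is open in $G$ (as $E$ is open in $G$) and is normalised by $H\leqslant E$, so $P$ is an $H$-invariant open subgroup of $G$ and hence $P\supseteq R$; intersecting over all such $P$ gives $\textup{Res}(E)\supseteq R$.

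The reverse inclusion is where the properties of $U$ are used. Pass to $\bar E:=E/R$, with $\bar H,\bar U$ the images of $H,U$; here $\bar U=UR/R$ is a compact open subgroup of $\bar E$ that is \emph{normal}, since $UR\trianglelefteq E$. I would first note that $\textup{Res}_{\bar E}(\bar H)=1$: every $H$-invariant open subgroup of $G$ contains $R$ by the definition of $R$ as their intersection, so the $\bar H$-invariant open subgroups of $\bar E$ intersect in $R/R=1$. Now let $O$ be an $\bar H$-invariant open subgroup of $\bar E$; replacing $O$ by $O\cap\bar U$ (still $\bar H$-invariant, as $\bar H$ normalises $\bar U$) we may assume $O\leqslant\bar U$. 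Its $\bar U$-core $O^{\bar U}:=\bigcap_{u\in\bar U}uOu^{-1}$ is open, because $\bar U$ is compact so only finitely many conjugates of the open subgroup $O$ occur, and it remains $\bar H$-invariant: for $h\in\bar H$ one has $hO^{\bar U}h^{-1}=\bigcap_{u\in\bar U}(huh^{-1})O(huh^{-1})^{-1}=O^{\bar U}$, using $hOh^{-1}=O$ and $h\bar Uh^{-1}=\bar U$ (here normality of $\bar U$ is essential). Hence each $O^{\bar U}$ is an open normal subgroup of $\bar E$ contained in $O$, and since the $O$ themselves already intersect in $1$ we conclude $\textup{Res}(\bar E)=1$, i.e.\ $E/R$ has trivial discrete residual. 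Consequently the open normal subgroups of $E$ containing $R$ already intersect in $R$, so $\textup{Res}(E)\subseteq R$. Combined with the previous paragraph this gives $\textup{Res}(E)=R$, finishing (3).

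The hard part, and the main obstacle, is the very first step: producing a compact open subgroup $U$ satisfying (i) and (ii). Equivalently, one must show that $R=\textup{Res}_G(H)$ is normalised by a compact open subgroup $U$ in a way compatible with $H$, namely so that $UR/R$ is an $H$-invariant compact open subgroup of $N_G(R)/R$; these are exactly the defining features of a tidy compact open subgroup for the $H$-action. This is where Theorem \ref{thrm:colin result for res commuting} enters: the identity $\textup{Res}_{\textup{Res}_G(H)}(H)=\textup{Res}_G(H)$ says that $R$ has no proper $H$-invariant open subgroup, which is precisely what pins $R$ down as a tidy object and, together with the surrounding flat/tidy theory for compactly generated groups of automorphisms, guarantees such a $U$ to exist. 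I would extract $U$ from a tidy subgroup for the action of $H$, verify (i) and (ii) directly from its defining properties, and then feed it into the argument above.
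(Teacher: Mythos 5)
First, a remark on the comparison itself: the paper does not prove this statement. It is quoted as Theorem B of \cite{residual_theorem_colin} and used as a black box, so there is no internal proof to measure your argument against; your proposal has to stand on its own.

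On its own terms, your reduction is correct and cleanly executed: \emph{granting} a compact open subgroup $U$ of $G$ satisfying (i) $U$ normalises $R:=\textup{Res}_G(H)$ and (ii) $hUh^{-1}\subseteq UR$ for all $h$ in a symmetric compact generating set of $H$, your verification of properties (1)--(3) goes through (in particular the passage to $E/R$, where normality of $UR/R$ lets you replace each $\bar H$-invariant open subgroup by its open $\bar U$-core, which is normal in $\bar E$, is fine). The gap is that the existence of such a $U$ is essentially the entire content of the theorem, and you do not establish it. A closed subgroup of a t.d.l.c.\ group need not be normalised by \emph{any} compact open subgroup (a Borel subgroup of $\textup{PSL}_2(\mathbb{Q}_p)$ is closed and self-normalising but not open), so condition (i) is already a nontrivial assertion about $R$; and condition (ii) amounts to saying that the image of $H$ in (an open subgroup of) $N_G(R)/R$ normalises a compact open subgroup, i.e.\ that $H$ acts uniscalarly modulo $R$. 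Neither follows formally from Theorem \ref{thrm:colin result for res commuting}: the identity $\textup{Res}_{\textup{Res}_G(H)}(H)=\textup{Res}_G(H)$ says that $R$ has no proper $H$-invariant open subgroup, but it does not by itself produce a compact open $U$ compatible with $H$ in the sense of (i)--(ii). In the source, this existence is exactly what the surrounding machinery (tidy and flat theory for compactly generated groups of automorphisms, the nub, etc.) is developed to deliver, and the two theorems are proved together rather than one being a formal consequence of the other. As written, your argument is a correct derivation of the theorem from an unproved statement that carries essentially all of its difficulty.
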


\begin{prop}[\cite{residual_theorem_colin},\ Proposition 3.13]\label{prop:res do what i want in elementary}
Let $G$ be a t.d.l.c. group. The following are equivalent:
\begin{enumerate}
    \item For every nontrivial compactly generated closed subgroup $H$ of $G$,\ then $\textup{Res}_G(H)\ngeqslant H$;
    \item For every non-trivial compactly generated closed subgroup $H$ of $G$,\ then
$\textup{Res}(H)\neq H$.
\end{enumerate}
\end{prop}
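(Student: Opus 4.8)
The plan is to establish the two implications as contrapositives, since the substantive content lies entirely in the direction $(2)\Rightarrow(1)$; the reverse direction is elementary and needs none of the cited machinery.

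For $(1)\Rightarrow(2)$ I would argue the contrapositive. Suppose $H$ is a nontrivial compactly generated closed subgroup with $\textup{Res}(H)=H$. Every open normal subgroup of $H$ contains $\textup{Res}(H)=H$ and is contained in $H$, so $H$ has no proper open normal subgroup. Now let $O\leqslant G$ be any open subgroup normalized by $H$; these are precisely the subgroups appearing in the definition of $\textup{Res}_G(H)$. Then $O\cap H$ is open in $H$ and normalized by $H$ (because $h(O\cap H)h^{-1}=(hOh^{-1})\cap H=O\cap H$ for $h\in H$), hence $O\cap H=H$ and $H\leqslant O$. Intersecting over all such $O$ gives $H\leqslant\textup{Res}_G(H)$, that is $\textup{Res}_G(H)\geqslant H$, which is the negation of $(1)$.

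For $(2)\Rightarrow(1)$ I would again use the contrapositive, the key idea being that $\textup{Res}_G(H)$ itself is the witness. Suppose $H$ is a nontrivial compactly generated closed subgroup with $H\leqslant K:=\textup{Res}_G(H)$. The group $K$ is closed, being an intersection of open (hence closed) subgroups, and is therefore t.d.l.c.\ as a closed subgroup of $G$. By Theorem \ref{thrm:colin result for res commuting}, $\textup{Res}_K(H)=\textup{Res}_{\textup{Res}_G(H)}(H)=\textup{Res}_G(H)=K$. Applying Theorem \ref{thrm:colin theorem G} to the pair $(K,H)$ then yields an open, compactly generated subgroup $E\leqslant K$ with $E=H\,\textup{Res}_K(H)\,U$ for some compact open $U\leqslant K$ and with $\textup{Res}(E)=\textup{Res}_K(H)$. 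Since $\textup{Res}_K(H)=K$ and $H,U\leqslant K$, we get $E=K$; hence $K$ is compactly generated and $\textup{Res}(K)=\textup{Res}(E)=K$. As $K\supseteq H\neq\{1\}$, this exhibits a nontrivial compactly generated closed subgroup $K$ of $G$ with $\textup{Res}(K)=K$, contradicting $(2)$.

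The delicate points, and the place where the two cited theorems do the real work, both live in this second direction. One must check that the conjugation-action conventions agree, so that the symbol $\textup{Res}_K(H)$ produced by Theorem \ref{thrm:colin theorem G} genuinely coincides with $\textup{Res}_{\textup{Res}_G(H)}(H)$ in Theorem \ref{thrm:colin result for res commuting}, and that the image of $H$ in $\textup{Aut}(G)$ is compactly generated so that both theorems apply. I expect the main obstacle to be showing that $\textup{Res}_G(H)$ is compactly generated, which is not automatic in general; this is exactly what Theorem \ref{thrm:colin theorem G} supplies through the identity $E=K$, and it is what licenses the final appeal to condition $(2)$, which constrains only compactly generated subgroups.
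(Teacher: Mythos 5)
The paper does not prove this proposition at all: it is imported verbatim from the cited reference (\cite{residual_theorem_colin}, Proposition 3.13), so there is no internal proof to compare against. Your argument is correct as written --- the contrapositive of $(1)\Rightarrow(2)$ via $O\cap H$ being an open normal subgroup of $H$, and the contrapositive of $(2)\Rightarrow(1)$ by feeding $K=\textup{Res}_G(H)$ through Theorem \ref{thrm:colin result for res commuting} and then Theorem \ref{thrm:colin theorem G} to conclude $E=K$, so that $K$ is compactly generated with $\textup{Res}(K)=K$ --- and it uses exactly the two results from that same reference which the paper quotes, so it is a faithful reconstruction of the intended derivation.
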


It then is the case that if $G$ is an elementary group and $K\in\mathcal{K}(G)$ then $\textup{Res}_{G}(K)\geqslant K$ if,\ and only if,\ $K=\{1\}$.

\section{Building groups acting on an infinite regular tree}

\subsection{Directed trees}

Here we will define our $\aleph_0$-regular directed tree and colour it so we can later define our group action in relation to the colouring. We will call a $\aleph_0$-regular tree $T$ a tree such that each vertex has countable degree. That is,\ for each $v\in VT$,\ the degree of the vertex $v$ is countable infinity.
\index{countable regular tree|ndx}

\begin{defi}\index{$\aleph_0$-regular tree|ndx}\index{infinite ray|ndx}\index{infinite line|ndx}\index{ends of a tree|ndx}\index{$\vec{T_{\chi}}$|ndx}\index{$\textup{Aut}(\vec{T_\chi})$|ndx}
Let $T$ be a $\aleph_0$-regular tree. We define the following:
\begin{itemize}
    \item An \textbf{infinite ray} of the tree to be a sequence $\delta:=\{v_n\}_{n\in\mathbb{N}}$ such that $(v_i,v_{i+1})$ are edges of $T$ and given $v_i,v_j\in VT$ if $i\neq j$ then $v_i\neq v_j$.
    \item An \textbf{infinite line} of the tree to be a sequence $\{v_n\}_{n\in \mathbb{Z}}$ such that $(v_i,v_{i+1})$ are edges of $T$ and given $v_i,v_j\in VT$ if $i\neq j$ then $v_i\neq v_j$.
    \item Given $\delta_1=\{v_n\}_{n\in\mathbb{N}},\ \delta_2=\{w_n\}_{n\in\mathbb{N}}$ rays of the infinite tree $T$ we say that \textbf{two rays $\delta_1$,\ $\delta_2$ are equivalent},\ denoted as $\delta_1\sim \delta_2$,\ if there exist $m_1,\ m_2\in\mathbb{N}$ such that for all $i\in\mathbb{N}$ we have $v_{m_1+i}=w_{m_2+i}$. The relation $\sim$ is an equivalence relation on the set of rays of the tree.
    \item Equivalence classes of rays of the tree are called \textbf{ends} of $T$. We denote them as $\chi$.
    \item Let $\chi$ be a fixed end from $T$. Given $e$ an edge of $T$,\ there exist $\delta$ a ray of $\chi$ including $e$. The \textbf{orientation of $e$ in relation to $\chi$} is the orientation of $e$ given by the ray $\delta$. We denote the tree where all edges are oriented towards $\chi$ as $\vec{T_{\chi}}$. (Figure \ref{figure:image tree}) 
    \begin{figure}[ht]
\includegraphics[scale=0.2]{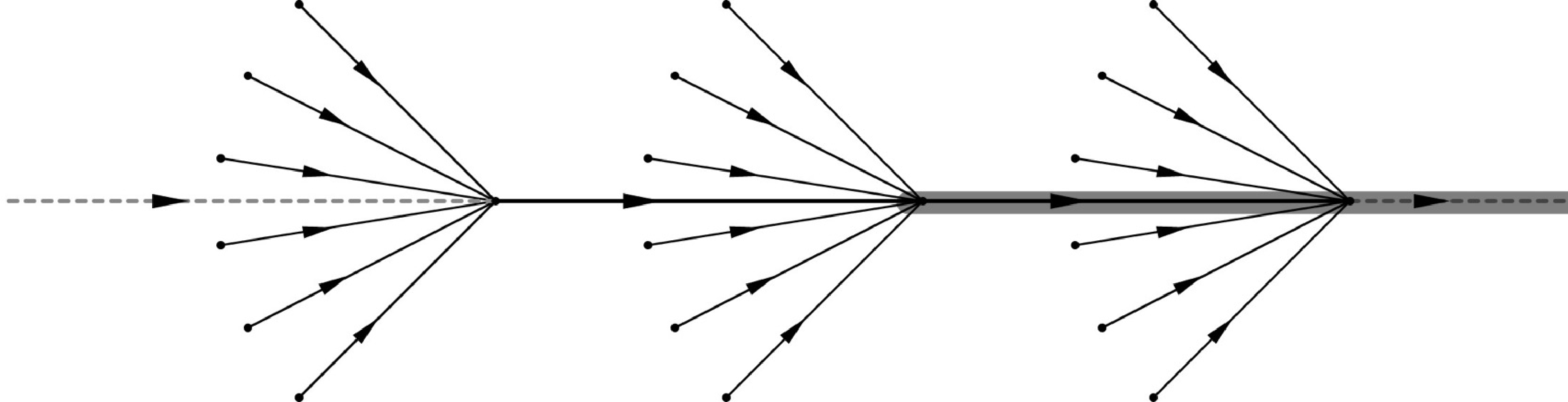}\\
\caption{Example of $\vec{T_{\chi}}$. }\label{figure:image tree}
\floatfoot{The fixed end $\chi$ is represented by the highlighted ray.}
\end{figure}
    \item An \textbf{automorphism of $\vec{T_{\chi}}$} is an element from $\textup{Aut}(T)$ preserving the orientation of the edges of $\vec{T_{\chi}}$.
    \item Given $\vec{T_{\chi}}$,\ we denote $V\vec{T_{\chi}}$ the \textbf{set of vertices} of the directed tree and $E\vec{T_{\chi}}$ the \textbf{set of oriented edges} of the directed tree. 
\end{itemize}
\end{defi}

The intersection of two equivalent rays $\delta_1$,\ $\delta_2$ is a ray. Hence,\ given $\chi$ an end,\ the orientation on $\vec{T_{\chi}}$ is unique.\\
\indent Given an edge $e$ in the directed tree $\vec{T_\chi}$,\ we write it as $e:=(v,w)$ when we want to specify $v$ is the initial vertex of $e$ and $w$ is the terminal vertex of $e$,\ that is,\ $e$ points towards $w$.\\
\indent Given $v\in V\vec{T_{\chi}}$ we define the set of \textbf{inner vertices of $v$}\index{$\textup{inn}^n(v)$|ndx}\index{inner vertices|ndx} as $\textup{inn}^1(v)=:\left\{ w\in V\vec{T_{\chi}}; \ (w,v) \in E\vec{T_{\chi}}\right\}$. Recursively we define for $m\geqslant 2$,\ $\textup{inn}^m(v):=\cup_{w\in \textup{inn}^{m-1}(v)}\textup{inn}^{1}(w)$.\\
\indent Below we have an example to illustrate $\textup{inn}^1(v)$ and $\textup{inn}^2(v)$ for a directed tree. Here $\textup{inn}^1(v)=\{v_1,v_2\}$ and $\textup{inn}^2(v)=\{v_{11},v_{12},v_{21},v_{22}\}$.\\
\begin{figure}[ht]
\includegraphics[scale=0.45]{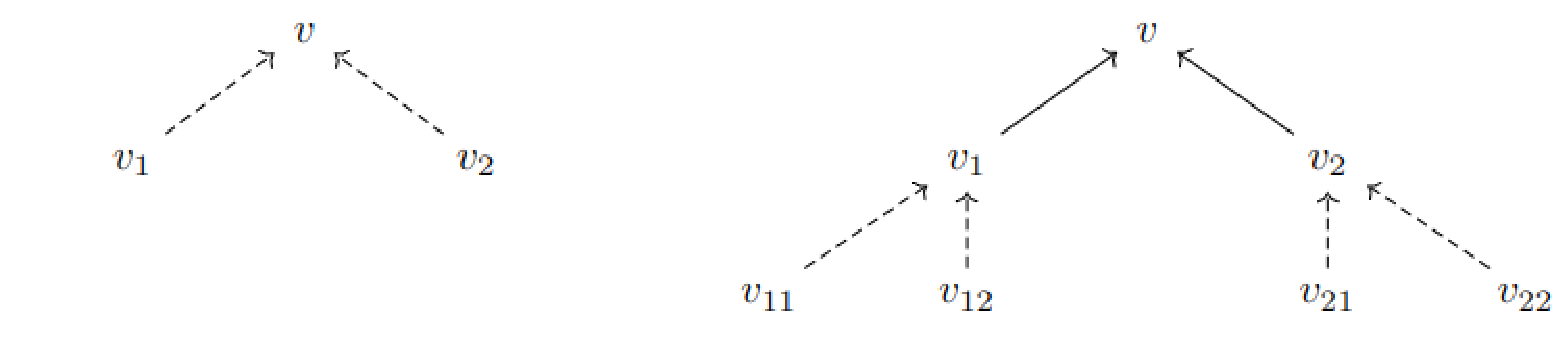}\\
\caption{Example of $\textup{inn}^n(v)$ for $n=1,2$.}
\end{figure}
\indent Given a countable set $X$,\ a \textbf{colouring of $\vec{T_{\chi}}$ in relation to $X$}\index{colouring of a tree|ndx} is a function $c:V\vec{T_{\chi}}\rightarrow X$ such that for each $v\in V\vec{T_{\chi}}$,
$$c_v:=c\vert_{\textup{inn}^1(v)}: \ \textup{inn}^1(v)\rightarrow X$$
is a bijection. We call the colouring \textbf{ended}\index{ended colouring|ndx} if there exist a \textbf{monochromatic ray}\index{monochromatic ray|ndx} $\{v_n\}_{n\in\mathbb{N}}\in \chi$,\ that is,\ $c(v_n)=c(v_m)$ for all $n,m\in\mathbb{N}$. We will always assume our colouring is ended without any additional comment.\\
\indent We can define a local action of $g\in \textup{Aut}(\vec{T_{\chi}})$ at $v\in V\vec{T_{\chi}}$ that tells us how the colouring in $\textup{inn}^1(v)$ moves as follows:
$$\sigma(g,\ \ v)\colon= c_{g(v)}\circ g\circ c_v^{-1}\in  \text{Sym}(X)$$
and,\ with this action of $\textup{Aut}(\vec{T_{\chi}})$ on the colourings of $\textup{inn}^1(v)$,\ we can define the group $E_X(G,\ U)$ as follows:

\begin{defi}\label{defi:EX(G,U)}\index{singularities|ndx}\index{$S_U(g)$|ndx}\index{$E_X(G,\ U)$|ndx}
Given $g\in \textup{Aut}(\vec{T_{\chi}})$ and $U\leqslant $ Sym$(X)$ a subgroup,\ we define the \textbf{singularities} of $g$ in relation to $U$ as

$$S_U(g)\colon=\left\{ v\in V\vec{T_{\chi}};\ \sigma(g,\ v)\notin U \right\}.$$

\indent Suppose that $(G,\ X)$ is a t.d.l.c.s.c.~permutation group and let $U\in \mathcal{U}(G)$. We define the group $E_X(G,\ U)\leqslant \textup{Aut}(\vec{T_{\chi}})$ as the set of all $g\in \textup{Aut}(\vec{T_{\chi}})$ such that $S_U(g)$ is finite and $\sigma(g,\ \ v)\in G$ for all $v\in V\vec{T_{\chi}}$. In the case $g\in E_X(G,\ U)$ we will denote $S(g):=S_U(g)$.
\end{defi}
In other words,\ we are getting a subgroup of $\textup{Aut}(\vec{T_{\chi}})$ with action on the colouring of the edges as the action of $G$ in $X$. The condition of acting as an element of $U$ almost everywhere will be necessary to define the topology.\\ 
\indent The assumption that the colouring is ended ensures that there is a unique infinite line $\Tilde{\delta}:=\{v_n\}_{n\in\mathbb{Z}}$ such that for all $m\in \mathbb{Z}$ the ray $\{v_n\}_{n\leqslant m}\in \chi$,\ and for every $i,\ j\in\mathbb{Z}$ we have $c(v_i)=c(v_j)$. We call such a set $\Tilde{\delta}$ the \textbf{monochromatic line}\index{monochromatic line|ndx} of $\vec{T_{\chi}}$.\\
\indent Given the monochromatic line,\ we define the \textbf{translation}\index{translation|ndx} $t\in E_X(G,\ U)$ as the element $t\in E_X(G,\ U)$ such that $t.v_n=v_{n+1}$,\ for every $n \in \mathbb{Z}$,\ and for every vertex $v$, $\sigma(t,\ v)=1$.

\subsection{The structure of \texorpdfstring{$E_X(G,\ U)$}{TEXT}}

Given $G$ a group acting on $X$ a countable set,\ we say that a sequence $\{g_n\}_{n\in\mathbb{N}}$ converges pointwise to $g\in G$ if for every $x\in X$ we have $g_n x\xrightarrow{n\rightarrow\infty} g x$. The topology induced by such sequences allows us to make $\textup{Aut}(\vec{T_{\chi}})$ a totally disconnected second countable group,\ which is not locally compact. We will show that the subgroups of $E_X(G,\ U)$ of the form $E_X(U,\ U)_{(F)}$,\ for $F\subset V\vec{T_{\chi}}$ a finite subset,\ are compact in the topology of $\textup{Aut}(\vec{T_{\chi}})$. We then show these subgroups can form a neighborhood basis of the identity in $E_X(G,\ U)$,\ giving us the desired t.d.l.c.s.c.~topology.


\begin{prop}\label{prop:compact subgrp}
Let $(G,\ X)$ be a t.d.l.c.s.c.~permutation group and $U\in \mathcal{U}(G)$. Given $v\in V\vec{T_{\chi}}$, the group $E_X(U,\ U)_{(v)}\leqslant \textup{Aut}(\vec{T_{\chi}})$ is compact with respect to the subspace topology.
\end{prop}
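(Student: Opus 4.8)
The plan is to realize $E_X(U,\ U)_{(v)}$ as a closed subgroup of a compact product of finite sets and then invoke Tychonoff. Recall that the topology on $\textup{Aut}(\vec{T_{\chi}})$ is pointwise convergence, i.e.\ the subspace topology inherited from $\prod_{w\in V\vec{T_{\chi}}}V\vec{T_{\chi}}$ with each factor discrete. The only obstruction to compactness is that the factors $V\vec{T_{\chi}}$ are infinite (the tree is not locally finite); the whole point of restricting local actions to the compact group $U$ is to cut these factors down to finite orbits.

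First I would record the structural input: since $U\in\mathcal{U}(G)$ is compact and $U\leqslant\textup{Sym}(X)$, every orbit of $U$ on $X$ is finite (a point stabilizer $U_{(x)}$ is open in $U$, hence of finite index by compactness, so its orbit is finite). Next I would observe that every $g\in E_X(U,\ U)_{(v)}$ fixes not only $v$ but the entire ray from $v$ to $\chi$: in $\vec{T_{\chi}}$ each vertex has a unique outgoing edge, and an orientation-preserving $g$ with $g(v)=v$ must send the outgoing edge of $v$ to the outgoing edge of $g(v)=v$, so $g$ fixes the parent of $v$, and inductively all ancestors.

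The key lemma is that every vertex $w$ has finite $E_X(U,\ U)_{(v)}$-orbit. Given $w$, let $a$ be the confluent of the rays from $v$ and from $w$ towards $\chi$; then $a$ is an ancestor of $v$, hence fixed by every $g$, and $w$ is a descendant of $a$ along a directed path $a=u_0,\ u_1,\ \dots,\ u_m=w$ with $u_{i+1}\in\textup{inn}^1(u_i)$. Since $g(a)=a$ and $\sigma(g,\ u_i)\in U$, an induction on $i$ shows that $c(g(u_{i+1}))=\sigma(g,\ u_i)(c(u_{i+1}))$ lies in the finite $U$-orbit of $c(u_{i+1})$; because colours are distinct within each $\textup{inn}^1$, the vertex $g(u_{i+1})$ is determined by $g(u_i)$ together with this colour. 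Hence $g(w)$ ranges over a finite set $O_w$, depending only on $a$, the path, and the $U$-orbits of the colours along it.

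This yields an injection $E_X(U,\ U)_{(v)}\hookrightarrow\prod_{w}O_w$ which is a homeomorphism onto its image for the subspace topologies, and $\prod_{w}O_w$ is compact by Tychonoff. It remains to show the image is closed, which I expect to be the main obstacle, since a pointwise limit of automorphisms need not a priori be surjective. I would resolve this with the standard inverse trick: given a net $g_\alpha\to f$ in the product, the inverses $g_\alpha^{-1}$ also lie in $\prod_{w}O_w$ (orbits are symmetric), so after passing to a subnet $g_\alpha^{-1}\to h$; continuity of evaluation forces $f\circ h=h\circ f=\textup{id}$, so $f$ is a genuine orientation-preserving automorphism. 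Then $f(v)=v$, and since $\sigma(g_\alpha,\ w)\to\sigma(f,\ w)$ pointwise on $X$ while $U$ is closed in $\textup{Sym}(X)$, we get $\sigma(f,\ w)\in U$ for every $w$; thus $f\in E_X(U,\ U)_{(v)}$. Being a closed subset of a compact space, $E_X(U,\ U)_{(v)}$ is compact.
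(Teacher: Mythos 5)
Your proof is correct and follows essentially the same route as the paper: the heart of both arguments is that every vertex has finite orbit under $E_X(U,\ U)_{(v)}$, established by the same counting (finiteness of the $U$-orbits on $X$ from $[U:U_{(x)}]<\infty$, together with the fact that fixing $v$ forces fixing every vertex on the ray from $v$ towards $\chi$, so one may induct down directed paths from a fixed ancestor). The only divergence is that where the paper invokes the cited general fact that a closed subgroup of $\textup{Sym}(\Omega)$ with finite orbits is compact, you prove that fact inline via Tychonoff, the subnet-of-inverses argument, and closedness of $U$ in $\textup{Sym}(X)$; this is a correct unpacking of the citation rather than a different method.
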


\begin{proof}
Under the pointwise convergence,\ given $g\in E_X(G,\ U)\leqslant \textup{Aut}(\vec{T_{\chi}})$ and $v\in S(g)$, then $\{h\in \textup{Aut}(\vec{T_{\chi}});\ h.v=g.v\}$ is an open neighbourhood of $g$ with $v\in S(h)$ for all $h$ in the neighbourhood. Hence the complement of $E_X(U,\ U)$ is open. Therefore, $E_X(U,\ U)=\{g\in E_X(G,\ U);\ S(g)=\emptyset\}$ is a closed subgroup of $\textup{Aut}(\vec{T_{\chi}})$. Fix $v  \in  V\vec{T_{\chi}}$ and consider $W:=E_X(U,\ U)_{(v)}$. The action of $W$ on $\textup{inn}^1(v)$ is equivalent to the action of $U$ on $X$. Moreover,\ given $x\in X$,\ the stabilizer subgroup $U_{(x)}:=G_{(x)}\cap U\leqslant G$ is compact,\ and the index $\left[ U:\ U_{(x)}\right]$ is finite. The orbit-stabilizer theorem implies that the orbits of $W\curvearrowright  \textup{inn}^1(v)$ are finite.\\
\indent Given $w\in V\vec{T_{\chi}}$,\ there exist $m,n\in \mathbb{N}$ such that $w\in t^n \textup{inn}^m(v)$. We will show by induction on $m$,\ for $n=0$,\ that the action $W \curvearrowright V\vec{T_{\chi}}$ has finite orbits. We then prove for the case $n>0$.\\
\indent As seen above,\ it is true for the case $n=0,\ m=1$. Assume it is true for some $m>0$ and $n=0$. If $w\in \textup{inn}^{m+1}(v)$,\ there exist $w'\in \textup{inn}^{m}(v)$ a vertex such that $(w,\ w')$ is a directed edge. Notice that $|E_X(U,\ U)_{(v)}.w|=|E_X(U,\ U)_{(v)}.w'|[U:U_{(x)}]$,\ where $x=c(w)\in X$. Hence,\ by the induction hypothesis,\ the orbit of $w$ is finite.\\
\indent Notice that for all $n\in\mathbb{N}$,\ $t^{n}v$ is fixed by $E_X(U,\ U)_{(v)}$. Hence $E_X(U,\ U)_{(t^{n}v)}\leqslant E_X(U,\ U)_{(v)}$ for all $n\in\mathbb{N}$. Hence,\ if $w\in t^{n}\textup{inn}^{m}(v)=\textup{inn}^m(t^{n}v)$ then,\ by the argument above on the group $E_X(U,\ U)_{(t^{n}v)}$,\ the orbit of $w$ is finite.\\
\indent The induction above shows that for all vertices $w\in V\vec{T_{\chi}}$,\ the orbit of $w$ is finite. As any closed subgroup of $\textup{Aut}(\vec{T_{\chi}})$ having finite orbits is compact,\ it is the case that $E_X(U,\ U)_{(v)}$ is compact \cite[Theorem 2.1]{symmetric_group}.
\end{proof}

The idea now is to use Proposition \ref{prop:building groups} and Proposition \ref{prop:compact subgrp} to give our desired topology to $E_X(G,\ U)$. To construct the basis of open,\ compact subgroups for $E_X(G,\ U)$,\ we will need to define some sets related to the permutation group $(G,\ X)$ and $V\vec{T_{\chi}}$.\\
\indent Suppose that $(G,\ X)$ is a t.d.l.c.s.c.~permutation group and $U\in\mathcal{U}(G)$. Since $U$ is open, and $G$ has compact open point stabilizers,\ there is $Y\subset X$ a finite subset such that $G_{(Y)}\leqslant U$. Fixing such $Y$ we define,\ for $h\in G$,\ the set $Y_h:=h^{-1}.Y\subset X$. The set $Y_h$ has the property that $hG_{(Y_h)}h^{-1}\leqslant U$. Let $\vec{T_{\chi}}$ an $\aleph_0$-regular tree with an ended colouring in relation to $X$. For each $v\in V\vec{T_{\chi}}$,\ let 
$$N_h(v):=\left\{ w\in \textup{inn}^1(v);\ c_v(w)\in Y_h  \right\}.$$
Observe that $|N_h(v)|=|Y_h|\in\mathbb{N}$. For a finite set of vertices $Z\subset V\vec{T_{\chi}}$,\ we put $N_h(Z):=\bigcup_{v\in Z} N_h(v)$.\\
\indent As seen in Definition \ref{defi:EX(G,U)},\ given $g\in E_X(G,\ U)$ and $v\in V\vec{T_{\chi}}$ it is always the case that $\sigma(g,\ v)\in G\leqslant\ $Sym$(X)$. Given $g\in E_X(G,\ U)$ and $v,\ w\in V\vec{T_{\chi}}$,\ we define $N_{\sigma(g,\ v)}(w)$ as above.
\begin{prop}\label{prop: comp gener}
Let $(G,\ X)$ be a t.d.l.c.s.c.~permutation group (not necessarily transitive) with $U\in\mathcal{U}(G)$. There is a t.d.l.c.s.c.~group topology on $E_X(G,\ U)$ such that the inclusion $E_X(U,\ U)_{(v)}$ into $E_X(G,\ U)$ is an open embedding for any $v\in V\vec{T_{\chi}}$,\ where the topology of $E_X(U,\ U)_{(v)}$ is as given in (Proposition \ref{prop:compact subgrp}).
\end{prop}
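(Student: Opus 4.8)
The plan is to realise the desired topology through the criterion of Proposition~\ref{prop:building groups}, using as a candidate neighbourhood basis of the identity the conjugation-closed family of compact subgroups
$$\mathcal{F}:=\left\{\,g\,E_X(U,\ U)_{(F)}\,g^{-1}\ ;\ g\in E_X(G,\ U),\ F\subseteq V\vec{T_{\chi}}\text{ finite}\,\right\}.$$
Each $E_X(U,\ U)_{(F)}=\bigcap_{v\in F}E_X(U,\ U)_{(v)}$ is a closed subgroup of the compact group $E_X(U,\ U)_{(v)}$ of Proposition~\ref{prop:compact subgrp}, hence compact; and since conjugation is a homeomorphism of $\textup{Aut}(\vec{T_{\chi}})$ for the topology of pointwise convergence, every member of $\mathcal{F}$ is compact. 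Condition~(2) of Proposition~\ref{prop:building groups} holds by construction, and condition~(1) is immediate because the members of $\mathcal{F}$ are subgroups, so $V.V=V\subseteq V$. What remains for $\mathcal{F}$ to be a genuine neighbourhood basis is that it be downward directed, and this is where the sets $N_h(v)$ enter.

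The key step is the following claim: for every $g\in E_X(G,\ U)$ and every finite $F\subseteq V\vec{T_{\chi}}$ there is a finite set $F'$ with $E_X(U,\ U)_{(F')}\subseteq g\,E_X(U,\ U)_{(F)}\,g^{-1}$. I would prove this using the cocycle identity $\sigma(ab,\ v)=\sigma(a,\ bv)\,\sigma(b,\ v)$, which follows at once from $\sigma(a,\ v)=c_{a(v)}\circ a\circ c_v^{-1}$ and gives $\sigma(a^{-1},\ v)=\sigma(a,\ a^{-1}v)^{-1}$. Writing $k:=g^{-1}hg$ for $h\in E_X(U,\ U)_{(F')}$ and expanding yields, for every vertex $w$,
$$\sigma(k,\ w)=\sigma(g,\ kw)^{-1}\,\sigma(h,\ gw)\,\sigma(g,\ w).$$
If $w\notin S(g)$ and $kw\notin S(g)$, then all three factors lie in $U$ and $\sigma(k,\ w)\in U$ for free, so the only vertices that can cause trouble are the finitely many $w\in S(g)$ (and a priori those with $kw\in S(g)$). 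I would take $F'$ to contain $gF$, the finite set $g.S(g)$, and, at each singular vertex $w\in S(g)$, the finite set $N_{\sigma(g,\ w)^{-1}}(gw)$. Forcing $h$ to fix $gF$ makes $k$ fix $F$; forcing $h$ to fix $g.S(g)$ pointwise makes $k$ fix $S(g)$ pointwise, so $kw=w$ there and the case $kw\in S(g)$ collapses to $w\in S(g)$; and forcing $h$ to fix $N_{\sigma(g,\ w)^{-1}}(gw)$ places $\sigma(h,\ gw)$ in $G_{(Y_{\sigma(g,\ w)^{-1}})}$, whereupon the defining property $aG_{(Y_a)}a^{-1}\leqslant U$ with $a=\sigma(g,\ w)^{-1}$ gives $\sigma(g,\ w)^{-1}\sigma(h,\ gw)\sigma(g,\ w)\in U$. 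Hence $\sigma(k,\ w)\in U$ at every vertex, so $k\in E_X(U,\ U)_{(F)}$, proving the claim. This bookkeeping at the singularities, together with the verification that $F'$ is finite, is the main obstacle.

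Granting the claim, downward directedness is immediate: given two members $g_i\,E_X(U,\ U)_{(F_i)}\,g_i^{-1}$ for $i=1,2$, each contains some $E_X(U,\ U)_{(F_i')}$, so both contain $E_X(U,\ U)_{(F_1'\cup F_2')}\in\mathcal{F}$. Thus Proposition~\ref{prop:building groups} endows $E_X(G,\ U)$ with a unique group topology for which $\mathcal{F}$ is a neighbourhood basis of the identity. The topology is Hausdorff, because $\bigcap_{F}E_X(U,\ U)_{(F)}$ consists of automorphisms fixing every vertex and is therefore trivial; and it is totally disconnected and locally compact because $\mathcal{F}$ is a basis of compact open subgroups. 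Taking $g=1$ and $F=\{v\}$ shows $E_X(U,\ U)_{(v)}\in\mathcal{F}$ is open and compact, and the claim shows that its subspace topology has $\{E_X(U,\ U)_{(F)}\ ;\ v\in F\}$ as a neighbourhood basis of the identity, which is exactly the topology of pointwise convergence from Proposition~\ref{prop:compact subgrp}; hence the inclusion is an open embedding.

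Finally, for second countability I would note that $\{E_X(U,\ U)_{(F)}\ ;\ F\subseteq V\vec{T_{\chi}}\text{ finite}\}$ is a countable neighbourhood basis of the identity, since $V\vec{T_{\chi}}$ is countable, and that $E_X(G,\ U)$ is $\sigma$-compact: the compact open subgroup $E_X(U,\ U)_{(v)}$ has countably many cosets because the $E_X(G,\ U)$-orbit of $v$ lies in the countable set $V\vec{T_{\chi}}$ and, over the finitely many singular vertices, the local data modulo $U$ ranges over the countable coset space $G/U$. A totally disconnected locally compact group with a countable basis of identity neighbourhoods that is $\sigma$-compact is second countable, so the resulting topology is t.d.l.c.s.c., as required.
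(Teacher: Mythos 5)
Your proposal is correct and follows essentially the same route as the paper: the same family of conjugates of the subgroups $E_X(U,\ U)_{(F)}$ fed into Proposition~\ref{prop:building groups} (the paper closes the family under finite intersections where you instead prove downward directedness), and the same key containment $E_X(U,\ U)_{(F')}\leqslant g\,E_X(U,\ U)_{(F)}\,g^{-1}$ established via the cocycle identity and the sets $N_h(\cdot)$; your explicit choice of $F'$ differs only cosmetically from the paper's set $R$ and performs the same bookkeeping at the singularities. The one genuine divergence is the second-countability step: the paper takes a countable dense subgroup $\Tilde{G}\leqslant G$ and shows $E_X(\Tilde{G},\{1\})_{(v)}\,E_X(U,\ U)_{(v)}=E_X(G,\ U)_{(v)}$, which directly exhibits the compact open subgroup as having countable index, whereas you argue via first countability plus $\sigma$-compactness. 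Your route is sound in principle (a first-countable, $\sigma$-compact locally compact group is metrizable and Lindel\"of, hence second countable), but the countability of the coset space is the one point you leave at the level of a plausibility argument: ``local data modulo $U$ at the singular vertices'' does not obviously parametrize the cosets of $E_X(U,\ U)_{(v)}$ injectively, since comparing $g_1$ and $g_2$ modulo that subgroup involves local actions at vertices paired through $g_2^{-1}g_1$ itself. The paper's dense-subgroup trick is the clean way to close that gap, and you should adopt it or spell out the counting more carefully.
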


\begin{proof}
Let $v\in V\vec{T_{\chi}}$ be a fixed vertex and define $W:=E_X(U,\ U)_{(v)}$. We want to show that the set
$$\mathcal{F}=\left\{\bigcap_{i=1}^{n} g_{i} W_{(S)} g_{i}^{-1};\ n \in \mathbb{N},\ g_{1},\ \ldots,\ g_{n} \in E(G,\ U),\ \text { and } S \subset V \vec{T_{\chi}} \text { is finite }\right\}$$
defines a basis of open,\ compact subgroups of $E_X(G,\ U)$.\\
\indent As this family consist of subgroups of $E_X(G,\ U)$,  for every $U\in \mathcal{F}$ we have $U.U\subset U$. The family $\mathcal{F}$ is, by definition, invariant under conjugation and finite intersections. By Proposition \ref{prop:building groups},\ this set defines a group topology $\tau$ on $E_X(G,\ U)$ such that $\mathcal{F}$ is a basis at $1$ of open subgroups. We need to prove that the natural embedding of $W$ into $E_X(G,\ U)$ is continuous, from which it will follow that $\mathcal{F}$ is a basis of compact,\ open subgroups.\\
\indent First prove that given $g\in E_X(G,\ U)$ there exist $R\subset V\vec{T_{\chi}}$ such that $gW_{(R)}g^{-1}\leqslant W$. Notice that it is the case $gWg^{-1}\leqslant E_X(G,\ U)_{(g.v)}$, but not necessarily it is the case $gWg^{-1}\leqslant E_X(U,\ U)_{(g.v)}$. We need to describe a finite set $R$ such that for every $w\in W_{(R)}$ we have $S(gwg^{-1})=\emptyset$.\\ 
\indent For each $g\in E_X(G,\ U)$,\ set
$$R:=\bigcup_{v'\in S(g)}N_{\sigma(g,v')}\left( S(g)\cup \{ g.v',\ v' \} \right)\cup \{g^{-1}v\}.$$
We argue that $gW_{(R)}g^{-1}\leqslant W$. It is immediate that $gW_{(R)}g^{-1}\leqslant E_X(G,\ U)_{(v)}$,\ as for every $w\in W_{(R)}$ it is the case that $gwg^{-1}.v=gw(g^{-1}.v)=g(g^{-1}.v)=v$. It suffices to show that the local action of each $gwg^{-1}\in gW_{(R)}g^{-1}$ at any $x\in V\vec{T_{\chi}}$ then $\sigma(gwg^{-1},\ x)$ is in $U$. For such a $gwg^{-1}$ and $x$,\ we see that
$$\begin{aligned}
\sigma\left(g w g^{-1},\ x\right) &=\sigma\left(g w,\ g^{-1}.x\right) \sigma\left(g^{-1},\ x\right) \\
&=\sigma\left(g,\ w g^{-1}.x\right) \sigma\left(w,\ g^{-1}.x\right) \sigma\left(g^{-1},\ x\right).
\end{aligned}$$
If $x\notin S(g^{-1})$,\ then $g^{-1}x\notin S(g)$,\ and thus,\ $wg^{-1}x\notin S(g)$. The previous equation then implies that $\sigma(gwg^{-1},\ x)\in U$.\\
\indent Let us now suppose that $x\in S(g^{-1})$. We see immediately that $wg^{-1}.x=g^{-1}.x$,\ so
$$\begin{aligned}
\sigma\left(g w g^{-1},\ x\right) &=\sigma\left(g,\ g^{-1}.x\right) \sigma\left(w,\ g^{-1}.x\right) \sigma\left(g^{-1},\ x\right) \\
&=\sigma\left(g,\ g^{-1}.x\right) \sigma\left(w,\ g^{-1}.x\right) \sigma\left(g,\ g^{-1}.x\right)^{-1}.
\end{aligned}$$
Set $h:=\sigma(g,\ g^{-1}.x)\in G\leqslant\ \textup{Sym}(X)$. The element $w$ fixes $N_h(g^{-1}.x)$,\ by definition. Hence,\ the local action $\sigma(w,\ g^{-1}.x)$ is an element of $G_{(Y_h)}$. Recalling how we choose $Y_h$,\ we conclude that $h\ \sigma(w,g^{-1}.x)\ h^{-1}\in U$. We thus demonstrate that $gW_{(R)}g^{-1}\leqslant W$. It follows that $W_{(R)}\leqslant W\bigcap g^{-1}W g$.\\
\indent For each $O\in\mathcal{F}$,\ the last paragraph ensures there exist $W_{(F)}\leqslant O$ for some $F$ finite collection of vertices. The preimage of $O$ under the inclusion map $W\hookrightarrow (E_X(G,\ U),\tau)$ is then open in $W$,\ as it contains the subgroup $W_{(F)}$. Hence the inclusion map is continuous. Since $W$ is compact,\ we deduce that $W$ is indeed isomorphic as a topological group to an open,\ compact subgroup of $E_X(G,\ U)$,\ namely its image. Hence under the topology induced by $\mathcal{F}$,\ $E_X(G,\ U)$ is a t.d.l.c. group. As the basis of $E_X(G,U)$ are groups of the form $W_{(F)}$,\ the inclusion of $W$ into $E_X(G,\ U)$ is an open embedding.\\
\indent It remains to show $\tau$ is a second countable topology. As $E(G,\ U)_{(v)}$ has countable index in $E_X(G,\ U)$,\ it suffices to show that $E_X(G,\ U)_{(v)}$ is second countable. To this end,\ let $\Tilde{G}$ be a countable dense subgroup of $G$ and form $E_X(\Tilde{G},\{1\})$. The group $E_X(\Tilde{G},\{1\})$ is plainly countable. As $\Tilde{G}U=G$ it follows that $E_X(\Tilde{G},\{1\})_{(v)}W=E_X(G,\ U)_{(v)}$. We thus deduce that $\tau$ is also a second countable topology.
\end{proof}
The next few results will be useful to understand the structure of our topological group and prove some properties necessary when building elementary groups.\\
\indent To build groups with high decomposition rank,\ we need to be able to iterate the group construction. Therefore first,\ we will prove that the action of $E_X(G,\ U)$ on the set $V\vec{T_{\chi}}$ is transitive under the condition $(G,\ X)$ is transitive. \\
\indent For the next lemma,\ given $F\subset V\vec{T_{\chi}}$ a finite subset,\ we will denote $\textup{inn}^1(F):=\bigcup_{v\in F} \textup{inn}^1(v)$.  \\
\indent We will define the function $d:V\vec{T_{\chi}}\times V\vec{T_{\chi}} \rightarrow \mathbb{N}$ by $d(w,\ v)=\min\{n\in\mathbb{N};$ there exist a geodesic from $v$ to $w$ or from $w$ to $v$ of length $n\}$.\\
\indent For the next lemma we identify the group $G$ with a subgroup of $E_X(G,\ U)$ as follows. Fix a vertex $v\in V\vec{T_{\chi}}$. Then $$G\cong\left\{ g\in E_X(G,\ U)_{(v)}; \ \forall u\notin \textup{inn}^1(v),\ \sigma(g,\ u)=1 \right\}\leqslant P_X(G,\ U)$$
the subgroup of $E_X(G,\ U)$ that only acts on $\textup{inn}^1(v)$ and has a trivial action everywhere else.

\begin{lemma}\label{lemma: element t and G}
For $G$,\ $t$ as above and $v$ in the monochromatic line,\ the following hold:
\begin{enumerate}[(1)]
    \item The group $\langle G,t\rangle$ acts transitively on $V\vec{T_{\chi}}$ and $E\vec{T_{\chi}}$;
    \item For all $w\in V\vec{T_{\chi}}$ such that $w$ is connected to $v$ by a directed geodesic $w=w_0,\cdots,w_n=v$ from $w$ to $v$,\ there is $\gamma\in \langle G,t\rangle$ for which $\gamma.w=v$ and $\sigma(\gamma,\ u)=1$ for all $u\notin \textup{inn}^1(\left\{ w_1,\cdots,w_n \right\})$. Such an element can be made so that $\sigma(\gamma^{-1}h\gamma,w)=\sigma(h,v)$ and $\sigma(\gamma^{-1}h\gamma,\ u)=1$ for all $u\notin \textup{inn}^1(\left\{ w_1,\cdots,w_n \right\})$ and $h\in G$.
\end{enumerate}
\end{lemma}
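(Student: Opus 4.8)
The plan is to prove part (2) first, by induction on the length $n$ of the directed geodesic, and then read off part (1) from it together with the transitivity of $(G,X)$. The one structural fact that makes everything work is that $t$ is \emph{locally trivial}: since $\sigma(t,u)=1$ for every $u$, the chain rule for local actions (as used in the proof of Proposition \ref{prop: comp gener}) gives $\sigma(t^{k}g,u)=\sigma(t^{k},g.u)\,\sigma(g,u)=\sigma(g,u)$, so premultiplying by a power of $t$ never alters any local action. Write $v=v_0$ for the fixed base vertex on the monochromatic line $\{v_m\}_{m\in\mathbb Z}$, let $x_0=c(v_0)$ be the colour of that line, and let $v_{-1}\in\textup{inn}^1(v_0)$ be its monochromatic child.

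For the base case $n=1$ we have $w=w_0\in\textup{inn}^1(v_0)$. Using that $(G,X)$ is transitive, pick $g$ in the fixed copy of $G$ (acting at $v_0$) whose local action $\sigma(g,v_0)$ carries the colour $c(w_0)$ to $x_0$; then $g.w_0=v_{-1}$ and $t.v_{-1}=v_0$, so $\gamma_1:=tg$ sends $w_0\mapsto v_0$. By local triviality of $t$, $\sigma(\gamma_1,\cdot)=\sigma(g,\cdot)$ is non-trivial only at $v_0=w_1$, and in particular $\sigma(\gamma_1,w_0)=1$. Expanding $\sigma(\gamma_1^{-1}h\gamma_1,w_0)$ by the chain rule and using $\gamma_1.w_0=v_0$, $h.v_0=v_0$, and $\sigma(\gamma_1,w_0)=\sigma(\gamma_1^{-1},v_0)=1$ collapses the expression to $\sigma(h,v_0)$, and shows $\gamma_1^{-1}h\gamma_1$ has trivial local action off $\{w_0\}$. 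This is exactly the conjugation assertion, with the conjugate supported at $w_0\in\textup{inn}^1(w_1)$.

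For the inductive step, apply the hypothesis to the shorter geodesic $w_1\to\cdots\to w_n=v_0$, obtaining $\gamma'\in\langle G,t\rangle$ with $\gamma'.w_1=v_0$, local action non-trivial only on $\{w_2,\dots,w_n\}$, and $\sigma(\gamma',w_1)=1$. Since $w_0\in\textup{inn}^1(w_1)$ and $\gamma'$ is a tree automorphism, $u_0:=\gamma'.w_0\in\textup{inn}^1(v_0)$, and the base case applied to $u_0$ yields $\gamma_1'=tg'$ with $\gamma_1'.u_0=v_0$, supported at $v_0$, and $\sigma(\gamma_1',u_0)=1$. Set $\gamma:=\gamma_1'\gamma'$. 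Then $\gamma.w_0=v_0$; the chain rule shows $\sigma(\gamma,\cdot)$ is non-trivial only on $\{w_2,\dots,w_n\}\cup(\gamma')^{-1}\{v_0\}=\{w_1,\dots,w_n\}$, while $\sigma(\gamma,w_0)=\sigma(\gamma_1',u_0)\,\sigma(\gamma',w_0)=1$. The two properties $\gamma.w_0=v_0$ and $\sigma(\gamma,w_0)=1$ mean $\gamma$ identifies the colouring of $\textup{inn}^1(w_0)$ with that of $\textup{inn}^1(v_0)$, so conjugation by $\gamma$ carries the copy of $G$ at $v_0$ onto the copy at $w_0$; this gives $\sigma(\gamma^{-1}h\gamma,w_0)=\sigma(h,v_0)$ with $\gamma^{-1}h\gamma$ supported at $w_0\in\textup{inn}^1(\{w_1,\dots,w_n\})$, as asserted. (Note that the local action of $\gamma$ itself is concentrated on $\{w_1,\dots,w_n\}$, forced to be non-trivial at $v_0=w_n$ by the colour mismatch there; it is the \emph{conjugate} that lies in $\textup{inn}^1(\{w_1,\dots,w_n\})$.)

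Part (1) then follows. Every vertex $u$ has a ray to $\chi$ that eventually merges with the monochromatic line at some $v_m$, so $u\in\textup{inn}^k(v_m)$ and $t^{-m}.u$ lies below $v_0$; applying part (2) to $t^{-m}.u$ produces $\gamma$ with $\gamma t^{-m}.u=v_0$, whence $\langle G,t\rangle$ is transitive on $V\vec{T_\chi}$. For edges, every edge of $\vec{T_\chi}$ is oriented towards $\chi$, hence of the form $(a,b)$ with $a\in\textup{inn}^1(b)$; moving $b$ and $b'$ to $v_0$ by vertex-transitivity and using that $(G,X)$ transitive makes the fixed copy of $G$ act transitively on $\textup{inn}^1(v_0)$, we match any two such edges, giving transitivity on $E\vec{T_\chi}$. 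The genuinely delicate point throughout is the local-action bookkeeping: the argument succeeds precisely because $t$ is locally trivial, so the only singular vertices introduced are those carrying an explicit factor $g$, and because maintaining $\sigma(\gamma,w_0)=1$ is exactly what lets the conjugation transport the vertex-copy of $G$ without spreading its support.
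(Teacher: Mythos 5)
Your proof is correct and follows essentially the same route as the paper's: the same inductive construction $\gamma=(tg)\gamma'$ along the directed geodesic, with the local triviality of $t$ ($\sigma(t,u)=1$ for all $u$) doing all the work; you merely prove (2) before (1), explicitly carry the invariant $\sigma(\gamma,w_0)=1$ through the induction (which is precisely what turns $\sigma(\gamma^{-1}h\gamma,w)=\sigma(\gamma,w)^{-1}\sigma(h,v)\sigma(\gamma,w)$ into an equality with $\sigma(h,v)$, a point the paper's proof passes over more quickly), and spell out edge-transitivity, which the paper's proof of (1) leaves implicit. Your parenthetical observation that $\sigma(\gamma,\cdot)$ is genuinely non-trivial at $v=w_n\notin\textup{inn}^1(\left\{ w_1,\ldots,w_n \right\})$ reflects a real imprecision in the statement (traceable to the paper's ambiguous description of the embedded copy of $G$ at $v$), and since the conjugate $\gamma^{-1}h\gamma$ ends up supported exactly at $w_0$, nothing used downstream is affected.
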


\begin{proof}
$(1)$ Let $v$ be our fixed vertex and $w$ in $\textup{inn}^1(v)$. As $G$ acts transitively on $X$,\ $t^{-1}v$ is an element of $\textup{inn}^1(v)$ and $c_v$ is a bijection from $\textup{inn}^1(v)$ to $X$,\ there is $g_1\in G$ such that $g_1t^{-1}v=w$. Inductively it is easy to see that if $w$ is in $\textup{inn}^n(v)$,\ then there are $g_1,g_2,\ldots,g_n\in G$ such that $g_nt^{-1}\ldots g_1t^{-1}. v=w$. On the other hand,\ as $V\vec{T_{\chi}}=\bigcup_{n\in \mathbb{N}}\left(\cup_{m\in\mathbb{N}}\textup{inn}^{m}(t^n.v)\right)$ there is $m\in\mathbb{N}$ such that $w$ is in $\cup_{n\in\mathbb{N}}\textup{inn}^n(t^{m}.v)$. By the argument above,\ there is  $h\in \langle G,t\rangle$ such that $h .v=t^{-m}. w$. It then follows that $t^{m}h. v= w$,\ as desired. \\
\indent $(2)$ We will argue by induction on $d(w,\ v)$. The base case,\ $d(w,\ v)=0$,\ is immediate. Suppose that $w\in V\vec{T_{\chi}}$ and that there is a directed geodesic $w=w_0,\cdots,w_{n+1}=v$. By the induction hypothesis,\ there is $\gamma\in \langle G,t\rangle$ such that $\gamma.w_1=v$ and $\sigma(\gamma,\ u)=1$ for all $u\notin \textup{inn}^1(\{w_2,\cdots,w_{n+1}\})$. Since $G\curvearrowright X$ transitively,\ there is $g\in G$ for which $g\gamma.w$ lies on the monochromatic line. It follows that $tg\gamma.w=v$.\\
\indent We now consider the local action of $tg\gamma$ on $V\vec{T_{\chi}}$:

$$\begin{aligned}
\sigma(t g \gamma,\ x) &=\sigma(t g,\ \gamma.x) \sigma(\gamma,\ x) \\
&=\sigma(t,\ g \gamma.x) \sigma(g,\ \gamma.x) \sigma(\gamma,\ x) \\
&=\sigma(g,\ \gamma.x) \sigma(\gamma,\ x)
\end{aligned}$$

\noindent,\ where the last line follows since $\sigma(t,\ u)=1$ for all $u\in V\vec{T_{\chi}}$. The local action $\sigma(g,\ u)$ is trivial for all $u\neq \textup{inn}^1(v)$,\ and the induction hypothesis ensures that $\sigma(\gamma,\ u)$ is trivial for all $u\notin inn (\{w_2,\cdots,w_{n+1}\})$. The proof of the induction step is now complete. \\
\indent For the second part,\ let $h$ in $G$. The local action of $\gamma^{-1}h\gamma$ is as follows:

$$\begin{aligned}
\sigma(\gamma^{-1}h \gamma,\ x) &=\sigma(\gamma^{-1}h,\ \gamma.x) \sigma(\gamma,\ x) \\
&=\sigma(\gamma^{-1},\ h \gamma.x) \sigma(h,\ \gamma.x) \sigma(\gamma,\ x) 
\end{aligned}.$$
Observing that $\sigma(\gamma^{-1},h\gamma.x)=\sigma(\gamma,\gamma^{-1}h\gamma.x)^{-1}$,\ we deduce from claim $(2)$ that $\sigma(\gamma^{-1},h\gamma.x)\neq 1$ only when $x\in \gamma^{-1}h^{-1}\gamma(\textup{inn}^1(\{ w_1,\cdots,w_n \}))$. The element $h$ fixes all elements of the monochromatic ray after $v$,\ and thus,\ $h$ also fixes $\gamma.w_i$ for $1\leqslant i\leqslant n+1$. Then $\gamma.w_i$ necessarily falls on the monochromatic line,\ since every vertex appears as the initial vertex of exactly one directed edge. We thus indeed have $\sigma(\gamma^{-1},h\gamma.x)\neq 1$ only when $x\in \textup{inn}^1(\{ w_1,\cdots,w_{n+1} \})$. Since $\sigma(h,\gamma.x)\neq 1$ when $x=w$ it follows that $\sigma(\gamma^{-1}h\gamma,w)=\sigma(h,v)$. It is also the case that $\sigma(\gamma^{-1}h\gamma,\ x)=1$ for all $x\notin \textup{inn}^1(\left\{ w_1,\cdots,w_n \right\})$.
\end{proof}

\begin{prop}\label{prop:writing ex(g,U) as semidirect product}\index{$P_X(G,\ U)$|ndx}
Let $(G,\ X)$ be a t.d.l.c.s.c.~permutation group and define 
$$P_X(G,\ U):=\bigcup_{v\in V\vec{T_{\chi}}} E_X(G,\ U)_{(v)}$$
the set of all elements that fix at least one vertex of $\vec{T_{\chi}}$. Then $P_X(G,\ U)$ is an open normal subgroup of $E_X(G,\ U)$ and $E_X(G,\ U)=P_X(G,\ U)\rtimes \langle t\rangle \cong P_X(G,\ U)\rtimes \mathbb{Z}$. It is also the case that for every $g\in E_X(G,\ U)$, either $g$ fixes a vertex, or $g$ translates down the line $\Tilde{\delta}$.
\end{prop}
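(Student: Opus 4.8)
The plan is to produce a surjective homomorphism $\phi\colon E_X(G,\ U)\to\mathbb{Z}$ whose kernel is exactly $P_X(G,\ U)$, and then read off every assertion from this single map. First I would introduce the horofunction associated to the end $\chi$: normalise a height function $h\colon V\vec{T_{\chi}}\to\mathbb{Z}$ by setting $h(v_n)=n$ on the monochromatic line $\Tilde{\delta}$, and for an arbitrary vertex $w$ follow its unique directed ray towards $\chi$ (each vertex has a unique outgoing edge) so that $h$ drops by $1$ along every directed edge. Because any $g\in\textup{Aut}(\vec{T_{\chi}})$ preserves orientation, it carries the outgoing edge at $u$ to the outgoing edge at $g.u$; hence for every directed edge $(w,\ w')$ both $h(w')-h(w)$ and $h(g.w')-h(g.w)$ equal $-1$, so the function $w\mapsto h(g.w)-h(w)$ agrees on the endpoints of every edge and, by connectedness of the tree, is constant. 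Call this constant $\phi(g)$. The computation $h(gh.w)=h(h.w)+\phi(g)=h(w)+\phi(h)+\phi(g)$ shows $\phi$ is a homomorphism, and since $t.v_n=v_{n+1}$ gives $\phi(t)=1$, the map $\phi$ is surjective.

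Next I would identify $\ker\phi$ with $P_X(G,\ U)$. One inclusion is immediate: if $g$ fixes a vertex $v$ then $h(g.v)=h(v)$ forces $\phi(g)=0$. For the converse, which I expect to be the main obstacle, suppose $\phi(g)=0$ and consider the forward ray $w=u_0,\ u_1,\ u_2,\ \dots$ from an arbitrary vertex $w$ together with its image $g.w=g.u_0,\ g.u_1,\ \dots$, which is the forward ray from $g.w$ precisely because $g$ respects outgoing edges. Both rays represent $\chi$, so by the definition of ends they eventually coincide: $u_{a+i}=g.u_{b+i}$ for all $i\geqslant 0$. Comparing heights, $h(u_{a+i})=h(w)-(a+i)$ while $h(g.u_{b+i})=h(u_{b+i})=h(w)-(b+i)$ using $\phi(g)=0$; hence $a=b$ and $g.u_{a+i}=u_{a+i}$ for all $i$, so $g$ fixes $u_a$ and thus $g\in E_X(G,\ U)_{(u_a)}\subseteq P_X(G,\ U)$. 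The delicate point is that $g$ need not preserve $\Tilde{\delta}$, so one cannot argue on the monochromatic line directly; the argument must run on the two (a priori different) forward rays and use the height normalisation to exclude a non-zero shift between them.

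With $\ker\phi=P_X(G,\ U)$ established, the structural claims follow formally. The set $P_X(G,\ U)$ is a subgroup, normal as the kernel of a homomorphism, and it is open because it contains the compact open subgroup $W=E_X(U,\ U)_{(v)}$ of Proposition~\ref{prop: comp gener} (which fixes $v$, hence lies in $P_X(G,\ U)$), and a subgroup of a topological group containing a non-empty open set is open. Since $\phi$ is surjective with $\phi(t)=1$ and $\langle t\rangle\cong\mathbb{Z}$ (as $t^n.v_0=v_n\neq v_0$ for $n\neq 0$), we get $\langle t\rangle\cap\ker\phi=\{1\}$; moreover every $g$ satisfies $g\,t^{-\phi(g)}\in\ker\phi$, so $E_X(G,\ U)=P_X(G,\ U)\langle t\rangle$. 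These two facts give the internal semidirect product $E_X(G,\ U)=P_X(G,\ U)\rtimes\langle t\rangle\cong P_X(G,\ U)\rtimes\mathbb{Z}$.

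Finally, for the dichotomy I would invoke the standard elliptic/hyperbolic classification of tree automorphisms, noting that an orientation-preserving automorphism admits no inversions, since an inversion would reverse an edge. If $\phi(g)=0$ then $g\in P_X(G,\ U)$ fixes a vertex; if $\phi(g)\neq 0$ then $g$ fixes no vertex, so $g$ is hyperbolic with a translation axis, and because $g$ fixes the end $\chi$ this axis must terminate at $\chi$ — that is, $g$ translates along a line directed towards $\chi$, in the same direction as $\Tilde{\delta}$. This is the last sentence of the proposition.
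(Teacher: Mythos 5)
Your proof is correct, but it takes a genuinely different route from the paper's. You build the Busemann (horofunction) homomorphism $\phi\colon E_X(G,\ U)\to\mathbb{Z}$ and identify $P_X(G,\ U)$ as its kernel, so that the subgroup property, normality, openness, and the complement $\langle t\rangle$ all drop out of one map; the paper instead argues directly: it observes that fixing a vertex propagates along directed edges towards $\chi$, so that $P_X(G,\ U)=\bigcup_{i\in\mathbb{Z}}E_X(G,\ U)_{(v_i)}$ is an increasing union of open vertex stabilizers along $\Tilde{\delta}$ (hence an open subgroup), checks normality by conjugating fixed vertices, and obtains the factorization $g=t^n(g'h)$ by invoking Lemma \ref{lemma: element t and G}(2) to move $g^{-1}.v$ back to $v$ inside $\langle G,\ t\rangle$. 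Your approach is more self-contained (it does not use Lemma \ref{lemma: element t and G}) and it is the only one of the two that actually proves the final dichotomy, which the paper asserts without argument; its mild costs are that the key inclusion $\ker\phi\subseteq P_X(G,\ U)$ needs the careful two-ray comparison you give, and that the last step leans on two standard but unproved facts (that an orientation-preserving hyperbolic automorphism fixing $\chi$ must have $\chi$ as an end of its axis, and that such an axis eventually merges with $\Tilde{\delta}$, which is the honest reading of ``translates down the line $\Tilde{\delta}$''). The paper's route, by contrast, records the explicit directed-union description of $P_X(G,\ U)$, which is reused later (e.g.\ in Lemma \ref{lemma:Gn 101} and Proposition \ref{prop:new grp is comp gener}). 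One small point to fix if you write this up: the paper's conventions for the direction of $\Tilde{\delta}$ relative to $\chi$ are not entirely consistent (compare the definition of the monochromatic line with the use of $t^{-1}v\in\textup{inn}^1(v)$ in Lemma \ref{lemma: element t and G}), so you should state explicitly which normalisation of $h$ you adopt; with either choice $\phi(t)=\pm 1$ generates $\mathbb{Z}$ and the argument goes through unchanged.
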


\begin{proof}
Let $g\in P_X(G,\ U)$ and $\{v_i\}_{i\in \mathbb{Z}}$ the monochromatic line of $\vec{T_{\chi}}$. Notice that if $v\in V\vec{T_{\chi}}$ is such that $g.v=v$ and $v\in \textup{inn}^1(w)$, then $g$ also fixes $w$. Hence,\ if $g$ fixes a vertex in $\vec{T_{\chi}}$ then,\ by how the oriented edges are defined,\ there exist $N\in \mathbb{Z}$ such that for all $i\geqslant N$,\ we have $g v_i=v_i$. For that reason, we have
$$P_X(G,\ U)=\bigcup_{i\in \mathbb{Z}} E_X(G,\ U)_{(v_i)}$$
that is,\ it is the union of an increasing chain of subgroups. It then follows that the subset $P_X(G,\ U)$ is a subgroup of $E_X(G,\ U)$. It also follows that it is the union of open subgroups,\ hence is open.\\
\indent To show it is normal let $h\in E_X(G,\ U)$,\ $g\in P_X(G,\ U)$ and $w\in V\vec{T_{\chi}}$ be such that $gw=w$. Notice that the vertex $h^{-1}w$ is fixed by the element $h^{-1}gh$. Hence $h^{-1}gh\in P_X(G,\ U)$,\ and the subgroup is normal.\\
\indent To show $\langle P_X(G,\ U),\ t\rangle = E_X(G,\ U)$ let $g\in E_X(G,\ U)$,\ fix $v$ in the monochromatic line and let $G\leqslant P_X(G,\ U)$ as in Lemma \ref{lemma: element t and G}. Define $w=g^{-1}v$. Then by Lemma \ref{lemma: element t and G} $(2)$ there is $\gamma\in \langle G,t\rangle\leqslant \langle P_X(G,\ U),\ t\rangle$ such that $\gamma w=v$. Hence $(\gamma^{-1}g)w=w$,\ that is,\ $\gamma^{-1}g=h\in P_X(G,\ U)$. As $t$ normalizes $G$, there are $n\in\mathbb{Z}$ and $g'\in G$ such that $\gamma=t^ng'$. It then follows that $g=t^n(g' h)\in\langle P_X(G,\ U),\ t\rangle$.\\
\indent Because $t$ moves all vertices it is easy to see that $P_X(G,\ U)\cap \langle t\rangle =\{1\}$. Hence $E_X(G,\ U)=P_X(G,\ U)\rtimes \langle t \rangle$.
\end{proof}

To work with the rank more easily,\ we also need to show that the group is compactly generated under some conditions.

\begin{prop}\label{prop:new grp is comp gener}
Suppose that $(G,\ X)$ is a transitive t.d.l.c.s.c.~permutation group. If $G$ is compactly generated,\ the $E_X(G,\ U)$ is compactly generated for any $U\in\mathcal{U}(G)$.
\end{prop}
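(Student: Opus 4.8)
The plan is to exhibit an explicit compact generating set and show it generates the whole group. Fix the vertex $v=v_0$ on the monochromatic line, let $W:=E_X(U,\ U)_{(v)}$, which is compact open by Propositions \ref{prop:compact subgrp} and \ref{prop: comp gener}, and let $K$ be a compact generating set of $G$, viewed inside $E_X(G,\ U)$ via the identification of $G$ with the copy acting only on $\textup{inn}^1(v)$. I claim $\Omega:=K\cup\{t\}\cup W$ generates $E_X(G,\ U)$; since $\Omega$ is a finite union of compact sets it is itself compact, so this gives the result. Write $H:=\langle\Omega\rangle$. Then $H\supseteq\langle G,\ t\rangle$, which acts transitively on $V\vec{T_{\chi}}$ by Lemma \ref{lemma: element t and G}(1), and $H\supseteq W$. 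By Proposition \ref{prop:writing ex(g,U) as semidirect product}, $E_X(G,\ U)=P_X(G,\ U)\rtimes\langle t\rangle$ with $P_X(G,\ U)=\bigcup_{i\geqslant 0}E_X(G,\ U)_{(v_i)}$ an increasing union and $E_X(G,\ U)_{(v_i)}=t^i E_X(G,\ U)_{(v)}t^{-i}$; since $t\in H$, it therefore suffices to prove the single containment $E_X(G,\ U)_{(v)}\subseteq H$.

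The key device is a copy of $G$ sitting at every vertex. For $w\in V\vec{T_{\chi}}$, transitivity gives $\alpha_w\in\langle G,\ t\rangle\leqslant H$ with $\alpha_w.v=w$; set $G_w:=\alpha_w G\alpha_w^{-1}\leqslant H$. A cocycle computation, using $\sigma(\alpha_w^{-1},\ w)=\sigma(\alpha_w,\ v)^{-1}$ and that each element of $G$ fixes $v$, shows $\sigma(\alpha_w h\alpha_w^{-1},\ w)=\sigma(\alpha_w,\ v)\,\sigma(h,\ v)\,\sigma(\alpha_w,\ v)^{-1}$; since $\sigma(\alpha_w,\ v)\in G$, as $h$ ranges over $G$ the local action at $w$ ranges over all of $G$, while the only singularity of $\alpha_w h\alpha_w^{-1}$ is at $w$. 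Now take $g\in E_X(G,\ U)_{(v)}$ with finite singularity set $S(g)$ and each $\sigma(g,\ u)\in G$. I would kill the singularities one at a time: for $u\in S(g)$, pick $f_u\in G_{g.u}$ with $\sigma(f_u,\ g.u)=\sigma(g,\ u)^{-1}$. Because the only nontrivial local action of $f_u$ sits at $g.u$, the identity $\sigma(f_u g,\ x)=\sigma(f_u,\ g.x)\sigma(g,\ x)$ gives $\sigma(f_u g,\ u)=1$ while leaving $\sigma(f_u g,\ x)=\sigma(g,\ x)$ for every other $x$; hence $S(f_u g)=S(g)\setminus\{u\}$. Iterating over $S(g)$ produces $f\in\langle G,\ t\rangle\leqslant H$ with $fg\in E_X(U,\ U)$.

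Finally I would check that $fg$ lands in a conjugate of $W$. Each $f_u\in G_{g.u}$ fixes $g.u$, so $f_u\in P_X(G,\ U)$; since $P_X(G,\ U)$ is a subgroup (Proposition \ref{prop:writing ex(g,U) as semidirect product}) we get $fg\in P_X(G,\ U)$, so $fg\in E_X(U,\ U)$ fixes a vertex, and by the argument of Proposition \ref{prop:writing ex(g,U) as semidirect product} it fixes $v_j$ for all large $j$. Therefore $fg\in E_X(U,\ U)_{(v_j)}=t^j W t^{-j}\subseteq H$, whence $g=f^{-1}(fg)\in H$. This establishes $E_X(G,\ U)_{(v)}\subseteq H$ and hence $H=E_X(G,\ U)$. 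The main obstacle is the bookkeeping in the inductive singularity-killing step: verifying that a correction at $u$ by an element whose local action is supported only at $g.u$ neither disturbs the other singularities nor creates new ones, and that the resulting singularity-free element still fixes a vertex so that it is captured by a translate of the compact open subgroup $W$. Transitivity of $(G,\ X)$ enters precisely in guaranteeing that the local copies $G_w$ exist at every vertex.
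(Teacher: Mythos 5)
Your generating set $K\cup\{t\}\cup W$, the reduction to showing $E_X(G,\ U)_{(v)}\leqslant H$, and the idea of removing singularities one at a time are exactly the paper's set-up (the paper takes $H=\langle Z,\ E_X(U,\ U)_{(v)},\ t\rangle$ and inducts on $|S(g)|$); your singularity-killing step is the natural way to finish the induction step that the paper only begins, and your closing observation that the singularity-free remainder $fg$ lies in $P_X(G,\ U)\cap E_X(U,\ U)$, hence in some $t^jWt^{-j}\leqslant H$, is correct.

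The one genuine gap is the sentence ``the only singularity of $\alpha_w h\alpha_w^{-1}$ is at $w$,'' where $\alpha_w$ is merely \emph{some} element of $\langle G,\ t\rangle$ with $\alpha_w.v=w$ supplied by transitivity. Writing $y=\alpha_w^{-1}.x$, the cocycle identity gives
$$\sigma(\alpha_w h\alpha_w^{-1},\ x)=\sigma(\alpha_w,\ h.y)\,\sigma(h,\ y)\,\sigma(\alpha_w,\ y)^{-1}.$$
This is trivial when $y\notin\{v\}\cup\textup{supp}(h)$, but for $y\in\textup{supp}(h)$ (a vertex strictly below $v$ that $h$ moves) it equals $\sigma(\alpha_w,\ h.y)\,\sigma(\alpha_w,\ y)^{-1}$, which for a general $\alpha_w$ need not lie in $U$: the conjugate can acquire new singularities throughout $\alpha_w(\textup{supp}(h))$ wherever the local action of $\alpha_w$ is not constant along an $h$-orbit, so the correction element $f_u$ could itself spray singularities over the subtree below $g.u$ and the induction would not close. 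The fix is to take $\alpha_w=\gamma^{-1}$ (composed with a power of $t$ if $w$ is not below $v$) with $\gamma$ the element produced by Lemma \ref{lemma: element t and G}$(2)$: its local action is trivial off $\textup{inn}^1(\{w_1,\ldots,w_n\})$, and since $\textup{supp}(h)$ is carried into the subtree strictly below $w$, which is disjoint from $\textup{inn}^1(\{w_1,\ldots,w_n\})$, the offending factors vanish and the conjugate really is supported at $w$ alone, acting there by $\sigma(\gamma,\ w)^{-1}\sigma(h,\ v)\sigma(\gamma,\ w)$ — still all of $G$ as $h$ varies. With that citation and computation added (this is precisely the ``bookkeeping'' you flagged) the argument is complete. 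A minor further point: after multiplying by $f_{u_1}$ the remaining singularities of $f_{u_1}g$ still sit at the vertices of $S(g)\setminus\{u_1\}$, but the next correction must be placed at $(f_{u_1}g).u$, which may differ from $g.u$; running the argument as an induction on $|S(g)|$, choosing each correction for the current element, avoids fixing all the $f_u$ up front.
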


\begin{proof}
Let $\Tilde{\delta}$ be the monochromatic line and fix a vertex $v\in \Tilde{\delta}$. Identify the group $G$ with the subgroup of $E_X(G,\ U)$ as in Lemma \ref{lemma: element t and G}. Let $Z$ be a compact generating set for $G$. \\
\indent Let $H:=\langle Z,E_X(U,\ U)_{(v)},t\rangle\leqslant E_X(G,\ U)$. We argue by induction on the number of singularities $|S(g)|$ that if $g\in E_X(G,\ U)$ then $g\in H$,\ that is,\ $H=E_X(G,\ U)$. For the base case,\ $|S(g)|=0$,\ we see that $g\in E_X(U,\ U)$. By Proposition \ref{prop:writing ex(g,U) as semidirect product},\ it follows that the element $g$ either fixes a vertex or translates down the line $\Tilde{\delta}$. For the former case,\ $g$ fixes some $w$ on the ray $\delta$. Taking $n\in\mathbb{Z}$ such that $t^n .w=v$,\ we see that $t^ngt^{-n}$ fixes $v$,\ and $t^ng t^{-n}\in E_X(U,\ U)_{(v)}$. Thus $g\in H$. For the latter case,\ there are $w$ and $w'$ on the ray $\delta$ such that $g.w=w'$. Taking $n\in\mathbb{Z}$ such that $t^n.w'=w$,\ the element $t^ng$ has no singularities and fixes a point. Applying the first case,\ we deduce that $g\in H$.\\
\indent Let us now suppose that $|S(g)|=n+1$. As in the base case,\ we may assume that $g$ fixes $v$ by acting with $t$. Acting again by $t$ if needed,\ we may further suppose that there exists $w\in S(g)$ such that there is a directed geodesic from $w$ to $v$ and the local action of $g$ on each vertex on the geodesic is by an element of $U$. Let $w\in S(g)$ admit such a geodesic and be such that $d(w,v)$ is the least.\\
\end{proof}

Let $(G,\ X)$ a t.d.l.c.s.c.~permutation group,\ $U\in\mathcal{U}(G)$,\ $\vec{T_{\chi}}$ a tree coloured by the countable set $X$ and $v\in V\vec{T_{\chi}}$ be a fixed vertex in the monochromatic ray. One can identify,\ as denoted in Definition \ref{defi:wr product},\ the set $X^n$ with $\textup{inn}^n(v)$ and the permutation group $(G_n,\ X^n)$ with 
$$\left\{ g\in E_X(G,\ U)_{(v)}; \ \forall w\notin \cup_{1\leq i \leq n}\textup{inn}^i(v),\ \sigma(g,\ w)=1 \right\}$$
acting on $\textup{inn}^n(v)$. Given the fixed vertex $v$,\ we define $T_0$\index{$T_0$|ndx} to be the half tree containing $\bigcup_{n\in\mathbb{N}}\textup{inn}^n(v)\cup \{v\}$ and all the edges with vertices in this union. We will denote by $T_R$\index{$T_R$|ndx} its complement.
\begin{lemma}\label{lemma:Gn 101}
Suppose that $(G,\ X)$ is a t.d.l.c.s.c.~permutation group with $U \in \mathcal{U}(G)$. Let $\vec{T_{\chi}}$ be coloured by the countable set $X$,\ and $v\in V\vec{T_{\chi}}$ a fixed vertex in the monochromatic line. Form $E_X(G,\ U)_{(T_R)}$. Then,
\begin{enumerate}[(1)]
    \item For any $n\geqslant 1$,\ the action on the wreath product of $n$ copies of $(G,\ X)$ over $U$,\ denoted $G_n$,\ on the infinitely branching rooted tree gives a continuous closed embedding of $G_n$ into $E_X(G,\ U)_{(T_R)}$;
    \item Under this embedding $\overline{\bigcup_{n\geqslant 1}G_n}=E_X(G,\ U)_{(T_R)}$; and
    \item Letting $t\in E_X(G,\ U)$ be a translation toward the distinguished end $\delta$,\ $$\overline{\bigcup_{n\geqslant 0}t^n (E_X(G,\ U)_{(T_R)})t^{-n}}=P_X(G,\ U)$$
\end{enumerate}
\end{lemma}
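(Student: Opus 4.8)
The plan is to prove the three statements in order, relying throughout on the description of $T_0$ as the half-tree of descendants of $v$ (together with $v$) and of $T_R$ as its complement, and on the elementary observation that any automorphism fixing $T_R$ pointwise already fixes $v$: writing $v^{+}$ for the out-neighbour of $v$, the siblings of $v$ all lie in $T_R$ and so are fixed, and since the automorphism permutes $\textup{inn}^1(v^{+})$ and fixes every element of it except possibly $v$, it must fix $v$ as well, and the same reasoning shows its local action is trivial at every vertex of $T_R$. For part (1) I would recall from the paragraph preceding the lemma that $G_n$ is identified with the set of $g\in E_X(G,\ U)_{(v)}$ whose local action is trivial off $\bigcup_{1\le i\le n}\textup{inn}^i(v)$; every such $g$ fixes $v$ and has trivial local action on all of $T_R$, hence fixes $T_R$ pointwise, giving the injective homomorphism $G_n\hookrightarrow E_X(G,\ U)_{(T_R)}$. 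To see it is a topological embedding I would compare the two groups on $V_n:=E_X(U,\ U)_{(v)}\cap G_n$: in the wreath-product topology $V_n$ is a profinite open subgroup of $G_n$ (the iterated wreath product of copies of $U$), while in the subspace topology it is a closed subgroup of the compact group $E_X(U,\ U)_{(v)}$ of Proposition \ref{prop:compact subgrp}, hence compact and open. The identity map from $V_n$ with its profinite topology to $V_n$ with the subspace topology is continuous, since the preimage of a subspace-basic subgroup is the stabilizer in $V_n$ of a finite set of tree vertices, which is clopen for the profinite action; a continuous bijection of a compact space onto a Hausdorff space is a homeomorphism, so the two topologies agree on $V_n$, and as $V_n$ is open in both groups they agree on all of $G_n$. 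The image is closed because
$$G_n=E_X(G,\ U)_{(T_R)}\cap\bigcap_{w\notin\bigcup_{1\le i\le n}\textup{inn}^i(v)}\{g;\ \sigma(g,\ w)=1\},$$
an intersection of conditions closed in the topology of pointwise convergence.

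For part (2), the inclusion $\overline{\bigcup_{n\ge 1}G_n}\subseteq E_X(G,\ U)_{(T_R)}$ is immediate from part (1) together with the fact that $E_X(G,\ U)_{(T_R)}=\bigcap_{w\in T_R}\{g;\ g.w=w\}$ is closed. For the reverse inclusion I would use a truncation argument: given $g\in E_X(G,\ U)_{(T_R)}$, which fixes $v$ by the opening observation, and given a basic identity neighbourhood, choose $n$ so large that the finitely many vertices determining the neighbourhood, as well as all singularities of $g$, lie within depth $n$ of $v$. Defining $h$ to coincide with $g$ on $T_0$ up to depth $n$ and to have trivial local action beyond, the finiteness of $S(g)$ ensures that $h$ is a genuine element of $E_X(G,\ U)$, lies in $G_n$, and shares the relevant compact-open coset with $g$; letting $n\to\infty$ yields $g\in\overline{\bigcup_n G_n}$.

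For part (3), I would first note $t^n E_X(G,\ U)_{(T_R)}t^{-n}=E_X(G,\ U)_{(t^nT_R)}$, and that, since $v=v_0$ is a descendant of $v_n=t^n v$, the half-trees satisfy $T_0\subseteq tT_0\subseteq\cdots$ with $\bigcup_n t^nT_0=V\vec{T_{\chi}}$, so the conjugates form an increasing chain of subgroups. By the opening observation each $E_X(G,\ U)_{(t^nT_R)}$ fixes $v_n$ and hence lies in $P_X(G,\ U)$; as $P_X(G,\ U)$ is open by Proposition \ref{prop:writing ex(g,U) as semidirect product}, hence closed, the closure of the union is contained in $P_X(G,\ U)$. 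For the reverse inclusion, any $g\in P_X(G,\ U)$ fixes some $v_N$ and therefore fixes $v_n$ and preserves both half-trees $t^nT_0,\ t^nT_R$ for $n\ge N$. Replacing $g$ by the identity on $t^nT_R$ while keeping it on $t^nT_0$ produces, for $n$ large enough that the singularities of $g$ lie in $t^nT_0$, elements $h_n\in E_X(G,\ U)_{(t^nT_R)}$ converging to $g$, whence $P_X(G,\ U)\subseteq\overline{\bigcup_n t^n E_X(G,\ U)_{(T_R)}t^{-n}}$.

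The step I expect to be the main obstacle is the topological-embedding claim in part (1): pinning down the common compact open subgroup $V_n$ and verifying that the subspace topology and the wreath-product topology genuinely coincide on it, rather than one merely refining the other. Parts (2) and (3) are then two applications of the same truncation-and-gluing construction, whose only delicate point is checking that the truncated maps are honest automorphisms of $\vec{T_{\chi}}$ lying in $E_X(G,\ U)$ and that they eventually agree with the target on a full compact-open coset; both are guaranteed by the finiteness of $S(g)$.
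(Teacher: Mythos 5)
Your proposal is correct and follows essentially the same route as the paper: an injective homomorphism plus a comparison of the two topologies for part (1), and the same truncation-and-gluing of $g$ at finite depth (using finiteness of $S(g)$ and pointwise convergence on $E_X(U,\ U)_{(v)}$) for parts (2) and (3). Your treatment of the topological embedding in (1), via the common compact open subgroup $V_n$ and the compact-to-Hausdorff bijection argument, is in fact somewhat more explicit than the paper's sequential-convergence sketch.
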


\begin{proof}
$(1)$ Observe that the action,\ as defined above,\ is an injective homomorphism from $G_m$ to $E_X(G,\ U)_{(T_R)}$.
\ Notice that in both $G_m$ and $E_X(G,\ U)_{(T_R)}$,\ a sequence $\{g_n\}_{n\in\mathbb{N}}$ converges to $g$ if,\ and only if there exist $N\in\mathbb{N}$ such that the sequence $\{g_n^{-1}g\}_{n\geqslant N}$ is in the subgroup with all entries in $U$ and converges pointwise to the identity,\ that is,\ the embedding is continuous. If $\{g_n\}_{n\in\mathbb{N}}\subset G_m$ converges to $g$ in $E_X(G,\ U)$ it must converge to an element that acts trivially on the colour of every vertex outside $\cup_{0\leqslant i\leqslant m} \textup{inn}^i(v)$. As every element fixing the colours of the vertices in $V\vec{T_{\chi}}\backslash\cup_{0\leqslant i\leqslant m} \textup{inn}^i(v)$ is an element of $G_m$, by the action defined above,\ it follows that $g\in G_m$.
Hence the group $G_m$ can be seen as a closed subgroup of $E_X(G,\ U)_{(T_R)}$ under this embedding.\\
\indent $(2)$ Let $g\in E_X(G,\ U)_{(T_R)}$. Given $n\in\mathbb{N}$,\ let $g_n\in G_n$ be the element such that for every $m>n$, $(x_1,\cdots,x_m)\in X^m$ we have $g_n.x_i=g.x_i$,\ if $i\leqslant n$,\ and $\sigma(g_n,\ x_i)=1$ if $i>n$. Let $\{g_n\}_{n\in\mathbb{N}}$ be the sequence defined as above. Observe that because $S(g)$ is finite,\ there exist $N\in\mathbb{N}$ such that for all $n\geqslant N$,\  $g_ng^{-1}\in E_X(U,\ U)_{(v)}$. Because $E_X(U,\ U)_{(v)}$ has the pointwise convergence,\ we get that $g_ng^{-1}\rightarrow 1$ as $n\rightarrow \infty$,\ proving our assumption.\\
\indent (3) Let $g\in P_X(G,\ U)$. By definition, there is $w\in V\vec{T_{\chi}}$ such that $g.w=w$. We want to build a sequence of elements in $\cup_{n\geqslant 0} t^{n}(E_X(G,\ U)_{(T_R)})t^{-n}$. Given $n\in \mathbb{N}$ let $T_0^{(n)}:=t^n.T_0$,\  the half tree defined as above but rooted at $t^{n}v$ instead of $v$. Let $k\in \mathbb{N}$ be such that $w\in T_0^{(k)}$. Define $\{g_n\}_{n\in\mathbb{N}}$ as follows:
$$g_n.w'= \left\{ \begin{array}{ll}
    g.w'  & \mbox{if }w'\in \textup{inn}(t^{k+n}.w)  \\
    w' & \mbox{otherwise.}
\end{array}\right.$$
\noindent Therefore $g_n\in E_X(G,\ U)_{(T_R^{(k+n)})}=t^{k+n} E_X(G,\ U)_{(T_R)}t^{-k-n}$. Notice that as $S(g)$ is finite,\ there exist $N\in\mathbb{N}$ such that the subsequence $\{g_ng^{-1}\}_{n\geqslant N}\in E_X(U,\ U)_{(w)}$. Because the subgroup $E_X(U,\ U)_{(w)}$  has the pointwise convergence,\ it follows that $g_ng^{-1}\rightarrow 1$ as $n\rightarrow \infty$,\ as desired. 

\end{proof}

\begin{prop}\label{prop: monolith tree}
Suppose that $(G,\ X)$ is a transitive elementary permutation group. Then,\ $E_X(G,\ U)$ is monolithic (Definition \ref{defi:general defi}),\ and the monolith is
$$M:=\overline{\langle[P_X(G,\ U),P_X(G,\ U)] \rangle}.$$
If $G$ is also topologically perfect (Definition \ref{defi:general defi}),\ then the monolith is equal to $P_X(G,\ U)$.
\end{prop}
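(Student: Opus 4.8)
The plan is to prove the sharper statement that \emph{every} non-trivial closed normal subgroup of $E := E_X(G,\ U)$ contains $M$; this exhibits $M$ as the unique minimal non-trivial closed normal subgroup, which is exactly monolithicity with monolith $M$. Write $P := P_X(G,\ U)$, and for a vertex $u$ let $T^{(u)} := \{u\}\cup\bigcup_{m\geqslant 1}\textup{inn}^m(u)$ be the inner half-tree below $u$ and let $R_u\leqslant E$ be the pointwise stabiliser of its complement, so that $R_v = E_X(G,\ U)_{(T_R)}$ and, by Lemma \ref{lemma:Gn 101}, $R_{v_n}=t^nR_vt^{-n}$ is an increasing chain with $\overline{\bigcup_n R_{v_n}}=P$. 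Two facts are immediate. First, since $P\trianglelefteq E$ the subgroup $\langle[P,P]\rangle$ is normal in $E$, so $M=\overline{\langle[P,P]\rangle}$ is a closed normal subgroup. Second, $M\neq\{1\}$: the copy of $G$ acting on $\textup{inn}^1(v)$ and its conjugate $tGt^{-1}$ do not commute, because a non-trivial element of $tGt^{-1}$ carries $v$ to a sibling $v'$ and hence conjugates the $G$-action, supported on $T^{(v)}$, to an action on the disjoint half-tree $T^{(v')}$. Finally, transitivity of $\langle G,t\rangle$ on $V\vec{T_{\chi}}$ (Lemma \ref{lemma: element t and G}) shows that all the $R_u$ are $E$-conjugate.

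The heart of the argument is the following engine: if $N_0\trianglelefteq E$ is non-trivial with $N_0\leqslant P$, then $N_0\supseteq[R_a,R_a]$ for some vertex $a$. Pick $1\neq g\in N_0$. A tree automorphism fixing every vertex has trivial local action everywhere, hence is the identity, so $g$ moves some vertex $a$; and since $g$ fixes a vertex it has translation length $0$ and preserves the horospheres based at $\chi$, so $a$ and $ga$ lie at the same level and are therefore incomparable. Thus $T^{(ga)}$ and $T^{(a)}=g^{-1}T^{(ga)}$ are disjoint, and $R_{ga}$, $R_a=g^{-1}R_{ga}g$ commute. For $f\in R_{ga}$ put $\phi(f):=[g,f]\in N_0$ and $\widehat f:=g^{-1}f^{-1}g\in R_a$, so that $\phi(f)=\widehat f f$ with $\widehat f\in R_a$ and $f\in R_{ga}$ commuting. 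A direct computation of $\phi(f)\phi(f_1)\phi(ff_1)^{-1}$ then cancels the $R_{ga}$-part and leaves an element of $[R_a,R_a]$; as $f,f_1$ range over $R_{ga}$ the elements $\widehat f,\widehat{f_1}$ range over all of $R_a$, so $[R_a,R_a]\subseteq N_0$. By conjugation and normality of $N_0$ this gives $[R_u,R_u]\subseteq N_0$ for every vertex $u$, in particular $[R_{v_n},R_{v_n}]\subseteq N_0$ for all $n$. Since the $R_{v_n}$ increase with dense union $P$ and the commutator map is continuous, every commutator $[x,y]$ with $x,y\in P$ is a limit of elements of $\bigcup_n[R_{v_n},R_{v_n}]\subseteq N_0$; as $N_0$ is closed we conclude $M=\overline{\langle[P,P]\rangle}\subseteq N_0$.

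It remains to reduce an arbitrary non-trivial closed normal $N\trianglelefteq E$ to the engine by showing $N\cap P\neq\{1\}$, after which $N_0:=N\cap P$ is closed, normal and contained in $P$, so $M\subseteq N_0\subseteq N$. If $N\not\subseteq P$, then by Proposition \ref{prop:writing ex(g,U) as semidirect product} some $g\in N$ translates along $\Tilde{\delta}$, i.e.\ is hyperbolic with axis pointing to $\chi$; a ping-pong choice of a vertex $w$ off the axis produces a non-trivial $f\in R_w$ with $\textup{supp}(g^{-1}fg)=g^{-1}\textup{supp}(f)$ disjoint from $\textup{supp}(f)$, whence $[g,f]$ is a non-trivial element of $N$ supported on two disjoint half-trees and thus fixing a common vertex above them, so $[g,f]\in N\cap P\setminus\{1\}$. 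This establishes that $E$ is monolithic with monolith $M$. For the last claim, suppose $G$ is topologically perfect; an induction shows each iterated wreath product $G_m$ is topologically perfect, since the base $\bigoplus_X(G,U)$ lies in the closed commutator subgroup (each coordinate copy of $G$ is topologically perfect) and the quotient $G_m$ is topologically perfect. Hence $R_v=\overline{\bigcup_m G_m}$ (Lemma \ref{lemma:Gn 101}) is topologically perfect, and conjugating by powers of $t$ and passing to the dense increasing union $\overline{\bigcup_n R_{v_n}}=P$ yields $\overline{\langle[P,P]\rangle}=P$, i.e.\ $M=P$.

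The main obstacle is the engine: isolating $[R_a,R_a]$ inside $N_0$ by the double commutator $\phi(f)\phi(f_1)\phi(ff_1)^{-1}$, which works precisely because the level-preservation of $g$ forces $T^{(a)}$ and $T^{(ga)}$ to be disjoint and so makes $R_a$ and $R_{ga}$ commute. The ping-pong producing a fixed vertex in the reduction step is routine but must be arranged with care.
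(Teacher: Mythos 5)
Your proof is correct and follows essentially the same route as the paper's: both isolate the rigid stabilizers of the inner half-trees (your $R_u$ are the paper's $L_u$), use the disjointness of supports obtained by conjugating such a stabilizer with a normal-subgroup element that moves the relevant vertex, apply a double-commutator identity to force $[L_u,L_u]$ into the normal subgroup, invoke transitivity to cover all vertices, and use Lemma \ref{lemma:Gn 101} for the topologically perfect case. The only differences are organizational --- you first reduce to $N\cap P_X(G,\ U)$ where the paper treats the translation case of $h=t^np$ directly, and you add an explicit (and welcome) verification that $M\neq\{1\}$, which the paper leaves implicit.
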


\begin{proof}
Given $u\in V\vec{T_{\chi}}$, we define the set of vertices $T^{u}:=V\vec{T_{\chi}}\backslash \cup_{n\in\mathbb{N}}\textup{inn}^n(u)$ and the subgroup $L_u:=E_X(G,\ U)_{(T^{u})}\leqslant E_X(G,\ U)$.\\
\indent Colour $\vec{T_{\chi}}$ by $X$ and let $\delta$ be a monochromatic ray giving the distinguished end $\chi$. Suppose that $H$ is a non-trivial closed normal subgroup of $E_X(G,\ U)$ and let $h\in H$ act non-trivially on $\vec{T_{\chi}}$. By Proposition \ref{prop:writing ex(g,U) as semidirect product} we have $h=t^n p$ for some $n\in \mathbb{Z}$ and $p\in P_X(G,\ U)$. \\

\begin{figure}[ht]
\includegraphics[scale=0.21]{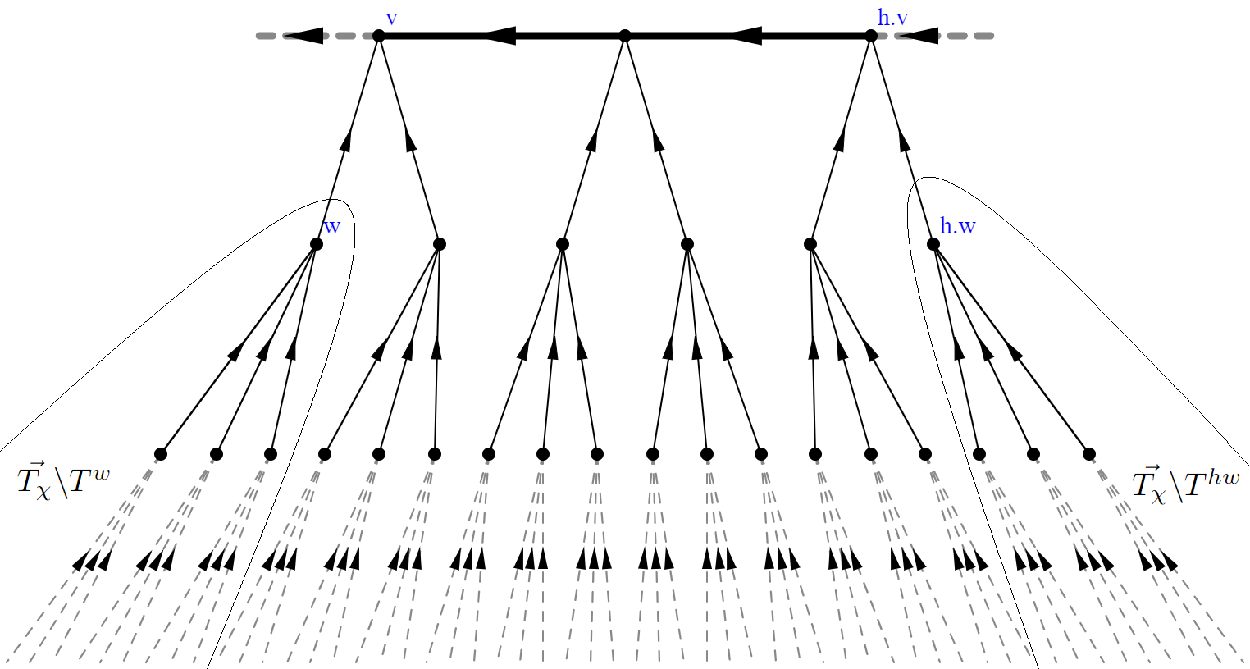}\\
\includegraphics[scale=0.21]{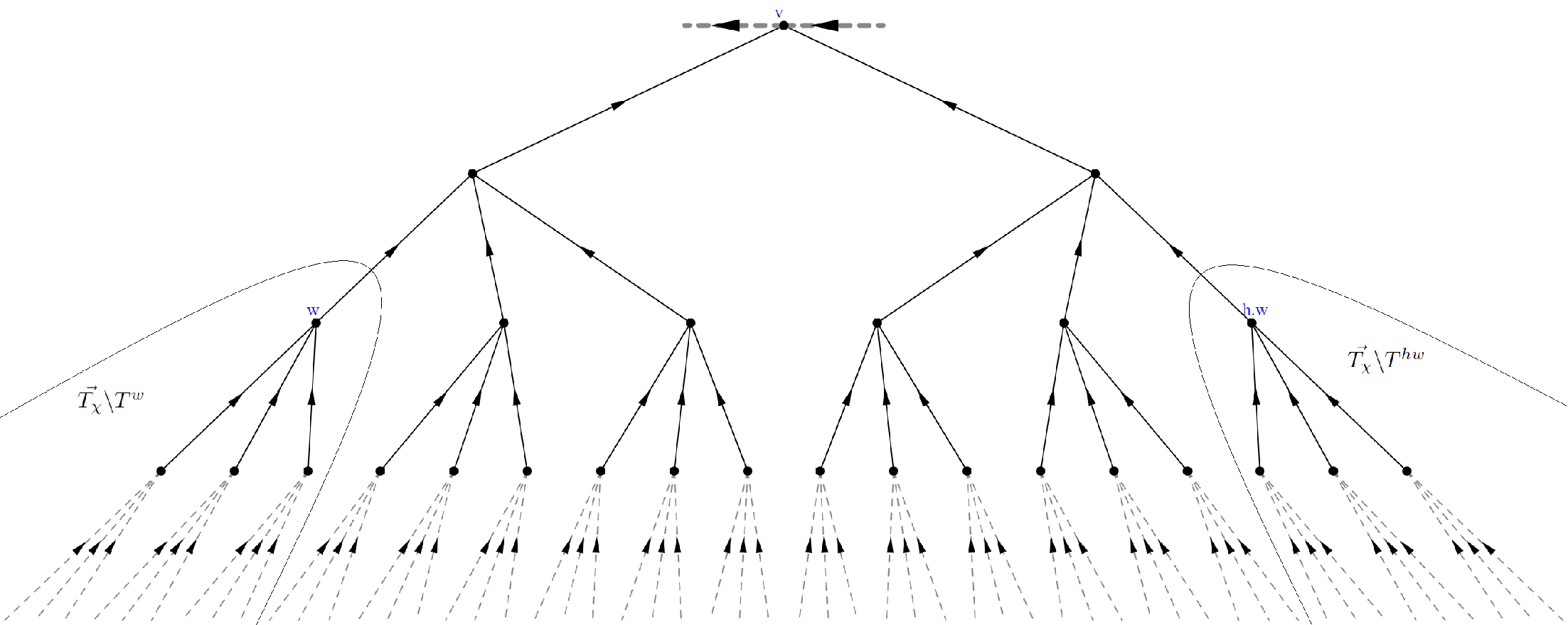}
\caption{The sets $\vec{T_{\chi}}\backslash T^w$ and $\vec{T_{\chi}}\backslash T^{hw}$ are disjoint in both cases,\ hence their respective subsets $\text{supp}(x)$ and $h.\text{supp}(x)=\text{supp}(hxh^{-1})$ are disjoint.}
\floatfoot{The monochromatic ray is represented by the highlighted line on top. Images made using \cite{draw_graph}.}
\end{figure}

\indent First,\ we will prove for the case $n\neq 0$. Since all translations in $E_X(G,\ U)$ must fix the distinguished end $\chi$,\ for all $\delta\in \chi$ we have that $h\delta\cap \delta\in \chi$. Fix such a monochromatic ray $\delta$. There are thus distinct vertices $v$ and $v'$ on $\delta$ such that $h.v=v'$. Fix such a $v$. Take $w\in \textup{inn}^1(v)$ not on the monochromatic ray. For $x,y\in L_w$,\ both $x$ and $y$ commute with $hxh^{-1}$,\ since $\textup{supp}(hxh^{-1})=h \textup{supp}(x)\subset h (\vec{T_{\chi}}\backslash T^w)=\vec{T_{\chi}}\backslash T^{hw}$ is disjoint from $\textup{supp}(x),\ \textup{supp}(y)\subset T^{w}$. This implies that $[[x,h],y]=[x,y]$. Since $H$ is normal,\ it is also the case that $[[x,h],y]\in H$. We conclude that $\overline{[L_w,L_w]}\leqslant H$.\\
\indent For the case $n=0$,\ there exist $v\in \delta$ such that $h.v=v$ and for some $w \in \cup_{n\in\mathbb{N}} \textup{inn}^n(v)$ we have that $h.w\neq w$. Fix such a $v$. As for the case $n\neq 0$ we have that $\overline{[L_w,\ L_w]}\leqslant H$.\\
\indent The group $E_X(G,\ U)$ acts transitively on $V\vec{T_{\chi}}$ via Lemma \ref{lemma: element t and G},\ so indeed $\overline{[L_u,L_u]}\leqslant H$ for all vertices $u\in V\vec{T_{\chi}}$. It now follows that $\overline{\langle [L_u,L_u];u\in V\vec{T_{\chi}}\rangle}=\overline{\langle[P_X(G,\ U),P_X(G,\ U)]\rangle}$ is the monolith of $E_X(G,\ U)$.\\
\indent Now assume $G$ is also topologically perfect. By the argument above,\ we have that if $v$ is a vertex in the monochromatic ray,\ $\overline{[L_v,\ L_v]}\leqslant H$. The group $G$ is topologically perfect,\ so the wreath product $G_n$ of $n$ copies of $(G,\ X)$ over $U$ is also topologically perfect. In view of Lemma \ref{lemma:Gn 101}, we conclude that $G_n$ can be embedded as a closed subgroup of $\overline{[L_v,\ L_v]}\leqslant H$ for all $n\in\mathbb{N}$. Hence $L_v\leqslant H$. Letting $t$ be the translation down the monochromatic ray,\ $t^nL_v t^{-n}\leqslant H$ for all $n\geqslant 0$. Applying again Lemma \ref{lemma:Gn 101},\ we have that $P_X(G,\ U)\leqslant H$.
\end{proof}

\begin{prop}\label{prop:gluing result remaining}
Let $(G,\ X)$ be a transitive permutation group. If $U\in\mathcal{U}(G)$ is such that $\langle\!\langle U\rangle\!\rangle_G=G$ then for every $V\in\mathcal{U}(P_X(G,\ U))$  we have
$$\langle\!\langle V\rangle\!\rangle_{P_X(G,U)}=P_X(G,U).$$
\end{prop}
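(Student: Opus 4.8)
The plan is to show that $N:=\langle\!\langle V\rangle\!\rangle_{P_X(G,U)}$, which is a closed normal subgroup of $P_X(G,U)$ containing the open subgroup $V$ (hence is itself open), contains a copy of $G$ rooted at every vertex, and then to reassemble all of $P_X(G,U)$ from these copies using Lemma~\ref{lemma:Gn 101}. For a vertex $w$, write $G^{(w)}$ (resp.\ $U^{(w)}$) for the subgroup of $E_X(G,U)$ acting as $G$ (resp.\ $U$) on $\textup{inn}^1(w)$ via the identification of Lemma~\ref{lemma: element t and G} and trivially on every other vertex; note that $G^{(w)}\leqslant P_X(G,U)$ since it fixes $w$, and that $U^{(w)}\leqslant E_X(U,U)$.

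First I would reduce to a standard compact open subgroup. Since $E_X(U,U)_{(v)}$ is compact open with the pointwise-convergence topology, the subgroups $E_X(U,U)_{(F)}$ with $F$ a finite set of vertices form a neighbourhood basis of the identity, so the open subgroup $V$ contains $E_X(U,U)_{(F)}$ for some finite $F$. The key observation is that $E_X(U,U)_{(F)}$ contains $U^{(w)}$ for every vertex $w$ whose strict descendant subtree $\bigcup_{n\geqslant 1}\textup{inn}^n(w)$ is disjoint from $F$: such a $U^{(w)}$ has empty singularity set and moves only vertices strictly below $w$, hence fixes $F$. Because $F$ is finite, the ``bad'' vertices (the ancestors of some vertex of $F$) meet each level set of $\vec{T_{\chi}}$ in only finitely many points, so good vertices $w$ occur at every level.

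The heart of the argument is to promote each such $U^{(w)}$ to the full $G^{(w)}$ using the hypothesis. Since $U^{(w)}\leqslant E_X(U,U)_{(F)}\leqslant V\leqslant N$, and $N$ is normal in $P_X(G,U)$ with $G^{(w)}\leqslant P_X(G,U)$, every $G^{(w)}$-conjugate of $U^{(w)}$ lies in $N$; thus $\langle\!\langle U^{(w)}\rangle\!\rangle_{G^{(w)}}\leqslant N$. Transporting the hypothesis $\langle\!\langle U\rangle\!\rangle_G=G$ across the isomorphism $G^{(w)}\cong G$ (which carries $U^{(w)}$ to $U$) gives $\langle\!\langle U^{(w)}\rangle\!\rangle_{G^{(w)}}=G^{(w)}$, so $G^{(w)}\leqslant N$ for every good $w$. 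This is precisely the step where $\langle\!\langle U\rangle\!\rangle_G=G$ is indispensable, and I expect it to be the conceptual crux.

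It remains to reach every vertex and then all of $P_X(G,U)$. Every element of $P_X(G,U)$ fixes the end $\chi$ and some vertex, hence preserves the level sets of $\vec{T_{\chi}}$ determined by $\chi$, and in fact acts transitively on each of them: a tree automorphism carrying one vertex to another at the same level while fixing a common ancestor can be built from finitely many local actions in $G$, and so lies in $P_X(G,U)$. Since good vertices occur at every level and $N$ is normal, conjugating the $G^{(w)}$ already obtained by elements of $P_X(G,U)$ yields $G^{(u)}\leqslant N$ for every vertex $u$. Finally, the restricted wreath product $G_m$ is topologically generated by the rooted copies $G^{(u)}$ at the interior vertices of the corresponding finite rooted tree, so Lemma~\ref{lemma:Gn 101}(2) gives $E_X(G,U)_{(T_R)}=\overline{\langle G^{(u)}:u\in T_0\rangle}$; applying this at each root $t^{n}v$ and combining with Lemma~\ref{lemma:Gn 101}(3) (noting $t^{n}G^{(u)}t^{-n}=G^{(t^{n}u)}$ and that the subtrees $T_0^{(n)}$ exhaust $V\vec{T_{\chi}}$) yields
$$P_X(G,U)=\overline{\langle G^{(u)}:u\in V\vec{T_{\chi}}\rangle}.$$
As every $G^{(u)}\leqslant N$ and $N$ is closed, this forces $N=P_X(G,U)$, completing the proof.
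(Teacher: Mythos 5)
Your proof is correct and follows essentially the same route as the paper's (much terser) argument: reduce to $E_X(U,U)_{(F)}\leqslant V$ for a finite set $F$, use normality together with the transitivity of $P_X(G,U)$ on the sets $\textup{inn}^n(v)$ to spread local copies around the tree, invoke $\langle\!\langle U\rangle\!\rangle_G=G$ to promote $U$ to $G$ at each vertex, and conclude via the generation of $P_X(G,U)$ by these local copies as in Lemma~\ref{lemma:Gn 101}. Your write-up supplies details (the ``good vertex'' device and the explicit promotion of $U^{(w)}$ to $G^{(w)}$) that the paper leaves implicit, but the underlying ideas coincide.
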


\begin{proof}
Let $V$ an open, compact subgroup of $P_X(G,\ U)$. As $P_X(G,\ U)$ is open in $E_X(G,\ U)$ it follows that $V$ is open in $P_X(G,\ U)$ if,\ and only if,\ there exist a finite set $F\subset V\vec{T_{\chi}}$ such that $E_X(U,\ U)_{(F)}\leqslant V$. As for every $v\in V\vec{T_{\chi}}$ the action of $P_X(G,\ U)$ is transitive in $\cup_{n\in\mathbb{N}}\textup{inn}^n(v)$ it follows that $P_X(U,\ U)\leqslant \langle\!\langle E_X(U,\ U)_{(F)}\rangle\!\rangle\leqslant \langle\!\langle V\rangle\!\rangle_{P_X(G,\ U)}$. As $\langle\!\langle U\rangle\!\rangle_G= G$,\ the result follows.
\end{proof}

\subsection{Building a group with infinite decomposition rank}

In this subsection,\ we will show that under the condition $(G,\ X)$ is an elementary permutation group,\ then $E_X(G,\ U)$ is also elementary. We show the existence of groups with decomposition rank up to $\omega^2+1$.\\
\indent To simplify the notation,\ we say that $(G,\ X)$ is an \textbf{elementary permutation group}\index{elementary permutation group|ndx} if $(G,\ \ X)$ is a t.d.l.c.s.c.~permutation group and $G$ is elementary.

\begin{lemma}\label{lemma: bad bound}
Suppose that $(K,\ X)$ is an elementary permutation group,\ and $L$ is an elementary group. Let $U\in \mathcal{U}(L)$ be a fixed compact open subgroup. Then $L\wr_U (K,\ X)$ is elementary. \\
\indent If in addition $K$ and $L$ are compactly generated and $K$ is not residually discrete,\ then 
$$\xi(L\wr_U (K,\ X)) \geqslant  \xi(L)+1.$$
\end{lemma}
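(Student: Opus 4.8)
The first assertion is immediate from Corollary \ref{coro:wr product is elementary}: $L\wr_U(K,X)=\bigoplus_X(L,U)\rtimes K$ is elementary. So I concentrate on the rank inequality. The plan is to produce a \emph{compactly generated} closed subgroup $H\leqslant L\wr_U(K,X)$ with $\xi(H)\geqslant\xi(L)+1$; the bound for the whole group then follows from Proposition \ref{prop:decomposition rank closed subgroup}. Since $K$ is not residually discrete, $\textup{Res}(K)\neq\{1\}$, and as $K\curvearrowright X$ is faithful some point is moved by $\textup{Res}(K)$. Fixing such a $y_0$ and letting $Y:=K.y_0$, normality of $\textup{Res}(K)$ in $K$ shows every point of $Y$ is moved by some element of $\textup{Res}(K)$. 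Writing $B_Y:=\bigoplus_Y(L,U)$, a closed $K$-invariant subgroup of the base group, I set $H:=B_Y\rtimes K$. Because $K$ is transitive on $Y$ and $K,L$ are compactly generated, $H$ is compactly generated (take compact generators of $K$, a compact generating set of $L$ placed at the coordinate $y_0$, and the compact open subgroup $\prod_Y U$ of $B_Y$). Hence $\xi(H)=\xi(\textup{Res}(H))+1$ by Definition \ref{defi:decomposition rank}, and it suffices to prove $\xi(\textup{Res}(H))\geqslant\xi(L)$.

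The core is a computation in $\textup{Res}(H)\cap B_Y$. Let $\pi\colon H\to H/B_Y\cong K$ be the projection and let $O\trianglelefteq H$ be an arbitrary open normal subgroup, with $N_O:=O\cap B_Y$. Then $\pi(O)$ is open normal in $K$, so $\pi(O)\supseteq\textup{Res}(K)$ and each $p\in\textup{Res}(K)$ lifts to some $\hat p=b_p\,p\in O$ with $b_p\in B_Y$. For $y\in Y$, write $\ell_y\in B_Y$ for the element equal to $\ell$ at coordinate $y$ and trivial elsewhere, and pick $p$ with $py\neq y$. I would compute $[\ell_y,\hat p]=\ell_y\,(\ell'')_{py}\in N_O$, where $\ell''$ is some conjugate of $\ell^{-1}$; commuting once more with $\ell'_y$ cancels the $py$-coordinate and gives $[[\ell_y,\hat p],\ell'_y]=[\ell,\ell']_y\in N_O$, \emph{independently of the lift}. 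Thus $[L,L]_y\leqslant N_O$ for every $y\in Y$, and since distinct coordinates commute in $B_Y$ this forces $B_Y/N_O$ to be abelian.

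This abelianity is the decisive point, and it is exactly what removes the unknown conjugator $b_p$. In $H/O$ the relation $\hat p=b_p p\in O$ gives $\bar p=\bar b_p^{-1}$, so conjugation by $p$ on $B_Y/N_O$ equals conjugation by the image of $b_p^{-1}$; as $B_Y/N_O$ is abelian this is trivial. Applied to $\ell_y$ (conjugated by $p$ to $\ell_{py}$) this yields $\ell_{py}\ell_y^{-1}\in N_O$ for all $\ell$, all $y$, and all $p\in\textup{Res}(K)$ with $py\neq y$. Since this holds for every open normal $O$, I conclude $\ell_{py}\ell_y^{-1}\in\textup{Res}(H)\cap B_Y$. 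Projecting to coordinate $y$ sends $\ell_{py}\ell_y^{-1}$ to $\ell^{-1}$, so the coordinate projection $p_y\colon \textup{Res}(H)\cap B_Y\to L$ is onto; being a continuous surjection of t.d.l.c.s.c.\ groups it is a quotient map by the open mapping theorem. Hence $L$ is a quotient of the closed subgroup $\textup{Res}(H)\cap B_Y$ of $\textup{Res}(H)$, and Theorem \ref{prop:quotient elementary 101} together with Proposition \ref{prop:decomposition rank closed subgroup} give $\xi(L)\leqslant\xi(\textup{Res}(H)\cap B_Y)\leqslant\xi(\textup{Res}(H))$. Therefore $\xi(H)=\xi(\textup{Res}(H))+1\geqslant\xi(L)+1$, and $\xi(L\wr_U(K,X))\geqslant\xi(H)\geqslant\xi(L)+1$.

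The step I expect to be the main obstacle is precisely the passage from $[L,L]_y\leqslant N_O$ to $\ell_{py}\ell_y^{-1}\in N_O$. A priori a lift $\hat p$ of $p$ only deposits the \emph{conjugated} element $\ell_y(\ell'')_{py}$ into $O$, not the clean difference $\ell_{py}\ell_y^{-1}$, so the naive attempt to push a whole copy of $L$ into $\textup{Res}(H)$ fails for non-abelian $L$ (indeed a single coordinate copy $L_y$ need not lie in $\textup{Res}(H)$ at all). It is the abelianity of $B_Y/N_O$ — a consequence of $K$ not being residually discrete, funnelled through the double-commutator trick — that discards the conjugating term and recovers the full projection onto $L$; I would make sure this is the one place where the hypothesis "$K$ not residually discrete" is genuinely used.
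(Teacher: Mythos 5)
Your argument is correct, and it reaches the key intermediate goal of the paper's own proof --- a coordinate projection of $\textup{Res}(H)\cap\bigoplus_Y(L,U)$ onto $L$, followed by Theorem \ref{prop:quotient elementary 101}, Proposition \ref{prop:decomposition rank closed subgroup} and Definition \ref{defi:decomposition rank} --- but by a genuinely different route. The paper chooses the orbit $Y$ so that $K/\textup{Fix}_K(Y)$ is \emph{not residually discrete} (such an orbit exists since $\bigcap_i \textup{Fix}_K(Y_i)=\{1\}$ by faithfulness), and then argues that, because $G/\textup{Res}(G)$ is residually discrete, the subgroup $K$ cannot inject into it modulo $\textup{Fix}_K(Y)$; this produces an actual element $k\in K\cap\textup{Res}(G)$ moving some $y\in Y$, after which a \emph{single} commutator $f_l k f_l^{-1}k^{-1}$ already lies in $\bigoplus_Y(L,U)\cap\textup{Res}(G)$ and has $y$-coordinate $l$ --- no correction term ever appears, so no abelianization step is needed. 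You instead choose $Y$ as the orbit of a point moved by $\textup{Res}(K)$, work with an arbitrary open normal $O\trianglelefteq H$, lift elements of $\textup{Res}(K)$ into $O$ only modulo the base group, and then spend the double-commutator and the abelianity of $B_Y/N_O$ precisely to discard the unknown lift $b_p$. What your version buys is that it never needs to locate an element of $K$ itself inside the discrete residual of the wreath product: the only input is the elementary fact that the image of an open normal subgroup of $H$ in $K$ is open and normal, hence contains $\textup{Res}(K)$. This sidesteps the slightly delicate point in the paper's proof (that residual discreteness of $G/\textup{Res}(G)$ passes to the not-necessarily-closed image of $K$), at the cost of roughly a paragraph of extra computation; your diagnosis of where the hypothesis that $K$ is not residually discrete enters is accurate for your argument, though in the paper's version it enters earlier, in producing the element $k\in K\cap\textup{Res}(G)$.
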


\begin{proof}
The fact that $L\wr_U (K,\ X)$ is elementary follows directly from Definition \ref{defi:wr product},\ Corollary \ref{coro:wr product is elementary},\ and Definition \ref{defi:elementary groups}.\\
\indent For the second claim,\ we want to show that there exists a compactly generated subgroup $G$ of $L\wr_U (K,\ X)$ such that $\textup{Res}(G)$ contains a copy of $L$. First,\ let us find a subset $Y$ of $X$ such that $G:=L\wr_U(K,\ Y)$ is compactly generated and $\textup{Res}(G)\cap K$ is non-trivial. For that,\  let $\{Y_i\}_{i\in I}$ be the partition of $X$ into its orbits. For each $i\in I$ let $\textup{Fix}_K(Y_i)=\bigcap_{y\in Y_i}K_{(y)}:=N_i\leqslant K$,\ which is a closed normal subgroup of $K$. It follows from the fact that the action of $K$ in $X$ is faithful that $\cap_{i\in I}N_i=\{1\}$. Hence,\ if for all $i\in I$,\ we have that $K/N_i$ is residually discrete,\ then $K$ is residually discrete,\ a contradiction to the assumption. Let $i_0\in I$ be such that $K/N_{i_0}$ is not residually discrete. Notice that $Y_{i_0}$ is infinite,\ because otherwise $K/N_{i_0}$ would be profinite,\ hence residually discrete. Denote $Y$ as our fixed $Y_{i_0}$.\\
\indent Denote $G:=L\wr_U(K,Y)$,\ $R:=\textup{Res}(G)$ and let $\pi:G\rightarrow G/R$ be the canonical projection. The quotient $G/R$ is residually discrete,\ so the restriction $\pi|_K:K\rightarrow G/R$ is not injective,\ that is,\ $K\cap R\neq N_{i_0}$. By the asumption on $Y$ and this fact it follows that $\exists k\in K\cap R\backslash \{1\}$ and $y\in Y$ with $ky\neq y$. Fix such a $k$ and $y$. For each $l \in L$,\ let $f_l\in \bigoplus (L,\ U)$ be the function which is defined by $f_l(y)=l$ and $f_l(x)=1$,\ for all $x\neq y$. Because $\bigoplus_Y (L,\ U)\cap R$ is normal,\ the element $f_l k f_l^{-1}k^{-1}$ is in it,\ and moreover,\\
$$f_{l} k f_{l}^{-1} k^{-1}(z)=\left\{\begin{array}{ll}
l & \text { if } z=y \\
l^{-1} & \text { if } z=k(y) \\
1 & \text { else. }
\end{array}\right. $$
Letting $\pi_y: \bigoplus_Y (L,\ U)\rightarrow L$ be the projection onto the $y$-th coordinate and using the fact $\pi_y(f_lkf_l^{-1}k^{-1})=l$ we get
$$\pi_y\left( \bigoplus_Y(L,\ U)\cap R \right)=L.$$
That is,\ $\pi_y$ is a surjection. By Proposition \ref{prop:quotient elementary 101},\ $\xi(R)\geqslant\xi(L)$. From the fact $L\wr_U(K,\ X)\geqslant G$ we have that $\xi(L\wr_U(K,\ X))\geqslant \xi(G)$ (Proposition \ref{prop:decomposition rank closed subgroup}) and,\ by Definition \ref{defi:decomposition rank},\ $\xi(G)=\xi(R)+1\geqslant \xi(L)+1$,\ proving the result.
\end{proof}

Knowing that given $(G,\ X)$ an elementary permutation group,\ then $G_n$,\ as defined in Lemma \ref{lemma:Gn 101},\ is elementary,\ we can use Lemma \ref{lemma:Gn 101} to prove $E_X(G,\ U)$ is elementary.

\begin{prop}\label{prop:grp is elementary}
Suppose that $(G,\ X)$ is an elementary permutation group and $U\in \mathcal{U}(G)$. Then $E_X(G,\ U)$ is an elementary group.
\end{prop}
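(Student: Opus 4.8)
The plan is to show that $E_X(G,\ U)$ is built from elementary pieces by the two permitted operations---countable increasing unions and group extensions---so that it lands in the class $\mathcal{E}$ by Definition \ref{defi:elementary groups}. The structural decomposition from Proposition \ref{prop:writing ex(g,U) as semidirect product}, namely $E_X(G,\ U)=P_X(G,\ U)\rtimes\langle t\rangle$ with $\langle t\rangle\cong\mathbb{Z}$, reduces the problem to showing that the open normal subgroup $P_X(G,\ U)$ is elementary: since $\mathbb{Z}$ is countable discrete and hence elementary, and $\mathcal{E}$ is closed under extensions, elementarity of $P_X(G,\ U)$ will give elementarity of the whole group.

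First I would show that the ``half-tree'' subgroup $E_X(G,\ U)_{(T_R)}$ is elementary. By Lemma \ref{lemma:Gn 101}(2), this group is the closure of the increasing union $\bigcup_{n\geqslant 1}G_n$, where $G_n$ is the iterated wreath product of $n$ copies of $(G,\ X)$ over $U$. Each $G_n$ is elementary: $G_1=G$ is elementary by hypothesis, and inductively $G_{n+1}=G\wr_U(G_n,\ X^n)$ is elementary by Corollary \ref{coro:wr product is elementary} (noting that $(G_n,\ X^n)$ is an elementary permutation group). The subgroups $G_n$ embed as an increasing chain of subgroups whose union is dense; to apply closure under increasing unions (Definition \ref{defi:elementary groups}(v)) I need the $G_n$, or a cofinal family of open subgroups built from them, to be \emph{open} in $E_X(G,\ U)_{(T_R)}$. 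The cleanest route is to take the open subgroups $O_n:=G_n\cdot E_X(U,\ U)_{(v)}$ (or the subgroup generated by $G_n$ and a fixed compact open subgroup of the half-tree group): each $O_n$ is open, elementary as an extension or product of elementary groups, the chain is increasing, and the union is all of $E_X(G,\ U)_{(T_R)}$ because every element has finitely many singularities and so agrees with some $G_n$ outside a compact open subgroup.

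Next I would promote this to $P_X(G,\ U)$ using Lemma \ref{lemma:Gn 101}(3), which expresses $P_X(G,\ U)$ as the closure of $\bigcup_{n\geqslant 0}t^n\bigl(E_X(G,\ U)_{(T_R)}\bigr)t^{-n}$. Conjugation by $t$ is a topological automorphism, so each conjugate $t^n E_X(G,\ U)_{(T_R)} t^{-n}$ is again elementary with the same rank. Moreover, by Proposition \ref{prop:writing ex(g,U) as semidirect product} the proof shows $P_X(G,\ U)=\bigcup_{i\in\mathbb{Z}}E_X(G,\ U)_{(v_i)}$ is already an increasing union of the open point-fixators along the monochromatic line; each such fixator contains the corresponding half-tree group as a finite-index-type piece and can be shown elementary by the same wreath-product analysis together with the fact that the remaining action on the complementary half-tree is again of the same form. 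I would therefore present $P_X(G,\ U)$ as an increasing union of open elementary subgroups and conclude $P_X(G,\ U)\in\mathcal{E}$ by Definition \ref{defi:elementary groups}(v).

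The main obstacle I anticipate is the bookkeeping that makes the unions genuinely \emph{open} increasing unions of \emph{elementary} subgroups, rather than mere dense unions or closures: Definition \ref{defi:elementary groups}(v) requires open subgroups, whereas Lemma \ref{lemma:Gn 101} only gives density of the uncompleted unions. The key observation resolving this is that in these tree groups the compact open subgroup $E_X(U,\ U)_{(v)}$ is elementary (indeed profinite by Proposition \ref{prop:compact subgrp}), so at each stage I can fatten $G_n$ by this compact open subgroup to obtain an \emph{open} elementary subgroup $O_n$ whose union is the whole group without needing to take a closure---the finiteness of the singularity set $S(g)$ for every $g$ guarantees that $g$ already lies in some $O_n$. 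Verifying that each $O_n$ is elementary (as an extension of a profinite group by $G_n$, or via Lemma \ref{lemma:cocompact has same rank as group}) and that the chain exhausts the group is where the care is required, but no genuinely new idea beyond Lemma \ref{lemma:Gn 101} and the wreath-product stability of $\mathcal{E}$ is needed.
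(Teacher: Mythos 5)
Your proposal is correct and follows essentially the same route as the paper: reduce to $P_X(G,\ U)$ via the semidirect product decomposition, exhibit $E_X(G,\ U)_{(T_R)}$ (up to a compact factor) as an increasing union of open elementary subgroups obtained by fattening each $G_n$ with compact pieces of $W=E_X(U,\ U)_{(v)}$, and then exhaust $P_X(G,\ U)$ by the $t$-conjugates using Lemma \ref{lemma:Gn 101}. The paper's $H_n=\langle G_n,\ W_{(T_0)},\ L_n\rangle$ is exactly the fattened subgroup you describe, realized as an extension of the compact normal subgroup $L_nW_{(T_0)}$ by $G_n$.
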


\begin{proof}
As in Lemma \ref{lemma:Gn 101},\ colour the tree $\vec{T_{\chi}}$,\ fix $v\in V\vec{T_{\chi}}$ a vertex,\ and define $T_R$ and $T_0$. With the same notation of Lemma \ref{lemma:Gn 101},\ we see a copy of the wreath product of $n$ copies of $G$ over $U$,\ denoted $G_n$,\ as a closed subgroup of $E(G,\ U)_{(T_R)}$.\\
\indent Given the $v$ that determines $T_R$ and $T_0$,\ the compact open subgroup $W:=E_X(U,\ U)_{(v)}$ may be written as $W=W_{(T_R)}W_{(T_0)}$. Because the group $G_n$ acts on $T_0$,\ it is easy to see that $G_n$ centralizes $W_{(T_0)}$. For a similar reason,\ letting $B_n(v)$ be the $n$ ball around $v$,\ the group $G_n$ normalizes $L_n:=W_{(T_R\cup B_n(v))}$. Define $H_n:=\langle G_n,\ W_{(T_0)},\ L_n\rangle$. Because $W\leqslant H_n$ then $H_n$ is open in $P_X(G,\ U)$. Also,\ both $L_n,\ W_{(T_0)}$ are compact,\ hence so is $L_n W_{(T_0)}$. By construction,\ the subgroup $L_n W_{(T_0)}$ is normal in $H_n$,\ and $H_n/L_n W_{(T_0)}\cong G_n$. Then $H_n$ is an open elementary group of $P_X(G,\ U)$.\\
\indent As $G_n$ acts on $B_n(v)$, we have that $L_0\backslash L_n\subset G_n$. Hence, the collection $\{H_n\}_{n\in\mathbb{N}}$ is directed. So the union $\bigcup_{n\in\mathbb{N}}H_n$ is an open elementary subgroup of $P_X(G,\ U)$,\ by Definition \ref{defi:elementary groups}. Applying Lemma \ref{lemma:Gn 101} $(2)$,\ it follows that
$$\bigcup_{n\in\mathbb{N}}H_n=E_X(G,\ U)_{(T_R)}W_{(T_0)}.$$
\indent Fixing $t$ a translation toward $\delta$,\ the family of elementary groups $\{t^n E_X(G,\ U)_{(T_R)}W_{(T_0)}t^{-n}\}_{n\in\mathbb{N}}$ is directed. Another application of Lemma \ref{lemma:Gn 101} implies that
$$\bigcup_{n\in\mathbb{N}}t^n E_X(G,\ U)_{(T_R)}W_{(T_0)}t^{-n}=P_X(G,\ U)$$
and therefore,\ $P_X(G,\ U)$ is elementary. Since $E_X(G,\ U)$ is the extension of $P_X(G,\ U)$ by $\mathbb{Z}$,\ Definition \ref{defi:elementary groups} implies $E_X(G,\ U)$ is elementary.
\end{proof}

\begin{prop}\label{prop: bad bound rank}
Suppose that $(G,\ X)$ is a compactly generated transitive elementary group. Then $E_X(G,\ U)$ is elementary with 
$$\xi(E_X(G,\ U))\geqslant \xi(G)+\omega+2$$
for any non-trivial $U\in\mathcal{U}(G)$.
\end{prop}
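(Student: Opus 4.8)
The plan is to climb to decomposition rank $\xi(G)+\omega$ inside the rooted part of the tree by iterating the wreath product, and then to harvest the last ``$+2$'' from the compact generation of $E_X(G,\ U)$. By Proposition \ref{prop:grp is elementary} the group $E_X(G,\ U)$ is elementary and by Proposition \ref{prop:new grp is comp gener} it is compactly generated, so Definition \ref{defi:decomposition rank}(ii) gives $\xi(E_X(G,\ U))=\xi(\textup{Res}(E_X(G,\ U)))+1$. The whole problem therefore reduces to proving $\xi(\textup{Res}(E_X(G,\ U)))\geqslant \xi(G)+\omega+1$, and I would spend the argument producing closed subgroups of $\textup{Res}(E_X(G,\ U))$ of unboundedly high rank.

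First I would use the iterated wreath products $G_n$ of Lemma \ref{lemma:Gn 101} and record the two ways of reading them off the rooted half-tree $T_0$: grouping the top $b$ levels and the bottom $a$ levels of the depth-$(a+b)$ tree yields a topological isomorphism $G_{a+b}\cong G_a\wr_{U_a}(G_b,\ X^b)$, in which $G_a$ is the base group and $U_a=G_a\cap E_X(U,\ U)$ is compact open in $G_a$. Transitivity of $(G,\ X)$ makes each $G_n$ compactly generated (the top group conjugates one deepest copy of $G$ onto all of them), and $(G_b,\ X^b)$ is again a transitive t.d.l.c.s.c.\ permutation group, so this wreath product satisfies the hypotheses of Lemma \ref{lemma: bad bound} as soon as its top factor is known to be non-residually-discrete.

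The decisive step is to show that $G_2=\bigoplus_X(G,\ U)\rtimes G$ is not residually discrete. Here I would argue that every open normal subgroup $O\trianglelefteq G_2$ contains the profinite floor $\prod_X U$: openness forces $O\cap\bigoplus_X(G,\ U)\supseteq\prod_{x\notin E}U$ for some finite $E$, and since $G$ acts transitively on the infinite set $X$, invariance of $O$ under conjugation by the top group spreads this over every coordinate, so $O\cap\bigoplus_X(G,\ U)\supseteq\prod_X U$. Intersecting over all such $O$ gives $\textup{Res}(G_2)\supseteq\prod_X U\neq\{1\}$, the non-triviality being exactly where $U\neq\{1\}$ is used. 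Hence $G_b$ is non-residually-discrete for every $b\geqslant 2$, and Lemma \ref{lemma: bad bound} applied to $G_{a+2}\cong G_a\wr_{U_a}(G_2,\ X^2)$ (base $G_a$, non-residually-discrete top $G_2$) yields $\xi(G_{a+2})\geqslant\xi(G_a)+1$. Running this recursion from $\xi(G_1)=\xi(G)$ along the odd indices gives $\xi(G_{2k+1})\geqslant\xi(G)+k$ for all $k$, so $\sup_n\xi(G_n)\geqslant\xi(G)+\omega$.

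Finally I would transfer this growth into the discrete residual. Each $G_n$ embeds as a closed subgroup of $E_X(G,\ U)$ by Lemma \ref{lemma:Gn 101}(1); for any open normal $O\trianglelefteq E_X(G,\ U)$ the intersection $O\cap G_n$ is open and normal in $G_n$, whence $\textup{Res}(G_n)\subseteq O$, and intersecting over all $O$ gives the closed containment $\textup{Res}(G_n)\subseteq\textup{Res}(E_X(G,\ U))$. Since $\xi(G_{2k+1})=\xi(\textup{Res}(G_{2k+1}))+1\geqslant\xi(G)+k$, Proposition \ref{prop:decomposition rank closed subgroup} gives $\xi(\textup{Res}(E_X(G,\ U)))\geqslant\sup_k\xi(\textup{Res}(G_{2k+1}))\geqslant\xi(G)+\omega$; as decomposition ranks are always successors (Remark \ref{remark:decomposition rank}), a rank bounded below by the limit ordinal $\xi(G)+\omega$ is in fact $\geqslant\xi(G)+\omega+1$, and therefore $\xi(E_X(G,\ U))\geqslant\xi(G)+\omega+2$. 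The step I expect to be the main obstacle is the non-residual-discreteness of $G_2$: it is the one place where transitivity, the infiniteness of $X$, and $U\neq\{1\}$ are all genuinely needed, and it is precisely what makes Lemma \ref{lemma: bad bound} usable with a base group of unbounded rank; a secondary point needing care is checking that the level-regrouping isomorphism $G_{a+b}\cong G_a\wr_{U_a}(G_b,\ X^b)$ matches compact open subgroups, which is routine from the description of $G_n$ as subgroups of $E_X(G,\ U)_{(T_R)}$ acting on prescribed levels of $T_0$.
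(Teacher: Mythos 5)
Your proposal is correct and follows essentially the same route as the paper: reduce to bounding $\xi(\textup{Res}(E_X(G,\ U)))$, observe that $G_2$ is not residually discrete because every open normal subgroup must contain $\prod_X U$ by transitivity, iterate Lemma \ref{lemma: bad bound} on the regrouped wreath products $G_{a+2}\cong G_a\wr_{U_a}(G_2,\ X^2)$ to push the ranks up to $\xi(G)+\omega$, transfer this into $\textup{Res}(E_X(G,\ U))$ via the closed embeddings of Lemma \ref{lemma:Gn 101}, and finish with the successor-ordinal observation. Your explicit level-regrouping and the resulting indexing $\xi(G_{2k+1})\geqslant\xi(G)+k$ is in fact a more careful rendering of the paper's terser claim $\xi(G_{2+n})\geqslant\xi(G)+n$, but the argument is the same.
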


\begin{proof}
Propositions \ref{prop: comp gener} and \ref{prop:grp is elementary} imply $E_X(G,\ U)$ is an elementary compactly generated group. Then $\xi(E_X(G,\ U))= \xi(\textup{Res}(E_X(G,\ U)))+1$,\ so it is enough to show that
$$\xi(\textup{Res}(E_X(G,\ U)))\geqslant \xi(G)+\omega+1.$$
\indent Fix $v\in V\vec{T_{\chi}}$ and,\ as in Lemma \ref{lemma:Gn 101},\ see $G_n$ as a closed subgroup of $E_X(G,\ U)$. \\
\indent To apply Lemma \ref{lemma: bad bound},\ we need to work with a group that is not residually discrete. As $X$ is infinite,\ it follows that even when $G=G_1$ is residually discrete,\ $G_2=G\wr_U (G,\ X)$ is not residually discrete,\ as the transitive action of $G$ on $X$ implies that every open normal subgroup of $G_2$ contains the compact,\ open subgroup $\prod_{x\in X}U$. Since $G$ acts transitively on $X$,\ the discrete residual of $G_2$ is non-trivial.\\
\indent Applying Lemma \ref{lemma: bad bound},\ we deduce $\xi(G_{2+n})\geqslant\xi(G)+n$ for each $n\geqslant 1$. These groups are all compactly generated,\ so $\xi(\textup{Res}(G_{2+(n+1)}))\geqslant\xi(G)+n$. On the other hand,\ $\textup{Res}(G_{2+(n+1)})\leqslant \textup{Res}(E_X(G,\ U))$ for all $n\in\mathbb{N}$. We thus infer that
$$\begin{aligned}
    \xi(\textup{Res}(E_X(G,\ U))) \geqslant \sup_{n\geqslant 1} \xi(\textup{Res}(G_{2+(n+1)})) \geqslant \sup_{n\geqslant 1} (\xi(G)+n)= \xi(G)+\omega.
\end{aligned}$$
By Definition \ref{defi:decomposition rank},\ the decomposition rank is always a successor ordinal,\ therefore $\xi(\textup{Res}(E_X(G,\ U)))\geqslant \xi(G)+\omega+1$,\ completing the proof.
\end{proof}

A straightforward application of Proposition \ref{prop: bad bound rank},\ the proof of Proposition \ref{prop:grp is elementary} and Lemma \ref{lemma:short exact sequence decomposition rank} gives us a way to build groups with rank $\omega+2$.

\begin{coro}\label{coro:group omega+2}
Suppose that $(G,\ X)$ is a compactly generated transitive elementary permutation group with finite rank. Then $E_X(G,\ U)$ is elementary with
$$\xi(E_X(G,\ U))=\omega+2$$
for any non-trivial $U\in\mathcal{U}(G)$.
\end{coro}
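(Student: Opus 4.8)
The plan is to establish the two bounds $\xi(E_X(G,\ U)) \geqslant \omega+2$ and $\xi(E_X(G,\ U)) \leqslant \omega+2$ separately, the first being immediate and the second carrying all the work.

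For the lower bound I would invoke Proposition \ref{prop: bad bound rank} directly: it gives $\xi(E_X(G,\ U)) \geqslant \xi(G)+\omega+2$ for every non-trivial $U\in\mathcal{U}(G)$. Writing $\xi(G)=m\in\mathbb{N}$ (finite by hypothesis), ordinal arithmetic gives $m+\omega+2=\omega+2$, since $m+\omega=\omega$ for any finite $m$. Hence $\xi(E_X(G,\ U))\geqslant\omega+2$.

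The upper bound is the substance. First I would record that the building blocks appearing in the proof of Proposition \ref{prop:grp is elementary} all have finite rank. Writing $G_n$ for the wreath product of $n$ copies of $(G,\ X)$ over $U$, note $G_{n+1}=\bigoplus_{X^n}(G,\ U)\rtimes G_n$; by Lemma \ref{lemma: rank reduced prod} the base has rank $\sup^+\xi(G)=\xi(G)$, so Lemma \ref{lemma:short exact sequence decomposition rank} yields $\xi(G_{n+1})\leqslant\xi(G)-1+\xi(G_n)$, and induction shows $\xi(G_n)<\omega$ for every $n$. The groups $H_n$ of that proof are extensions of the compact (hence profinite, rank $\leqslant 2$) subgroup $L_n W_{(T_0)}$ by $G_n$, so Lemma \ref{lemma:short exact sequence decomposition rank} again gives $\xi(H_n)\leqslant\xi(G_n)+1<\omega$. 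Each $G_n$ is compactly generated (using compact generation of $G$ together with transitivity to produce all coordinate copies), and the remaining generators of $H_n$ are compact, so each $H_n$ is open and compactly generated.

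Now I would feed these finite ranks through the two nested increasing unions. Since $E_X(G,\ U)_{(T_R)}W_{(T_0)}=\bigcup_n H_n$ with the $H_n$ open, compactly generated and of increasing finite rank (Proposition \ref{prop:decomposition rank closed subgroup}), Definition \ref{defi:decomposition rank}(iii) gives $\xi\bigl(E_X(G,\ U)_{(T_R)}W_{(T_0)}\bigr)=\sup^+_n\xi(H_n)\leqslant\omega+1$. The same bound holds for each conjugate $t^m E_X(G,\ U)_{(T_R)}W_{(T_0)}t^{-m}$, conjugation being a topological automorphism. Since $P_X(G,\ U)=\bigcup_m t^m E_X(G,\ U)_{(T_R)}W_{(T_0)}t^{-m}$, the family $\{t^m H_n t^{-m}\}_{m,n}$ is a directed family of open, compactly generated, finite-rank subgroups with union $P_X(G,\ U)$; extracting an increasing cofinal sequence and applying Definition \ref{defi:decomposition rank}(iii) once more gives $\xi(P_X(G,\ U))\leqslant\omega+1$. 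Finally, by Proposition \ref{prop:writing ex(g,U) as semidirect product}, $E_X(G,\ U)=P_X(G,\ U)\rtimes\mathbb{Z}$ is an extension of $P_X(G,\ U)$ by $\mathbb{Z}$; as $\xi(\mathbb{Z})=2$, Lemma \ref{lemma:short exact sequence decomposition rank} gives $\xi(E_X(G,\ U))\leqslant\xi(P_X(G,\ U))-1+2\leqslant\omega+2$. Together with the lower bound this yields $\xi(E_X(G,\ U))=\omega+2$.

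The hard part is the bookkeeping around Definition \ref{defi:decomposition rank}(iii): it applies only to increasing sequences of open compactly generated subgroups, so I must verify that each $G_n$ (hence $H_n$) is genuinely compactly generated and that the two-level union defining $P_X(G,\ U)$ can be organized into a single increasing cofinal chain via directedness of $\{t^m H_n t^{-m}\}$. The role of the finite-rank hypothesis is exactly to keep every $\xi(H_n)$ below $\omega$; this is what collapses $\xi(G)+\omega$ to $\omega$ and pins the final rank at $\omega+2$ rather than something larger.
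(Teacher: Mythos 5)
Your proposal is correct and follows exactly the route the paper indicates for this corollary: the lower bound $\xi(G)+\omega+2=\omega+2$ from Proposition \ref{prop: bad bound rank}, and the upper bound by running the finite ranks of the $G_n$ and $H_n$ through the increasing unions in the proof of Proposition \ref{prop:grp is elementary} together with Lemma \ref{lemma:short exact sequence decomposition rank} for the final extension by $\mathbb{Z}$. The paper leaves these details implicit, but your bookkeeping (in particular organizing $\{t^m H_n t^{-m}\}$ into a directed family of open compactly generated subgroups of finite rank before applying Definition \ref{defi:decomposition rank}(iii)) is precisely what is needed.
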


Notice that if $(G,\ X)$ is an elementary permutation group,\ so is $(E_X(G,\ U),\ V\vec{T}_{\chi})$. These results can  then be iterated to prove the existence of groups with decomposition rank at least $\omega n +2$,\ for $n\in\mathbb{N}$. \\
\indent Notice that Lemma \ref{lemma: bad bound} does not get close to the upper bound of how much the decomposition rank can increase under extensions (Lemma \ref{lemma:short exact sequence decomposition rank}). Hence Proposition \ref{prop: bad bound rank} does not optimize the rank of $E_X(G,\ U)$. In Section 5, our focus will be on making the rank of the wreath product reach the upper bound,\ under some conditions on $G$ and $U\in\mathcal{U}(G)$. We will then be able to build groups with bigger decomposition rank.

\section{Residual height and rank}

In this Section,\ we give an order for the set,  $\mathcal{K}(G)$, of compactly generated subgroups of a group $G$. This order is central to our construction of elementary groups with a given decomposition rank up to $\omega^\omega+1$.

\subsection{Ordering closed,\ compactly generated subgroups}

The following lemma follows from the definition of the discrete residual of a subgroup $H\leqslant G$ in relation to the group $G$.

\begin{lemma}\label{lemma:properties residue that will be reused}\label{lemma:residual rank and semidirect product}
Let $G$ be a t.d.l.c.s.c.~group,\ then the following are true:
\begin{enumerate}
    \item If $N\leqslant G$ and $K\leqslant G$ normalizes $N$ then $\textup{Res}_N(K)\leqslant \textup{Res}_G(K)$.
    \item For every $H\leqslant G$ and $K\leqslant G$ such that $K$ normalizes $H$,\ $\textup{Res}_H(K)\leqslant \textup{Res}(G)$.
    \item If $O\leqslant G$ is an open subgroup and $K\leqslant G$ such that $K$ normalizes $O$. Then $\textup{Res}_O(K)=\textup{Res}_G(K)$.
    \item For any $K\leqslant G$,\ $\textup{Res}_G(K)=\textup{Res}_G(K\backslash (K\cap Z(G)))$,\ where $Z(G)$ is the center of $G$.
    \item  If $G=N\rtimes K$ and $H$ is a subgroup of $K$,\ then $\textup{Res}_K(H)\leqslant \textup{Res}_G(N\rtimes H)$.
    \item Let $H$ be a t.d.l.c.s.c.~group,\ $\pi:G\rightarrow H$ a continuous surjection. If $K\in\mathcal{K}(H)$ is such that there exists $K'\in\mathcal{K}(G)$ with $\pi(K')\geqslant K$, then $\textup{Res}_H(K)\leqslant \pi(\textup{Res}_G(K'))$.
\end{enumerate}
\end{lemma}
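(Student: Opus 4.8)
The first five items all follow from a single principle: since $\textup{Res}_G(S)$ is by definition the intersection of the $S$-invariant open subgroups of $G$, any containment between two such residuals can be obtained by checking that an appropriate operation — intersecting with a subgroup, restricting to a complement, or pulling back along $\pi$ — carries the invariant open subgroups witnessing one residual to invariant open subgroups witnessing the other. Concretely, for (1) and (2), given a $K$-invariant open $O\leqslant G$ (respectively an open normal $O\trianglelefteq G$), the set $O\cap N$ (respectively $O\cap H$) is open in the subgroup and, because $K$ normalizes both the subgroup and $O$, is again $K$-invariant; hence $\textup{Res}_N(K)\leqslant O\cap N\leqslant O$ (respectively $\textup{Res}_H(K)\leqslant O\cap H\leqslant O$), and intersecting over all such $O$ gives the claim. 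Item (3) is the case $N=O$ of (1) for the inclusion $\leqslant$, while for $\geqslant$ one notes that a $K$-invariant open subgroup of the open subgroup $O$ is already a $K$-invariant open subgroup of $G$, so the family defining $\textup{Res}_O(K)$ is contained in the one defining $\textup{Res}_G(K)$, and a smaller family has a larger intersection.

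Item (4) holds because conjugation by a central element is the trivial automorphism, so an open subgroup is invariant under the conjugation action of $K$ if and only if it is invariant under conjugation by $K\setminus(K\cap Z(G))$; the two defining families literally coincide. For (5), write $G=N\rtimes K$ with the product topology; then $K$ carries its original topology as a (closed) subspace, so for any $(N\rtimes H)$-invariant open $O\leqslant G$ the intersection $O\cap K$ is open in $K$, and since $H\leqslant K$ normalizes $K$ and $O$ is in particular $H$-invariant, $O\cap K$ is an $H$-invariant open subgroup of $K$; thus $\textup{Res}_K(H)\leqslant O\cap K\leqslant O$ and we intersect over $O$.

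The substantive item is (6), where compact generation and Subsection 2.4 enter. The plan is as follows. Apply Theorem \ref{thrm:colin theorem G} to $K'\in\mathcal{K}(G)$ to obtain an open, compactly generated $E\leqslant G$ with $\textup{Res}(E)=\textup{Res}_G(K')$ normal in $E$. As $\pi$ is a continuous surjection of second countable (hence Polish) groups it is open, so $F:=\pi(E)$ is an open, compactly generated subgroup of $H$, and $K\leqslant\pi(K')\leqslant F$. By item (3), $\textup{Res}_H(K)=\textup{Res}_F(K)$, and since every open normal subgroup of $F$ is in particular $K$-invariant (because $K\leqslant F$), we get $\textup{Res}_F(K)\leqslant\textup{Res}(F)$. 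Finally I would compare $\textup{Res}(F)$ with the image of $\textup{Res}(E)$: the group $F/\overline{\pi(\textup{Res}(E))}$ is a Hausdorff quotient of $E/\textup{Res}(E)$, which is compactly generated and residually discrete, hence SIN; as the SIN property passes to open quotients, $F/\overline{\pi(\textup{Res}(E))}$ is residually discrete, forcing $\textup{Res}(F)\leqslant\overline{\pi(\textup{Res}(E))}$. Chaining, $\textup{Res}_H(K)=\textup{Res}_F(K)\leqslant\textup{Res}(F)\leqslant\overline{\pi(\textup{Res}_G(K'))}$.

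The main obstacle is precisely this last comparison: bounding $\pi(\textup{Res}_G(K'))$ from below. The difficulty is structural — $\pi$ of an intersection is only contained in the intersection of the images, so the naive pullback argument delivers the \emph{reverse} inclusion $\pi(\textup{Res}_G(K'))\leqslant\textup{Res}_H(\pi(K'))$ for free, and obtaining the inclusion we actually want forces us to realize $\textup{Res}_G(K')$ as the discrete residual $\textup{Res}(E)$ of a compactly generated group (this is the only place Theorem \ref{thrm:colin theorem G} is needed) and to use that residual discreteness is quotient-stable in the compactly generated setting. I would also note that the argument naturally produces the bound with the closure $\overline{\pi(\textup{Res}_G(K'))}$; one should either verify in context that the closure may be dropped (for instance because $\textup{Res}_H(K)$ is closed and the image is already closed, e.g.\ when $\ker\pi$ is compact or contained in $\textup{Res}_G(K')$) or else record the statement with the closure in place.
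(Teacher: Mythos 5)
Your treatment of items (1)--(5) coincides with the paper's proof: the same intersect-with-an-invariant-open-subgroup computations, with (3) split into the two inclusions and (4) reduced to the observation that the two defining families of open subgroups are literally equal. (The paper disposes of (5) by citing item (1) together with the monotonicity of $S\mapsto\textup{Res}_G(S)$, where you redo the computation directly; the content is the same.) The genuine divergence is item (6). The paper's own proof is precisely the short push-forward argument you set aside as ``naive'': for each $K'$-invariant open $O\leqslant G$ it notes that $\pi(O)$ is an open $K$-invariant subgroup of $H$, hence $\textup{Res}_H(K)\leqslant \pi(O)$, and then ``takes the intersection over all such $O$'' to conclude $\textup{Res}_H(K)\leqslant \pi(\textup{Res}_G(K'))$. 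That final step silently replaces $\bigcap_O \pi(O)$ by $\pi(\bigcap_O O)$, and, as you correctly point out, the image of an intersection is only \emph{contained} in the intersection of the images; so the paper's proof of (6) has a gap at exactly the point you isolate. Your alternative route --- realizing $\textup{Res}_G(K')$ as $\textup{Res}(E)$ via Theorem \ref{thrm:colin theorem G}, passing to the open compactly generated image $F=\pi(E)$, and using that compactly generated residually discrete groups are SIN and that the SIN property passes to Hausdorff quotients --- is sound, but it needs an external input (the Caprace--Monod characterization of residually discrete compactly generated groups, which the paper never cites) and it only yields $\textup{Res}_H(K)\leqslant \overline{\pi(\textup{Res}_G(K'))}$. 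The closure is not cosmetic: $\pi(\textup{Res}_G(K'))$ need not be closed, and the closure-free form is exactly what Proposition \ref{prop:auxiliary for 4.9} later invokes to get $\pi(\pi^{-1}(L)\cap \textup{Res}_G(K'))=L$. So neither your argument nor the paper's establishes (6) precisely as stated; yours is the more rigorous of the two, and the honest resolution is either to restate (6) with the closure (repairing the downstream use, e.g.\ in the situations where $\ker\pi$ is compact and the image is automatically closed) or to supply the missing argument for removing it.
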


\begin{proof}
$\textit{1.}$ Assume $O\leqslant G$ an open subgroup is such that $kO=Ok$ for every $k\in K$. Because $N$ normalizes $K$ it is the case that $k(O\cap N)=(O\cap N)k$. Hence
$$ \textup{Res}_N(K)   \leqslant  O\cap N  \leqslant  O .$$
Taking the intersection under all open subgroups of $G$ normalized by $K$ gives us the desired result
$$   \textup{Res}_N(K)   \leqslant  \bigcap_{\substack{O\leqslant_o G \\ kO=Ok,\  \forall k\in K}} O\cap N  \leqslant  \bigcap_{\substack{O\leqslant_o G \\ kO=Ok,\  \forall k\in K}}O  =  \textup{Res}_G(K). $$
\indent $\textit{2.}$ If $O$ is an open normal subgroup of $G$ then for every $k\in K$ we have that $kO=Ok$. As $H$ is normalized by $K$ it follows that $k(O\cap H)=(O\cap H)k$, therefore $\textup{Res}_H(K)\leqslant O\cap H\leqslant O$. Then, it follows
$$    \textup{Res}_H(K) \leqslant   \bigcap_{\substack{O\trianglelefteq_o G }} O\cap H\leqslant \textup{Res}(G). $$
\indent $\textit{3.}$ Let $O\leqslant G$ be an open subgroup and $K$ such that $K$ normalizes $O$. Assume $V\leqslant O$ is an open subgroup of $V$ such that for every $k\in K$,\ $kV=Vk$. Notice that $V$ is also open and normalized by $K$ in $G$. Therefore
$$\textup{Res}_G(K)\leqslant V.$$
By taking the intersection of all $V\leqslant O$ open subgroup of $O$ normalized by $K$ we see that:
$$\textup{Res}_G(K)\leqslant \bigcap_{{\substack{V\leqslant_o O \\ kV=Vk,\  \forall k\in K}}}V=\textup{Res}_O(K).$$
With \textit{1.} we get $\textup{Res}_G(K)=\textup{Res}_O(K)$,\ as desired.\\
\indent \textit{4.} If $g\in Z(G)$ then for every $O\leqslant G$ open subgroup we have $gO=Og$. Hence,\ an open subgroup $O\leqslant G$ is such that $kO=Ok$ for every $k\in K$ if,\ and only if,\ $k'O=Ok'$ for every $k'\in K\backslash (K\cap Z(G))$.\\
\indent \textit{5.} A special case of \textit{1.}\\
\indent \textit{6.} Let $K,\ K'$ be as in the statement. Let $O\leqslant G$ be an open subgroup such that for every $k\in K'$ we have $kO=Ok$. Because $\pi$ is surjective, it follows that it is an open map and that $\overline{k}\pi(O)=\pi(O)\overline{k}$ for every $k\in K$. Hence
$$\textup{Res}_H(K)\leqslant \pi(O).$$
Taking the intersection of all such $O$ implies that
$$\textup{Res}_H(K)\leqslant \pi(\textup{Res}_G(K').$$
\end{proof}

\begin{defi}\index{discrete residual order|ndx}
Let $G$ be a topological group. We define the \textbf{discrete residual order} on $\mathcal{K}(G)$, denoted by $K_1\preccurlyeq K_2$, if $K_1\leqslant \textup{Res}_G(K_2)$ or $K_1=K_2$. If $K_1\preccurlyeq K_2$ and $K_1\neq K_2$ we will denote it as $K_1\prec_G K_2$.
\end{defi}


\begin{defi}[Residual order hierarchy]\index{residual order hierarchy|ndx}\index{$\mathcal{K}_{\omega_1}$|ndx}
Let $G$ be a topological group. We define recursively the \textbf{residual order hierarchy of $G$} as
$$\mathcal{K}_\alpha(G)=\left\{\begin{array}{ll}
  \{ \{1_G\} \} & \mbox{if } \alpha=0,\ \\
   \{K\in\mathcal{K}(G); \ \textup{Res}_G(K)=\{1_G\}\} & \mbox{if }\alpha=1\\
   \{K\in \mathcal{K}(G); \text{ all } L\prec K\text{ satisfy } L\in\mathcal{K}_{\beta}(G) \text{ for some } \beta\leqslant\gamma\}  & \mbox{if } \alpha=\gamma,\ 0<\gamma<\omega_1.
\end{array}\right.$$
We define the complete residual order hierarchy of $G$ as the set
$$\mathcal{K}_{\omega_1}(G)=\bigcup_{\alpha<\omega_1}\mathcal{K}_{\alpha}(G).$$
\end{defi}

The set $\mathcal{K}_1(G)$ is defined individually so that $\mathcal{K}_{\alpha}\subset\mathcal{K}_{\beta}$,\ for $\alpha<\beta$. Otherwise the trivial subgroup,\ $\{1\}$,\ wouldn't be contained in $\mathcal{K}_1(G)$.

\begin{defi}\label{defi:residual height/rank}\index{residual height|ndx}\index{residual rank|ndx}\index{$\textphnc{y}$|ndx}\index{$\textphnc{f}$|ndx}
Let $G$ be a topological group and $K\in \mathcal{K}_{\omega_1}(G)$. We define the \textbf{residual height of $K$},\ denoted $\textphnc{y}_G(K)$,\ as
$$\textphnc{y}_G(K)=\min\{\alpha;\ \alpha<\omega_1 \text{ and } K\in K_{\alpha}(G)\}.$$
Given that $\mathcal{K}(G)=\mathcal{K}_{\omega_1}(G)$,\ we say that the \textbf{residual rank is well-defined in $G$} and define it as
$$\textphnc{f}(G)=\left(\sup\!^+_{K\in\mathcal{K}(G)}\textphnc{y}_G(K)+1\right).$$
\end{defi}

\begin{remark}\label{remark:less or equal height}
Notice that, by definition, if $K_1,\ K_2\in \mathcal{K}_{\omega_1}(G)$ are such that $\textup{Res}_G(K_1)\geqslant\textup{Res}_G(K_2)$, then $K_2\succ L$ implies $K_1\succ L$. By definition it is then the case that $\textphnc{y}_G(K_1)\geqslant \textphnc{y}_G(K_2)$. If $\textup{Res}_G(K_1)=\textup{Res}_G(K_2)$ then it is also the case $\textphnc{y}_G(K_1)= \textphnc{y}_G(K_2)$.
\end{remark}

\indent One possibility for a group $G$ to not have a well-defined residual rank is having a collection of compactly generated subgroups that can be ordered as follows:
$$\begin{tikzcd}
\{1\} & \ldots \arrow[l] & G_{n} \arrow[l] & \ldots \arrow[l] & G_2 \arrow[l] & G_1 \arrow[l] & G_0. \arrow[l]
\end{tikzcd}$$

As the set $\{G_{n}\}_{n\in\mathbb{N}}$ has no minimal element under the residual order, it is impossible to put such groups on the residual order hierarchy of $G$. We will see that this must always be the case.

\begin{lemma}
Let $G$ be a t.d.l.c.s.c. group and $K\in\mathcal{K}(G)$. Then $K\in\mathcal{K}_{\omega_1}(G)$ if, and only if, every chain of $\{L\in \mathcal{K}(G);\ L\preccurlyeq K\}$ can be embedded into a countable ordinal via an order-preserving map.
\end{lemma}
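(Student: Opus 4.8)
The plan is to prove this characterization of membership in $\mathcal{K}_{\omega_1}(G)$ by establishing the two implications separately, using transfinite induction for the forward direction and a well-foundedness argument for the reverse. The key object is the partially ordered set $\mathcal{K}_K := \{L \in \mathcal{K}(G);\ L \preccurlyeq K\}$ with the restriction of the residual order $\preccurlyeq$.

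For the forward direction, suppose $K \in \mathcal{K}_{\omega_1}(G)$, so $\textphnc{y}_G(K) = \alpha$ for some $\alpha < \omega_1$. First I would observe that by the recursive definition of the hierarchy, every $L \prec K$ satisfies $\textphnc{y}_G(L) \leqslant \alpha$, and more importantly $\textphnc{y}_G(L) < \textphnc{y}_G(K)$ whenever $L \prec K$ strictly; this strict decrease is the crux, and it should follow by unwinding the definition of $\mathcal{K}_\gamma(G)$ together with Remark \ref{remark:less or equal height} (if $L \prec K$ then any $M \prec L$ satisfies $M \prec K$, so $L$ lands in a strictly earlier stage). Granting this, the map $L \mapsto \textphnc{y}_G(L)$ is an order-preserving (indeed strictly order-preserving) function from $\mathcal{K}_K$ into the countable ordinal $\alpha + 1$. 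Restricting to any chain gives the desired order-embedding into a countable ordinal.

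For the reverse direction I would argue by contraposition: suppose $K \notin \mathcal{K}_{\omega_1}(G)$ and produce a chain in $\mathcal{K}_K$ that admits no order-preserving map into any countable ordinal. The natural strategy is to show that the poset $\mathcal{K}_K$ fails to be well-founded under $\preccurlyeq$, i.e., it contains a strictly $\prec$-descending sequence $K = K_0 \succ K_1 \succ K_2 \succ \cdots$, exactly the configuration displayed in the diagram preceding the lemma. Such an infinite descending chain embeds an order-reversed copy of $\omega$, which has no order-preserving map into a well-ordered set (a countable ordinal is well-ordered, so it has no infinite descending sequence). To build the descending chain, I would assume toward a contradiction that $\mathcal{K}_K$ is well-founded; then by transfinite recursion one can assign to each $L \in \mathcal{K}_K$ a rank equal to the supremum of successor ranks of its strict predecessors, and show this rank is a countable ordinal (using that $G$ is second countable, so $\mathcal{K}(G)$ is countable and each rank stays below $\omega_1$). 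This countable rank function would then place $K$ inside some $\mathcal{K}_\alpha(G)$, contradicting $K \notin \mathcal{K}_{\omega_1}(G)$.

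The main obstacle I anticipate is the bookkeeping in the reverse direction: precisely matching the abstract well-foundedness rank with the specific recursive stratification defining $\mathcal{K}_\alpha(G)$, and in particular verifying that ranks never escape past $\omega_1$. The essential input that keeps everything countable is second countability of $G$, which forces $\mathcal{K}(G)$ (and hence every chain and every set of predecessors) to be countable, so that suprema of ranks stay strictly below $\omega_1$; I would make sure to invoke this explicitly. A subtle point to handle carefully is that $\preccurlyeq$ need not be a total order on $\mathcal{K}_K$, so the statement about \emph{chains} is genuinely weaker than well-foundedness of the whole poset — I would reconcile this by noting that non-well-foundedness of $\mathcal{K}_K$ is equivalent to the existence of a single infinite descending chain, and it is this chain (a totally ordered subset) that obstructs the order-embedding.
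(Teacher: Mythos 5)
Your proof follows the same two-pronged strategy as the paper --- map a chain to residual heights for the forward direction, and exhibit an infinite strictly $\prec$-descending sequence below $K$ for the reverse --- but the step you yourself single out as ``the essential input'' fails as stated. Second countability of $G$ does \emph{not} make $\mathcal{K}(G)$ countable: in an infinite metrizable profinite group every closed subgroup is compact, hence compactly generated, and there are $2^{\aleph_0}$ of them, so the set of $\prec$-predecessors of $K$ can have the cardinality of the continuum, and a family of continuum many countable ordinals need not be bounded below $\omega_1$. Consequently your argument that the well-founded rank of the predecessor poset stays countable does not go through as written. The correct way to exploit second countability (and what the paper's own terse parenthetical is relying on) is Remark \ref{remark:comp gen}: $G$ is an increasing union of countably many open compactly generated subgroups $O_n$, each $L\in\mathcal{K}(G)$ lies in some $O_n$ with $\textup{Res}_G(L)\leqslant \textup{Res}_G(O_n)$, so Remark \ref{remark:less or equal height} bounds $\textphnc{y}_G(L)$ by $\textphnc{y}_G(O_n)$, and a countable supremum of countable ordinals is countable (this is exactly the computation carried out in Proposition \ref{coro:simplified residual rank general}). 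With that substitution your contradiction scheme works; the paper reaches the same descending chain more directly, by observing that if $K\notin\mathcal{K}_{\omega_1}(G)$ then some $L\prec K$ must itself lie outside $\mathcal{K}_{\omega_1}(G)$, and iterating.

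A smaller issue: in the forward direction you justify the strict decrease of height along $\prec$ by the implication that $M\prec L$ and $L\prec K$ give $M\prec K$. That is transitivity of the residual order, which is not available for general t.d.l.c.s.c.\ groups --- the paper only obtains it for elementary groups via Proposition \ref{prop:res do what i want in elementary} and explicitly leaves the general case as an open question at the end. Fortunately you do not need it: if $\textphnc{y}_G(K)=\alpha$ then, directly from the definition of the residual order hierarchy, every $L\prec K$ already lies in an earlier stage, so $\textphnc{y}_G(L)<\textphnc{y}_G(K)$; this is all the paper invokes and all your order-embedding of a chain into $\textphnc{y}_G(K)+1$ requires.
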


\begin{proof}
Assume $K\in\mathcal{K}_{\omega_1}(G)$. Let $\{L_{\gamma}\}_{\gamma\in \Gamma}$ be a chain of $\{L\in \mathcal{K}(G);\ L\preccurlyeq K\}$, that is $\Gamma$ is a totally ordered set and if $\gamma_1<\gamma_2$ then $L_{\gamma_1}\prec L_{\gamma_2}$. Define $\phi:\{L_{\gamma}\}_{\gamma\in \Gamma}\rightarrow \textphnc{y}_G(K)$ as $\phi(L_{\gamma})=\textphnc{y}_G(L_{\gamma})$. By the definition of residual height, it follows that $\phi$ is an order-preserving embedding.\\
\indent Now assume $K\notin\mathcal{K}_{\omega_1}(G)$. There exists $L\prec K$ such that $L\in\mathcal{K}(G)\backslash \mathcal{K}_{\omega_1}(G)$ as if this wasn't the case, we would have a contradiction (notice that as $G$ is second countable, it is not possible that for every $\alpha<\omega_1$ there exists $L\prec K$ such that $\textphnc{y}_G(L)\geqslant \alpha$). We can then create a chain $\{L_n\}_{n\in\mathbb{N}}\subset \{L\in \mathcal{K}(G);\ L\preccurlyeq K\}$ such that $L_n\prec L_{n+1}$ for every $n\in\mathbb{N}$. Hence, there is a chain that cannot be embedded into an ordinal.
\end{proof}

\begin{coro}\label{coro:when do we have no residual rank}
Let $G$ be a t.d.l.c.s.c. group. If $K\notin\mathcal{K}(G)$ then there exist a chain $\{L_n\}_{n\in\mathbb{N}}\subset \{L\in \mathcal{K}(G);\ L\preccurlyeq K\}$ such that $L_n\prec L_{n+1}$ for every $n\in\mathbb{N}$.
\end{coro}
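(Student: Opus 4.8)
I read the hypothesis as $K\in\mathcal{K}(G)\setminus\mathcal{K}_{\omega_1}(G)$, which is what makes the set $\{L\in\mathcal{K}(G);\ L\preccurlyeq K\}$ meaningful; with this reading the statement is exactly the descending chain extracted in the second half of the proof of the preceding lemma, and the plan is to isolate that extraction. Everything reduces to the following sub-claim: if $M\in\mathcal{K}(G)\setminus\mathcal{K}_{\omega_1}(G)$ then there is $L\prec M$ with $L\in\mathcal{K}(G)\setminus\mathcal{K}_{\omega_1}(G)$. Granting it, I would define the chain by dependent choice, putting $L_0:=K$ and choosing, whenever $L_n$ has been built outside $\mathcal{K}_{\omega_1}(G)$, some $L_{n+1}\prec L_n$ again outside $\mathcal{K}_{\omega_1}(G)$. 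Transitivity of $\preccurlyeq$ (which one gets from Theorem \ref{thrm:colin theorem G} together with part (3) of Lemma \ref{lemma:properties residue that will be reused}) keeps every $L_n$ inside $\{L\in\mathcal{K}(G);\ L\preccurlyeq K\}$, and by construction $L_{n+1}\prec L_n$ for all $n$. This is the required infinite chain; it is strictly $\prec$-descending, which is exactly the configuration that fails to embed into a countable ordinal and which the displayed diagram before the lemma depicts (so the indexing in the statement should be read in this descending sense).

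For the sub-claim I would argue contrapositively: assume every $L\prec M$ lies in $\mathcal{K}_{\omega_1}(G)$, so that each has a well-defined countable height $\textphnc{y}_G(L)$, and prove $\gamma:=\sup\{\textphnc{y}_G(L);\ L\prec M\}<\omega_1$. By the definition of the residual order hierarchy this forces $M\in\mathcal{K}_\gamma(G)\subseteq\mathcal{K}_{\omega_1}(G)$, contradicting the hypothesis on $M$. To bound the supremum, set $R:=\textup{Res}_G(M)$; every $L\prec M$ satisfies $L\leqslant R$, and by Remark \ref{remark:comp gen} I may write $R=\bigcup_{n\in\mathbb{N}}O_n$ as an increasing union of compactly generated open subgroups of $R$. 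A compact generating set of such an $L$ lands in some $O_n$, so $L\leqslant O_n$; the inclusion $L\leqslant O_n$ makes the family of open subgroups normalized by $O_n$ a subfamily of those normalized by $L$, whence $\textup{Res}_G(L)\leqslant\textup{Res}_G(O_n)$, and Remark \ref{remark:less or equal height} then yields $\textphnc{y}_G(L)\leqslant\textphnc{y}_G(O_n)$. As long as each $O_n$ is a proper predecessor of $M$ (hence itself in $\mathcal{K}_{\omega_1}(G)$), we conclude $\gamma\leqslant\sup_{n\in\mathbb{N}}\textphnc{y}_G(O_n)$, a countable supremum of countable ordinals, so $\gamma<\omega_1$.

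The step I expect to be the real obstacle is precisely this supremum bound, which is where second countability is consumed, and its only delicate case is $R=\textup{Res}_G(M)=M$. There $M$ is compactly generated and equals the increasing union $\bigcup_n O_n$, forcing $M=O_n$ for some $n$; the open exhaustion then produces no proper predecessors and, by itself, cannot bound the heights of the proper compactly generated subgroups of $M$. I would resolve this either by replacing each $L\prec M$ with the compactly generated open envelope $E$ supplied by Theorem \ref{thrm:colin theorem G} (for which $\textup{Res}_G(L)=\textup{Res}(E)$) and running the same monotonicity estimate through the $E$'s, or, more robustly, by observing that once every $L\prec M$ has a defined height the relation $\prec$ is well founded below $M$ on the standard Borel space of closed subgroups of $G$, so a $\Sigma^1_1$-boundedness (Kunen--Martin) argument bounds its rank, and hence $\gamma$, below $\omega_1$. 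Either route closes the sub-claim, and the corollary follows.
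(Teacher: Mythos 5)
Your proof is correct and follows essentially the same route as the paper: the corollary is extracted directly from the second half of the proof of the preceding lemma, which argues exactly as you do --- contrapositively, that if every $L\prec K$ had a countable residual height then second countability would bound $\sup_{L\prec K}\textphnc{y}_G(L)$ below $\omega_1$ and force $K\in\mathcal{K}_{\omega_1}(G)$ --- and then produces the strictly descending chain by dependent choice. Your reading of the hypothesis as $K\in\mathcal{K}(G)\setminus\mathcal{K}_{\omega_1}(G)$ and of the chain as descending matches the paper's evident intent, and your detailed treatment of the boundedness step (which the paper dispatches with a one-line parenthetical appeal to second countability, and which is indeed only delicate when $\textup{Res}_G(M)=M$) fills a gap rather than taking a different route.
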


 The next result follows directly from the definition of a group having a well-defined residual rank.

\begin{lemma}\label{lemma:auxiliary for height on nice cases}
Let $G$ be a t.d.l.c.s.c.~group with well-defined residual rank. Then given $K\in\mathcal{K}(G)$ the height of $G$ can alternatively defined either as
$$\textphnc{y}_G(K)=\left\{ \begin{array}{ll}
 0 & \mbox{if }K=\{1\}\\
   \sup_{K'\prec K} \textphnc{y}_G(K)+1 & \mbox{otherwise}
\end{array}  \right.$$
or
$$\textphnc{y}_G(K)=\left( \sup_{K'\in (\textup{Res}_G(K))}\textphnc{y}_G(K')\right)+1.$$
\end{lemma}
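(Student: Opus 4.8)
The plan is to read $\textphnc{y}_G$ as the rank function of the order $(\mathcal{K}(G),\prec)$ and to let everything fall out of the recursive definition of the hierarchy $\mathcal{K}_\alpha(G)$; the hypothesis that $G$ has well-defined residual rank is exactly what guarantees that every member of $\mathcal{K}(G)$ receives an ordinal height, so the recursion below terminates (cf. Corollary \ref{coro:when do we have no residual rank}). First I would dispose of the base case: since $\mathcal{K}_0(G)=\{\{1\}\}$ and $\{1\}$ lies in no earlier stage, $\textphnc{y}_G(\{1\})=0$, matching both formulas. For $K\neq\{1\}$ I would unfold the definition of $\mathcal{K}_\gamma(G)$: membership $K\in\mathcal{K}_\gamma(G)$ asserts precisely that every $L\prec K$ satisfies $\textphnc{y}_G(L)<\gamma$. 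Hence $\textphnc{y}_G(K)$ is the least ordinal lying strictly above every predecessor height, i.e. $\textphnc{y}_G(K)=\sup_{L\prec K}\bigl(\textphnc{y}_G(L)+1\bigr)$, which is the first displayed formula; in particular this records that $\textphnc{y}_G(L)<\textphnc{y}_G(K)$ whenever $L\prec K$.

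To reach the second formula I would only need to rewrite the index set of the supremum. By definition $L\prec K$ means $L\leqslant\textup{Res}_G(K)$ and $L\neq K$, so the predecessors of $K$ are exactly the members of $\mathcal{K}(G)$ contained in $\textup{Res}_G(K)$, with the single possible exception of $K$ itself. For $K\neq\{1\}$ the relation $K\leqslant\textup{Res}_G(K)$ cannot hold (Proposition \ref{prop:res do what i want in elementary}, together with the standing well-definedness hypothesis), so $\{L\in\mathcal{K}(G);\ L\prec K\}$ coincides with $\{K'\in\mathcal{K}(G);\ K'\leqslant\textup{Res}_G(K)\}$, and the two suprema agree. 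This identifies the first formula with the second.

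The delicate point, which I expect to be the main obstacle, is the placement of the $+1$: both displayed expressions carry it outside the supremum, so they are literally correct only once one knows $\sup_{L\prec K}\textphnc{y}_G(L)$ is attained (equivalently, that $\textphnc{y}_G(K)$ is a successor rather than a limit). I would approach this through two monotonicity facts. First, height is monotone under inclusion: if $K_1\leqslant K_2$ then every open subgroup normalised by $K_2$ is normalised by $K_1$, whence $\textup{Res}_G(K_1)\leqslant\textup{Res}_G(K_2)$, and Remark \ref{remark:less or equal height} gives $\textphnc{y}_G(K_1)\leqslant\textphnc{y}_G(K_2)$. Second, writing $\textup{Res}_G(K)=\bigcup_n O_n$ as an increasing union of compactly generated open subgroups (Remark \ref{remark:comp gen}), any predecessor $L$, being compactly generated, has its generating set covered by some $O_N$, so $L\leqslant O_N$; thus $\sup_{L\prec K}\textphnc{y}_G(L)=\sup_n\textphnc{y}_G(O_n)$. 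It then remains to see that this increasing sequence of ordinals stabilises, and this is where the real work lies. Here I would exploit Theorem \ref{thrm:colin theorem G}: there is a single compactly generated open subgroup $E$ with $\textup{Res}(E)=\textup{Res}_G(K)$ and, since $E$ is open, $\textup{Res}_G(E)=\textup{Res}_G(K)$, so $\textphnc{y}_G(E)=\textphnc{y}_G(K)$ by Remark \ref{remark:less or equal height}; pinning the heights $\textphnc{y}_G(O_n)$ against the invariant $\textup{Res}_G(K)=\textup{Res}(E)$ attached to one compactly generated group should prevent them from climbing without bound and force attainment. Once attainment is secured, the $+1$ may be pulled outside the supremum and both displayed formulas follow.
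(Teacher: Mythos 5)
The paper gives no proof of this lemma at all---it is introduced with the remark that it ``follows directly from the definition''---so the only question is whether your argument actually closes, and I don't think it does at the two points where something nontrivial is being claimed. The first is the step you yourself single out as the main obstacle: attainment of $\sup_{L\prec K}\textphnc{y}_G(L)$. Your proposed resolution (``pinning the heights $\textphnc{y}_G(O_n)$ against the invariant $\textup{Res}_G(K)=\textup{Res}(E)$ \ldots should prevent them from climbing without bound and force attainment'') is a hope, not an argument; Theorem~\ref{thrm:colin theorem G} gives you one compactly generated open $E$ with $\textup{Res}(E)=\textup{Res}_G(K)$, but nothing in it bounds the heights of the compactly generated subgroups of $\textup{Res}_G(K)$, and a group such as $\bigoplus_{n}(H,U)$ with $\textup{Res}(H)=H$ shows that an increasing exhaustion of a closed subgroup by compactly generated pieces can have strictly increasing heights. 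Whether attainment is even needed depends on how one parses the recursion defining $\mathcal{K}_\alpha(G)$: under the reading the paper evidently intends (successor stages built from the union of all earlier stages, unions at limits), the height of $K$ is \emph{by construction} the successor of the supremum of the predecessor heights and the first displayed formula is a tautology; under your reading ($\textphnc{y}_G(K)$ is the least ordinal strictly above every predecessor height) the formula is simply false whenever the predecessor heights are cofinal in a limit ordinal without attaining it, and you have not excluded that.

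The second gap is in passing from the first formula to the second. You need that $K\leqslant\textup{Res}_G(K)$ forces $K=\{1\}$, and you justify this by Proposition~\ref{prop:res do what i want in elementary} ``together with the standing well-definedness hypothesis''. Well-definedness of the residual rank does not give you the hypotheses of that proposition: $\textup{Aut}(\mathcal{T}_{d+1})$ has well-defined residual rank by the paper's own Subsection 4.2, yet $\textup{Res}_{\textup{Aut}(\mathcal{T}_{d+1})}(\textup{Aut}^+(\mathcal{T}_{d+1}))=\textup{Aut}^+(\mathcal{T}_{d+1})$ since $\textup{Aut}^+$ is open, non-discrete and topologically simple; for this $K$ the second displayed formula would yield $\textphnc{y}_G(K)\geqslant\textphnc{y}_G(K)+1$. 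So the equivalence of the two formulas genuinely requires $K\not\leqslant\textup{Res}_G(K)$, which holds for elementary $G$ (the setting in which the paper actually uses the lemma) but is not a consequence of the stated hypothesis, and your citation does not supply it. If you restrict to that setting and adopt the successor-stage reading of the hierarchy, your argument collapses to the paper's one-line observation; as written, it leaves both of these points open.
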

Our objective now is to show that the rank and height are well behaved in the class of elementary groups,\ that is,\ if $G$ is elementary then the residual rank is well-defined in $G$. For that reason we will use the equivalent definition for residual height on groups with well behaved residual rank in this section. 



\begin{prop}\label{prop:easier residual height}
Let $G$ be a t.d.l.c.s.c.~group. Then:
\begin{enumerate}
    \item If $O\leqslant G$ is open,\ $K\leqslant O$ and $K\in\mathcal{K}_{\omega_1}(O)$, then $K\in\mathcal{K}_{\omega_1}(G)$ and $\textphnc{y}_O(K)=\textphnc{y}_G(K)$.
    \item If $K\leqslant \textup{Res}(G)$ and $K\in\mathcal{K}_{\omega_1}(\textup{Res}(G))$, then $K\in\mathcal{K}_{\omega_1}(G)$ and $\textphnc{y}_{\textup{Res}(G)}(K)=\textphnc{y}_G(K)$.
\end{enumerate}
\end{prop}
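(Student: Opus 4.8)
The plan is to reduce both statements to a single residual identity and then transport the order structure. Concretely, for a subgroup $H\leqslant G$ of the relevant kind (open in part 1, equal to $\textup{Res}(G)$ in part 2) and any $K\in\mathcal{K}(G)$ with $K\leqslant H$, I would first prove that the discrete residual computed inside $H$ agrees with the one computed inside $G$, i.e. $\textup{Res}_H(K)=\textup{Res}_G(K)$, and that this holds for every $L\in\mathcal{K}(G)$ with $L\leqslant H$. Since $H$ is closed in $G$ in both cases, one has $\mathcal{K}(H)=\{L\in\mathcal{K}(G);\ L\leqslant H\}$. Once the residual identity is available on all such $L$, the order $\prec$ agrees on subgroups contained in $H$, so the entire hereditary downset $\{L\in\mathcal{K}(G);\ L\preccurlyeq K\}$ together with its order relation is literally the same whether read in $H$ or in $G$. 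As membership in $\mathcal{K}_\alpha$ and the height $\textphnc{y}$ (Definition \ref{defi:residual height/rank}) are determined by this downset alone (via the recursive description in Lemma \ref{lemma:auxiliary for height on nice cases}, used only through predecessor sets), a transfinite induction on $\textphnc{y}_H(K)$ gives both $K\in\mathcal{K}_{\omega_1}(H)\Leftrightarrow K\in\mathcal{K}_{\omega_1}(G)$ and $\textphnc{y}_H(K)=\textphnc{y}_G(K)$.

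For part 1 the residual identity is immediate: since $K\leqslant O$, the subgroup $K$ normalizes $O$, so Lemma \ref{lemma:properties residue that will be reused}(3) yields $\textup{Res}_O(K)=\textup{Res}_G(K)$. In particular $\textup{Res}_G(K)=\textup{Res}_O(K)\leqslant O$, so every $L\prec_G K$ satisfies $L\leqslant O$ and hence lies in $\mathcal{K}(O)$. The same identity applied to each such $L\leqslant O$ shows that the full downset and its order coincide in $O$ and in $G$, which is exactly what the reduction above needs.

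For part 2, write $N=\textup{Res}(G)$; here the identity $\textup{Res}_N(K)=\textup{Res}_G(K)$ carries the real content. I would first note $\textup{Res}_G(K)\leqslant\textup{Res}(G)=N$, because every open normal subgroup of $G$ is $K$-invariant and therefore belongs to the family defining $\textup{Res}_G(K)$. Lemma \ref{lemma:properties residue that will be reused}(1), applied with $K\leqslant N\trianglelefteq G$, gives $\textup{Res}_N(K)\leqslant\textup{Res}_G(K)$. For the reverse inclusion set $R:=\textup{Res}_G(K)\leqslant N$; conjugation by elements of $K$ permutes the family defining $R$, so $K$ normalizes $R$, and applying Lemma \ref{lemma:properties residue that will be reused}(1) inside the ambient group $N$ (with $R$ in the role of the normalized subgroup) yields $\textup{Res}_R(K)\leqslant\textup{Res}_N(K)$. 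Theorem \ref{thrm:colin result for res commuting} then gives $\textup{Res}_R(K)=R$, whence $R\leqslant\textup{Res}_N(K)$ and the two residuals coincide. Because $\textup{Res}_G(L)\leqslant N$ for \emph{every} $L\in\mathcal{K}(G)$, the identical computation gives $\textup{Res}_N(L)=\textup{Res}_G(L)$ for all $L\leqslant N$, so once more the downset of $K$ lies hereditarily in $N$ and carries the same order, and the transfinite induction concludes.

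The main obstacle is precisely the reverse inclusion in part 2: unlike the open case, $N=\textup{Res}(G)$ need not be open, so Lemma \ref{lemma:properties residue that will be reused}(3) is unavailable and one cannot simply intersect open $K$-invariant subgroups of $G$ with $N$ to build the needed open subgroups of $N$. The device that rescues the argument is Reid's self-referential identity $\textup{Res}_{\textup{Res}_G(K)}(K)=\textup{Res}_G(K)$ (Theorem \ref{thrm:colin result for res commuting}), which recovers $R$ as a residual computed inside $R$ itself and then lets us push it up into $N$ through Lemma \ref{lemma:properties residue that will be reused}(1). In the inductive bookkeeping I would be careful to invoke the height recursion of Lemma \ref{lemma:auxiliary for height on nice cases} only via the predecessor sets $\{L\prec K\}$, since those are exactly the sets proved to be stable under passage between $H$ and $G$.
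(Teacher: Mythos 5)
Your proposal is correct and follows essentially the same route as the paper: part 1 via Lemma \ref{lemma:properties residue that will be reused}(3), and part 2 by sandwiching $\textup{Res}_{\textup{Res}(G)}(K)$ between $\textup{Res}_G(K)$ and $\textup{Res}_{\textup{Res}_G(K)}(K)$ using Lemma \ref{lemma:properties residue that will be reused}(1) and Theorem \ref{thrm:colin result for res commuting}, then transporting the order through the fact that the predecessor downset of $K$ lies hereditarily in the subgroup and carries the same relation $\prec$. Your write-up of the reverse inclusion in part 2 is in fact slightly cleaner than the paper's phrasing of the same step.
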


\begin{proof}
$\textit{1.}$ We want to show that the set $\mathcal{K}_{\omega_1}(O)$ is a \textbf{lower set} of $\mathcal{K}_{\omega_1}(G)$, that is, if $K\in \mathcal{K}_{\omega_1}(O)$ and $K_1\prec K$ in $\mathcal{K}_{\omega_1}(G)$, then $K_1\in \mathcal{K}_{\omega_1}(O)$ and $K_1\prec K$ in $\mathcal{K}_{\omega_1}(O)$. \\
\indent Let $O\leqslant G$ an open subgroup and $K\leqslant O$ a compactly generated subgroup such that $K\in\mathcal{K}_{\omega_1}(G)$. By Lemma \ref{lemma:properties residue that will be reused} \textit{3.} it follows that $\textup{Res}_O(K)=\textup{Res}_G(K)$. Hence $K_1\prec K$ in $\mathcal{K}_{\omega_1}(G)$ if,\ and only if,\ $K_1\in \mathcal{K}_{\omega_1}(O)$ and $K_1\prec K$ in $\mathcal{K}_{\omega_1}(O)$. It is then the case that $\mathcal{K}_{\omega_1}(O)$ is a lower set of $\mathcal{K}_{\omega_1}(G)$. Therefore $K\in\mathcal{K}_{\omega_1}(G)$ and $\textphnc{y}_O(K)=\textphnc{y}_G(K)$.\\
\indent $\textit{2.}$ Similarly to $1.$, we want to show that $\mathcal{K}_{\omega_1}(\textup{Res}(G))$ is a lower set of $\mathcal{K}_{\omega_1}(G)$.\\
\indent Assume that $K\leqslant \textup{Res}(G)$. Lemma \ref{lemma:properties residue that will be reused} \textit{1.} implies $\textup{Res}_G(K)\geqslant \textup{Res}_{\textup{Res}(G)}(K)$. Because $\textup{Res}_G(K)$ normalizes $K$,\ \textit{1.} and \textit{3.} from Lemma \ref{lemma:properties residue that will be reused} implies $\textup{Res}_{\textup{Res}(G)}(K) \geqslant \textup{Res}_{\textup{Res}_G(K)}(K)$. Hence $\textup{Res}_G(K)\geqslant \textup{Res}_{\textup{Res}(G)}(K)\geqslant \textup{Res}_{\textup{Res}_G(K)}(K)$. Theorem \ref{thrm:colin result for res commuting} then implies $\textup{Res}_G(K)=\textup{Res}_{\textup{Res}(G)}(K)$. Therefore $\textup{Res}_G(K)=\textup{Res}_{\textup{Res}(G)}(K)$. 
Hence  $K_1\prec K$ in $\mathcal{K}_{\omega_1}(G)$ if,\ and only if,\ $K_1\in \textup{Res}(G)$ and $K_1\prec K$ in $\mathcal{K}_{\omega_1}(\textup{Res}_G(K))$. It is then the case that $\mathcal{K}_{\omega_1}(\textup{Res}(G))$ is a lower set of $\mathcal{K}_{\omega_1}(G)$. Therefore $K\in\mathcal{K}_{\omega_1}(G)$ and $\textphnc{y}_{\textup{Res}(G)}(K)=\textphnc{y}_G(K)$.
\end{proof}

\begin{prop}\label{prop:auxiliary for 4.9}
Let $G,\ H$ be t.d.l.c.s.c. groups and $\pi:G\rightarrow H$ a continuous surjective map. Then for every $K\in\mathcal{K}(H)$ there exists $K'\in\mathcal{K}(G)$ such that $\pi(K')=K$. If $K,\ L\in \mathcal{K}(H)$ are such that $K\succ L$ you can pick an $L'\in\mathcal{K}(G)$ such that $\pi(L')=L$ and $K'\succ L'$.
\end{prop}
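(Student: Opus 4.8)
The plan is to prove the two assertions in turn, treating the first (bare lifting) as a lemma that I then reapply inside the second (order-preserving lifting). Throughout I use two standard facts: a continuous surjective homomorphism between t.d.l.c.s.c.\ groups is open (open mapping theorem for $\sigma$-compact locally compact groups), and the Van Dantzig theorem supplying compact open subgroups. For the first assertion, fix $K\in\mathcal{K}(H)$ and pass to the closed subgroup $\tilde{G}:=\pi^{-1}(K)$, which is again t.d.l.c.s.c.\ and onto which $\pi$ restricts to a continuous surjection $\tilde{G}\to K$; this reduces matters to the case where the target is the compactly generated group $K$. Choose a compact open subgroup $U\leqslant\tilde{G}$; since $\pi|_{\tilde{G}}$ is open, $\pi(U)$ is a compact open subgroup of $K$. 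Because $K$ is compactly generated and $\pi(U)$ is open, covering a compact generating set of $K$ by finitely many cosets of $\pi(U)$ yields a finite set $F=\{f_1,\dots,f_n\}\subseteq K$ with $K=\langle F\cup\pi(U)\rangle$. Picking $g_i\in\tilde{G}$ with $\pi(g_i)=f_i$ and setting $K':=\langle g_1,\dots,g_n,U\rangle$, I get that $K'$ is open (hence closed) in $\tilde{G}$, and therefore closed in $G$, is compactly generated by the compact set $\{g_1,\dots,g_n\}\cup U$, and satisfies $\pi(K')=\langle F\cup\pi(U)\rangle=K$. Thus $K'\in\mathcal{K}(G)$ is the desired lift.

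For the second assertion, suppose $K\succ L$ in $\mathcal{K}(H)$, let $K'$ be the lift produced above, and write $R:=\textup{Res}_G(K')$, a closed subgroup of $G$. Since $\pi(K')=K\geqslant K$, Lemma \ref{lemma:properties residue that will be reused}(6) applies and gives $L\leqslant\textup{Res}_H(K)\leqslant\pi(R)$. I then form the closed subgroup $R_L:=R\cap\pi^{-1}(L)$ of $G$; the containment $L\leqslant\pi(R)$ forces $\pi(R_L)=L$, so $\pi|_{R_L}\colon R_L\to L$ is a continuous surjection of t.d.l.c.s.c.\ groups. Applying the first assertion to this map produces $L'\in\mathcal{K}(R_L)\subseteq\mathcal{K}(G)$ with $\pi(L')=L$ and $L'\leqslant R_L\leqslant\textup{Res}_G(K')$. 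Finally, $\pi(L')=L\neq K=\pi(K')$ forces $L'\neq K'$, so $L'\leqslant\textup{Res}_G(K')$ together with $L'\neq K'$ is exactly the statement $K'\succ L'$.

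The one genuinely delicate point is ensuring each lift lands in $\mathcal{K}(G)$, that is, is \emph{simultaneously} closed and compactly generated: the naive candidate $\overline{\langle C'\rangle}$ for a lift $C'$ of a compact generating set need not be compactly generated. The device resolving this is to pass to $\pi^{-1}(K)$ and lift a compact open subgroup $U$ whose image, together with finitely many elements, generates $K$, so that $K'$ is open and hence automatically closed. For the second part the substantive input is precisely Lemma \ref{lemma:properties residue that will be reused}(6): it guarantees that $L$ already lies in $\pi(\textup{Res}_G(K'))$, which is what lets me lift $L$ \emph{inside} $\textup{Res}_G(K')$ rather than merely somewhere in $G$; once that is in hand the construction is just a relative reapplication of the first assertion to $\pi|_{R_L}$.
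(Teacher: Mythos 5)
Your proof is correct and follows essentially the same route as the paper: reduce to $\pi^{-1}(K)$, use openness of the quotient map to produce an open, compactly generated (hence closed) subgroup surjecting onto $K$, and then invoke Lemma \ref{lemma:properties residue that will be reused}(6) to lift $L$ inside $\textup{Res}_G(K')$ by the same device. The only difference is cosmetic — you build the lift explicitly from finitely many coset representatives of $\pi(U)$, whereas the paper selects a suitable member of an exhaustion of $\pi^{-1}(K)$ by open compactly generated subgroups — and your version is if anything slightly more careful in noting $L'\neq K'$.
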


\begin{proof}
Let $K\in\mathcal{K}(H)$, as in the statement. Let $\pi':\pi^{-1}(K)\rightarrow K$ the restriction of $\pi$ on $\pi^{-1}(K)$. Let $\{O_n\}_{n\in\mathbb{N}}$ a collection of compactly generated, open subgroups of $\pi^{-1}(K)$ such that $\cup_{n\in\mathbb{N}}O_n=\pi^{-1}(K)$. As $\pi'$ is an open map, there is $N\in\mathbb{N}$ such that $\pi'(O_N)=K$. Define $K':=O_N$, for such $N$. It is then the case that $K'\in\mathcal{K}(G)$ and $\pi(K')=K$.\\
\indent Now assume $L\in\mathcal{K}(H)$ is such that $K\succ L$. By Lemma \ref{lemma:properties residue that will be reused} $6.$ it is the case that $\textup{Res}_{H}(K)<\pi(\textup{Res}_G(K'))$. Hence $\pi(\pi^{-1}(L)\cap \textup{Res}_G(K'))=L$. Define $\pi'':\pi^{-1}(L)\cap \textup{Res}_G(K')\rightarrow L$ the restriction  of $\pi$ on $\pi^{-1}(L)\cap \textup{Res}_G(K')$. Let $\{O'_n\}_{n\in\mathbb{N}}$ a collection of compactly generated, open subgroups of $\pi^{-1}(L)\cap \textup{Res}_G(K')$ such that $\cup_{n\in\mathbb{N}}O'_n=\pi^{-1}(L)\cap \textup{Res}_G(K')$. As $\pi''$ is an open map, there is $N'\in\mathbb{N}$ such that $\pi'(O_{N'})=L$. Define $L':=O_{N'}$, for such $N'$. By definition of $L'$ we have that $L'\leqslant \textup{Res}_{G}(K')$ and $L'\in\mathcal{K}(G)$, therefore $K'\succ L'$.
\end{proof}

\begin{prop}\label{coro:height decreases on surject}
Let $G,\ H$ be t.d.l.c.s.c.~groups and $\pi:G\rightarrow H$ a continuous surjective map. If $K\in\mathcal{K}(H)$ and $K'\in \mathcal{K}_{\omega_1}(G)$ are such that $\pi(K')= K$, then $K\in\mathcal{K}_{\omega_1}(H)$ and $\textphnc{y}_G(K')\geqslant \textphnc{y}_H(K)$. More than that, if $G$ has well-defined residual rank, then $H$ has well-defined residual rank and $\textphnc{f}(G)\geqslant \textphnc{f}(H)$.
\end{prop}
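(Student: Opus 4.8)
The plan is to establish the height inequality first, by transfinite induction on $\textphnc{y}_G(K')$, and then to read off the statement about $\textphnc{f}$ by letting $K$ range over all of $\mathcal{K}(H)$. The two tools I would lean on throughout are Proposition \ref{prop:auxiliary for 4.9}, which lifts subgroups of $H$ (and the relation $\prec$) back across $\pi$, and Lemma \ref{lemma:properties residue that will be reused}(6), which records $\textup{Res}_H(K)\leqslant\pi(\textup{Res}_G(K'))$ whenever $\pi(K')\geqslant K$. These together say that passing to the quotient can only shrink discrete residuals and hence can only lower residual height.

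For the first assertion I would prove, by induction on $\alpha$, the statement: \emph{for every $K'\in\mathcal{K}_{\omega_1}(G)$ with $\textphnc{y}_G(K')=\alpha$ and every $K\in\mathcal{K}(H)$ with $\pi(K')=K$, one has $K\in\mathcal{K}_{\omega_1}(H)$ and $\textphnc{y}_H(K)\leqslant\alpha$.} The base cases are immediate: if $\alpha=0$ then $K'=\{1_G\}$, so $K=\{1_H\}$ and $\textphnc{y}_H(K)=0$; if $\alpha=1$ then $\textup{Res}_G(K')=\{1_G\}$, so Lemma \ref{lemma:properties residue that will be reused}(6) forces $\textup{Res}_H(K)=\{1_H\}$, i.e.\ $K\in\mathcal{K}_1(H)$. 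For the inductive step I may assume $K\neq\{1_H\}$ and pick an arbitrary $L\prec K$ in $H$. Applying Proposition \ref{prop:auxiliary for 4.9} to our fixed lift $K'$ yields $L'\in\mathcal{K}(G)$ with $\pi(L')=L$ and $L'\prec K'$. Since $K'\in\mathcal{K}_{\omega_1}(G)$ makes $\prec$ well-founded on the subgroups $\preccurlyeq K'$ (the characterization preceding Corollary \ref{coro:when do we have no residual rank}), the residual height is the associated ordinal rank there and strictly decreases along $\prec$; thus $L'\in\mathcal{K}_{\omega_1}(G)$ with $\textphnc{y}_G(L')<\alpha$. The induction hypothesis then gives $L\in\mathcal{K}_{\omega_1}(H)$ with $\textphnc{y}_H(L)<\alpha$. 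As $L\prec K$ was arbitrary, every $\prec$-predecessor of $K$ has height $<\alpha$, so the defining clause of the residual order hierarchy places $K$ in $\mathcal{K}_\alpha(H)$, whence $K\in\mathcal{K}_{\omega_1}(H)$ and $\textphnc{y}_H(K)\leqslant\alpha=\textphnc{y}_G(K')$.

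For the second assertion, assume $G$ has well-defined residual rank, so $\mathcal{K}(G)=\mathcal{K}_{\omega_1}(G)$. Given any $K\in\mathcal{K}(H)$, the first part of Proposition \ref{prop:auxiliary for 4.9} supplies $K'\in\mathcal{K}(G)=\mathcal{K}_{\omega_1}(G)$ with $\pi(K')=K$, and the assertion just proved gives $K\in\mathcal{K}_{\omega_1}(H)$ together with $\textphnc{y}_H(K)\leqslant\textphnc{y}_G(K')$. Since $K$ was arbitrary, $\mathcal{K}(H)=\mathcal{K}_{\omega_1}(H)$, so $H$ has well-defined residual rank. Moreover every $\textphnc{y}_H(K)$ is bounded by $\sup_{K''\in\mathcal{K}(G)}\textphnc{y}_G(K'')$, so $\sup_{K\in\mathcal{K}(H)}\textphnc{y}_H(K)\leqslant\sup_{K''\in\mathcal{K}(G)}\textphnc{y}_G(K'')$; as $\sup^{+}$ and the trailing $+1$ are monotone in this supremum, $\textphnc{f}(H)\leqslant\textphnc{f}(G)$.

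The step I expect to be the main obstacle is the strict inequality $\textphnc{y}_G(L')<\textphnc{y}_G(K')$ used in the inductive step: the hierarchy clause only gives $\textphnc{y}_G(L')\leqslant\textphnc{y}_G(K')$ on its face, and promoting this to a strict drop requires knowing that $\textphnc{y}_G$ genuinely agrees with the ordinal rank of the well-founded order $\prec$ on the down-set of $K'$. This is exactly where membership $K'\in\mathcal{K}_{\omega_1}(G)$ is needed (via the well-foundedness characterization and the recursive description of height in Lemma \ref{lemma:auxiliary for height on nice cases}); once strict monotonicity of height along $\prec$ is secured, the remainder is bookkeeping with the hierarchy and repeated use of Proposition \ref{prop:auxiliary for 4.9}.
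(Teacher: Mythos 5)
Your proposal is correct and follows essentially the same route as the paper's proof: transfinite induction on $\textphnc{y}_G(K')$, lifting $\prec$-predecessors of $K$ via Proposition \ref{prop:auxiliary for 4.9}, and then taking suprema over $\mathcal{K}(H)$ for the claim about $\textphnc{f}$. The only difference is that you explicitly justify the strict drop $\textphnc{y}_G(L')<\textphnc{y}_G(K')$ along $\prec$ (via well-foundedness and Lemma \ref{lemma:auxiliary for height on nice cases}), a point the paper asserts without comment.
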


\begin{proof}
We will prove the first statement by induction on $\textphnc{y}_G(K')$. It follows trivially for the case $\textphnc{y}_G(K')=0$, that is, $K'=\{1_G\}$ and, as $\pi$ is surjective, $K=\{1_H\}$.\\
\indent Assume that for every $\beta<\alpha+1$, if $K\in\mathcal{K}(H)$ is such that there exists $K'\in \mathcal{K}_{\omega_1}(G)$ with $\pi(K')=K$ and $\textphnc{y}_G(K')<\beta$, then $K\in \mathcal{K}_{\omega_1}(H)$ and $\textphnc{y}_H(K)\leqslant \beta$. \\
\indent Assume $K\in\mathcal{K}(H)$ and $K'\in\mathcal{K}_{\omega_1}(G)$ are such that $\textphnc{y}_G(K')=\alpha+1$ and $\pi(K')\geqslant\ K$. Such $K'$ always exists by Proposition \ref{prop:auxiliary for 4.9}. Let $L\in\mathcal{K}(H)$ be such that $K\succ L$. By Proposition \ref{prop:auxiliary for 4.9} there is $L'\in\mathcal{K}(G)$ such that $K'\succ L'$ and $\pi(L')=L$. As $\textphnc{y}_G(K')=\alpha+1$ we have that $\alpha\geqslant \textphnc{y}_G(L')$. By the induction hypothesis, it is then the case that $\alpha\geqslant \textphnc{y}_H(L)$. As this is the case for all $L\prec K$ we have that $\textphnc{y}_G(K')=\alpha+1\geqslant \textphnc{y}_H(K)$, as required.\\
\indent Assume now $G$ has a well-defined rank. By Proposition \ref{prop:auxiliary for 4.9} it is the case that for every $K\in\mathcal{K}(H)$ there is $K'\in\mathcal{K}(G)$ with $\pi(K')=K$. By the proof above it is also the case that $\textphnc{y}_G(K')\geqslant \textphnc{y}_H(K)$ for all such $K,\ K'$. Hence
$$\textphnc{f}(G)=\left(\sup_{K'\in\mathcal{K}(G)}\textphnc{y}_{G}(K')\right)+1 \geqslant \left(\sup_{K\in\mathcal{K}(H)}\textphnc{y}_{H}(K)\right)+1=\textphnc{f}(H).$$
\end{proof}

Now for the proof that for the class of elementary groups the decomposition rank is the same as the residual rank we will use steps similar to the construction of the decomposition rank (Definition \ref{defi:decomposition rank}) for our proof.

\begin{prop}\label{prop:rank 2 is the same in both ranks}
Let $G$ be a t.d.l.c.s.c.~group. Then $G$ is elementary with $\xi(G)=2$ if,\ and only if,\ $\textphnc{f}(G)=2$.
\end{prop}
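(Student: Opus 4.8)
The plan is to translate both equalities into concrete statements about discrete residuals and then bridge them through the structure theory of residually discrete groups. Unwinding the definitions, $\xi(G)=2$ means that $G$ is a non-trivial elementary group in which every compactly generated open subgroup $O$ satisfies $\textup{Res}(O)=\{1\}$: indeed such an $O$ is closed and elementary with $\xi(O)\leqslant\xi(G)=2$ by Proposition \ref{prop:decomposition rank closed subgroup}, and being compactly generated it has $\xi(O)=\xi(\textup{Res}(O))+1$, which forces $\textup{Res}(O)=\{1\}$ since $\xi=1$ characterises the trivial group. On the other side, directly unwinding the $\sup^{+}$ in the definition of $\textphnc{f}$ shows that $\textphnc{f}(G)=2$ is equivalent to: $G\neq\{1\}$, the residual rank is well-defined, and $\sup_{K\in\mathcal{K}(G)}\textphnc{y}_G(K)=1$; equivalently, $G\neq\{1\}$ and $\textup{Res}_G(K)=\{1\}$ for every $K\in\mathcal{K}(G)$ (so that $\mathcal{K}(G)=\mathcal{K}_1(G)$ and the rank is automatically defined). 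Thus both sides reduce to triviality of discrete residuals, and the task is to pass between ``$\textup{Res}(O)=\{1\}$ for compactly generated open $O$'' and ``$\textup{Res}_G(K)=\{1\}$ for all $K\in\mathcal{K}(G)$''.

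For the forward implication, assume $\xi(G)=2$ and take an arbitrary $K\in\mathcal{K}(G)$. I would invoke Theorem \ref{thrm:colin theorem G} to obtain a compactly generated open subgroup $E\leqslant G$ with $\textup{Res}_G(K)=\textup{Res}(E)$. By the first paragraph $\textup{Res}(E)=\{1\}$, hence $\textup{Res}_G(K)=\{1\}$, so $\textphnc{y}_G(K)\leqslant 1$ for every $K$ and the residual rank is well-defined with $\mathcal{K}(G)=\mathcal{K}_1(G)$. Since $\xi(G)=2\neq 1$ forces $G\neq\{1\}$, there is a non-trivial $K\in\mathcal{K}(G)$ (for instance $\overline{\langle g\rangle}$ with $g\neq 1$), which has height exactly $1$; therefore $\sup_{K}\textphnc{y}_G(K)=1$ and $\textphnc{f}(G)=2$.

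For the converse, assume $\textphnc{f}(G)=2$, so $G\neq\{1\}$ and $\textup{Res}_G(K)=\{1\}$ for all $K\in\mathcal{K}(G)$. Write $G=\bigcup_i O_i$ as an increasing union of compactly generated open subgroups (Remark \ref{remark:comp gen}). Applying part $3$ of Lemma \ref{lemma:properties residue that will be reused} with the open subgroup $O_i$ acting on itself by conjugation gives $\textup{Res}(O_i)=\textup{Res}_{O_i}(O_i)=\textup{Res}_G(O_i)=\{1\}$; that is, each $O_i$ is compactly generated and residually discrete. By the Caprace--Monod structure theorem for compactly generated residually discrete groups, each $O_i$ possesses a compact open normal subgroup $V_i$, so $O_i$ is an extension of the profinite group $V_i$ by the countable discrete group $O_i/V_i$, hence elementary; being compactly generated it then has $\xi(O_i)=\xi(\textup{Res}(O_i))+1=2$ whenever $O_i\neq\{1\}$. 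Consequently $G=\bigcup_i O_i$ is elementary by Definition \ref{defi:elementary groups}(v), and Definition \ref{defi:decomposition rank}(iii) yields $\xi(G)=\sup^{+}_i\xi(O_i)=2$, the supremum being attained since $G\neq\{1\}$.

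The definitional translations and the forward direction are essentially formal once Theorem \ref{thrm:colin theorem G} is in hand. The main obstacle is the converse: triviality of all the $\textup{Res}_G(K)$ only tells us that the building blocks $O_i$ are residually discrete, and upgrading this to \emph{elementarity} (so that $\xi(O_i)$ is even meaningful) is precisely where one needs the external input that a compactly generated residually discrete t.d.l.c.\ group has a compact open normal subgroup. I expect that structural fact to be the crux, since without it one cannot legitimately speak of $\xi(O_i)$ at all.
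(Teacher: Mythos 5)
Your proof is correct and follows essentially the same route as the paper: both directions reduce the two conditions to the single statement that $\textup{Res}_G(K)=\{1\}$ for every $K\in\mathcal{K}(G)$, which is matched against residual discreteness of the compactly generated open subgroups. The only differences are minor — in the forward direction the paper works with $O=\langle U,K\rangle$ and Lemma \ref{lemma:properties residue that will be reused} rather than Theorem \ref{thrm:colin theorem G}, and in the converse you make explicit the Caprace--Monod input (compactly generated residually discrete groups have a compact open normal subgroup, hence are elementary of rank $2$) that the paper uses silently when it writes $\xi(O)=2$; making that step explicit is a genuine improvement in rigour.
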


\begin{proof}
Assume $\xi(G)=2$. Let $K\in \mathcal{K}(G)$ non-trivial and $U$ be a compact open subgroup of $G$. The subgroup $O=\langle U,K\rangle\leqslant G$ is a compactly generated open subgroup. Because $O$ is non-trivial then $\xi(O)=2$ and because $O$ is compactly generated then $\textup{Res}(O)=\{1\}$. Lemma \ref{lemma:properties residue that will be reused} implies $\textup{Res}_O(K)=\textup{Res}_G(K)\leqslant \textup{Res}(O)$. Then $\textup{Res}_G(K)=\{1\}$ and $\textphnc{y}_G(K)=1$. Then $\textphnc{f}(G)=2$.\\
\indent If $\textphnc{f}(G)=2$ then for every $K\in\mathcal{K}(G)$,\ $\textphnc{y}_G(K)=1$,\ that is,\ $\textup{Res}_G(K)=\{1\}$. This implies that for every $O\leqslant G$ compactly generated open subgroup of $G$,\ $\textup{Res}(O)=\textup{Res}_G(O)=\{1\}$. Hence $\xi(O)=2$. As this is the case for all open compactly generated subgroups of $G$ it then follows that $\xi(G)=2$.
\end{proof}

\begin{prop}\label{prop:simplified residual rank on compactly generated}
Let $G$ be a compactly generated t.d.l.c.s.c.~group. If $\textup{Res}(G)$ has a well-defined residual rank then $G$ has a well-defined residual rank and $\textphnc{f}(G)=\textphnc{y}_G(G)+1=\textphnc{f}(\textup{Res}(G))+1$.
\end{prop}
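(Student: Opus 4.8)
The plan is to prove the two claims of the proposition in order: first that $\textup{Res}(G)$ having a well-defined residual rank forces the same for $G$, and then the displayed chain of equalities. Both rest on one elementary inclusion: since every open normal subgroup of $G$ is normalized by each $K\in\mathcal{K}(G)$, intersecting over the (larger) family of $K$-invariant open subgroups gives
$$\textup{Res}_G(K)\leqslant\textup{Res}(G)\qquad\text{for all }K\in\mathcal{K}(G).$$
Consequently every proper predecessor $L\prec K$ satisfies $L\leqslant\textup{Res}_G(K)\leqslant\textup{Res}(G)$; being closed and compactly generated, such an $L$ belongs to $\mathcal{K}(\textup{Res}(G))$. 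This inclusion is the channel through which I transport height data from $\textup{Res}(G)$, where the rank is assumed to behave, up to $G$. I also note that one may assume $\textup{Res}(G)\lneq G$: the degenerate case $\textup{Res}(G)=G$ would make the asserted identity $\textphnc{f}(G)=\textphnc{f}(\textup{Res}(G))+1$ read $\textphnc{f}(G)=\textphnc{f}(G)+1$, so it is incompatible with the hypothesis.

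For well-definedness, fix $K\in\mathcal{K}(G)$ and look at its proper predecessors $L\prec K$. By the paragraph above each such $L$ lies in $\mathcal{K}(\textup{Res}(G))=\mathcal{K}_{\omega_1}(\textup{Res}(G))$, the equality holding because $\textup{Res}(G)$ has well-defined residual rank. Proposition \ref{prop:easier residual height}(2) then places each $L$ in $\mathcal{K}_{\omega_1}(G)$ with $\textphnc{y}_G(L)=\textphnc{y}_{\textup{Res}(G)}(L)$, and all these heights are bounded by the countable ordinal $\delta:=\sup_{L\in\mathcal{K}(\textup{Res}(G))}\textphnc{y}_{\textup{Res}(G)}(L)<\omega_1$. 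Thus every predecessor of $K$ is settled in the hierarchy of $G$ at a level below $\omega_1$, so $K$ itself enters $\mathcal{K}_{\omega_1}(G)$ (at a level at most $\delta+1$). Since $K$ was arbitrary, $\mathcal{K}(G)=\mathcal{K}_{\omega_1}(G)$ and $G$ has a well-defined residual rank.

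With this in hand I may use the recursive height formula of Lemma \ref{lemma:auxiliary for height on nice cases}. First, $G$ attains the maximal height in $\mathcal{K}(G)$: for any $K$ the predecessors of $K$ lie in $\textup{Res}_G(K)\leqslant\textup{Res}(G)=\textup{Res}_G(G)$ and hence are predecessors of $G$, so $\textphnc{y}_G(K)=\sup_{L\prec K}\textphnc{y}_G(L)+1\leqslant\sup_{L\prec G}\textphnc{y}_G(L)+1=\textphnc{y}_G(G)$. Because the maximal value $\textphnc{y}_G(G)+1$ is attained at $K=G$ and is a successor, $\textphnc{f}(G)=\sup^+_{K\in\mathcal{K}(G)}(\textphnc{y}_G(K)+1)=\textphnc{y}_G(G)+1$. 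It remains to identify $\textphnc{y}_G(G)$ with $\textphnc{f}(\textup{Res}(G))$. Since $\textup{Res}(G)\lneq G$, the predecessors of $G$ are exactly the members of $\mathcal{K}(\textup{Res}(G))$, and Proposition \ref{prop:easier residual height}(2) identifies their $G$-heights with their $\textup{Res}(G)$-heights; hence $\textphnc{y}_G(G)=\sup_{L\in\mathcal{K}(\textup{Res}(G))}\textphnc{y}_{\textup{Res}(G)}(L)+1$. This matches $\textphnc{f}(\textup{Res}(G))=\sup^+_{L\in\mathcal{K}(\textup{Res}(G))}(\textphnc{y}_{\textup{Res}(G)}(L)+1)$, completing the chain $\textphnc{f}(G)=\textphnc{y}_G(G)+1=\textphnc{f}(\textup{Res}(G))+1$.

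The step I expect to need the most care is this last identification, precisely because $\textup{Res}(G)$ need not be compactly generated and so may have no subgroup of maximal height. Writing $s:=\sup_{L\in\mathcal{K}(\textup{Res}(G))}\textphnc{y}_{\textup{Res}(G)}(L)$, the formula for $\textphnc{y}_G(G)$ produces $s+1$ with an ordinary supremum, whereas $\textphnc{f}(\textup{Res}(G))$ is built from the $\sup^+$. When $s$ is attained it is a successor and $\sup^+$ agrees with $\sup$; when $s$ is a limit the values $\textphnc{y}_{\textup{Res}(G)}(L)+1$ still have supremum $s$, so $\sup^+$ raises it to $s+1$. In both cases $\textphnc{f}(\textup{Res}(G))=s+1=\textphnc{y}_G(G)$, which is exactly why the $\sup^+$ in the definition of the residual rank is the right normalization. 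The remaining ingredients — the inclusion $\textup{Res}_G(K)\leqslant\textup{Res}(G)$, the lower-set transfer of Proposition \ref{prop:easier residual height}(2), and the recursive height formula — are either immediate or already established.
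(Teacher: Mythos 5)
Your proof is correct and follows essentially the same route as the paper's: the key inclusion $\textup{Res}_G(K)\leqslant\textup{Res}(G)$, the height transfer of Proposition \ref{prop:easier residual height}(2), and the recursive formula identifying $\textphnc{y}_G(G)$ as one plus the supremum of heights over the predecessors $\mathcal{K}(\textup{Res}(G))$. You are in fact more careful than the paper on two points it leaves implicit --- the exclusion of the degenerate case $\textup{Res}(G)=G$, where the stated identity $\textphnc{f}(G)=\textphnc{f}(\textup{Res}(G))+1$ genuinely fails (e.g.\ for $\textup{Aut}^+(\mathcal{T}_{d+1})$), and the $\sup$ versus $\sup^+$ normalization --- so no changes are needed.
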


\begin{proof}
Let $\textphnc{f}(\textup{Res}(G))=\alpha+1$. Given $K\in\mathcal{K}(G)$, Lemma \ref{lemma:properties residue that will be reused} implies that $\textup{Res}(G)\geqslant \textup{Res}_G(K)$. Hence if $K,\ L\in\mathcal{K}(G)$ are such that $K\succ L$, then $G\succ L$,\ that is,\ $G$ is the maximum element of $\mathcal{K}(G)$.  Hence, if the height is well-defined for $G$ then it is well-defined for every $K\in\mathcal{K}(G)$ and $\textphnc{y}_G(G)\geqslant \textphnc{y}_G(K)$.\\
\indent Given $L\in\mathcal{K}(\textup{Res}(G))$,\ Lemma \ref{lemma:properties residue that will be reused} implies $\textphnc{y}_G(L)=\textphnc{y}_{\textup{Res}(G)}(L)$. By definition of the residual rank we have that $\alpha\geqslant\textphnc{y}_{\textup{Res}(G)}(L) =\textphnc{y}_G(L)$.  Hence
$$\alpha+1\geqslant \left(\sup_{L\in\mathcal{K}(\textup{Res}(G))}\textphnc{y}_G(L)\right)+1=\textphnc{y}_G(G).$$ As $\textphnc{f}(\textup{Res}(G))=\alpha+1$ it is the case that $\alpha+1=\textphnc{y}_G(G)$. The result then follows from the definitions of residual height and rank.
\end{proof}

\begin{prop}\label{coro:simplified residual rank general}
Let $G$ be a t.d.l.c.s.c.~group. If $G$ has $\{O_n\}_{n\in\mathbb{N}}$ an increasing collection of compactly generated subgroups with well-defined residual rank such that $G=\cup_{n\in\mathbb{N}}O_n$,\ then $G$ has a well-defined residual rank and $\textphnc{f}(G)=\sup^+_{n\in\mathbb{N}}(\textphnc{f}_G(O_n))=\sup_{n\in\mathbb{N}}(\textphnc{f}(\textup{Res}(O_n)))+1$.
\end{prop}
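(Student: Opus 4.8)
The plan is to first establish that the residual rank of $G$ is well-defined, and then to identify all three displayed ordinals with $s+1$, where $s:=\sup_{n\in\mathbb{N}}\textphnc{y}_G(O_n)$.

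First I would check that $\mathcal{K}(G)=\mathcal{K}_{\omega_1}(G)$. Given $K\in\mathcal{K}(G)$, write $K=\langle C\rangle$ with $C$ compact. As the $O_n$ are open and increasing, they form an open cover of $C$, so by compactness $C\subseteq O_N$, hence $K\leqslant O_N$, for some $N$. Since $O_N$ has a well-defined residual rank we have $K\in\mathcal{K}_{\omega_1}(O_N)$, and Proposition~\ref{prop:easier residual height}(1) upgrades this to $K\in\mathcal{K}_{\omega_1}(G)$ with $\textphnc{y}_G(K)=\textphnc{y}_{O_N}(K)$. Thus every $K\in\mathcal{K}(G)$ has a height, so $G$ has a well-defined residual rank.

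Next I would show $\sup_{K\in\mathcal{K}(G)}\textphnc{y}_G(K)=s$. Each $O_n$ is open and compactly generated, hence lies in $\mathcal{K}(G)$, which gives ``$\geqslant$''. For ``$\leqslant$'', take $K\in\mathcal{K}(G)$ and $N$ with $K\leqslant O_N$ as above; Lemma~\ref{lemma:properties residue that will be reused} gives $\textup{Res}_{O_N}(K)\leqslant\textup{Res}(O_N)=\textup{Res}_{O_N}(O_N)$, so Remark~\ref{remark:less or equal height} yields $\textphnc{y}_{O_N}(K)\leqslant\textphnc{y}_{O_N}(O_N)$, whence $\textphnc{y}_G(K)=\textphnc{y}_{O_N}(K)\leqslant\textphnc{y}_{O_N}(O_N)=\textphnc{y}_G(O_N)\leqslant s$. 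I would also record that the sequence $\textphnc{y}_G(O_n)$ is increasing: from $O_n\leqslant O_{n+1}$ every $O_{n+1}$-invariant open subgroup is $O_n$-invariant, so $\textup{Res}_G(O_n)\leqslant\textup{Res}_G(O_{n+1})$ and Remark~\ref{remark:less or equal height} gives $\textphnc{y}_G(O_n)\leqslant\textphnc{y}_G(O_{n+1})$; this is what makes $\sup^+_n$ meaningful.

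Finally I would assemble the equalities. Applying Proposition~\ref{prop:simplified residual rank on compactly generated} to the compactly generated $O_n$, together with the height agreement $\textphnc{y}_{O_n}(O_n)=\textphnc{y}_G(O_n)$ from Proposition~\ref{prop:easier residual height}(1), gives $\textphnc{f}_G(O_n)=\textphnc{f}(O_n)=\textphnc{y}_G(O_n)+1=\textphnc{f}(\textup{Res}(O_n))+1$. These form an increasing sequence of successor ordinals whose $\sup^+$ is $s+1$, so $\sup^+_n\textphnc{f}(O_n)=s+1$, while $\sup_n\textphnc{f}(\textup{Res}(O_n))+1=\sup_n\textphnc{y}_G(O_n)+1=s+1$; and $\textphnc{f}(G)=s+1$ by the definition of residual rank together with $\sup_{K\in\mathcal{K}(G)}\textphnc{y}_G(K)=s$ from the previous paragraph. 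Hence all three expressions coincide. The main obstacle I expect is the $\sup^+$ bookkeeping around limit ordinals — specifically, justifying that replacing the supremum of heights over the whole of $\mathcal{K}(G)$ by the supremum over the single chain $\{O_n\}$ is valid, which rests squarely on the height agreement of Proposition~\ref{prop:easier residual height}(1) and on $\textphnc{y}_{O_N}(O_N)$ dominating $\textphnc{y}_{O_N}(K)$ for all $K\in\mathcal{K}(O_N)$.
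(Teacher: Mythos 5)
Your proposal is correct and follows essentially the same route as the paper: locate $K\leqslant O_N$ by compactness, transfer heights via Proposition~\ref{prop:easier residual height}, dominate $\textphnc{y}_G(K)$ by $\textphnc{y}_G(O_N)$ using the containment $\textup{Res}_{O_N}(K)\leqslant\textup{Res}(O_N)$ together with Remark~\ref{remark:less or equal height}, and finish with Proposition~\ref{prop:simplified residual rank on compactly generated}. Your added remarks on the monotonicity of $\textphnc{y}_G(O_n)$ and the $\sup^+$ bookkeeping are slightly more explicit than the paper's but change nothing of substance.
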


\begin{proof}
Let $H$ a compactly generated subgroup of $G$ and $H=\langle K\rangle$,\ with $K$ compact. Because $\{O_n\}$ covers $K$,\ there exist $N\in\mathbb{N}$ such that for every $n\geqslant N$,\ $K\subset O_n$. Hence $H\leqslant O_n$. By Proposition \ref{prop:easier residual height} $\textphnc{y}_{O_n}(H)=\textphnc{y}_{G}(H)$. Lemma \ref{lemma:properties residue that will be reused} also gives us that $\textup{Res}(O_n)=\textup{Res}_{G}(O_n)\geqslant \textup{Res}_{O_n}(H)=\textup{Res}_G(H)$. Remark \ref{remark:less or equal height} then implies $\textphnc{y}_G(O_n)\geqslant \textphnc{y}_{G}(H)$. Hence the residual rank of $G$ is well-defined.  Gathering all these points it then follows that $\sup_{H\in\mathcal{K}(G)}\textphnc{y}_G(H)=\sup_{n\in\mathbb{N}}\textphnc{y}_G(O_n)$. Proposition \ref{prop:simplified residual rank on compactly generated} and the definition of residual rank then gives us $\textphnc{f}(G)=\sup^+_{n\in\mathbb{N}}(\textphnc{y}_G(O_n)+1)=\sup_{n\in\mathbb{N}}(\textphnc{f}(\textup{Res}(O_n)))+1$.
\end{proof}

\begin{theorem}\label{thrm:for elementary groups ranks are the same}
If $G$ is an elementary group then the residual rank of $G$ is well-defined and $\xi(G)=\textphnc{f}(G)$.
\end{theorem}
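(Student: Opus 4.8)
The plan is to prove both assertions simultaneously by transfinite induction on the decomposition rank $\xi(G)$, which is a well-defined ordinal precisely because $G$ is elementary. The three cases of the induction will mirror the three clauses of Definition \ref{defi:decomposition rank}, and each will be discharged by invoking the matching proposition established above. For the base case I would take $G=\{1\}$, where $\xi(\{1\})=1$ while $\mathcal{K}(\{1\})=\{\{1\}\}$ with $\textphnc{y}_G(\{1\})=0$, so directly $\textphnc{f}(\{1\})=1=\xi(\{1\})$.

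For the inductive step I assume the statement holds for every elementary group of decomposition rank strictly less than $\xi(G)$. If $G$ is non-trivial and compactly generated, then $\xi(G)=\xi(\textup{Res}(G))+1$; since $\textup{Res}(G)$ is a closed subgroup of the elementary group $G$, it is itself elementary by Definition \ref{defi:elementary groups}, with $\xi(\textup{Res}(G))<\xi(G)$. The induction hypothesis then makes $\textphnc{f}(\textup{Res}(G))$ well-defined and equal to $\xi(\textup{Res}(G))$, whence Proposition \ref{prop:simplified residual rank on compactly generated} yields that $\textphnc{f}(G)$ is well-defined with $\textphnc{f}(G)=\textphnc{f}(\textup{Res}(G))+1=\xi(\textup{Res}(G))+1=\xi(G)$.

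The remaining case is that $G$ is not compactly generated. Here I would use Remark \ref{remark:comp gen} to write $G=\bigcup_{n\in\mathbb{N}}O_n$ as an increasing union of compactly generated open (hence elementary) subgroups, so that $\xi(G)=\sup_{n}\xi(\textup{Res}(O_n))+1$ by Definition \ref{defi:decomposition rank}. The crucial point, which I expect to carry the technical weight, is that each $\xi(\textup{Res}(O_n))$ is strictly smaller than $\xi(G)$: writing $\xi(G)=\beta+1$, a successor ordinal by Remark \ref{remark:decomposition rank}, the defining equation forces $\sup_n\xi(\textup{Res}(O_n))=\beta$ and hence $\xi(\textup{Res}(O_n))\leqslant\beta<\xi(G)$ for every $n$. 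The induction hypothesis then applies to each $\textup{Res}(O_n)$, giving $\textphnc{f}(\textup{Res}(O_n))=\xi(\textup{Res}(O_n))$ well-defined; Proposition \ref{prop:simplified residual rank on compactly generated} upgrades this to $\textphnc{f}(O_n)=\xi(O_n)$ for each $n$, and finally Proposition \ref{coro:simplified residual rank general} yields that $\textphnc{f}(G)$ is well-defined with
$$\textphnc{f}(G)=\sup_{n\in\mathbb{N}}\bigl(\textphnc{f}(\textup{Res}(O_n))\bigr)+1=\sup_{n\in\mathbb{N}}\bigl(\xi(\textup{Res}(O_n))\bigr)+1=\xi(G).$$

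The heart of the argument is thus entirely structural: all the machinery needed has already been assembled, and the only genuinely delicate point is the strict decrease of rank among the building blocks $\textup{Res}(O_n)$ in the union case, which is exactly where the fact that the decomposition rank is never a limit ordinal (Remark \ref{remark:decomposition rank}) is used to guarantee $\beta+1=\xi(G)$ with each block landing at rank $\leqslant\beta$. Everything else is bookkeeping that matches the clauses of the two rank definitions against the corresponding propositions. I would also remark that Proposition \ref{prop:rank 2 is the same in both ranks} provides an independent confirmation at the level $\xi(G)=2$, which is subsumed by the induction but serves as a useful consistency check at the first non-trivial step.
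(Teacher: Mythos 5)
Your proposal is correct and follows essentially the same route as the paper: transfinite induction on $\xi(G)$, splitting into the compactly generated case (handled by Proposition \ref{prop:simplified residual rank on compactly generated}) and the increasing-union case (handled by Proposition \ref{coro:simplified residual rank general}). The only cosmetic difference is in the union case, where the paper distinguishes whether the $\xi(O_n)$ stabilize at $\alpha+1$ or stay below it, while you apply the induction hypothesis directly to the groups $\textup{Res}(O_n)$ after noting $\sup_n\xi(\textup{Res}(O_n))=\beta<\xi(G)$ — a slightly cleaner packaging of the same argument.
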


\begin{proof}
If $\xi(G)=1$ or $\textphnc{f}(G)=1$ then $G=\{1\}$ and the result follows. By Proposition \ref{prop:rank 2 is the same in both ranks} the result is also true when $\xi(G)=2$.\\
\indent Assume that for every $\beta<\alpha$,\ if $G$ is elementary with $\xi(G)=\beta+1$ then $\textphnc{f}(G)=\beta+1$. Let $G$ be an elementary group such that $\xi(G)=\alpha+1$. The induction step is proven by working with two cases, looking at when $G$ is compactly generated and when $G$ is not.\\
\indent Assume $G$ is compactly generated. Then $\xi(G)=\xi(\textup{Res}(G))+1$ and,\ by the induction hypothesis,\ the residual rank of $\textup{Res}(G)$ is well-defined and $\xi(\textup{Res}(G))=\textphnc{f}(\textup{Res}(G))$. Since, by Proposition \ref{prop:simplified residual rank on compactly generated} $\textphnc{f}(G)=\textphnc{f}(\textup{Res}(G))+1$ the result follows for the compactly generated case.\\
\indent On the other hand,\ let $G$ not compactly generated. Let $\{O_n\}_{n\in\mathbb{N}}$ be an increasing collection of open compactly generated subgroups such that $G=\cup_{n\in\mathbb{N}}O_n$. Then,\ for every $n\in\mathbb{N}$,\ $\xi(O_n)\leqslant \alpha+1$. If there is $N\in\mathbb{N}$ such that for every $n\geqslant N$,\ $\xi(O_n)=\alpha+1$,\ the argument for compactly generated groups of rank $\alpha+1$ implies that $\xi(O_n)=\textphnc{f}(O_n)$. On the other hand,\ if for every $n\in\mathbb{N}$,\ $\xi(O_n)<\alpha+1$,\ because they are all compactly generated,\ the induction hypothesis implies $O_n$ has a well-defined residual rank and $\textphnc{f}(O_n)=\xi(O_n)$. In either case,\ Proposition \ref{coro:simplified residual rank general} implies $\textphnc{f}(G)=\sup_{n\in\mathbb{N}}(\textphnc{f}(\textup{Res}(O_n)))+1=\sup_{n\in\mathbb{N}}(\xi(\textup{Res}(O_n)))+1=\xi(G)=\alpha+1$.
\end{proof}

\indent Now that we know that on elementary groups the decomposition rank is the same as the residual rank we can use this property to have a better notion of the behaviour of the residual height on the class of elementary groups.

\begin{prop}\label{prop:finding comp gen open with same residual rank as K}
Let $G$ be a t.d.l.c.s.c.~group. If $K\in \mathcal{K}(G)$,\ there is $E$, an open compactly generated subgroup of $G$, such that $K\leqslant E$,\ $\textphnc{f}(E)=\textphnc{y}_G(E)+1=\textphnc{y}_G(K)+1$ and $\textup{Res}(E)=\textup{Res}_G(K)$.
\end{prop}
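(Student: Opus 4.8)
The plan is to obtain $E$ directly from Colin Reid's Theorem B (Theorem \ref{thrm:colin theorem G}) applied to the compactly generated subgroup $K$, and then to read off the height and rank statements from the structural results already proved in this subsection. Applying Theorem \ref{thrm:colin theorem G} with $H=K$ produces an open, compactly generated subgroup $E = K\,\textup{Res}_G(K)\,U$ of $G$, where $U$ is a compact open subgroup. Since $1\in \textup{Res}_G(K)$ and $1\in U$, we get $K\leqslant E$, and part (3) of that theorem gives $\textup{Res}(E)=\textup{Res}_G(K)$ for free. Thus the last required equality is immediate, and everything remaining concerns the residual heights. Throughout I assume $K\in\mathcal{K}_{\omega_1}(G)$, which is exactly what is needed for $\textphnc{y}_G(K)$ to be defined, and which holds automatically when $G$ is elementary by Theorem \ref{thrm:for elementary groups ranks are the same}.

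The first key step is the identity $\textup{Res}_G(E)=\textup{Res}_G(K)$. Because $E$ is open and normalizes itself, Lemma \ref{lemma:properties residue that will be reused}(3) gives $\textup{Res}_E(E)=\textup{Res}_G(E)$, while $\textup{Res}_E(E)=\textup{Res}(E)$ via the identification $\textup{Res}_H(\textup{Inn}(H))=\textup{Res}(H)$. Combining with $\textup{Res}(E)=\textup{Res}_G(K)$ yields $\textup{Res}_G(E)=\textup{Res}(E)=\textup{Res}_G(K)$. Since the residual-order predecessors of a subgroup depend only on its discrete residual, this forces $\{L:\,L\prec E\}=\{L\in\mathcal{K}(G):\,L\leqslant\textup{Res}_G(K)\}=\{L:\,L\prec K\}$; hence $E\in\mathcal{K}_{\omega_1}(G)$ precisely when $K$ is, and then $\textphnc{y}_G(E)=\textphnc{y}_G(K)$ by Remark \ref{remark:less or equal height}.

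The second key step is the $+1$. As $E$ is open in $G$, Proposition \ref{prop:easier residual height}(1) transfers heights, so $E\in\mathcal{K}_{\omega_1}(E)$ and $\textphnc{y}_E(E)=\textphnc{y}_G(E)$. Unwinding $E\in\mathcal{K}_{\omega_1}(E)$: every $L\prec E$ in $E$, that is every $L\in\mathcal{K}(\textup{Res}(E))$, lies in $\mathcal{K}_{\omega_1}(E)$ with bounded height; and by the agreement of the residual orders of $E$ and of $\textup{Res}(E)$ on subgroups below $\textup{Res}(E)$ (the lower-set computation in the proof of Proposition \ref{prop:easier residual height}(2), which shows $\textup{Res}_E(L)=\textup{Res}_{\textup{Res}(E)}(L)$ there), these are exactly the elements of $\mathcal{K}_{\omega_1}(\textup{Res}(E))$. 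Hence $\textup{Res}(E)$ has well-defined residual rank, and Proposition \ref{prop:simplified residual rank on compactly generated} applies to the compactly generated group $E$, giving $\textphnc{f}(E)=\textphnc{y}_E(E)+1$. Chaining the equalities yields
$$\textphnc{f}(E)=\textphnc{y}_E(E)+1=\textphnc{y}_G(E)+1=\textphnc{y}_G(K)+1,$$
which is the claim.

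I expect the main obstacle to be the bookkeeping around well-definedness rather than any single hard estimate: one must confirm that $E$ and $K$ lie in $\mathcal{K}_{\omega_1}(G)$ simultaneously, and that $\textup{Res}(E)$ inherits a well-defined residual rank before Proposition \ref{prop:simplified residual rank on compactly generated} may be invoked. Both reduce to the single identity $\textup{Res}_G(E)=\textup{Res}_G(K)$ together with the fact that the residual-order hierarchy restricts correctly along open subgroups (Proposition \ref{prop:easier residual height}(1)) and along discrete residuals (Proposition \ref{prop:easier residual height}(2)); once that identity is established via openness and Theorem \ref{thrm:colin theorem G}, every remaining step is a direct citation.
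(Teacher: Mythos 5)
Your proposal is correct and follows essentially the same route as the paper: both obtain $E$ from Theorem \ref{thrm:colin theorem G}, transfer heights along the open inclusion via Proposition \ref{prop:easier residual height}, and conclude $\textphnc{y}_G(E)=\textphnc{y}_G(K)$ from the equality of discrete residuals via Remark \ref{remark:less or equal height}. Your write-up is merely more explicit about the final step $\textphnc{f}(E)=\textphnc{y}_E(E)+1$ (via Proposition \ref{prop:simplified residual rank on compactly generated}) and about the well-definedness bookkeeping, which the paper leaves implicit.
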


\begin{proof}
Let $E$ be an open,\ compactly generated subgroup such that $\textup{Res}_G(K)=\textup{Res}(E)$,\ as given in Theorem \ref{thrm:colin theorem G}. Because $E$ is open then,\ by Proposition \ref{prop:easier residual height},\ $\textphnc{y}_G(E)=\textphnc{y}_E(E)\geqslant \textphnc{y}_G(K)=\textphnc{y}_E(K)$ but,\ because $\textup{Res}_G(K)=\textup{Res}(E)$,\ Remark \ref{remark:less or equal height} implies $\textphnc{y}_E(K)= \textphnc{y}_E(E)$. Hence the result follows.
\end{proof}


\begin{coro}\label{coro:height well behaved on elementary}
Let $G$ be an elementary group. Then for every $K\in\mathcal{K}(G)$ we have $$\xi(K)\leqslant \textphnc{y}_G(K)+1.$$
\end{coro}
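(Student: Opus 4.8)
The plan is to sandwich $K$ between itself and a conveniently chosen open, compactly generated overgroup whose decomposition rank is exactly $\textphnc{y}_G(K)+1$, using the fact that decomposition rank is monotone under passage to closed subgroups. The key is that Proposition \ref{prop:finding comp gen open with same residual rank as K} already manufactures the overgroup we need, and Theorem \ref{thrm:for elementary groups ranks are the same} converts residual data into decomposition rank.

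First I would apply Proposition \ref{prop:finding comp gen open with same residual rank as K} to the given $K\in\mathcal{K}(G)$. This produces an open, compactly generated subgroup $E\leqslant G$ with $K\leqslant E$ and $\textphnc{f}(E)=\textphnc{y}_G(E)+1=\textphnc{y}_G(K)+1$. The role of this step is to trade the residual height of $K$, measured externally inside $G$, for the intrinsic residual rank of an honest open subgroup containing $K$; note that this is exactly the quantity we can feed into the identification $\xi=\textphnc{f}$.

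Next I would observe that $E$, being open—hence closed—in the elementary group $G$, is itself elementary by Definition \ref{defi:elementary groups}(iii); for the same reason $K$ is a closed elementary subgroup of $E$. Theorem \ref{thrm:for elementary groups ranks are the same} then applies to $E$ and gives $\xi(E)=\textphnc{f}(E)$, so combining with the output of the previous step we obtain $\xi(E)=\textphnc{y}_G(K)+1$. Finally, since $K\leqslant E$ is a closed subgroup of the elementary group $E$, Proposition \ref{prop:decomposition rank closed subgroup} yields
$$\xi(K)\leqslant \xi(E)=\textphnc{y}_G(K)+1,$$
which is the desired inequality.

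I do not expect a genuine obstacle here: the substantive work is already contained in Proposition \ref{prop:finding comp gen open with same residual rank as K} (which in turn rests on Theorem \ref{thrm:colin theorem G}) and in Theorem \ref{thrm:for elementary groups ranks are the same}. The only points requiring care are bookkeeping ones—verifying that every group in play ($G$, $E$, and $K$) is genuinely elementary so that both $\xi$ and the equality $\xi=\textphnc{f}$ are legitimately available, and invoking the monotonicity of $\xi$ (Proposition \ref{prop:decomposition rank closed subgroup}) in the direction that gives $\leqslant$ rather than equality, since $K$ need not exhaust $E$.
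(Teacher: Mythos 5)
Your proof is correct and follows essentially the same route as the paper: both arguments construct the open, compactly generated overgroup $E$ via Proposition \ref{prop:finding comp gen open with same residual rank as K} and then convert residual data into decomposition rank via Theorem \ref{thrm:for elementary groups ranks are the same}. The only (harmless) difference is where the monotonicity is applied --- the paper computes $\xi(K)=\textphnc{y}_K(K)+1$ and compares residual heights of $K$ inside $K$ and inside $E$, whereas you compute $\xi(E)=\textphnc{f}(E)=\textphnc{y}_G(K)+1$ and finish with the monotonicity of $\xi$ under closed subgroups (Proposition \ref{prop:decomposition rank closed subgroup}).
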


\begin{proof}
If $G$ is elementary then by Proposition \ref{prop:decomposition rank closed subgroup},\ $K$ is elementary. Because $K$ is elementary then $\textphnc{f}(K)=\textphnc{y}_K(K)+1=\xi(K)$. Let $E$ be an open, compactly generated subgroup of $G$ such that $\textphnc{y}_G(E)=\textphnc{y}_G(K)$,\ $K\leqslant E$ and $\textup{Res}(E)=\textup{Res}_G(K)=\textup{Res}_E(K)\geqslant \textup{Res}(K)$ (last equality and the inequality follows from Lemma \ref{lemma:properties residue that will be reused}). Such a subgroup exists by Proposition \ref{prop:finding comp gen open with same residual rank as K}. Because $E$ is open,\ Proposition \ref{prop:easier residual height} implies that $\textphnc{y}_E(E)=\textphnc{y}_G(E)$ and $\textphnc{y}_E(K)=\textphnc{y}_G(K)$. By Remark \ref{remark:less or equal height} it is then the case $\xi(K)= \textphnc{y}_K(K)+1\leqslant\textphnc{y}_E(K)+1=\textphnc{y}_G(K)+1$.
\end{proof}
\indent  Notice that if $\xi(E)=\xi(K)$ then Theorem \ref{thrm:for elementary groups ranks are the same} implies that $\xi(K)=\textphnc{y}_G(K)+1$. \\
\indent These results then show us that in the universe of elementary groups, the residual height and rank always behave well. 

\begin{lemma}\label{lemma:residual rank closed under subgroups}
Let $G$ be a t.d.l.c.s.c.~group and $H\leqslant G$. If the residual rank of $G$ is well-defined then so is the residual rank of $H$,\ $\textphnc{y}_G(K)\geqslant\textphnc{y}_H(K)$ and $\textphnc{f}(G)\geqslant \textphnc{f}(H)$.
\end{lemma}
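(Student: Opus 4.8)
The plan is to reduce everything to a single structural observation: for a closed subgroup $H\leqslant G$, the discrete residual order on $\mathcal{K}(H)$ refines (is contained in) the one on $\mathcal{K}(G)$. First note that a closed, compactly generated subgroup of $H$ is again closed and compactly generated in $G$, so $\mathcal{K}(H)\subseteq\mathcal{K}(G)$. Next, for any $K\in\mathcal{K}(H)$ the inclusion $K\leqslant H$ forces $K$ to normalise $H$, since $kHk^{-1}=H$ for every $k\in K\subseteq H$; hence Lemma \ref{lemma:properties residue that will be reused}.1, applied with $N=H$, yields $\textup{Res}_H(K)\leqslant\textup{Res}_G(K)$. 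From this inequality I get at once that, for $L,K\in\mathcal{K}(H)$, $L\prec_H K\Rightarrow L\prec_G K$ and $L\preccurlyeq_H K\Rightarrow L\preccurlyeq_G K$. In particular every $\prec_H$-chain below $K$ is, with the identical ordering, a $\prec_G$-chain below $K$. This containment of orders is what powers all three assertions.

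First I would establish that the residual rank of $H$ is well-defined, i.e. $\mathcal{K}(H)=\mathcal{K}_{\omega_1}(H)$. Fix $K\in\mathcal{K}(H)$; since the rank of $G$ is well-defined we have $K\in\mathcal{K}(G)=\mathcal{K}_{\omega_1}(G)$. By the chain characterisation stated just before Corollary \ref{coro:when do we have no residual rank}, every $\prec_G$-chain in $\{L\in\mathcal{K}(G);\ L\preccurlyeq_G K\}$ embeds order-preservingly into a countable ordinal. By the previous paragraph, each $\prec_H$-chain below $K$ is such a $\prec_G$-chain and therefore also embeds into a countable ordinal; applying the same characterisation in the reverse direction inside $H$ gives $K\in\mathcal{K}_{\omega_1}(H)$. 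As $K$ was arbitrary, $\mathcal{K}(H)=\mathcal{K}_{\omega_1}(H)$.

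With well-definedness secured for both groups, I would prove $\textphnc{y}_H(K)\leqslant\textphnc{y}_G(K)$ for every $K\in\mathcal{K}(H)$ by transfinite induction on $\textphnc{y}_G(K)$, using the recursive description of height from Lemma \ref{lemma:auxiliary for height on nice cases}. The base case $\textphnc{y}_G(K)=0$ forces $K=\{1\}$, whence $\textphnc{y}_H(K)=0$. For the inductive step, every $L\prec_H K$ also satisfies $L\prec_G K$, so $\textphnc{y}_G(L)<\textphnc{y}_G(K)$; the induction hypothesis then gives $\textphnc{y}_H(L)\leqslant\textphnc{y}_G(L)$. Taking the supremum over the $L\prec_H K$ (a subfamily of the $L\prec_G K$) and adding one yields $\textphnc{y}_H(K)\leqslant\textphnc{y}_G(K)$. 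Finally, since $\mathcal{K}(H)\subseteq\mathcal{K}(G)$ and $\textphnc{y}_H\leqslant\textphnc{y}_G$ pointwise on $\mathcal{K}(H)$, the supremum of heights over $\mathcal{K}(H)$ is at most that over $\mathcal{K}(G)$; as both $\sup^{+}$ and the trailing $+1$ in the definition of $\textphnc{f}$ are monotone, this gives $\textphnc{f}(H)\leqslant\textphnc{f}(G)$.

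The only delicate point is sequencing: the height comparison relies on the recursion of Lemma \ref{lemma:auxiliary for height on nice cases}, which is valid only once the residual rank of $H$ is known to be well-defined, so that step must come first and must not itself invoke the recursion. I avoid circularity by proving well-definedness purely through the chain characterisation, which uses nothing beyond the order inclusion $\prec_H\,\subseteq\,\prec_G$. Everything then rests on the elementary inequality $\textup{Res}_H(K)\leqslant\textup{Res}_G(K)$, and I expect no serious obstacle beyond keeping this dependency straight.
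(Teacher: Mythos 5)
Your proposal is correct and follows essentially the same route as the paper: the key inequality $\textup{Res}_H(K)\leqslant\textup{Res}_G(K)$ (which the paper asserts and you justify via Lemma \ref{lemma:properties residue that will be reused}.1), the resulting containment of the order $\prec_H$ in $\prec_G$, and a transfinite induction on height as in Proposition \ref{coro:height decreases on surject}. The only difference is cosmetic: you secure well-definedness of the residual rank of $H$ separately via the chain characterisation before running the height induction, whereas the paper folds both conclusions into a single induction.
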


\begin{proof}
Let $K\in \mathcal{K}(H)$. Then $K\in\mathcal{K}(G)$ and $\textup{Res}_G(K)\geqslant \textup{Res}_H(K)$. Hence if $K\succ_H L$ then $K\succ_G L$. An induction on the height of $K$ as in Corollary \ref{coro:height decreases on surject} implies $\textphnc{y}_G(K)\geqslant\textphnc{y}_H(K)$. It then follows by definition that the residual rank of $H$ is well-defined and $\textphnc{f}(G)\geqslant \textphnc{f}(H)$.
\end{proof}

\begin{lemma}\label{lemma:residual rank closed extensions}
Let $G$ be a t.d.l.c.s.c.~group. If $N$ is a closed, normal subgroup of $G$ such that $N$ has well-defined residual rank, then for all $K\in \mathcal{K}_{\omega_1}(G)$,
$$\textphnc{y}_{G/N}(\overline{\pi(K)})\leqslant\textphnc{y}_G(K)\leqslant \textphnc{f}(N)-1+\textphnc{y}_{G/N}(\overline{\pi(K)}),$$
where $\pi:G\rightarrow G/N$ is the surjection of $G$ under $N$. If $G$ also has well-defined residual rank, we then have
$$\textphnc{f}(G)\leqslant \textphnc{f}(N)-1+\textphnc{f}(G/N).$$
\end{lemma}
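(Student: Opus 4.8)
The plan is to fix the quotient map $\pi\colon G\to G/N$, write $\bar K:=\overline{\pi(K)}$, and reduce everything to a single inclusion between discrete residuals together with a transfinite induction on residual height. The inclusion I would establish first is
$$\pi(\textup{Res}_G(K))\leqslant\textup{Res}_{G/N}(\bar K)\qquad\text{for every }K\in\mathcal{K}(G),$$
which is immediate: if $\bar O\leqslant_o G/N$ is normalized by $\bar K$ (equivalently, since normalizing a closed subgroup is a closed condition, by $\pi(K)$), then $\pi^{-1}(\bar O)\leqslant_o G$ is normalized by $K$, so $\textup{Res}_G(K)\leqslant\pi^{-1}(\bar O)$ and hence $\pi(\textup{Res}_G(K))\leqslant\bar O$; intersecting over all such $\bar O$ gives the claim, and taking closures $\overline{\pi(\textup{Res}_G(K))}\leqslant\textup{Res}_{G/N}(\bar K)$.

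The case $K\leqslant N$ is the base case $\bar K=\{1\}$ of the upper bound, and is where I expect the real work. Applying the inclusion to $K=L\leqslant N$ gives $\pi(\textup{Res}_G(L))\leqslant\textup{Res}_{G/N}(\{1\})=\{1\}$, so $\textup{Res}_G(L)\leqslant N$. Now Theorem \ref{thrm:colin result for res commuting} yields $\textup{Res}_{\textup{Res}_G(L)}(L)=\textup{Res}_G(L)$, while part (1) of Lemma \ref{lemma:properties residue that will be reused} (applied to $\textup{Res}_G(L)\leqslant N$) gives $\textup{Res}_{\textup{Res}_G(L)}(L)\leqslant\textup{Res}_N(L)\leqslant\textup{Res}_G(L)$; combining, $\textup{Res}_N(L)=\textup{Res}_G(L)$ for every $L\in\mathcal{K}(G)$ with $L\leqslant N$. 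Hence $\prec_G$ and $\prec_N$ coincide on the $\mathcal{K}$-subgroups of $N$, the residual order hierarchy below such an $L$ never leaves $N$, and therefore $\textphnc{y}_G(L)=\textphnc{y}_N(L)\leqslant\textphnc{f}(N)-1$. This is the only place where the commuting theorem \ref{thrm:colin result for res commuting} is essential, and it is the crux of the whole argument.

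For the upper bound in general I would induct on $\gamma=\textphnc{y}_G(K)$. Given $L\prec_G K$ we have $L\leqslant\textup{Res}_G(K)$, so $\overline{\pi(L)}\leqslant\textup{Res}_{G/N}(\bar K)$, i.e. $\overline{\pi(L)}\preccurlyeq_{G/N}\bar K$. The one thing to exclude is $\overline{\pi(L)}=\bar K$: this would force $\bar K\leqslant\textup{Res}_{G/N}(\bar K)$, which is impossible for a nontrivial $\bar K\in\mathcal{K}_{\omega_1}(G/N)$ (a nontrivial element of the hierarchy cannot lie in its own discrete residual, by Theorem \ref{thrm:colin result for res commuting} together with the remark following Proposition \ref{prop:res do what i want in elementary}, as otherwise its residual height could not be well defined). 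Consequently $\overline{\pi(L)}\prec_{G/N}\bar K$ and $\textphnc{y}_{G/N}(\overline{\pi(L)})<\textphnc{y}_{G/N}(\bar K)=:\delta$. Writing $\delta=\delta'+1$, the inductive hypothesis gives $\textphnc{y}_G(L)\leqslant\textphnc{f}(N)-1+\textphnc{y}_{G/N}(\overline{\pi(L)})\leqslant\textphnc{f}(N)-1+\delta'$, and taking the supremum over $L\prec_G K$ and adding one yields $\textphnc{y}_G(K)\leqslant\textphnc{f}(N)-1+\delta$, as required. (Here $\textphnc{f}(N)$ and $\delta$ are successors, so the subtractions and the left addition of ordinals behave as needed.)

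The lower bound $\textphnc{y}_{G/N}(\bar K)\leqslant\textphnc{y}_G(K)$ is the statement that a continuous surjection does not increase residual height, which is Proposition \ref{coro:height decreases on surject} applied to $\pi$ (using once more that $\textup{Res}_{G/N}(\bar K)=\textup{Res}_{G/N}(\pi(K))$, so the density of $\pi(K)$ in $\bar K$ causes no trouble); this is also what shows $\bar K\in\mathcal{K}_{\omega_1}(G/N)$, as used above. Alternatively one may re-derive it by the dual induction, lifting each $\bar M\prec_{G/N}\bar K$ into $\textup{Res}_G(K)$ via Proposition \ref{prop:auxiliary for 4.9} and part (6) of Lemma \ref{lemma:properties residue that will be reused}. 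Finally, when $G$ itself has well-defined residual rank, Proposition \ref{coro:height decreases on surject} guarantees the same for $G/N$, and since every $M\in\mathcal{K}(G/N)$ equals $\pi(K')$ for some $K'\in\mathcal{K}(G)$ by Proposition \ref{prop:auxiliary for 4.9}, taking suprema in the two-sided estimate gives $\textphnc{f}(G)=\sup^+_{K\in\mathcal{K}(G)}\textphnc{y}_G(K)+1\leqslant\textphnc{f}(N)-1+\textphnc{f}(G/N)$. I expect the delicate points to be exactly the base-case identity $\textup{Res}_N(L)=\textup{Res}_G(L)$ and the exclusion of the degenerate case $\overline{\pi(L)}=\bar K$; everything else is ordinal bookkeeping.
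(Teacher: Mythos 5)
Your treatment of the base case $K\leqslant N$ — deducing $\textup{Res}_G(L)\leqslant N$ from $\pi(\textup{Res}_G(L))\leqslant\textup{Res}_{G/N}(\{1\})=\{1\}$ and then forcing $\textup{Res}_N(L)=\textup{Res}_G(L)$ by sandwiching with Theorem \ref{thrm:colin result for res commuting} and Lemma \ref{lemma:properties residue that will be reused} — is correct and in fact more explicit than what the paper records. The gap is in the inductive step of the upper bound, precisely at the point you yourself flag as delicate: the exclusion of $\overline{\pi(L)}=\overline{\pi(K)}$ for $L\prec_G K$. You justify it by asserting that a non-trivial $\bar K\in\mathcal{K}_{\omega_1}(G/N)$ cannot satisfy $\bar K\leqslant\textup{Res}_{G/N}(\bar K)$. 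That assertion is the remark following Proposition \ref{prop:res do what i want in elementary}, which is stated only for \emph{elementary} groups; the lemma here assumes nothing of the sort about $G/N$, and the assertion is false in general. The paper itself exhibits the counterexample: $H=\textup{Aut}^+(\mathcal{T}_{d+1})$ is topologically simple and non-discrete, so $\textup{Res}_H(H)=H$, yet $H$ has well-defined residual rank with $\textphnc{y}_H(H)=2$. Neither Theorem \ref{thrm:colin result for res commuting} nor well-foundedness of the lower set rules out a member of the hierarchy lying inside its own discrete residual.

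Moreover the degenerate case genuinely occurs and genuinely breaks your estimate. Take $N$ a non-trivial profinite group (so $\textphnc{f}(N)=2$) and $G=N\times S$ with $S=\textup{Aut}^+(\mathcal{T}_3)$. Then $K=G$ and $L=\{1\}\times S$ satisfy $L\leqslant\textup{Res}(G)=\{1\}\times S$, hence $L\prec_G K$, while $\overline{\pi(L)}=\overline{\pi(K)}=S$. One computes $\textphnc{y}_G(L)=2$, $\textphnc{y}_G(G)=3$ and $\textphnc{y}_{G/N}(S)=2$, so the stated bound $\textphnc{y}_G(G)\leqslant\textphnc{f}(N)-1+2=3$ holds with equality; but your induction, applied to this $L$, only returns $\textphnc{y}_G(L)\leqslant\textphnc{f}(N)-1+\textphnc{y}_{G/N}(\overline{\pi(L)})=3$, and taking $\sup+1$ over $L\prec_G K$ yields $\textphnc{y}_G(G)\leqslant 4$ rather than $3$. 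This is why the paper runs the induction in the opposite direction: it replaces $K$ by the open compactly generated envelope $E$ with $\textup{Res}(E)=\textup{Res}_G(E)=\textup{Res}_G(K)$ (Theorem \ref{thrm:colin theorem G}, Proposition \ref{prop:finding comp gen open with same residual rank as K}) and \emph{lifts} each $L\prec_{G/N}\overline{\pi(K)}$ to some $L'\prec_G E$ with $\pi(L')=L$ via Proposition \ref{prop:auxiliary for 4.9}; since one starts from an $L$ already strictly below $\overline{\pi(K)}$, the collapse you must forbid never enters. As written, your argument needs either this change of direction or a separate treatment of subgroups $L\prec_G K$ whose image is dense in $\overline{\pi(K)}$; without one of these it proves only a weaker bound.
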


\begin{proof}
The proof of the first statement will be done on induction first by showing that if $\textphnc{y}_{G}(K)$ is well defined, then so is $\textphnc{y}_{G/N}(\overline{\pi(K)})$. We then make an induction over $\textphnc{y}_{G}(K)$ to show the right inequality.\\
\indent Let $E\leqslant G$ be a compactly generated subgroup such that $\textup{Res}_G(K)=\textup{Res}_G(E)=\textup{Res}(E)$. Such a group exists by Proposition \ref{prop:finding comp gen open with same residual rank as K}. Notice that $\pi(E)$ is an open subgroup of $G/N$, hence Proposition \ref{coro:height decreases on surject} implies $\textphnc{y}_G(E)=\textphnc{y}_G(K)\geqslant \textphnc{y}_{G/N}(\pi(E))$. Notice that $\pi(E)\geqslant \pi(K)$, hence as $\pi(E)$ is closed we have $\pi(E)\geqslant \overline{\pi(K)}$. Therefore $\textphnc{y}_G(K)\geqslant\textphnc{y}_{G/N}(\pi(E))\geqslant \textphnc{y}_{G/N}(\overline{\pi(K)})$, that is, the height of $\overline{\pi(K)}$ is well defined and the left inequality follows.\\
\indent To show the right side of the inequality, our base cases are when $K\in\mathcal{K}_{\omega_1}(G)$ are such that $\textphnc{y}_G(K)\leqslant \textphnc{f}(N)-1$. For these cases, the right side of the inequality follows trivially.\\
\indent Assume now that for every $\beta<\alpha$, if $K\in\mathcal{K}_{\omega_1}(G)$ satisfies $\textphnc{y}_G(K)\leqslant\textphnc{f}(N)-1+\beta$ then $\textphnc{y}_{G/N}(\overline{\pi(K)})\geqslant \beta$.\\
\indent Let $K\in\mathcal{K}_{\omega_1}(G)$ be such that $\textphnc{y}_G(K)=\textphnc{f}(N)-1+\alpha$. Let $E$ a compactly generated, open subgroup of $G$ such that $\textup{Res}(E)=\textup{Res}_{G}(K)=\textup{Res}_{G}(E)$. Let $L\in\mathcal{K}(H)$ such that $L\prec \overline{\pi(K)}$. By Proposition \ref{coro:height decreases on surject}, we can find $L'\prec E$ such that $\pi(L')=L$. As $\textup{Res}_{G}(K)=\textup{Res}_{G}(E)$ it is also the case $L'\prec K$. We then have $\textphnc{y}_G(L')<\textphnc{f}(N)-1+\alpha$ hence, by the induction hypothesis,  $\textphnc{y}_G(L')<\textphnc{f}(N)-1+\textphnc{y}_{G/N}(\pi(L))$ and $\textphnc{y}_{G/N}(L)\geqslant \alpha-1$. As this is the case for all $L\prec \overline{\pi(K)}$, it follows that $\textphnc{y}_{G/N}(\overline{\pi(K)})\geqslant \alpha$, proving the induction hypothesis.\\
\indent Now assume that $G$ has well-defined residual rank. By Proposition \ref{coro:height decreases on surject},  $G/N$ has well-defined residual. By the first statement and the definition of the residual rank, it is then the case that for every $K\in\mathcal{K}(G)$, we have that 
$$\textphnc{y}_G(K)<\textphnc{f}(N)-1+\textphnc{f}(G/N).$$
Again, by the definition of residual rank, we obtain the following
$$\textphnc{f}(G)=\left(\sup_{K\in\mathcal{K}(G)}\!^+\textphnc{y}_G(K)+1\right)\leqslant \textphnc{f}(N)-1+\textphnc{f}(G/N).$$
\end{proof}

It would be ideal to show that if $N$ and $G/N$ have well-defined residual rank, then so does $G$. An initial idea would be given $K\in\mathcal{K}(G)$, proving that $K$ has well-defined residual height by induction on the height of $\overline{\pi(K)}$. The problem with that is: we don't know if given a chain $\{L_n\}_{n\in\mathbb{N}}$ as in Corollary \ref{coro:when do we have no residual rank}, if it is possible that $\textphnc{y}_{G/N}(\overline{\pi(L_{m_1})})=\textphnc{y}_{G/N}(\overline{\pi(L_{m_2})})$, for all $m_1,\ m_2\in \mathbb{N}$. If a group with such a chain exists, the result wouldn't be true in general.

\subsection{Residual rank and non-elementary groups}
In this Section we focus on showing not all t.d.l.c.s.c.~groups have well-defined residual rank and that there are groups with well-defined residual rank that are not elementary. The following result gives us conditions for a group to have such an infinite descending chain, that is, it satisfies the condition for Corollary \ref{coro:when do we have no residual rank}.

\begin{prop}
Let $G$ be a t.d.l.c.s.c.,\ non-discrete,\ compactly generated,\ simple group. If there exists $H$ a closed proper subgroup of $G$ such that $G\cong H$,\ then the residual rank of $G$ is not well-defined.
\end{prop}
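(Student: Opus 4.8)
The plan is to exploit the self-similarity of $G$ to produce an infinite strictly descending chain in the residual order $\prec$, which obstructs the assignment of ordinal heights and hence forces the residual rank to be undefined. Fix a topological isomorphism $\phi\colon G\to H$ onto the proper closed subgroup $H\leqslant G$, and set $H_n:=\phi^n(G)$, so that $H_0=G$, $H_1=H$, and $H_{n+1}=\phi(H_n)$. Since $H\subsetneq G$ and $\phi$ is injective, applying $\phi^n$ gives $H_{n+1}=\phi^n(H)\subsetneq\phi^n(G)=H_n$, so the chain is strictly decreasing. Each $H_n$ is closed in $G$: the map $\phi$ is a homeomorphism of $G$ onto the closed subgroup $H$, so it carries closed subsets of $G$ to subsets closed in $H$ and therefore closed in $G$; an induction starting from $H_0=G$ then shows every $H_n$ is closed. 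As $H_n\cong G$ is compactly generated, we conclude $H_n\in\mathcal{K}(G)$ for all $n$.

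The key computation is that $\textup{Res}(G)=G$. Indeed, every open subgroup of a topological group is closed, so every open normal subgroup of $G$ is a closed normal subgroup; since $G$ is topologically simple these are only $\{1\}$ and $G$, and $\{1\}$ is excluded because $G$ is non-discrete. Hence $G$ is the unique open normal subgroup of itself and $\textup{Res}(G)=G$. Transporting this through the topological isomorphism $\phi^n$ gives $\textup{Res}(H_n)=H_n$ for every $n$, that is, $\textup{Res}_{H_n}(H_n)=H_n$. Applying Lemma \ref{lemma:properties residue that will be reused}(1) with $N=K=H_n$ yields $H_n=\textup{Res}_{H_n}(H_n)\leqslant\textup{Res}_G(H_n)$. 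Combining this with $H_m\leqslant H_{n+1}\leqslant H_n\leqslant\textup{Res}_G(H_n)$ for $m>n$, together with $H_m\neq H_n$, we obtain $H_m\prec H_n$ for all $m>n$; in particular $\{H_n\}_{n\in\mathbb{N}}$ is a $\prec$-chain below $G=H_0$ with no minimal element.

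To finish, suppose toward a contradiction that the residual rank of $G$ is well-defined, so that $G=H_0\in\mathcal{K}_{\omega_1}(G)$ and the recursive description of the height in Lemma \ref{lemma:auxiliary for height on nice cases} applies. Then $H_{n+1}\prec H_n$ forces $\textphnc{y}_G(H_{n+1})<\textphnc{y}_G(H_n)$, so that $\textphnc{y}_G(H_0)>\textphnc{y}_G(H_1)>\cdots$ is an infinite strictly decreasing sequence of ordinals, which is impossible. Equivalently, $\{H_n\}$ is a chain of the type produced in Corollary \ref{coro:when do we have no residual rank}, certifying $G\notin\mathcal{K}_{\omega_1}(G)$ while $G\in\mathcal{K}(G)$; either way $\mathcal{K}(G)\neq\mathcal{K}_{\omega_1}(G)$ and the residual rank is not well-defined. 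I expect the main technical care to lie not in the ordinal argument but in the two structural verifications feeding it: that each $H_n$ is genuinely a closed, compactly generated subgroup, so that it lives in $\mathcal{K}(G)$, and that the identity $\textup{Res}(G)=G$ propagates to $H_n\leqslant\textup{Res}_G(H_n)$ through the relative discrete residual of Lemma \ref{lemma:properties residue that will be reused}(1). These are exactly the steps that upgrade the mere inclusions $H_{n+1}\subsetneq H_n$ into genuine residual-order comparisons $H_{n+1}\prec H_n$.
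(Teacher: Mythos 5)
Your proposal is correct and follows essentially the same route as the paper: both establish $\textup{Res}(G)=G$ from simplicity and non-discreteness, build the descending chain of isomorphic copies $H_{n+1}\subsetneq H_n$, use Lemma \ref{lemma:properties residue that will be reused}(1) (with $N=K=H_n$, since $H_n$ normalizes itself) to promote the inclusions to $H_{n+1}\prec H_n$, and conclude via Corollary \ref{coro:when do we have no residual rank}. Your version is merely more explicit about why each $H_n=\phi^n(G)$ is closed and compactly generated, details the paper leaves implicit in its recursive definition of the chain.
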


\begin{proof}
 As $G$ is simple and non-discrete, it follows that $\textup{Res}(G)=G$. Define recursively $\{G_n\}_{n\in\mathbb{N}}$ a collection of compactly generated subgroups of $G$ as follows: $G_0=G$ and $G_{n+1}$ is a proper,\ closed subgroup of $G_n$ and isomorphic to $G$. As $G_n$ normalizes itself,\ it follows that $G_{n+1}<G_{n}=\textup{Res}(G_{n})\leqslant \textup{Res}_{G}(G_{n})$. Hence $G_{n+1}\prec G_n$ for all $n\in\mathbb{N}$. Such an infinitely descending chain implies the rank is not well-defined by Corollary \ref{coro:when do we have no residual rank}.
\end{proof}

The Neretin's groups of sphereomorphisms $N_p$ \cite{Neretin} always satisfy this property,\ giving a countable collection of example of groups without a well-defined residual rank.\\
\indent Another question that arises naturally is showing that the class of groups with well-defined residual rank contains non-elementary groups. For the following proof we define $\mathcal{T}_{d+1}$ the infinite $(d+1)$-regular tree. The group $\textup{Aut}(\mathcal{T}_{d+1})$\index{$\textup{Aut}(\mathcal{T}_{d+1})$|ndx} is defined to be the group of automorphisms of $\mathcal{T}_{d+1}$ with a basis of compact,\ open subgroups generated by the set $\{\textup{Fix}(F);\ F\subset V\mathcal{T}_{d+1}$ is a finite subset$\}$. This group has the simple,\ non-discrete,\ compactly generated,\ open subgroup $\textup{Aut}^+(\mathcal{T}_{d+1})$  generated by the edge fixing automorphisms \cite{Tits1970}. Hence,\ for $d>1$ the groups are $\textup{Aut}^+(\mathcal{T}_{d+1})$ and $\textup{Aut}(\mathcal{T}_{d+1})$ are not elementary.

\begin{prop}
Given $d>1$ a natural number,\ the groups $\textup{Aut}^+(\mathcal{T}_{d+1})$ and $\textup{Aut}(\mathcal{T}_{d+1})$ have well-defined residual rank and $\textphnc{f}(\textup{Aut}(\mathcal{T}_{d+1}))=\textphnc{f}(\textup{Aut}^+(\mathcal{T}_{d+1}))=3$.
\end{prop}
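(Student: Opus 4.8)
The plan is to compute the two discrete residuals, reduce the residual rank to the residual heights of the compactly generated subgroups sitting below the top in the residual order, and control those heights through the action on the tree. Since $\textup{Aut}^+(\mathcal{T}_{d+1})$ is topologically simple and non-discrete, its only open normal subgroup is itself, so $\textup{Res}(\textup{Aut}^+(\mathcal{T}_{d+1}))=\textup{Aut}^+(\mathcal{T}_{d+1})$. For the full group, $\textup{Aut}^+(\mathcal{T}_{d+1})$ is open, normal and of index $2$ (it contains every vertex fixator, hence is open); any open normal subgroup $N\trianglelefteq \textup{Aut}(\mathcal{T}_{d+1})$ meets $\textup{Aut}^+(\mathcal{T}_{d+1})$ in a nontrivial open normal subgroup of the latter, which by simplicity is everything, so $N\supseteq \textup{Aut}^+(\mathcal{T}_{d+1})$ and therefore $\textup{Res}(\textup{Aut}(\mathcal{T}_{d+1}))=\textup{Aut}^+(\mathcal{T}_{d+1})$. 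Both groups are compactly generated, so by Proposition \ref{prop:simplified residual rank on compactly generated} it suffices to understand the residual heights inside $\textup{Aut}^+(\mathcal{T}_{d+1})$; by Proposition \ref{prop:easier residual height} these heights agree whether computed in $\textup{Aut}^+(\mathcal{T}_{d+1})$ or in the open overgroup $\textup{Aut}(\mathcal{T}_{d+1})$.

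The heart of the argument is the claim that every proper closed compactly generated subgroup $K$ of $G:=\textup{Aut}^+(\mathcal{T}_{d+1})$ satisfies $\textup{Res}_G(K)=\{1\}$, so that $\textphnc{y}_G(K)=1$ (while $\textphnc{y}_G(\{1\})=0$). Granting this, the subgroups $L\prec G$ all have height at most $1$, so by Lemma \ref{lemma:auxiliary for height on nice cases} one gets $\textphnc{y}_G(G)=2$; since no infinite descending chain as in Corollary \ref{coro:when do we have no residual rank} can occur, the residual rank is well-defined and $\textphnc{f}(G)=\textphnc{y}_G(G)+1=3$. The same count, transported through the open subgroup $\textup{Aut}^+(\mathcal{T}_{d+1})$ and the relation $\textup{Res}(\textup{Aut}(\mathcal{T}_{d+1}))=\textup{Aut}^+(\mathcal{T}_{d+1})$, yields $\textphnc{f}(\textup{Aut}(\mathcal{T}_{d+1}))=3$.

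To prove the claim I would split according to the Tits classification of the action of $K$ on $\mathcal{T}_{d+1}$. If $K$ is bounded it fixes a vertex $v$ (after passing to an index-two subgroup if it inverts an edge) and is compact; then for every $g\in K$ one has $g\,\textup{Fix}(B_n(v))\,g^{-1}=\textup{Fix}(B_n(gv))=\textup{Fix}(B_n(v))$, so $K$ normalises each open subgroup $\textup{Fix}(B_n(v))$, and since $\bigcap_n \textup{Fix}(B_n(v))=\{1\}$ we conclude $\textup{Res}_G(K)=\{1\}$. In the unbounded cases $K$ fixes a unique end, stabilises a line, or admits a proper minimal invariant subtree; in each situation $K$ preserves proper combinatorial data, and I would build from that data a descending sequence of $K$-normalised open subgroups with trivial intersection (pointwise fixators of growing balls inside a $K$-invariant subtree, refined by the horospherical filtration when an end is fixed), again forcing $\textup{Res}_G(K)=\{1\}$.

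The main obstacle is exactly the unbounded general-type case: I must rule out that a proper closed compactly generated subgroup can act richly enough --- with a locally transitive general-type action --- to make $\textup{Res}_G(K)$ nontrivial, since such a residual would inherit topological simplicity and produce the infinite descending chains exhibited in the previous proposition. I expect to handle this by writing $\textup{Res}_G(K)=\textup{Res}(E)$ for the compactly generated open subgroup $E=K\,\textup{Res}_G(K)\,U$ furnished by Theorem \ref{thrm:colin theorem G}, and then invoking the rigidity of open subgroups of $\textup{Aut}(\mathcal{T}_{d+1})$ to show that properness of $K$ prevents $E$ from normalising anything beyond the trivial residual, i.e.\ $\textup{Res}(E)=\{1\}$.
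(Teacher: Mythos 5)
Your overall strategy is genuinely different from the paper's. The paper does not classify actions on the tree at all: it quotes the rigidity theorem that every proper open subgroup of $\textup{Aut}^+(\mathcal{T}_{d+1})$ is compact, deduces from Proposition \ref{prop:easier residual height} that every proper open compactly generated subgroup has residual height $1$, and then transfers this to arbitrary proper closed compactly generated subgroups through the open envelope $E$ of Proposition \ref{prop:finding comp gen open with same residual rank as K} (which satisfies $\textup{Res}(E)=\textup{Res}_G(K)$ and $\textphnc{y}_G(E)=\textphnc{y}_G(K)$). Your computation of the two discrete residuals, and your use of Propositions \ref{prop:easier residual height} and \ref{prop:simplified residual rank on compactly generated} to pass between $\textup{Aut}^+(\mathcal{T}_{d+1})$ and $\textup{Aut}(\mathcal{T}_{d+1})$, do agree with the paper.

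However, your central claim --- that every proper closed compactly generated $K\leqslant G:=\textup{Aut}^+(\mathcal{T}_{d+1})$ has $\textup{Res}_G(K)=\{1\}$ --- fails in the unbounded case, and the sketch you give there cannot be repaired. Take $K=\langle t\rangle$ with $t$ hyperbolic. Any proper open subgroup $O$ of $G$ is compact (the same rigidity theorem you invoke at the end), hence contains $\textup{Fix}(B_n(v))$ for some ball; if $tOt^{-1}=O$ then $O$ contains $\textup{Fix}(B_n(t^kv))$ for every $k\in\mathbb{Z}$, and the group these fixators generate has unbounded orbits, contradicting compactness of $O$. So no proper open subgroup of $G$ is normalized by $t$, and $\textup{Res}_G(\langle t\rangle)=G$, not $\{1\}$. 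Concretely, the objects you propose to build do not exist: pointwise fixators of growing balls inside a $K$-invariant line or horoball are open but not normalized by $K$, while the fixator of the whole invariant subtree is $K$-normalized but not open. The fallback through Theorem \ref{thrm:colin theorem G} does not rescue the argument, since for this $K$ the envelope $E=K\,\textup{Res}_G(K)\,U$ is all of $G$, so ``$\textup{Res}(E)=\{1\}$'' is precisely what fails. You should also be aware that this same subgroup $\langle t\rangle$ puts pressure on the paper's own transfer step: Proposition \ref{prop:finding comp gen open with same residual rank as K} only produces a \emph{proper} (hence compact) envelope when $\textup{Res}_G(K)$ is already small, so any complete proof has to treat the unbounded cyclic subgroups, whose envelope is all of $G$, as a separate case rather than folding them into the compact one.
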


\begin{proof}
Let $G<\textup{Aut}^+(\mathcal{T}_{d+1})$ a proper,\ compactly generated,\ open subgroup. By \cite[Theorem 11]{AutTd} it follows that if $G$ is a proper,\ open subgroup then it is compact. Proposition \ref{prop:easier residual height} then implies that $\textphnc{y}_G(G)=\textphnc{y}_{\textup{Aut}^+(\mathcal{T}_{d+1})}(G)=1$. With Proposition \ref{prop:finding comp gen open with same residual rank as K} it follows that this is also the case for all proper,\ compactly generated,\ closed subgroups of $\textup{Aut}^+(\mathcal{T}_{d+1})$. Hence $\textphnc{y}_{\textup{Aut}^+(\mathcal{T}_{d+1})}(\textup{Aut}^+(\mathcal{T}_{d+1}))=2$ and $\textphnc{f}(\textup{Aut}^+(\mathcal{T}_{d+1}))=3$.\\
\indent For the case $\textup{Aut}(\mathcal{T}_{d+1})$,\ notice that $\textup{Res}(\textup{Aut}(\mathcal{T}_{d+1}))=\textup{Aut}^+(\mathcal{T}_{d+1})$,\ as $\textup{Aut}^+(\mathcal{T}_{d+1})$ is the only proper,\ normal,\ open subgroup of $\textup{Aut}(\mathcal{T}_{d+1})$. Hence the result follows.
\end{proof}
For the group $\textup{Aut}(\mathcal{T}_{d+1})$ we then have the following residual order hierarchy:
$$\mathcal{K}_0(\textup{Aut}(\mathcal{T}_{d+1}))=\{\{1_{\textup{Aut}(\mathcal{T}_{d+1})}\}\}$$
$$\mathcal{K}_1(\textup{Aut}(\mathcal{T}_{d+1}))=\{K\leqslant \textup{Aut}^+(\mathcal{T}_{d+1});\ K \text{ is a compact subgroup}\}$$
$$\mathcal{K}_2(\textup{Aut}(\mathcal{T}_{d+1}))=\mathcal{K}(\textup{Aut}(\mathcal{T}_{d+1})).$$

\indent The next result shows that, for non-elementary groups, the residual rank behaves quite differently from the decomposition rank.

\begin{lemma}
If $G$ is a compactly generated group with well-defined residual rank such that $\textup{Res}(G)=G$ then $\textphnc{f}(G\times G)=\textphnc{f}(G)-1+\textphnc{f}(G)$.
\end{lemma}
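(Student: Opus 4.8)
The plan is to reduce the statement to a single height computation. Since $G$ is compactly generated, Lemma \ref{lemma:properties residue that will be reused}(2) and Remark \ref{remark:less or equal height} show that $G$ is the element of largest residual height in $\mathcal{K}(G)$, so by Definition \ref{defi:residual height/rank} we have $\textphnc{f}(G)=\textphnc{y}_G(G)+1$; write $\rho:=\textphnc{y}_G(G)$, so that the target ordinal is $\textphnc{f}(G)-1+\textphnc{f}(G)=(\rho+1)-1+(\rho+1)=\rho\cdot 2+1$. The product $G\times G$ is again compactly generated and satisfies $\textup{Res}(G\times G)=\textup{Res}(G)\times\textup{Res}(G)=G\times G$, so the same observation gives $\textphnc{f}(G\times G)=\textphnc{y}_{G\times G}(G\times G)+1$ once $G\times G$ is known to have well-defined residual rank. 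Thus it suffices to prove that $G\times G$ has well-defined residual rank and that $\textphnc{y}_{G\times G}(G\times G)=\rho\cdot 2$.

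Writing $N_1=G\times\{1\}$, $N_2=\{1\}\times G$ and $p_1,p_2$ for the two projections, I would first record two identities obtained from Lemma \ref{lemma:properties residue that will be reused}. For $L\leqslant N_2$, part (1) gives $\textup{Res}_{N_2}(L)\leqslant\textup{Res}_{G\times G}(L)$, while the product bound $\textup{Res}_{G\times G}(L)\leqslant\textup{Res}_G(p_1(L))\times\textup{Res}_G(p_2(L))=\{1\}\times\textup{Res}_G(p_2(L))$ (using $p_1(L)=\{1\}$) forces $\textup{Res}_{G\times G}(L)\leqslant N_2$ and hence $\textup{Res}_{G\times G}(L)=\textup{Res}_{N_2}(L)$. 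Arguing exactly as in Proposition \ref{prop:easier residual height}, this makes $\mathcal{K}_{\omega_1}(N_2)$ a lower set of $\mathcal{K}_{\omega_1}(G\times G)$ with $\textphnc{y}_{G\times G}(L)=\textphnc{y}_{N_2}(L)=\textphnc{y}_G(L)$ for every $L\leqslant N_2$; in particular $\textphnc{y}_{G\times G}(N_2)=\textphnc{y}_G(G)=\rho$. Second, for $H\in\mathcal{K}(G)$ one checks $\textup{Res}_{G\times G}(H\times G)=\textup{Res}_G(H)\times G$: the inclusion $\leqslant$ is the product bound, and $\geqslant$ follows from Lemma \ref{lemma:properties residue that will be reused}(1), since $\textup{Res}_{N_1}(H\times G)=\textup{Res}_G(H)\times\{1\}$ and $\textup{Res}_{N_2}(N_2)=\{1\}\times G$ both embed into $\textup{Res}_{G\times G}(H\times G)$.

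With these identities the lower bound is an induction on $\textphnc{y}_G(H)$ showing $\textphnc{y}_{G\times G}(H\times G)=\rho+\textphnc{y}_G(H)$: the base case $H=\{1\}$ is $\textphnc{y}_{G\times G}(N_2)=\rho$, and for the step the subgroups $H'\times G$ with $H'\prec_G H$ satisfy $H'\times G\prec_{G\times G}H\times G$ (by the second identity) with heights $\rho+\textphnc{y}_G(H')$, so $\sup$ over them plus one gives $\rho+\textphnc{y}_G(H)$. Taking $H=G$ yields $\textphnc{y}_{G\times G}(G\times G)\geqslant\rho\cdot 2$. For the matching upper bound I would invoke Lemma \ref{lemma:residual rank closed extensions} with the closed normal factor $N=N_2\cong G$ (which has well-defined residual rank) and quotient $(G\times G)/N_2\cong G$ via $p_1$: for every $K\in\mathcal{K}_{\omega_1}(G\times G)$ this gives $\textphnc{y}_{G\times G}(K)\leqslant\textphnc{f}(N_2)-1+\textphnc{y}_G(\overline{p_1(K)})\leqslant\rho+\rho=\rho\cdot 2$, and hence $\textphnc{f}(G\times G)\leqslant\rho\cdot 2+1$.

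The hard part will be establishing that $G\times G$ actually has well-defined residual rank, i.e.\ that no $K\in\mathcal{K}(G\times G)$ admits an infinite strictly $\prec$-descending chain (Corollary \ref{coro:when do we have no residual rank}); this is precisely the extension question flagged as open after Lemma \ref{lemma:residual rank closed extensions}, so the argument must exploit the fact that $N_1,N_2$ are \emph{direct} factors rather than merely normal. Given a putative chain $L_0\succ L_1\succ\cdots$, the product bound gives $\overline{p_i(L_{n+1})}\leqslant\textup{Res}_G(\overline{p_i(L_n)})$, so each sequence $\{\overline{p_i(L_n)}\}_n$ is weakly $\preccurlyeq$-descending in $G$; since $G$ has well-defined residual rank, the heights $\textphnc{y}_G(\overline{p_i(L_n)})$ are non-increasing and, by Remark \ref{remark:less or equal height}, stabilize, forcing both projections to become constant, say equal to $A$ and $B$, for $n$ large. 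The delicate remaining case is a chain with constant projections but strictly shrinking factor-intersections $L_n\cap N_2\leqslant N_2$; here I would use the first identity above, which shows that subgroups of $N_2$ carry their $N_2$-residual order, to convert the tail into a descending configuration inside $N_2\cong G$ and contradict the well-defined residual rank of $G$. Making this conversion fully rigorous — controlling the non-product, Goursat-type subgroups that share the projections $A$ and $B$ — is the step I expect to require the most care, and it is exactly where the complementation of $N_1$ by $N_2$ is essential.
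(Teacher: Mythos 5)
Your overall route is the same as the paper's: establish well-definedness of the residual rank of $G\times G$ by projecting a hypothetical descending chain onto the factors, obtain the lower bound $\textphnc{y}_{G\times G}(G\times G)\geqslant\textphnc{y}_G(G)+\textphnc{y}_G(G)$ from chains of the form $K\times G\succ L\times G\succ\{1\}\times G$ (the paper appeals to Remark \ref{remark:less or equal height} where you run an explicit induction on $\textphnc{y}_G(H)$ --- same content), obtain the matching upper bound from Lemma \ref{lemma:residual rank closed extensions} applied to the factor $N_2$, and convert heights to ranks via Proposition \ref{prop:simplified residual rank on compactly generated}. Your two auxiliary identities, $\textup{Res}_{G\times G}(L)=\textup{Res}_{N_2}(L)$ for $L\leqslant N_2$ and $\textup{Res}_{G\times G}(H\times G)=\textup{Res}_G(H)\times G$, are correct and actually supply details the paper leaves implicit, e.g.\ why $\textphnc{y}_{G\times G}(\{1\}\times G)=\textphnc{y}_G(G)$ even though $N_2$ is neither open in $G\times G$ nor equal to $\textup{Res}(G\times G)$, so Proposition \ref{prop:easier residual height} does not apply verbatim.

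The one place you stop short --- well-definedness when the chain $\{L_n\}$ consists of non-product (Goursat-type) subgroups whose closed projections stabilize at some $A$ and $B$ --- is a genuine gap in your write-up, and you are right that it is the delicate point: the product bound only yields $\overline{\pi_i(L_{n+1})}\leqslant\textup{Res}_G(\overline{\pi_i(L_n)})$, and once the projections are constant this degenerates to $A\leqslant\textup{Res}_G(A)$, which is perfectly consistent with $G$ having well-defined residual rank (indeed $A=G$ satisfies it under the standing hypothesis $\textup{Res}(G)=G$), so no contradiction inside $G$ is produced. Be aware, however, that the paper's own proof does not close this case either: it invokes the identity $\textup{Res}_{G\times G}(K\times L)=\textup{Res}_G(K)\times\textup{Res}_G(L)$, which only applies to product subgroups, and simply asserts that the descending chain in $G\times G$ yields one in $G$. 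So your sketch matches the published argument in both structure and, at this step, level of rigour; a complete proof still requires the missing analysis of chains with constant projections (for instance by controlling the tail of $L_n\cap N_2$ inside $N_2\cong G$, as you propose), and supplying it would also repair the paper's version.
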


\begin{proof}
First, we will prove by contradiction that $G\times G$ has well-defined residual rank. We will then show that the rank of $G\times G$ is as given in the statement.\\
\indent Assume $G\times G$ doesn't have well defined rank. Then by Corollary \ref{coro:when do we have no residual rank}, there exists $\{L_n\}_{n\in\mathbb{N}}$ a collection such that for all $n\in\mathbb{N}$, $L_{n+1}\prec L_n$. Let $\pi_i:G\times G\rightarrow G$ the surjection into the $i$-th coordinate. Then either $\{\pi_1(L_n)\}_{n\in\mathbb{N}}$ or $\{\pi_2(L_n)\}_{n\in\mathbb{N}}$ is an infinite collection of non-trivial groups. In either cases, as $\textup{Res}_{G\times G}(K\times L)=\textup{Res}_G(K)\times \textup{Res}_G(L)$ for every $K,\ L\in\mathcal{K}(G)$, it follows that $G$ has an infinite descending chain, a contradiction to the assumption $G$ has well-defined residual rank.\\
\indent Let $K,\ L \in \mathcal{K}(G)$ be such that $K\succ L$. Then the assumption that $\textup{Res}(G)=G$ implies $K\times G \succ L\times G\succ \{1\}\times G$. By Remark \ref{remark:less or equal height} it is then the case:
$$\textphnc{y}_{G\times G}(G\times G)\geqslant \textphnc{y}_G(G)+\textphnc{y}_G(K)$$
for every $K\in\mathcal{K}(G)$. Taking the supremum gives us:
$$\textphnc{y}_{G\times G}(G\times G)\geqslant \textphnc{y}_G(G)+\textphnc{y}_G(G).$$

By Lemma \ref{lemma:residual rank closed extensions} it is also the case $\textphnc{y}_{G\times G}(G\times G)\leqslant \textphnc{y}_G(G)+\textphnc{y}_G(G)$, hence 
$$\textphnc{y}_{G\times G}(G\times G)= \textphnc{y}_G(G)+\textphnc{y}_G(G).$$ 
As $G\times G$ is compactly generated, Proposition \ref{prop:simplified residual rank on compactly generated} gives us the desired result
$$\textphnc{f}(G\times G)=\textphnc{y}_{G\times G}(G\times G)+1=\textphnc{y}_G(G)+\textphnc{y}_G(G)+1=\textphnc{f}(G)-1+\textphnc{f}(G).$$
\end{proof}

As an example of the result above, let $G=\textup{Aut}(\mathcal{T}_{d+1})$, $U=\textup{Aut}(\mathcal{T}_{d+1})_{(v)}$ for $v$ a vertex of $V\mathcal{T}_{d+1}$. One can show that $G\times G$ has height $4$ as follows:
$$
G\times G \succ G\times U \succ G\times \{1_G\} \succ U\times \{1_G\} \succ \{1_G\}\times \{1_G\}
$$
 Applying this lemma with Proposition \ref{coro:simplified residual rank general} gives us the following:

\begin{coro}
Let $G$ be a compactly generated group with well-defined residual rank such that $\textup{Res}(G)=G$,\ and let $U\in\mathcal{U}(G)$. Then $\textphnc{f}(\bigoplus_{n\in\mathbb{N}}(G,U))=\textphnc{f}(G)\omega +1$.
\end{coro}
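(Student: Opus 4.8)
The plan is to realize the local direct product as an increasing union of open compactly generated subgroups and then invoke Proposition~\ref{coro:simplified residual rank general}. Taking $X=\mathbb{N}$ and $F_n=\{0,1,\dots,n-1\}$ in Definition~\ref{defi:local direct product}, the groups $O_n:=S_{F_n}=G^n\times\prod_{x\geqslant n}U$ form a $\subseteq$-increasing cofinal chain whose union is $\bigoplus_{n\in\mathbb{N}}(G,U)$; each $O_n$ is open (it contains the compact open subgroup $\prod_{x}U$) and compactly generated (a product of the compactly generated $G^n$ with the compact group $\prod_{x\geqslant n}U$). Since $\prod_{x\geqslant n}U$ is profinite, hence residually discrete, while $\textup{Res}(G)=G$, the product formula for the discrete residual used in the preceding Lemma gives $\textup{Res}(O_n)=\textup{Res}(G^n)\times\{1\}=G^n$. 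Granting that $G^n$ has well-defined residual rank, Proposition~\ref{prop:simplified residual rank on compactly generated} shows each $O_n$ does as well, and Proposition~\ref{coro:simplified residual rank general} then yields
\begin{equation*}
\textphnc{f}\!\left(\bigoplus_{n\in\mathbb{N}}(G,U)\right)=\sup_{n\in\mathbb{N}}\textphnc{f}(\textup{Res}(O_n))+1=\sup_{n\in\mathbb{N}}\textphnc{f}(G^n)+1 .
\end{equation*}

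First I would compute $\textphnc{f}(G^n)$. The proof of the preceding Lemma never uses that the two factors coincide: for compactly generated $A,B$ with well-defined residual rank and $\textup{Res}(A)=A$, $\textup{Res}(B)=B$, the same descending chain $A\times B\succ\cdots\succ\{1\}\times B$ (stacking a maximal chain of the first factor above $\{1\}\times B$) together with the upper bound of Lemma~\ref{lemma:residual rank closed extensions} applied to $A\times\{1\}\trianglelefteq A\times B$ gives $\textphnc{f}(A\times B)=\textphnc{f}(A)-1+\textphnc{f}(B)$. As $\textup{Res}(G^k)=G^k$ for every $k$, an induction on $n$ starting from $\textphnc{f}(G^1)=\textphnc{f}(G)$ produces
\begin{equation*}
\textphnc{f}(G^n)=(\textphnc{f}(G)-1)\,n+1 .
\end{equation*}

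Finally I would finish with ordinal arithmetic. Set $h:=\textphnc{f}(G)-1$; since $G$ is nontrivial and $\textup{Res}_G(G)=\textup{Res}(G)=G\neq\{1\}$, we have $\textphnc{f}(G)\geqslant 3$, so $h\geqslant 2$. Then $\sup_{n\in\mathbb{N}}(hn+1)=h\omega$, and because $h$ and $h+1$ share the same leading exponent $a$ (both lie in $[\omega^{a},\omega^{a+1})$) in Cantor normal form, we get $h\omega=\omega^{a+1}=(h+1)\omega=\textphnc{f}(G)\,\omega$. Combining this with the two displays gives
\begin{equation*}
\textphnc{f}\!\left(\bigoplus_{n\in\mathbb{N}}(G,U)\right)=h\omega+1=\textphnc{f}(G)\,\omega+1 ,
\end{equation*}
which is the claim. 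I expect the main obstacle to be the middle step: making precise that the preceding Lemma extends to products of unequal powers of $G$, so that the induction giving $\textphnc{f}(G^n)=(\textphnc{f}(G)-1)n+1$ is valid, together with the ordinal identity $(h+1)\omega=h\omega$ that collapses $(\textphnc{f}(G)-1)\omega$ back to $\textphnc{f}(G)\,\omega$.
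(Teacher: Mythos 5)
Your argument is correct and follows exactly the route the paper indicates (the corollary is stated there without a written proof, as an application of the preceding lemma together with Proposition~\ref{coro:simplified residual rank general}): realize the local direct product as the increasing union of the open compactly generated subgroups $O_n=G^n\times\prod_{x\geqslant n}U$ with $\textup{Res}(O_n)=G^n$, iterate the lemma to get $\textphnc{f}(G^n)=(\textphnc{f}(G)-1)n+1$, and take the supremum. The two points you flag --- that the lemma's lower bound $\textphnc{f}(B)-1+\textphnc{f}(A)$ and upper bound $\textphnc{f}(A)-1+\textphnc{f}(B)$ only coincide here because both factors are powers of the same $G$, and the ordinal identity $(\textphnc{f}(G)-1)\omega=\textphnc{f}(G)\omega$ --- are precisely the details the paper leaves implicit, and you handle both correctly.
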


We then have the following properties on the class of groups with well-defined residual rank:

\begin{lemma}
The class of t.d.l.s.c.~groups with well-defined residual rank is closed under:
\begin{itemize}
    \item Closed subgroups (Lemma \ref{lemma:residual rank closed under subgroups})
    \item Hausdorff quotients (Corollary \ref{coro:height decreases on surject})
    \item Increasing countable unions (Proposition \ref{coro:simplified residual rank general})
\end{itemize}    
\end{lemma}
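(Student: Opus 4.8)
The plan is to observe that each of the three asserted closure properties has already been established in this section, so the proof consists of collecting the relevant statements and, for the third item, supplying a short reduction to the compactly generated case.

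The first two items are immediate. For closed subgroups, the assertion is precisely the content of Lemma \ref{lemma:residual rank closed under subgroups}: if $G$ has well-defined residual rank and $H\leqslant G$ is closed, then $H$ has well-defined residual rank, and in fact $\textphnc{f}(G)\geqslant\textphnc{f}(H)$. For Hausdorff quotients, the assertion is exactly the second half of Proposition \ref{coro:height decreases on surject}: if $\pi\colon G\to H$ is a continuous surjection of t.d.l.c.s.c.~groups and $G$ has well-defined residual rank, then so does $H$, with $\textphnc{f}(G)\geqslant\textphnc{f}(H)$. Neither case needs any additional argument beyond citing these results.

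The only item requiring work is increasing countable unions, since Proposition \ref{coro:simplified residual rank general} is stated for unions of \emph{compactly generated} subgroups, whereas a general group with well-defined residual rank need not be compactly generated. Suppose then $G=\bigcup_{n\in\mathbb{N}}H_n$ with $\{H_n\}_{n\in\mathbb{N}}$ an increasing sequence of open subgroups, each having well-defined residual rank. First I would apply Remark \ref{remark:comp gen} to each $H_n$, writing $H_n=\bigcup_{m\in\mathbb{N}}O_{n,m}$ as an increasing union of compactly generated open subgroups; since an open subgroup is closed, each $O_{n,m}$ is a closed subgroup of $H_n$, so by the already-established closure under closed subgroups each $O_{n,m}$ has well-defined residual rank. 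A diagonal argument then assembles these into a single exhausting sequence: setting $O_k:=\langle O_{1,k},O_{2,k},\ldots,O_{k,k}\rangle$ gives compactly generated open subgroups with $O_k\leqslant O_{k+1}$ and $\bigcup_{k\in\mathbb{N}}O_k=G$, and each $O_k$ lies in $H_k$ and hence (again by closure under closed subgroups) has well-defined residual rank. Applying Proposition \ref{coro:simplified residual rank general} to the sequence $\{O_k\}_{k\in\mathbb{N}}$ shows that $G$ has well-defined residual rank.

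The main point at which care is needed is verifying that the diagonalized sequence $\{O_k\}$ is genuinely increasing, exhausts $G$, and has every term compactly generated and open. This is routine: $O_k\leqslant O_{k+1}$ because $O_{i,k}\leqslant O_{i,k+1}$ for each $i\leqslant k$; the union is all of $G$ because any $g\in G$ lies in some $H_n$ and hence in some $O_{n,m}$, so $g\in O_{\max(n,m)}$; and each $O_k$ is compactly generated and open as a finite join of compactly generated open subgroups. With this verification in hand the three closure properties follow, completing the proof.
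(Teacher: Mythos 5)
Your proposal is correct and follows the same route as the paper, which offers no proof beyond citing the three referenced results. Your diagonal argument for the third item is a sound and worthwhile addition: it correctly bridges the gap between an increasing union of arbitrary open subgroups with well-defined residual rank and the hypothesis of Proposition \ref{coro:simplified residual rank general}, which requires the exhausting subgroups to be compactly generated, a reduction the paper leaves implicit.
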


\section{Last comments}

\subsection{Intermediate steps}

The following definition will be essential to build groups with the desired rank,\ as it will allow us to work with the elementary/residual rank of a group in a more precise way.

\begin{defi}\index{collection witnessing the rank of $G$|ndx}
Let $G$ be a non-trivial group with well-defined residual rank. We say that a collection $\{G_{[\beta+1]}\}_{\beta<\alpha}$ of closed compactly generated subgroups of $G$ \textbf{witness that the rank of $G$ is at least $\alpha+1$} if $G_{[1]}\neq \{1\}$,\ for every $\beta<\alpha$ we have $\textphnc{y}_G(G_{[\beta+1]})=\beta+1$,\ and given $\beta<\beta'$ then  $G_{[\beta+1]}\prec G_{[\beta'+1]}$. If $\textphnc{f}(G)=\alpha+1$, we say that this collection witnesses the rank of $G$.
\end{defi}

A group $G$ with rank $\omega+2$ that has a collection of groups witnessing the rank will have a collection of compactly generated subgroups that can be ordered in the following way:

$$\begin{tikzcd}
G_{[1]} & G_{[2]} \arrow[l] & G_{[3]} \arrow[l] & \ldots \arrow[l] & G_{[n]}\arrow[l] & \ldots \arrow[l] & G_{[\omega+1]} \arrow[l]
\end{tikzcd}$$

\noindent and a group $G$ with rank $\omega+2$ that has no collection of groups witnessing the rank might have a collection of compactly generated subgroups that can be ordered in the following way:

$$\begin{tikzcd}
          &                     & G_{[\omega+1]} \arrow[lld] \arrow[ld] \arrow[d] \arrow[rd] \arrow[rrd] &                     &        \\
G_{[1]}^{(1)} & G_{[2]}^{(2)} \arrow[d] & G_{[3]}^{(3)} \arrow[d]                                                  & G_{[4]}^{(4)} \arrow[d] & \ldots \\
          & G_{[1]}^{(2)}           & G_{[2]}^{(3)} \arrow[d]                                                  & G_{[3]}^{(4)} \arrow[d] &        \\
          &                     & G_{[1]}^{(3)}                                                            & G_{[2]}^{(4)} \arrow[d] &        \\
          &                     &                                                                      & G_{[1]}^{(4)}           &       
\end{tikzcd}$$

\noindent without any subset that can be ordered as the first example. It then might be possible to have a group with residual rank $\omega+2$ but no collection witnessing its rank. It is still not known if a group satisfying such a property does exist.

\begin{theorem}\label{thrm:making big ranks better with residue order}\label{coro:condition for the rank to be additive}
Suppose $L$ and $K$ are non-trivial compactly generated elementary groups and $K$ acts on a countable set $X$ with compact open point stabilizers (the action does not need to be faithful). If $U\in\mathcal{U}(L)$ is such that $\langle\!\langle U\rangle\!\rangle_L=L$ and there exist $\{K_{[\alpha+1]}\}_{\alpha<\textphnc{y}_K(K)}$ a collection witnessing the rank of $K$ such that $K_{[1]}$ has an infinite orbit $Y_1\subset X$ then
$$\textphnc{f}(L\wr_U(K,\ X))=\textphnc{f}(L)-1+\textphnc{f}(K).$$
If $\textphnc{f}(K)>\omega$ then the condition of $K_{[1]}$ having an infinite orbit $Y_1\subset X$ is not necessary.
\end{theorem}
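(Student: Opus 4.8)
The plan is to establish the two inequalities separately, with the upper bound essentially immediate and the lower bound carrying all the weight. Write $G=L\wr_U(K,X)=N\rtimes K$ with $N=\bigoplus_X(L,U)$ and let $\rho\colon G\to G/N\cong K$ be the quotient map. By Corollary \ref{coro:wr product is elementary} the group $G$ is elementary, and it is compactly generated, so Theorem \ref{thrm:for elementary groups ranks are the same} and Proposition \ref{prop:simplified residual rank on compactly generated} give $\textphnc{f}(G)=\xi(G)=\textphnc{y}_G(G)+1$, and likewise $\textphnc{f}(L)=\textphnc{y}_L(L)+1$ and $\textphnc{f}(K)=\textphnc{y}_K(K)+1$. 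For the upper bound, $N$ is closed normal with $G/N\cong K$, and by Lemma \ref{lemma: rank reduced prod} its rank is the constant supremum $\textphnc{f}(N)=\xi(N)=\sup^+_{x\in X}\xi(L)=\xi(L)=\textphnc{f}(L)$ (a constant family of successor ordinals). Lemma \ref{lemma:residual rank closed extensions} then yields $\textphnc{f}(G)\le \textphnc{f}(N)-1+\textphnc{f}(G/N)=\textphnc{f}(L)-1+\textphnc{f}(K)$.

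Since $\textphnc{f}(L)$ and $\textphnc{f}(K)$ are successors, the lower bound is equivalent to $\textphnc{y}_G(G)\ge \textphnc{y}_L(L)+\textphnc{y}_K(K)$. Transporting the given witnessing chain $\{K_{[\alpha+1]}\}_{\alpha<\textphnc{y}_K(K)}$ upward through $\rho$ by Propositions \ref{prop:auxiliary for 4.9} and \ref{coro:height decreases on surject}, I would obtain a chain $\{G_\alpha\}_{\alpha<\textphnc{y}_K(K)}$ in $\mathcal{K}(G)$ with $\rho(G_\alpha)=K_{[\alpha+1]}$, with $G_\alpha\prec_G G_{\alpha'}$ for $\alpha<\alpha'$, and with $\textphnc{y}_G(G_\alpha)\ge\alpha+1$. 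By itself this only gives $\textphnc{f}(G)\ge\textphnc{f}(K)$; the whole content is to improve the \emph{bottom} of the chain by $\textphnc{y}_L(L)$. So I would arrange the construction so that the least term $G_0\le\textup{Res}_G(G_1)$ projects onto $K_{[1]}$ and additionally contains a compact open subgroup of $N$ supported on the infinite orbit $Y_1$ of $K_{[1]}$, and then prove the base estimate $\textphnc{y}_G(G_0)\ge\textphnc{y}_L(L)+1$. Granting it, the transfinite induction $\textphnc{y}_G(G_\alpha)\ge\textphnc{y}_L(L)+\alpha+1$ goes through using the height formula of Lemma \ref{lemma:auxiliary for height on nice cases} at successor and limit stages, and finally $\textphnc{y}_G(G)\ge\sup_{\alpha<\textphnc{y}_K(K)}\textphnc{y}_G(G_\alpha)\ge\textphnc{y}_L(L)+\textphnc{y}_K(K)$ by Remark \ref{remark:less or equal height}.

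The heart is the base estimate, and this is where $\langle\!\langle U\rangle\!\rangle_L=L$ and the infinite orbit $Y_1$ enter. Following the commutator computation in Lemma \ref{lemma: bad bound} localized to $Y_1$, one picks $k\in K_{[1]}$ and $y\in Y_1$ with $ky\ne y$ and conjugates the coordinate subgroups $f_l$; this shows $\textup{Res}_G(G_0)\cap N$ surjects coordinatewise onto $L$, and the normal-generation hypothesis $\langle\!\langle U\rangle\!\rangle_L=L$ upgrades this, via Proposition \ref{prop:gluing result remaining}, to the containment of a full copy of $\bigoplus_{Y_1}(L,U)$ in $\textup{Res}_G(G_0)$. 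Applying the surjection estimates (Propositions \ref{prop:auxiliary for 4.9} and \ref{coro:height decreases on surject}) together with subgroup monotonicity of height (Lemma \ref{lemma:residual rank closed under subgroups}), every compactly generated $K'\le L$ lifts into $\textup{Res}_G(G_0)$ without dropping its height, so $\sup_{K''\le\textup{Res}_G(G_0)}\textphnc{y}_G(K'')\ge\textphnc{f}(L)-1=\textphnc{y}_L(L)$, and Lemma \ref{lemma:auxiliary for height on nice cases} gives $\textphnc{y}_G(G_0)\ge\textphnc{y}_L(L)+1$.

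For the final clause, suppose $\textphnc{f}(K)>\omega$ and $K_{[1]}$ has only finite orbits. Then, being compactly generated with compact open point stabilizers, $K_{[1]}$ is compact, so $\textphnc{y}_G(G_0)=1$ and the $L$-boost cannot be installed at the bottom level; however $K_{[2]}$ has height $2$, hence is non-compact, hence has an infinite orbit, so the boost is installed one level up and the induction yields $\textphnc{y}_G(G_\alpha)\ge\textphnc{y}_L(L)+\alpha$ for $\alpha\ge1$, with the usual extra increment at limit stages. Since $\textphnc{f}(K)>\omega$ forces $\textphnc{y}_K(K)\ge\omega$, I expect the resulting one-level shift to be absorbed on passing to the supremum, the identity $1+\beta=\beta$ for $\beta\ge\omega$ and the limit increment in Lemma \ref{lemma:auxiliary for height on nice cases} recovering the missing unit. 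The main obstacle will be exactly this bookkeeping: controlling $\textup{Res}_G(G_\alpha)$ precisely enough to secure \emph{simultaneously} the descending chain and the base boost (in particular getting the relevant compact open part of $N$ and a section of $K_{[\alpha+1]}$ into $\textup{Res}_G(G_{\alpha'})$, where the finite-orbit coordinates make $\prod_XU$ unavailable), and verifying honestly that the one-step shift in the finite-orbit case is absorbed for \emph{every} $\textphnc{y}_K(K)\ge\omega$ rather than only at limit values.
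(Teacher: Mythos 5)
Your overall architecture matches the paper's: the upper bound via Lemma \ref{lemma:residual rank closed extensions} applied to $N=\bigoplus_X(L,U)$ is exactly how the paper closes the argument, and the lower bound is indeed obtained from a $\prec$-chain indexed by the witnessing collection of $K$ whose bottom is boosted by $\textphnc{y}_L(L)$. But the mechanism you propose for the base estimate would fail. The commutator computation of Lemma \ref{lemma: bad bound} places $f_lkf_l^{-1}k^{-1}$ in the discrete residual only because there $k$ is chosen inside $K\cap\textup{Res}(G)$, which is nontrivial precisely because $K$ is assumed not residually discrete. Here the bottom group $K_{[1]}$ has residual height $1$, i.e.\ $\textup{Res}_K(K_{[1]})=\{1\}$, so there is no reason for $\textup{Res}_G(G_0)$ to contain a nontrivial element of $K_{[1]}$ moving a point of $Y_1$, and for $k\in K_{[1]}$ merely the commutator $[f_l,k]$ lies in $N$ but need not lie in $\textup{Res}_G(G_0)$; the claimed coordinatewise surjection does not follow. (Proposition \ref{prop:gluing result remaining} is also not the right tool here: it concerns the tree groups $P_X(G,U)$.) The paper's argument avoids commutators entirely: any open subgroup $O$ normalized by $G_1:=\bigoplus_{Y_1}(L,U)\rtimes K_{[1]}$ contains $U^{Y_1\setminus I}$ for some finite $I$ because it is open; conjugating by $K_{[1]}$, which acts transitively on the \emph{infinite} set $Y_1$, yields $U^{Y_1}\leqslant O$; and then $O\cap J_1\trianglelefteq J_1$ together with $\langle\!\langle U\rangle\!\rangle_L=L$ forces $J_1=\bigoplus_{Y_1}(L,U)\leqslant O$. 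That is where the two hypotheses (infinite orbit, normal generation by $U$) actually enter.

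The second issue is one you flag yourself but do not resolve: propagating the boost up the chain. The paper does this by choosing the chain members concretely as $G_{\alpha+1}:=\bigoplus_{Y_{\alpha+1}}(L,U)\rtimes K_{[\alpha+1]}$ with $Y_{\alpha+1}:=K_{[\alpha+1]}Y_1$, so that the local direct products $J_{\alpha+1}$ are nested, the same openness/transitivity/normal-generation argument puts $J_{\alpha+1}$ into $\textup{Res}_G(G_{\alpha+1})$ at every level, and then $G_{\beta+1}=J_{\beta+1}\rtimes K_{[\beta+1]}\leqslant J_{\alpha+1}\rtimes\textup{Res}_K(K_{[\alpha+1]})\leqslant\textup{Res}_G(G_{\alpha+1})$ for all $\beta<\alpha$. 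An abstract lift of the chain through the quotient via Proposition \ref{prop:auxiliary for 4.9} does not by itself give this simultaneous control of the residuals, so that step of your plan is genuinely missing rather than routine bookkeeping. Your treatment of the final clause (pass to $K_{[2]}$, which must have an infinite orbit, and use $\textphnc{f}(K)>\omega$ to absorb the unit shift past level $\omega$) is in line with what the paper does.
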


\begin{proof}
Let $\{K_{[\alpha+1]}\}_{\alpha<\textphnc{y}_K(K)}$ be a collection witnessing the rank of $K$.\\
\indent For the case that there is $Y_1\subset X$, fix such an $Y_1$ an infinite orbit of $K_{[1]}$ and define, for each $\alpha<\textphnc{y}_K(K)$, the subsets $Y_{\alpha+1}:=K_{[\alpha+1]}Y_{1}$.\\
\indent If $K_{[1]}$ has no infinite orbit, then, as seen in the proof of Proposition \ref{prop: bad bound rank},\ even though $K_{[1]}$ doesn't have an infinite orbit in $X$,\ as $K_{[2]}$ is not residually discrete it will have at least one infinite orbit in $X$,\ say $Y_2$. Fix $Y_1$ a finite orbit of $K_{[1]}$ and $Y_2$ an infinite orbit of $K_{[2]}$ and define, for $1<\alpha<\textphnc{y}_K(K)$, the sets $Y_{\alpha+1}=K_{[\alpha+1]}Y_{2}$.\\
\indent We define the subgroups $G_{\alpha+1}:=L\wr_U(K_{[\alpha+1]},Y_{\alpha+1})$,\ $J_{\alpha+1}:=\bigoplus_{y\in Y_{\alpha+1}}(L,\ U)$,\ and $G:=L\wr_U(K,\ X)$. Notice that because the actions of $K_{[\alpha+1]}$ on $Y_{\alpha+1}$ are transitive,\ the groups $G_{\alpha+1}$ are compactly generated.\\
\indent Claim 1: If $Y_{\alpha+1}$ is infinite then $J_{\alpha+1}\leqslant \textup{Res}_G(G_{\alpha+1})$ and given $\beta<\alpha$,\ $G_{\beta+1}\leqslant \textup{Res}_G(G_{\alpha+1})$.\\
\indent Let $O\leqslant G$ be an open subgroup such that for every $g\in G_{\alpha+1}$ we have $gO=Og$. Then as $J_{\alpha+1}\leqslant G_{\alpha+1}$,\ $O\cap J_{\alpha+1}$ is normal in $J_{\alpha+1}$. Since $J_{\alpha+1}$ 
contains the product over $Y_{\alpha+1}$ of copies of $U$, and $O$ is open, then  $U^{Y_{\alpha+1}\backslash I}\leqslant O$ for some finite subset $I\subset Y_{\alpha+1}$. Since the group $K_{[\alpha+1]}$ acts transitively in $Y_{\alpha+1}$,\ we get that $U^{Y_{\alpha+1}}\leqslant O$. Because $O\cap J_{\alpha+1}$ is normal on $J_{\alpha+1}$ and $\langle\!\langle U\rangle\!\rangle_L=L$ we get that $J_{\alpha+1}\leqslant O$. Hence $J_{\alpha+1}\leqslant \textup{Res}_G(G_{\alpha+1})$. By Lemma \ref{lemma:residual rank and semidirect product} the subgroup $\textup{Res}_G(G_{\alpha+1})$ must also contain $\textup{Res}_K(K_{[\alpha+1]})$,\ so
$$L\wr_U(\textup{Res}_K(K_{[\alpha+1]}),Y_{\alpha+1})=J_{\alpha+1}\rtimes \textup{Res}_K(K_{[\alpha+1]}) \leqslant \textup{Res}_G(G_{\alpha+1}).$$
The assumption $\{K_{[\alpha+1]}\}_{\alpha<\textphnc{y}_K(K)}$ is a collection witnessing the rank of $K$ and the fact that for every $\alpha<\beta$,\ $J_{\alpha+1}\leqslant J_{\beta+1}$,\ implies that for $\beta<\alpha$,
$$G_{\beta+1}=J_{\beta+1}\rtimes K_{[\beta+1]}\leqslant J_{\alpha+1}\rtimes \textup{Res}_K(K_{[\alpha+1]})\leqslant \textup{Res}_G(G_{\alpha+1}).$$
\indent Claim 2: If $K_{[1]}$ has an infinite orbit $Y_1$ then $\textphnc{y}_L(L)+1\leqslant\textphnc{y}_G(G_1)$. Otherwise,\ $\textphnc{y}_L(L)+1\leqslant\textphnc{y}_G(G_2)$.\\
\indent Assume first $K_{[1]}$ has an infinite orbit $Y_1$. As $J_1=\bigoplus_{y\in Y_1}(L,\ U)$ is an infinite product, by Corollary \ref{coro:height well behaved on elementary} and Lemma \ref{lemma:infinite restricted prod rank},\ $\textphnc{f}(J_1)=\textphnc{y}_{J_1}(J_1)+1\geqslant \textphnc{y}_L(L)+1$. Together with Claim $1$ and the observation after Definition \ref{defi:residual height/rank},\ we get $\textup{Res}_{G}(J_1)<J_1\leqslant \textup{Res}_G(G_1)$ and $\textphnc{y}_G(G_1)\geqslant \textphnc{y}_G(J_1)+1 \geqslant\textphnc{y}_L(L)+1$.\\
\indent Assume now $K_{[1]}$ only has finite orbits. Because $Y_2$ is an infinite orbit of $K_{[2]}$, in a similar way to the case $K_{[1]}$ has an infinite orbit,\ it follows that $\textphnc{y}_G(G_2)\geqslant \textphnc{y}_G(J_2)+1 \geqslant\textphnc{y}_L(L)+1$.\\
\indent It then follows that either for $i=1$ or $2$,
$$\textphnc{y}_G(G_i)\geqslant \textphnc{y}_L(L)+1.$$ 
Notice that from Claim 1, we have that for every $\alpha>\beta\geqslant 1$, it is the case $G_{\alpha+1}\succ G_{\beta+1}$. If $K_{[1]}$ has an infinite orbit, it is also the case that $G_2\succ G_1$.\\
\indent If $K_{[1]}$ has an infinite orbit we then have
$$\textphnc{y}_G(G)> \textphnc{y}_G(G_{\alpha+1})>\textphnc{y}_G(G_{\beta+1})\geqslant\textphnc{y}_L(L)+1$$
for every $\textphnc{y}_K(K)>\alpha>\beta\geqslant 0$. In other words,
$$\textphnc{y}_G(G)\geqslant \sup_{\alpha<\textphnc{y}_K(K)} \textphnc{y}_G(G_{\alpha+1})\geqslant \sup_{\alpha<\textphnc{y}_K(K)}\textphnc{y}_{L}(L)+\textphnc{y}_K(K_{\alpha+1}).$$
Therefore, for this case we have $\textphnc{f}(G)\geqslant \textphnc{y}_L(L)+\textphnc{y}_K(K)+1$.\\
\indent If $K_{[1]}$ has no infinite orbit and $\textphnc{f}(K)>\omega$, it is the case that $\ldots\succ G_n\succ \ldots G_3\succ G_2\succ J_2$. Therefore
$$\textphnc{y}_G(G_{\omega+1})\geqslant \textphnc{y}_L(L)+\textphnc{y}_K(K_{[\omega+1]})>\sup_{n\in \mathbb{N},\ n>1}\textphnc{y}_{L}(L)+\textphnc{y}_K(K_{[n]})=\textphnc{y}_L(L)+\omega.$$
Hence, for every $\textphnc{y}_K(K)>\alpha>\omega$ we have $\textphnc{y}_G(G_{\alpha+1})\geqslant \textphnc{y}_L(L)+\textphnc{y}_K(K_{[\alpha+1]})$. It is then also the case that $\textphnc{f}(G)\geqslant \textphnc{y}_L(L)+\textphnc{y}_K(K)+1$.\\
\indent The equality follows from Lemma \ref{lemma:residual rank closed extensions} \textit{5.},\ as in this case, the group achieves the maximal possible rank.
\end{proof}

\begin{prop}\label{coro:still have collection witnessing the rank}
Assume $L$ and $K$ are elementary groups satisfying the conditions from Theorem \ref{thrm:making big ranks better with residue order} 
and $L$ also has a collection $\{L_{[\alpha+1]}\}_{\alpha<\textphnc{y}_L(L)}$ witnessing its rank. Then $L\wr_U(K,\ X)$ has a collection witnessing that its rank is $$\textphnc{y}_L(L)+\textphnc{y}_K(K)+1.$$ 
\end{prop}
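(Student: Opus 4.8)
The plan is to build the witnessing collection by concatenating two chains: a \emph{lower} chain obtained by placing the witnessing collection of $L$ on a single tree/orbit coordinate, realizing the heights $1$ up to (just below) $\textphnc{y}_L(L)$, and an \emph{upper} chain given by the subgroups $G_{\delta+1}=L\wr_U(K_{[\delta+1]},Y_{\delta+1})$ already manufactured in the proof of Theorem~\ref{thrm:making big ranks better with residue order}, realizing the heights from $\textphnc{y}_L(L)+1$ upward. Writing $G:=L\wr_U(K,X)$, that theorem gives $\textphnc{f}(G)=\textphnc{y}_L(L)+\textphnc{y}_K(K)+1$, and since $G$ is compactly generated Proposition~\ref{prop:simplified residual rank on compactly generated} yields $\textphnc{y}_G(G)=\textphnc{y}_L(L)+\textphnc{y}_K(K)=:\mu$. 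So I must exhibit closed compactly generated $G_{[\gamma+1]}$ with $\textphnc{y}_G(G_{[\gamma+1]})=\gamma+1$ for each $\gamma<\mu$, forming a $\prec_G$-chain; the whole issue is to pin the heights down \emph{exactly} (the theorem supplies only lower bounds) and to glue the two chains at $\gamma=\textphnc{y}_L(L)$.

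For the lower chain I fix $y_0\in Y_1$ and identify $L$ with the coordinate copy $L^{(y_0)}\le\bigoplus_X(L,U)\le G$. The key computation is that for every $K'\in\mathcal{K}(L)$ the coordinate copy $\widetilde{K'}\le L^{(y_0)}$ satisfies $\textup{Res}_G(\widetilde{K'})=\widetilde{\textup{Res}_L(K')}$: the inclusion $\geqslant$ comes from intersecting any $\widetilde{K'}$-invariant open $O\le G$ with $L^{(y_0)}$, while for $\leqslant$ one shrinks independently the factor at each coordinate $x\neq y_0$ to an arbitrary open subgroup of $U$, and the $K$-direction to an arbitrary open subgroup of $K_{(y_0)}$ (both are centralized, hence normalized, by $L^{(y_0)}$), invoking that a profinite group has trivial intersection of open subgroups. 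A transfinite induction on $\textphnc{y}_L(K')$ through the second formula of Lemma~\ref{lemma:auxiliary for height on nice cases} then gives $\textphnc{y}_G(\widetilde{K'})=\textphnc{y}_L(K')$, because the compactly generated subgroups of $\widetilde{\textup{Res}_L(K')}$ are exactly the coordinate copies of those of $\textup{Res}_L(K')$. Applied to $\{L_{[\gamma+1]}\}_{\gamma<\textphnc{y}_L(L)}$ this produces $G_{[\gamma+1]}:=\widetilde{L_{[\gamma+1]}}$ of height $\gamma+1$, and $\widetilde{L_{[\gamma+1]}}\prec_G\widetilde{L_{[\gamma'+1]}}$ follows from $L_{[\gamma+1]}\prec_L L_{[\gamma'+1]}$ together with the displayed identity.

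For the upper chain I set $G_{[\textphnc{y}_L(L)+\delta+1]}:=G_{\delta+1}=J_{\delta+1}\rtimes K_{[\delta+1]}$ with $J_{\delta+1}=\bigoplus_{y\in Y_{\delta+1}}(L,U)$ for $\delta<\textphnc{y}_K(K)$; these are compactly generated, and Claim~1 in the proof of Theorem~\ref{thrm:making big ranks better with residue order} already yields $G_{\delta'+1}\prec_G G_{\delta+1}$ for $\delta'<\delta$ and the bound $\textphnc{y}_G(G_{\delta+1})\geqslant\textphnc{y}_L(L)+\delta+1$. For the matching upper bound I use the identity $\textphnc{y}_G(K)=\xi(\textup{Res}_G(K))$ valid for $K\in\mathcal{K}(G)$ in an elementary group $G$ (combine Propositions~\ref{prop:finding comp gen open with same residual rank as K}, \ref{prop:easier residual height} and \ref{prop:simplified residual rank on compactly generated}). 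Projecting along $q:G\to K$, each $K_{[\delta+1]}$-invariant open $P\le K$ pulls back to the $G_{\delta+1}$-invariant open $q^{-1}(P)$, so the image of $\textup{Res}_G(G_{\delta+1})$ in $K$ lies in $\textup{Res}_K(K_{[\delta+1]})$, while $\textup{Res}_G(G_{\delta+1})\cap\bigoplus_X(L,U)$ is a closed subgroup of $\bigoplus_X(L,U)$, hence of rank $\leqslant\xi(L)$ by Proposition~\ref{prop:decomposition rank closed subgroup} and Lemma~\ref{lemma:infinite restricted prod rank}. Feeding these into Lemma~\ref{lemma:short exact sequence decomposition rank}, and using $\xi(L)=\textphnc{y}_L(L)+1$ with $\xi(\textup{Res}_K(K_{[\delta+1]}))=\xi(K_{[\delta+1]})-1\leqslant\textphnc{y}_K(K_{[\delta+1]})=\delta+1$ (Corollary~\ref{coro:height well behaved on elementary}), gives $\xi(\textup{Res}_G(G_{\delta+1}))\leqslant\textphnc{y}_L(L)+\delta+1$, hence equality of heights. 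The two chains glue since $\widetilde{L_{[\gamma+1]}}\le L^{(y_0)}\le J_{\delta+1}\le\textup{Res}_G(G_{\delta+1})$ (as $y_0\in Y_1\subseteq Y_{\delta+1}$), so every lower rung is $\prec_G$ every upper rung; combined with the exact heights this shows $\{G_{[\gamma+1]}\}_{\gamma<\mu}$ witnesses the rank of $G$.

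I expect the main obstacle to be precisely the upper bound $\textphnc{y}_G(G_{\delta+1})\leqslant\textphnc{y}_L(L)+\delta+1$: the residual $\textup{Res}_G(G_{\delta+1})$ is taken inside the ambient $G$, so it is a priori strictly larger than the intrinsic $\textup{Res}(G_{\delta+1})$, and it is the identity $\textphnc{y}_G=\xi\circ\textup{Res}_G$ followed by the extension estimate above that prevents overshooting. A secondary point needing separate care is the degenerate case in which $K_{[1]}$ has no infinite orbit (so one works over the infinite orbit $Y_2$ of $K_{[2]}$, with $\textphnc{f}(K)>\omega$): there the upper chain descends only to height $\textphnc{y}_L(L)+2$, so the single height $\textphnc{y}_L(L)+1$ must be realized by an additional rung lying inside $\textup{Res}_G(G_2)$ and above the lower chain, and producing a compactly generated subgroup of exactly that height is the delicate step of that case.
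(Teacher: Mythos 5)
Your construction coincides with the paper's: the lower chain is the coordinate copy $L^{\{y\}}_{[\gamma+1]}$ of the witnessing collection of $L$ placed at a point $y$ of the infinite orbit, the upper chain consists of the groups $\bigoplus_{x\in Y_{\beta+1}}(L,\ U)\rtimes K_{[\beta+1]}$ manufactured in the proof of Theorem \ref{thrm:making big ranks better with residue order}, the two are glued via $L^{\{y\}}\leqslant J_{\beta+1}\leqslant \textup{Res}_G(G_{\beta+1})$, and the degenerate case is handled exactly as in the paper by working over the infinite orbit of $K_{[2]}$. The paper merely asserts that the exact heights ``follow from the construction,'' so your explicit verifications --- the identity $\textup{Res}_G(\widetilde{K'})=\widetilde{\textup{Res}_L(K')}$ for coordinate copies and the upper bound $\textphnc{y}_G(G_{\delta+1})\leqslant \textphnc{y}_L(L)+\delta+1$ obtained from $\textphnc{y}_G=\xi\circ\textup{Res}_G$ together with the extension estimate --- are welcome details rather than a departure in method.
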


\begin{proof}
Denote $G:=L\wr_U(K,\ X)$ and let $\{L_{[\alpha+1]}\}_{\alpha<\textphnc{y}_L(L)}$, and $\{K_{[\beta+1]}\}_{\beta<\textphnc{y}_K(K)}$ be collections witnessing the ranks of $L$ and $K$,\ respectively. We will prove first for the case there exists $Y_1$ an infinite orbit of $K_{[1]}$. Let $Y_1$ such an orbit and fix $y\in Y_1$. Define $L^{\{y\}}_{[\alpha+1]}\leqslant \bigoplus_{x\in X}(L,\ U)$ the subgroup that is $L_{[\alpha+1]}$ in the $y$ coordinate and the trivial group for $x\neq y$. For $\beta<\textphnc{y}_K(K)$ define $Y_{\beta+1}=K_{[\beta+1]} Y_1$. Then,\ for $\gamma<\textphnc{y}_L(L)+\textphnc{y}_K(K)$ we define $G_{[\gamma+1]}$ as follows:
$$G_{[\gamma+1]}:=\left\{\begin{array}{ll}
    L^{\{y\}}_{[\gamma+1]} & \mbox{if }\gamma< \textphnc{y}_L(L)   \\
    \bigoplus_{x\in Y_{\beta+1}}(L,\ U)\rtimes K_{[\beta+1]} & \mbox{if } \gamma=\textphnc{y}_L(L)+\beta. 
\end{array}
\right.$$ 
\indent For the case $K_{[1]}$ has no infinite orbit, let $Y_2$ be an infinite orbit of $K_{[2]}$. This orbit exists as $K_{[2]}$ is not residually discrete, as seen in the proof of Proposition \ref{prop: bad bound rank}. As in the other case, fix $y\in Y_2$, define $L_{[\alpha+1]}^{\{y\}}\leqslant \bigoplus_{x\in X}(L, \ U)$ and for $1<\beta<\textphnc{y}_K(K)$ the sets $Y_{\beta+1}=K_{[\beta+1]}Y_2$. Then for $\gamma <\textphnc{y}_L(L)+\textphnc{y}_K(K)$ we define $G_{[\gamma+1]}$ as follows:
$$G_{[\gamma+1]}:=\left\{\begin{array}{ll}
       L^{\{y\}}_{[\gamma+1]} & \mbox{if }\gamma<\textphnc{y}_L(L)   \\
    \bigoplus_{x\in Y_{n+1}}(L,\ U)\rtimes K_{[n+1]} & \mbox{if } \gamma=\textphnc{y}_L(L)+n\\
    \bigoplus_{x\in Y_{\beta+1}}(L,\ U)\rtimes K_{[\beta+1]} & \mbox{if } \gamma=\textphnc{y}_L(L)+\beta \mbox{ and }\beta>\omega.
\end{array}
\right.$$
\indent The fact $\{L_{[\alpha+1]}\}_{\alpha<\textphnc{y}_L(L)}$ is a collection witnessing the rank of $L$ and the proof of Theorem \ref{thrm:making big ranks better with residue order} implies that given $\gamma<\gamma'$,\ $G_{[\gamma+1]}<\textup{Res}_G(G_{[\gamma'+1]})\leqslant \textup{Res}(G)$. The condition that $\textphnc{y}_G(G_{[\gamma+1]})\geqslant \gamma+1$ follows from the construction.
\end{proof} 

Notice that in both Theorem \ref{thrm:making big ranks better with residue order} and Proposition \ref{coro:still have collection witnessing the rank}, to prove the case $K_1$ has no infinite orbit we just find a way to avoid the group $\bigoplus_{x\in Y_1}(L,\ U)\rtimes K_{[1]}$, as we don't know if this group has same height as $\bigoplus_{x\in Y_1}(L,\ U)$ in relation to $G$ or not.



\begin{prop}\label{prop:collection witnessing on ex(g,u)}
Assume $L$ is an elementary group and that $L$ acts on a countable set $X$ with compact open point stabilizers. If $U\in\mathcal{U}(L)$ is such that $\langle\!\langle U\rangle\!\rangle_L=L$ and there is a collection $\{L_{[\alpha+1]}\}_{\alpha<\textphnc{y}_L(L)}$ witnessing the rank of $L$ such that $L_1$ has an infinite orbit $Y_1\subset X$, then the group $E_X(L,\ U)$ has a collection witnessing that its rank is 
$$\textphnc{f}(E_X(L,\ U))=\omega^{\theta+1}+2,$$
where $\theta$ is the leading exponent in the Cantor normal form of $\textphnc{y}_L(L)$. If $L$ is also topologically perfect, then $P_X(L,\ U)$ is the monolith and has rank $\omega^{\theta+1}+1$. 
\end{prop}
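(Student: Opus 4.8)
The plan is to reduce everything to the iterated wreath products $G_n$ (the group denoted $L_n$, acting on $X^n$), whose ranks I can pin down exactly by iterating the wreath-product formula, and then to read the rank of $E_X(L,U)$ off the structural decompositions in Lemma \ref{lemma:Gn 101}. Write $\lambda:=\textphnc{y}_L(L)$, so $\textphnc{f}(L)=\lambda+1$ by Proposition \ref{prop:simplified residual rank on compactly generated}, and recall $G_{n+1}=L\wr_U(G_n,X^n)$. First I would prove by induction that $\textphnc{f}(G_n)=\lambda\cdot n+1$ and that $G_n$ carries a collection witnessing its rank whose bottom term $(G_n)_{[1]}$ still admits an infinite orbit on $X^n$. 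The base case $n=1$ is the hypothesis on $L$. For the step, Theorem \ref{thrm:making big ranks better with residue order} with outer group $L$ and inner group $G_n$ gives $\textphnc{f}(G_{n+1})=\textphnc{f}(L)-1+\textphnc{f}(G_n)=\lambda+(\lambda\cdot n+1)=\lambda\cdot(n+1)+1$, while Proposition \ref{coro:still have collection witnessing the rank} supplies the witnessing collection for $G_{n+1}$. Since $\lambda\cdot k$ has leading term $\omega^{\theta}(a_\theta k)$ with $\theta$ the leading exponent of $\lambda$, these are cofinal in $\omega^{\theta+1}$, so $\sup_n\textphnc{f}(G_n)=\lambda\cdot\omega=\omega^{\theta+1}$.

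Next I would compute $\xi(P_X(L,U))=\omega^{\theta+1}+1$, freely using $\xi=\textphnc{f}$ on elementary groups (Theorem \ref{thrm:for elementary groups ranks are the same}). Each $G_n$ embeds as a closed subgroup of $E_X(L,U)_{(T_R)}\le P_X(L,U)$ by Lemma \ref{lemma:Gn 101}(1), so Proposition \ref{prop:decomposition rank closed subgroup} gives $\xi(P_X(L,U))\ge\lambda\cdot n+1$ for all $n$; taking the supremum and using that decomposition ranks are successors (Remark \ref{remark:decomposition rank}) yields $\xi(P_X(L,U))\ge\omega^{\theta+1}+1$. For the matching upper bound I would use that, by Lemma \ref{lemma:Gn 101}(2)--(3), $P_X(L,U)$ is exhausted by the open subgroups $t^n(E_X(L,U)_{(T_R)}W_{(T_0)})t^{-n}$, each of which is in turn exhausted by the compactly generated open groups $t^nH_mt^{-n}$ from the proof of Proposition \ref{prop:grp is elementary}; since $H_m$ is a compact extension of $G_m$, Lemma \ref{lemma:short exact sequence decomposition rank} gives $\xi(H_m)\le\xi(G_m)+1=\lambda\cdot m+2$, so every compactly generated open subgroup of $P_X(L,U)$ has rank below $\omega^{\theta+1}$, whence $\xi(P_X(L,U))\le\omega^{\theta+1}+1$ by Proposition \ref{coro:simplified residual rank general}.

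Finally I would promote this to $E_X(L,U)=P_X(L,U)\rtimes\mathbb{Z}$ (Proposition \ref{prop:writing ex(g,U) as semidirect product}). The bound $\textphnc{f}(E_X(L,U))\le\omega^{\theta+1}+2$ is immediate from Lemma \ref{lemma:residual rank closed extensions} with $N=P_X(L,U)$ and $\textphnc{f}(\mathbb{Z})=2$. For equality I would invoke compact generation of $E_X(L,U)$ (Proposition \ref{prop:new grp is comp gener}), so that $\xi(E_X(L,U))=\xi(\textup{Res}(E_X(L,U)))+1$, and then sandwich $\textup{Res}(E_X(L,U))$ between $\textup{Res}(G_n)$ (contained in it by Lemma \ref{lemma:properties residue that will be reused}) and $P_X(L,U)$ (containing it, as $P_X(L,U)$ is open normal and the quotient is discrete); this forces $\xi(\textup{Res}(E_X(L,U)))=\omega^{\theta+1}+1$ and hence $\textphnc{f}(E_X(L,U))=\omega^{\theta+1}+2$. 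The witnessing collection for $E_X(L,U)$ is assembled by amalgamating the witnessing collections of the $G_n$: their strict residual-order comparisons transfer from each $G_n$ to $E_X(L,U)$ by monotonicity of the discrete residual (Lemma \ref{lemma:residual rank closed under subgroups}), each such term lies in $\textup{Res}(E_X(L,U))$, and the family is capped by $E_X(L,U)$ itself at height $\omega^{\theta+1}+1$; the exact-height bookkeeping then follows from the open-subgroup invariance of height (Propositions \ref{prop:easier residual height} and \ref{prop:finding comp gen open with same residual rank as K}). The topologically perfect case is then immediate, since Proposition \ref{prop: monolith tree} identifies the monolith with $P_X(L,U)$, whose rank we have just computed to be $\omega^{\theta+1}+1$.

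The step I expect to be the main obstacle is the inductive maintenance of the hypothesis required to apply Theorem \ref{thrm:making big ranks better with residue order} at every stage. When $\lambda$ is finite we never reach $\textphnc{f}(G_n)>\omega$, so the alternative clause---that the bottom term $(G_n)_{[1]}$ of the witnessing collection has an infinite orbit---must be verified afresh at each step. This forces one to track the explicit collection produced by Proposition \ref{coro:still have collection witnessing the rank}, whose bottom term is a single copy of $L_{[1]}$ planted in one coordinate, and to check that its imprimitive action on $X^{n+1}$ still has an infinite orbit, which it does precisely because $L_{[1]}$ retains the infinite orbit $Y_1$ on $X$. A secondary, conceptual subtlety is the final ``$+2$'' rather than ``$+1$'': because $\omega^{\theta+1}+1$ is of the form limit$\,+1$, it is attainable as a rank only by a group that is genuinely compactly generated, so the equality depends essentially on transitivity entering through Proposition \ref{prop:new grp is comp gener}; without it one would only obtain $\omega^{\theta+1}+1$.
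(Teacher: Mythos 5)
Your proposal is correct and follows essentially the same route as the paper: iterate Theorem \ref{thrm:making big ranks better with residue order} and Proposition \ref{coro:still have collection witnessing the rank} on the wreath powers $L_n$ to get ranks cofinal in $\omega^{\theta+1}$, embed them via Lemma \ref{lemma:Gn 101}, bound $P_X(L,U)$ from above by exhausting it with the compact-by-$L_n$ open subgroups $H_n$ and their $t$-conjugates, and finish with the fact that a non-trivial compactly generated group cannot have rank of the form $\lambda+1$ for $\lambda$ a limit. (One slip in your closing remarks: a rank of the form limit${}+1$ is attainable \emph{only} by groups that are \emph{not} compactly generated, which is precisely why compact generation forces the final $+2$; your main argument uses this correctly.)
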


\begin{proof}
To prove this result, we iterate the construction given in Proposition \ref{coro:still have collection witnessing the rank} to create the collection witnessing the rank of $E_X(L,\ U)$. \\
\indent For this proof, we use denote the iteration of the wreath product of $L$ $n$ times $L_n$, as in Definition \ref{defi:wr product}.\\
\indent Assume that $\textphnc{y}_{L}(L)=\omega^\theta k+\gamma$ for some $\omega^\theta>\gamma$ and $k\geqslant 1$ a natural number. Let $L_{n}$ be a closed subgroup of $E_X(L,\ U)$,\ as seen in Lemma \ref{lemma:Gn 101}. By Theorem \ref{thrm:making big ranks better with residue order} we have that $L_{1}<L_{2}<\ldots<L_{n}<\ldots$.  By Proposition \ref{coro:still have collection witnessing the rank},\ $L_{n}$ has a collection witnessing that its rank is greater or equal than 
$$\left(\underbrace{\textphnc{y}_L(L) + \ldots + \textphnc{y}_L(L)}_{n\rm\ times}\right)+1=\left(\underbrace{(\omega^{\theta}k+\gamma) + \ldots + (\omega^{\theta}k+\gamma)}_{n\rm\ times}\right)+1=\omega^{\theta}kn+\gamma+1\geqslant \omega^{\theta}n.$$
Notice that the collection witnessing the rank of $L_{i}$,\ as defined in the wreath product case,\ is contained in the collection witnessing the rank of $L_{i+1}$. Let $\{G_{[\gamma+1]}\}_{\gamma<\textphnc{y}_L(L).\omega}$ be the union of all these collections. It is a collection of $E_X(L,\ U)$ witnessing that the rank is greater or equal than $\omega^{\theta+1}+1$. Lemma \ref{lemma:Gn 101} and Proposition \ref{prop: monolith tree} imply that $L_{n}<P_X(L,\ U)$ and that $P_X(L,\ U)$ is the monolith, hence $L_{n}<P_X(L,\ U)\leqslant \textup{Res}(E_X(L,\ U))$,\ for all $n\in\mathbb{N}$. Hence 
$$\textphnc{y}_{E_X(L,\ U)}(E_X(L,\ U))\geqslant \sup_{n\in\mathbb{N}}\textphnc{y}_{L_{n}}(L_{n}) +1\geqslant \sup_{n\in\mathbb{N}}\omega^{\theta}n+1=\omega^{\theta+1}+1.$$
\indent Now it remains to prove that the equation above is an equality. For $v\in V\vec{T}_{\chi}$ a fixed vertex, let $H_n:=\langle L_{n},\ W_{(T_0)},\ W_{(T_R \cup B_n(v))}\rangle$, where $B_n(v)=\cup_{0\leqslant i\leqslant n}\textup{inn}^i(v)$, $T_0=\cup_{i\in\mathbb{N}}\textup{inn}^i(v)\cup \{v\}$ and $T_R=V\vec{T}_{\chi}\backslash T_0$ as in the proof of Proposition \ref{prop:grp is elementary}. As seen in Proposition \ref{prop:grp is elementary}, the subgroups $H_n$ are open in $P_X(G,\ U)$,\ and for $n\in\mathbb{N}$ the subgroup $W_{(T_R \cup B_n(v))}W_{(T_0)}$ is a normal compact subgroup of $H_n$ such that $H_n/(W_{(T_R \cup B_n(v))}W_{(T_0)})\cong L_{n}$ (the subgroup $W_{(T_R \cup B_n(v))}W_{(T_0)}$ is denoted $L_nW_{(T_0)}$ in Proposition \ref{prop:grp is elementary}). We thus deduce that $H_n$ is an open elementary subgroup of $P_X(L,\ U)$. Furthermore,\ Theorem \ref{thrm:making big ranks better with residue order} and Lemma \ref{lemma:short exact sequence decomposition rank} ensure
\begin{equation}\label{eq:3.1}
    \omega^\theta n\leqslant \textphnc{f}(H_n)< \omega^{\theta+1}.
\end{equation}
\indent The collection $\{H_n\}_{n\in\mathbb{N}}$ is directed. So $\bigcup_{n\in\mathbb{N}}H_n$ is an open elementary subgroup of $P_X(L,\ U)$. On the proof of Proposition \ref{prop:grp is elementary} we see that
$$\bigcup_{n\in\mathbb{N}}H_n=E_X(L,\ U)_{(T_R)}W_{(T_0)}.$$
Moreover,\ following the fact that the rank of a group is the supremum of the rank of all open subgroups,\ equation (\ref{eq:3.1}) gives us that $\textphnc{f}(E_X(L,\ U)_{(T_R)}W_{(T_0)})=\omega^{\theta+1}+1$. Fixing $t$ a translation toward the end $\chi$,\ the family $(t^n E_X(L,\ U)_{(T_R)}W_{(T_0)}t^{-n})_{n\geqslant 0}$ is directed. In view of Lemma \ref{lemma:Gn 101} we get that
$$\bigcup_{n\geqslant 0}t^n E_X(L,\ U)_{(T_R)}W_{(T_0)}t^{-n}=P_X(L,\ U).$$
As $\textphnc{f}(t^n E_X(L,\ U)_{(T_R)}W_{(T_0)}t^{-n})=\textphnc{f}(t^m E_X(L,\ U)_{(T_R)}W_{(T_0)}t^{-m})$, for every $n,\ m \in\mathbb{Z}$, a direct consequence of Proposition \ref{coro:simplified residual rank general} is that $\textphnc{f}(P_X(L,\ U))=\textphnc{f}(t^n E_X(L,\ U)_{(T_R)}W_{(T_0)}t^{-n})=\omega^{\theta+1}+1$.\\
\indent The discrete residual of $E_X(L,\ U)$ is contained in $P_X(L,\ U)$,\ therefore $\textphnc{f}(\textup{Res}(E_X(L,\ U)))\leqslant \textphnc{f}(P_X(L,\ U))$. Thus,\ $\textphnc{f}(E_X(L,\ U))=\textphnc{f}(\textup{Res}(E_X(L,\ U)))+1\leqslant \textphnc{f}(P_X(L,\ U))+1$,\ and we deduce that $$\omega^{\theta+1}+1\leqslant\textphnc{f}(E_X(L,\ U))\leqslant \omega^{\theta+1}+2.$$
Non-trivial compactly generated groups cannot have a residual/decomposition rank of the form $\lambda+1$,\ with $\lambda$ a limit ordinal,\ thus $\textphnc{f}(E_X(L,\ U))=\omega^{\theta+1}+2$. It then follows that the collection $\{G_{[\gamma+1]}\}_{\gamma<\textphnc{y}_L(L).\omega+1}$, with $G_{[\textphnc{y}_L(L).\omega+1]}=E_X(L,\ U)$, is a collection witnessing the rank of $E_X(L,\ U)$.\\
\indent The final claim is immediate from Proposition \ref{prop: monolith tree}.
\end{proof}

\begin{coro}\label{coro:adding 1 to the rank}
Let $L$ and $K$ elementary compactly generated groups such that $\xi(K)=2$. If $U\in\mathcal{U}(L)$ is such that $\langle\!\langle U\rangle\!\rangle_L=L$ 
and $(K,\ X)$ is a permutation group satisfying the conditions from Theorem \ref{thrm:result for higher rank by wesolek}, by the proof of Theorem \ref{thrm:making big ranks better with residue order}, 
$$\textphnc{f}(L\wr_U(K,\ X))=\textphnc{f}(L)+1.$$
\end{coro}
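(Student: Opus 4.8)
The plan is to recognize this corollary as precisely the specialization of Theorem \ref{thrm:making big ranks better with residue order} to the case $\textphnc{f}(K)=2$, so that the bulk of the work is verifying the hypotheses of that theorem rather than proving anything new. First I would translate the decomposition-rank hypothesis into residual-rank language: since $K$ is elementary with $\xi(K)=2$, Theorem \ref{thrm:for elementary groups ranks are the same} gives $\textphnc{f}(K)=\xi(K)=2$, and from $\xi(K)=\xi(\textup{Res}(K))+1=2$ I read off $\textup{Res}(K)=\{1\}$. Because $K$ is compactly generated, Proposition \ref{prop:simplified residual rank on compactly generated} then yields $\textphnc{y}_K(K)=\textphnc{f}(K)-1=1$.

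Next I would exhibit a collection witnessing the rank of $K$. As $\textphnc{f}(K)=2=1+1$, such a collection is indexed by $\beta<1$, i.e.\ it consists of a single group $K_{[1]}$ with $K_{[1]}\neq\{1\}$ and $\textphnc{y}_K(K_{[1]})=1$, the chain condition being vacuous. Taking $K_{[1]}:=K$ works, since $K$ is non-trivial and $\textphnc{y}_K(K)=1$. The conditions imported from Theorem \ref{thrm:result for higher rank by wesolek} guarantee that $K=K_{[1]}$ has an infinite orbit $Y_1\subseteq X$ (for instance because $(K,X)$ is transitive on the infinite set $X$). The hard part is exactly here: because $\textphnc{f}(K)=2\leqslant\omega$, the ``infinite orbit'' clause of Theorem \ref{thrm:making big ranks better with residue order} is genuinely needed and cannot be dropped, and indeed a compactly generated residually discrete $K$ may act with only finite orbits, so this is the one hypothesis that must be supplied from outside rather than deduced.

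With all hypotheses in place, I would apply Theorem \ref{thrm:making big ranks better with residue order} directly to conclude
$$\textphnc{f}(L\wr_U(K,\ X))=\textphnc{f}(L)-1+\textphnc{f}(K)=\textphnc{f}(L)-1+2=\textphnc{f}(L)+1.$$
To make the dependence on the proof of that theorem fully explicit, I would also recover the two bounds by hand. For the upper bound, writing $G:=L\wr_U(K,X)=\bigoplus_X(L,\ U)\rtimes K$ and $N:=\bigoplus_X(L,\ U)$, Lemma \ref{lemma:infinite restricted prod rank} together with Theorem \ref{thrm:for elementary groups ranks are the same} gives $\textphnc{f}(N)=\xi(L)=\textphnc{f}(L)$ (the $\sup^+$ of the constant successor $\xi(L)$ is $\xi(L)$), so Lemma \ref{lemma:residual rank closed extensions} yields $\textphnc{f}(G)\leqslant\textphnc{f}(N)-1+\textphnc{f}(K)=\textphnc{f}(L)+1$. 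For the lower bound I would run Claims 1 and 2 from the proof of Theorem \ref{thrm:making big ranks better with residue order} with the single group $K_{[1]}=K$ and the orbit $Y_1$: setting $G_1:=L\wr_U(K,\ Y_1)\in\mathcal{K}(G)$ and $J_1:=\bigoplus_{y\in Y_1}(L,\ U)$, the hypothesis $\langle\!\langle U\rangle\!\rangle_L=L$ and transitivity give $J_1\leqslant\textup{Res}_G(G_1)$, whence $\textphnc{y}_G(G_1)\geqslant\textphnc{y}_L(L)+1$ via Corollary \ref{coro:height well behaved on elementary} and Lemma \ref{lemma:infinite restricted prod rank}. Since $G_1\in\mathcal{K}(G)$, this forces $\textphnc{f}(G)\geqslant\textphnc{y}_G(G_1)+1\geqslant\textphnc{y}_L(L)+2=\textphnc{f}(L)+1$, matching the upper bound and completing the argument.
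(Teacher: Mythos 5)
Your proposal is correct and follows essentially the same route as the paper: both reduce the corollary to Theorem \ref{thrm:making big ranks better with residue order}, taking $K_{[1]}=K$ as the (one-element) collection witnessing $\textphnc{f}(K)=2$ and using transitivity of $(K,X)$ on the infinite set $X$ to supply the infinite-orbit hypothesis. The only cosmetic difference is in the upper bound: you invoke the extension inequality of Lemma \ref{lemma:residual rank closed extensions}, whereas the paper pins down $\textup{Res}(L\wr_U(K,X))=\bigoplus_{x\in X}(L,U)$ exactly, using that this is an open normal subgroup together with $\textup{Res}(K)=\{1\}$ — both yield $\textphnc{f}(L)+1$.
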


\begin{proof}
Theorem \ref{thrm:making big ranks better with residue order} implies 
$$\textup{Res}(L\wr_U(K,\ X))\geqslant \bigoplus_{x\in X}(L,\ U)\rtimes \textup{Res}_K(K)=\bigoplus_{x\in X}(L,\ U),$$
where the equality follows from the fact $K$ is compactly generated and $\xi(K)=2$. Because $\bigoplus_{x\in X}(L,\ U)$ is an open,\ normal subgroup of $L\wr_U(K,\ X)$ then
$$\textup{Res}(L\wr_U(K,\ X))\leqslant \bigoplus_{x\in X}(L,\ U)$$
That is,\ $\textup{Res}(L\wr_U(K,\ X))=\bigoplus_{x\in X}(L,\ U)$. Hence by Theorem \ref{thrm:for elementary groups ranks are the same} and Lemma \ref{lemma:infinite restricted prod rank} we have $\textphnc{y}_{L\wr_U(K,\ X)}(L\wr_U(K,\ X))=\textphnc{y}_L(L)+1$,\ and the result follows.
\end{proof}

\begin{prop}\label{prop:one more prop to order in the quotient}
Let $G$ be an elementary group and $N\trianglelefteq G$ a compact normal subgroup. If $N\trianglelefteq G$ is closed and $\{1\}\prec K_1\prec K_2$ then $\{1\}\preccurlyeq K_1N/N\prec K_2N/N$.
\end{prop}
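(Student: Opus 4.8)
The plan is to push everything through the quotient map $\pi\colon G\to G/N$, exploiting compactness of $N$ at two separate points. First I would record that $\pi$ sends $\mathcal{K}(G)$ into $\mathcal{K}(G/N)$: if $K\in\mathcal{K}(G)$ has compact generating set $C$, then $\pi(K)=\langle\pi(C)\rangle$ is compactly generated, and $KN$ is closed (a product of a closed set and a compact set), so $\pi(K)=KN/N$ is a closed compactly generated subgroup of $G/N$. In particular $K_1N/N=\pi(K_1)$ and $K_2N/N=\pi(K_2)$ both lie in $\mathcal{K}(G/N)$. The assertion $\{1\}\preccurlyeq K_1N/N$ is then immediate, since $\{1\}\leqslant\textup{Res}_{G/N}(K_1N/N)$ always holds. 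It remains to verify the two ingredients of $K_1N/N\prec K_2N/N$, namely the containment $\pi(K_1)\leqslant\textup{Res}_{G/N}(\pi(K_2))$ and the strict inequality $\pi(K_1)\neq\pi(K_2)$.

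For the containment I would prove the pushforward inequality $\pi(\textup{Res}_G(K_2))\leqslant\textup{Res}_{G/N}(\pi(K_2))$ by a preimage argument (the reverse direction to Lemma \ref{lemma:properties residue that will be reused}(6)). Let $\overline{O}$ be any open subgroup of $G/N$ invariant under conjugation by $\pi(K_2)$. Then $O:=\pi^{-1}(\overline{O})$ is an open subgroup of $G$ containing $N=\ker\pi$, and for $k\in K_2$ one has $kOk^{-1}=\pi^{-1}(\pi(k)\overline{O}\pi(k)^{-1})=\pi^{-1}(\overline{O})=O$ (using that $kOk^{-1}$ is a subgroup containing $N$), so $O$ is $K_2$-invariant. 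Hence $\textup{Res}_G(K_2)\leqslant O$, giving $\pi(\textup{Res}_G(K_2))\leqslant\pi(O)=\overline{O}$; intersecting over all such $\overline{O}$ yields the claim. Since $K_1\prec K_2$ means $K_1\leqslant\textup{Res}_G(K_2)$, applying $\pi$ gives $\pi(K_1)\leqslant\pi(\textup{Res}_G(K_2))\leqslant\textup{Res}_{G/N}(\pi(K_2))$, as required.

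The strict inequality is the step I expect to be the main obstacle, since distinct subgroups can in principle collapse under $\pi$; ruling this out is where both the hypothesis $\{1\}\prec K_1$ and the compactness of $N$ become essential. I would argue by contradiction: suppose $\pi(K_1)=\pi(K_2)$. Combined with the containment just established, this gives $\pi(K_2)\leqslant\textup{Res}_{G/N}(\pi(K_2))$. Now $G/N$ is elementary by Theorem \ref{prop:quotient elementary 101}, and $\pi(K_2)\in\mathcal{K}(G/N)$, so the observation following Proposition \ref{prop:res do what i want in elementary} forces $\pi(K_2)=\{1\}$, i.e. $K_2\leqslant N$. As $N$ is compact, $K_2$ is then a compact subgroup of $G$, and such subgroups have trivial discrete residual: for any open neighbourhood $V$ of $1$, Van Dantzig's theorem gives a compact open $U\leqslant V$, and the finite intersection $\bigcap_{k\in K_2}kUk^{-1}$ is a $K_2$-invariant compact open subgroup contained in $V$, whence $\textup{Res}_G(K_2)=\{1\}$. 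But $\{1\}\prec K_1$ means $K_1\neq\{1\}$, while $K_1\leqslant\textup{Res}_G(K_2)=\{1\}$ forces $K_1=\{1\}$, a contradiction. Therefore $\pi(K_1)\neq\pi(K_2)$, and together with the previous paragraph this gives $K_1N/N\prec K_2N/N$.
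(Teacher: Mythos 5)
Your proof is correct and follows essentially the same route as the paper's: the containment $K_1N/N\leqslant\textup{Res}_{G/N}(K_2N/N)$ is obtained by pulling back $K_2N/N$-invariant open subgroups of $G/N$ to $K_2$-invariant open subgroups of $G$, and strictness comes from the fact that a non-trivial compactly generated subgroup of an elementary group is never contained in its own discrete residual. The only cosmetic difference is that the paper verifies $K_2\nleqslant N$ up front (so that $K_2N/N\neq\{1\}$), whereas you derive the same fact at the end inside a proof by contradiction using the compactness of $N$.
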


\begin{proof}
Notice that $K_2\nleqslant N$,\ as the claim implies the height of $K_2$ is at least $2$. Hence $K_2N/N\neq \{1\}$. Let $\overline{O}\leqslant G/N$ be an open subgroup such that for all $\overline{k}\in K_2N/N$,\ $\overline{k} \overline{O}=\overline{O} \overline{k}$, that is, $\overline{O}$ is an open subgroup of $G/N$ normalised by $K_2/N$. Define $O$ be the inverse image of $\overline{O}$ under the quotient under $N$. Notice that $O$ is normalised by $K_2$,\ hence $K_1\leqslant O$ and $K_1N/N\leqslant \overline{O}$. Taking the intersection on all such $\overline{O}$ gives us that $K_1N/N\leqslant \textup{Res}_{G/N}(K_2N/N)$. As $G$ is elementary,\ the remark after Proposition \ref{prop:res do what i want in elementary} implies $K_1N/N\neq K_2N/N$,\ giving us the desired result.
\end{proof}

\begin{coro}\label{equation:maybe 4}
Let $G$ be an elementary group and $N\trianglelefteq G$ a compact normal subgroup. If $\{G_{[\alpha+1]}\}_{\alpha<\gamma}$ is a collection witnessing that the rank of $G$ is at least $\gamma+1$ then
$$\textphnc{y}_{H}(G_{\alpha+1}N/N)\in \{-1+\alpha+1,\ \alpha+1\}$$
for every $\alpha<\gamma$. If $\alpha>\omega$ then 
$$\textphnc{y}_{H}(G_{\alpha+1}N/N)=\textphnc{y}_G(G_{\alpha+1})=\alpha+1.$$
\end{coro}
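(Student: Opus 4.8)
The plan is to obtain both claimed values by feeding the collection into the extension estimate of Lemma \ref{lemma:residual rank closed extensions}, once the rank of the compact kernel is computed. Set $H := G/N$ with quotient map $\pi : G \to H$. First I would pin down the rank of $N$: since $G$ is elementary and $N$ is a closed subgroup, $N$ is elementary, and by Theorem \ref{thrm:for elementary groups ranks are the same} it has well-defined residual rank equal to $\xi(N)$. Being compact and totally disconnected, $N$ is profinite, so $\textup{Res}(N)=\{1\}$ and $\xi(N)=2$ when $N\neq\{1\}$ (and $\xi(N)=1$ when $N=\{1\}$); in all cases $\textphnc{f}(N)\leqslant 2$, i.e. $\textphnc{f}(N)-1\leqslant 1$. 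I would also record at the outset that $G_{\alpha+1}N/N$ is closed: as $N$ is compact and $G_{\alpha+1}$ is closed, the product $G_{\alpha+1}N$ is closed, whence $\overline{\pi(G_{\alpha+1})}=\pi(G_{\alpha+1})=G_{\alpha+1}N/N$; since $G_{\alpha+1}$ is compactly generated and $H$ is elementary, all the heights involved are defined.

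Writing $b:=\textphnc{y}_{H}(G_{\alpha+1}N/N)$ and using $\textphnc{y}_G(G_{\alpha+1})=\alpha+1$ (which is exactly what it means for the collection to witness the rank), Lemma \ref{lemma:residual rank closed extensions} applied to $K=G_{\alpha+1}$ gives
$$b\;\leqslant\;\alpha+1\;\leqslant\;\textphnc{f}(N)-1+b\;\leqslant\;1+b,$$
where the last step uses the (weak) left-monotonicity of ordinal addition together with $\textphnc{f}(N)-1\leqslant 1$. From here the whole statement reduces to the ordinal arithmetic of the two inequalities $b\leqslant\alpha+1$ and $\alpha+1\leqslant 1+b$, reading $-1+(\alpha+1)$ as the unique $\delta$ with $1+\delta=\alpha+1$.

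I would then split on whether $\alpha$ is finite. If $\alpha<\omega$, then $b\leqslant\alpha+1<\omega$ forces $b$ finite, so $1+b=b+1$ and $\alpha+1\leqslant b+1$ gives $\alpha\leqslant b$; with $b\leqslant\alpha+1$ this pins $b\in\{\alpha,\alpha+1\}$, and since $-1+(\alpha+1)=\alpha$ here, this is exactly $\{-1+\alpha+1,\ \alpha+1\}$. If $\alpha$ is infinite, a finite $b$ would make $1+b$ finite and contradict $\alpha+1\leqslant 1+b$, so $b$ is infinite, $1+b=b$, and the inequalities collapse to $b=\alpha+1$; as $-1+(\alpha+1)=\alpha+1$ in this range, the set is the singleton $\{\alpha+1\}$, so the first claim still holds and moreover $b=\alpha+1=\textphnc{y}_G(G_{\alpha+1})$. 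This infinite case covers $\alpha>\omega$ (indeed already $\alpha=\omega$), which is the second assertion.

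The only genuinely delicate point is the non-commutativity of ordinal addition: I must be careful that $\textphnc{f}(N)-1=1$ enters on the \emph{left}, so that the lower bound reads $\alpha+1\leqslant 1+b$ rather than $\alpha+1\leqslant b+1$. It is precisely this $1+b$ versus $b+1$ distinction that produces two admissible values in the finite regime while forcing the single value $\alpha+1$ once $\alpha$ is infinite. I do not expect to need Proposition \ref{prop:one more prop to order in the quotient} for the heights themselves; it would only be invoked if one wished to upgrade the conclusion to the statement that the images $\{G_{\alpha+1}N/N\}$ form a chain witnessing the rank of $H$.
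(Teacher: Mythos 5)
Your proof is correct, but it reaches the two key inequalities by a different route than the paper. The paper works at the level of the decomposition rank: it applies Theorem \ref{prop:quotient elementary 101} and Lemma \ref{lemma:short exact sequence decomposition rank} to the extension of the compact group $G_{[\alpha+1]}\cap N$ (hence of rank at most $2$) by $G_{[\alpha+1]}N/N$ to obtain $\xi(G_{[\alpha+1]}N/N)\leqslant\xi(G_{[\alpha+1]})\leqslant 1+\xi(G_{[\alpha+1]}N/N)$, and only then passes to heights via Corollary \ref{coro:height well behaved on elementary}. You instead invoke the height-level extension estimate of Lemma \ref{lemma:residual rank closed extensions} directly, using $\textphnc{f}(N)\leqslant 2$ (since $N$ is profinite) to get $b\leqslant\alpha+1\leqslant\textphnc{f}(N)-1+b\leqslant 1+b$ in one step. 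This is arguably the cleaner route: it sidesteps the translation between the intrinsic rank $\xi(K)$ and the relative height $\textphnc{y}_G(K)$, which are only tied together by the one-sided inequality of Corollary \ref{coro:height well behaved on elementary}, and your preliminary observation that $G_{[\alpha+1]}N$ is closed (so $\overline{\pi(G_{[\alpha+1]})}=G_{[\alpha+1]}N/N$ and Lemma \ref{lemma:residual rank closed extensions} applies verbatim) is exactly the point that needs checking. The concluding ordinal arithmetic --- the distinction between $1+b$ and $b+1$, and the collapse $1+b=b$ for infinite $b$ --- is the same in both arguments; your case split additionally makes explicit that the second assertion already holds at $\alpha=\omega$, slightly more than the statement claims. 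Your closing remark is also accurate: Proposition \ref{prop:one more prop to order in the quotient} plays no role here and is only needed later to compare the images among themselves.
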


\begin{proof}
Let $\{G_{[\alpha+1]}\}_{\alpha<\gamma}$ be a collection witnessing that the rank of $G$ is at least $\gamma+1$. By Lemma \ref{lemma:short exact sequence decomposition rank} it follows that for every $\alpha<\gamma$
$$\xi(G_{[\alpha+1]}N/N)\leqslant\xi(G_{[\alpha+1]})\leqslant \xi(G_{[\alpha+1]}\cap N)-1+\xi(G_{[\alpha+1]}N/N).$$
As $N$ is compact then $\xi(G_{[\alpha+1]}\cap N)\leqslant 2$. We then get
$$\xi(G_{[\alpha+1]}N/N)\leqslant\xi(G_{[\alpha+1]})\leqslant 1+\xi(G_{[\alpha+1]}N/N).$$
Hence,\ by Corollary \ref{coro:height well behaved on elementary},
$$\textphnc{y}_{H}(G_{[\alpha+1]}N/N)\leqslant \textphnc{y}_G(G_{[\alpha+1]})=\alpha+1\leqslant 1+\textphnc{y}_{H}(G_{[\alpha+1]}N/N)$$
that is,\ $\textphnc{y}_{H}(G_{[\alpha+1]}N/N)\in \{-1+\alpha+1,\ \alpha+1\}$. If $\alpha>\omega$ then the inequality above implies that
$$1+\textphnc{y}_H(G_{[\alpha+1]}N/N)\leqslant 1+\alpha+1=\alpha+1.$$
Therefore $\textphnc{y}_H(G_{[\alpha+1]}N/N)\leqslant 1+\alpha+1=\alpha+1$.
\end{proof}

\begin{prop}\label{prop:collection witnessing rank well after quotient}
Let $G$ be an elementary group and $N\trianglelefteq G$ a compact normal subgroup. If $G$ has a collection $\{G_{[\alpha+1]}\}_{\alpha<\gamma}$ witnessing that the rank of $G$ is at least $\gamma+1$, for $\gamma\geqslant\omega$, then $H:=G/N$ has a collection $\{H_{[\alpha+1]}\}_{\alpha<\gamma}$ witnessing that its rank is at least $\gamma+1$.\\
\indent In particular, if $\textphnc{f}(G)>\omega$ and $G$ has a collection witnessing its rank, then so does $H$.
\end{prop}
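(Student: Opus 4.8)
The plan is to push the given collection forward along $\pi\colon G\to H:=G/N$ and then repair the finite levels by hand. Put $H_{[\alpha+1]}:=G_{[\alpha+1]}N/N=\overline{\pi(G_{[\alpha+1]})}$; as $N$ is compact and each $G_{[\alpha+1]}$ closed, the product $G_{[\alpha+1]}N$ is closed, so these are closed and compactly generated. Since $\{1\}\prec G_{[\alpha+1]}\prec G_{[\alpha'+1]}$ for $\alpha<\alpha'$, Proposition~\ref{prop:one more prop to order in the quotient} gives $H_{[\alpha+1]}\prec H_{[\alpha'+1]}$, so the images form a $\prec_H$-chain. By Corollary~\ref{equation:maybe 4}, for every infinite $\alpha$ (that is, $\omega\leqslant\alpha<\gamma$) the height is preserved, $\textphnc{y}_H(H_{[\alpha+1]})=\alpha+1$, while for finite $\alpha$ only $\textphnc{y}_H(H_{[\alpha+1]})\in\{\alpha,\alpha+1\}$ is guaranteed. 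Thus the subfamily indexed by the infinite $\alpha$ is already a correct portion of a witnessing collection, and the entire difficulty is concentrated at the finite levels, where a dropped height may cause a skipped value.

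Before attacking those levels I would record that $H$ has enough room. The quotient $H$ is elementary (Theorem~\ref{prop:quotient elementary 101}); from Lemma~\ref{lemma:short exact sequence decomposition rank} and $\xi(N)\leqslant 2$ (as $N$ is compact) we get $\xi(G)\leqslant 1+\xi(H)$, and combining this with $\xi=\textphnc{f}$ on elementary groups (Theorem~\ref{thrm:for elementary groups ranks are the same}) and $\textphnc{f}(G)\geqslant\gamma+1\geqslant\omega+1$ yields $\textphnc{f}(H)\geqslant\gamma+1$: indeed $\gamma+1>\omega$ forces $\xi(H)\geqslant\omega$, whence $1+\xi(H)=\xi(H)\geqslant\gamma+1$. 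In particular $\textphnc{f}(H)>\omega$.

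It therefore suffices to produce an infinite ascending chain $Q_1\prec_H Q_2\prec_H\cdots$ with $\textphnc{y}_H(Q_n)=n$ which sits $\prec_H$-below the infinite part; appending it beneath $\{H_{[\alpha+1]}\}_{\omega\leqslant\alpha<\gamma}$ then gives the desired witnessing collection. When $\gamma>\omega$ I would build this chain inside $S:=\textup{Res}_H(H_{[\omega+1]})$, which is elementary with $\textphnc{f}(S)=\textphnc{y}_H(H_{[\omega+1]})=\omega+1$ and in which heights agree with those computed in $H$ (Proposition~\ref{prop:easier residual height}); any such $Q_n\leqslant S=\textup{Res}_H(H_{[\omega+1]})$ then automatically satisfies $Q_n\prec_H H_{[\omega+1]}$. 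When $\gamma=\omega$ there is no infinite part and I build the chain directly in $H$. In both cases the matter reduces to the following claim: every elementary group $S$ with $\textphnc{f}(S)\geqslant\omega+1$ admits a $\prec_S$-chain $Q_1\prec Q_2\prec\cdots$ with $\textphnc{y}_S(Q_n)=n$ for all $n\geqslant 1$.

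To prove this claim I would write $S=\bigcup_m O_m$ as an increasing union of compactly generated open subgroups (Remark~\ref{remark:comp gen}); since $\textphnc{f}(S)\geqslant\omega+1$ such $O_m$ occur with every finite rank. In a compactly generated elementary group $O$ of finite rank $k$ one manufactures an internal chain of heights $1,\dots,k-1$ by iterating the discrete residual: inside $\textup{Res}(O)$ choose, via Proposition~\ref{prop:finding comp gen open with same residual rank as K} and Proposition~\ref{coro:simplified residual rank general}, a compactly generated open subgroup realizing rank $k-1$, and recurse, using Proposition~\ref{prop:easier residual height} to see that heights measured inside successive residuals coincide with those in $O$; reading the resulting nested residual chain upward yields $Q_1\prec\cdots\prec Q_{k-1}$ with $\textphnc{y}_O(Q_i)=i$. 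The remaining and principal difficulty is to splice these finite chains, of unbounded length as $m$ grows, into a single infinite ascending chain: one must extend a chain $Q_1\prec\cdots\prec Q_k$ by a further $Q_{k+1}\succ Q_k$ of height $k+1$ while preserving $\prec$, and this is precisely the step where equality of heights alone is insufficient. I expect this coherence/extension step to be the crux; it should be resolved by choosing the higher-rank approximations $O_m$ and their residual representatives coherently, so that the chain produced inside $O_{m+1}$ refines the one produced inside $O_m$, with elementarity and $\textphnc{f}(S)>\omega$ guaranteeing the construction never terminates. Concatenating the finite chain with the infinite images then witnesses $\textphnc{f}(H)\geqslant\gamma+1$. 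For the final clause, if $\textphnc{f}(G)>\omega$ and $G$ has a collection witnessing its rank, then $\gamma+1=\textphnc{f}(G)$ with $\gamma\geqslant\omega$, and since $\textphnc{f}(H)\leqslant\textphnc{f}(G)=\gamma+1$ by Proposition~\ref{coro:height decreases on surject}, the collection just built witnesses the actual rank of $H$.
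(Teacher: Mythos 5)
There is a genuine gap at exactly the step you flag as the crux. Your reduction replaces the specific group $H=G/N$ by an arbitrary elementary group $S$ with $\textphnc{f}(S)\geqslant\omega+1$ and asserts that any such $S$ admits a chain $Q_1\prec Q_2\prec\cdots$ with $\textphnc{y}_S(Q_n)=n$. But such a chain is precisely a collection witnessing that the rank of $S$ is at least $\omega+1$, and the paper explicitly records (in the discussion following the definition of a witnessing collection, with the branching diagram) that it is \emph{not known} whether every elementary group of rank $\geqslant\omega+1$ has one; the residual order may branch so that every maximal $\prec$-chain of finite-height subgroups has bounded length, and then no amount of coherent choice of the $O_m$ and their residual representatives will let you extend a chain of height $k$ to one of height $k+1$ without abandoning it. Your splicing step is therefore not a technical detail to be filled in later: it is equivalent to an open problem, and it discards the one piece of information that makes the proposition true, namely that $H$ is a quotient of a group which \emph{already has} a witnessing collection.

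The paper's proof keeps that information in play. It also pushes the collection forward, but when a finite height $m+1$ is skipped it does not build a fresh chain from scratch; instead it picks some $K\prec H_{[m+2]}$ with $\textphnc{y}_H(K)=m+1$ (which exists by the definition of height, since $\textphnc{y}_H(H_{[m+2]})=m+2$) and sets $H_{[m+1]}:=\overline{\langle K,\ \pi(G_{[m+1]})\rangle}$. The component $K$ forces the height up to $m+1$, containment in $\textup{Res}_H(H_{[m+2]})$ together with Proposition \ref{prop:res do what i want in elementary} caps it and gives $H_{[m+1]}\prec H_{[m+2]}$, and the component $\pi(G_{[m+1]})$ guarantees $H_{[n+1]}\leqslant\textup{Res}_H(\pi(G_{[m+1]}))\leqslant\textup{Res}_H(H_{[m+1]})$ for the lower levels $n<m$, so the chain order is preserved. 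This hybrid subgroup is the idea your proposal is missing. The rest of your argument (closedness and compact generation of $G_{[\alpha+1]}N/N$, the chain property via Proposition \ref{prop:one more prop to order in the quotient}, exact preservation of height for $\alpha\geqslant\omega$ via Corollary \ref{equation:maybe 4}, and the final clause via Proposition \ref{coro:height decreases on surject}) matches the paper and is fine.
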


\begin{proof}
\indent Let $\pi:G\rightarrow G/N=H$ be the projection. For our proof, we want to use a collection $\{G_{[\alpha+1]}\}_{\alpha<\gamma}$ that witness the rank of $G$ is at least $\gamma+1$ and, if $\gamma>\omega$, build a chain $\{H_{[\alpha+1]}\}_{\alpha<\gamma}$ witnessing that the rank of $H$ is at least $\gamma+1$. We will begin by building a collection $\{H_{[n+1]}\}_{n<\omega}$ witnessing that the rank of $H$ is at least $\omega+1$ and then extend it.\\
\indent Let $\{G_{[n+1]}\}_{n<\omega}$ be a collection witnessing that the rank of $G$ is at least $\omega+1$. As $\textphnc{y}_{H}(G_{[n+1]}N/N)\in \{n,\ n+1\}$ (Corollary \ref{equation:maybe 4}) the collection $\{\pi(G_{[n+1]})\}_{n<\omega}$ might not cover all ranks below $\omega$. It might also be the case that there is $n\in\mathbb{N}$ such that $\textphnc{y}_{H}(\pi(G_{[n]}))=\textphnc{y}_{H}(\pi(G_{[n+1]}))$. Hence, to build the collection $\{H_{[n+1]}\}_{n<\omega}$ witnessing that the rank of $H$ is at least $\omega+1$ we need to manage such problems. We will begin finding which ranks we can get from $\{\pi(G_{[n]})\}_{n\in\mathbb{N}}$ and then add groups to our collection with the necessary properties so we get a chain witnessing the rank of $H$ is at least $\omega+1$.\\
\indent Our first incomplete collection will be built recursively in the following way:
\begin{enumerate}
    \item Let $k=1$;
    \item If $\pi(G_{[1]})\neq \{1\}$,\ define $H_{[1]}:=\pi(G_{[1]})$;
    \item If $\pi(G_{[1]})=\{1\}$, discard the group $\pi(G_{[1]})$ and go to step $4$;
    \item Assume you just looked at the group $\pi(G_{[k]})$ for $k=n$. Let $k=n+1$;
    \item If $\textphnc{y}_{H}(\pi(G_{[k]}))=m$ and $H_{[m]}$ was not defined,\ define $H_{[m]}:=\pi(G_{[k]})$ and go to step $4$;
    \item If $\textphnc{y}_{H}(\pi(G_{[k]}))=m$ and $H_{[m]}$ was already defined, discard the group $\pi(G_{[k]})$ and go to step $4$.
\end{enumerate}
Corollary \ref{equation:maybe 4} implies that this recursion gives us an infinite collection of groups with strictly increasing rank that is contained in $\{H_{[n+1]}\}_{n\in S}\subset\{\pi(G_{[n+1]})\}_{n<\omega}$, where $S:=\{n\in \omega;\ H_{[n+1]} \text{ was defined}\}$. As $\{G_{[n+1]}\}_{n<\omega}$ is a collection witnessing the rank of $G$ is at least $\omega+2$ then for every $n<m$, by definition, it is the case that $G_{[n]}\prec G_{[m]}$. Hence Proposition \ref{prop:one more prop to order in the quotient} implies that given $n,\ m\in S$ with $n<m$, then $H_{[n+1]}\prec H_{[m+1]}$.\\
\indent Assume $m\in\omega\backslash S$ is the smallest element such that $H_{[m+2]}$ was defined but $H_{[m+1]}$ was not. As the collection $S$ is unbounded such element always exist. Because $\textphnc{y}_{H}(H_{[m+2]})=m+2$ then there exist $K\prec H_{[m+2]}$ such that $\textphnc{y}_H(K)=m+1$. Define $H_{[m+1]}:=\overline{\langle K,\ \pi(G_{[m+1]}) \rangle}$. Because $K,\ \pi(G_{[m+1]})\leqslant \textup{Res}_H(H_{[m+2]})$,\ then $H_{[m+1]}$ is a compactly generated subgroup of $\textup{Res}_{H}(H_{[m+2]})$. Because $H$ is elementary,\ Proposition \ref{prop:res do what i want in elementary} implies $\textup{Res}_H(H_{[m+2]})\ngeqslant H_{[m+2]}$,\ hence $H_{[m+1]}\prec H_{[m+2]}$. Suppose now that for $n<m$ the group $H_{[n+1]}$ was defined. The proof of Proposition \ref{prop:one more prop to order in the quotient} gives us that $H_{[n+1]}\leqslant \textup{Res}_H(\pi(G_{[m+1]}))$ hence,\ because $\textup{Res}_H(\pi(G_{[m+1]}))\leqslant \textup{Res}_H(H_{[m+1]})$ then the difference in height implies $H_{[n+1]}\prec H_{[m+1]}$. We can then repeat the argument of this paragraph for the new set $S\cup \{m\}$ and, recursively, define $H_{[n+1]}$ for every $n<\omega$. We then have our desired collection $\{H_{[n+1]}\}_{n<\omega}$ witnessing that the rank of $H$ is at least $\omega+1$.\\
\indent Now assume $G$ has a collection $\{G_{[\alpha+1]}\}_{\alpha<\gamma}$ witnessing that its rank is at least $\gamma+1$. We want to build a collection $\{H_{[\alpha+1]}\}_{\alpha<\gamma}$ witnessing that the rank of $H$ is at least $\gamma+1$.\\
\indent Let $\{G_{[n+1]}\}_{n<\omega}\subset \{G_{[\alpha+1]}\}_{\alpha<\gamma}$ the subcollection witnessing that the rank of $G$ is at least $\omega+2$. We can build $\{H_{[n+1]}\}_{n<\omega}$ the collection witnessing that the rank of $H$ is at least $\omega+2$ obtained from $\{\pi(G_{[n+1]})\}_{n<\omega}$ as given in the proof above. Now let $\alpha>\omega$. Notice that Corollary \ref{equation:maybe 4} implies that $\textphnc{y}_H(\pi(G_{[\alpha+1]}))=\alpha+1$. We can then define, for $\textphnc{y}_G(G)>\alpha>\omega$, $H_{[\alpha+1]}=\pi(G_{[\alpha+1]})$. We can then extend our initial collection $\{H_{[n+1]}\}_{n<\omega}$ and obtain $\{H_{[\alpha+1]}\}_{\alpha<\textphnc{y}_G(G)}$.\\
\indent It remains to prove that for every $\beta<\alpha$ we have that $H_{[\beta+1]}\prec H_{[\alpha+1]}$. Defining $H_{[\alpha+1]}:=\pi(G_{[\alpha+1]})$ for every $\alpha<\gamma$ and adding to the collection then gives us the desired result.\\
\indent The case $G$ has a collection $\{G_{[\alpha+1]}\}_{\alpha<\textphnc{f}(G)-1}$ witnessing its rank is just the particular case $\gamma=\textphnc{f}(G)-1$, hence we can also obtain a collection $\{H_{[\alpha+1]}\}_{\alpha<\textphnc{f}(G)-1}$ witnessing that the rank of $H$ is at least $\textphnc{f}(G)$. As $\textphnc{f}(G)\geqslant\textphnc{f}(H)$, it then follows that $\textphnc{f}(H)=\textphnc{f}(G)$ and the result follows.
\end{proof}


\subsection{Higher bounds for the decomposition rank}

Now that we have a result related to building groups with higher ranks,\ we need to find a way to build groups satisfying the conditions from the proposition so we can iterate the construction.

\begin{lemma}\label{lemma:how to iterate tree construction for rank}
Suppose that $G$ is a compactly generated elementary group with an open $L\trianglelefteq G$,\ such that $G=L\rtimes
\langle t \rangle$ where $|t|=\infty$. If $L$ is topologically perfect and there is $U\in\mathcal{U}(L)$ such that $\langle\!\langle U\rangle\!\rangle_L=L$,\ then there exists a compactly generated elementary group $E$ such that $E$ is topologically perfect,\ $\xi(E)=\xi(G)$,\ and there is $V\in\mathcal{U}(E)$ such that $\langle\!\langle V \rangle\!\rangle_E=E$.
\end{lemma}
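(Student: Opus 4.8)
The plan is to isolate the single obstruction to $G$ being topologically perfect and then remove it without disturbing the rank or the compact generation. Since $L$ is topologically perfect we have $L=\overline{[L,L]}\leqslant\overline{[G,G]}$, while $G/L\cong\mathbb{Z}$ is abelian, so $\overline{[G,G]}=L$ and the abelianization of $G$ is exactly the copy of $\mathbb{Z}$ generated by $tL$. Thus the \emph{only} reason $G$ fails to be topologically perfect (Definition~\ref{defi:general defi}) is the cyclic quotient carried by the translation $t$. The difficulty is that one cannot simply pass to $L$: in the intended applications, where $G=E_X(\cdot,\cdot)$ and $L=P_X(\cdot,\cdot)$, the subgroup $L$ is an increasing union of open subgroups swept out by $t$ and is \emph{not} compactly generated, so it is the translation $t$ that is responsible for the compact generation of $G$. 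Any fix must therefore retain a sweeping translation (to keep compact generation) while killing the $\mathbb{Z}$ it contributes to the abelianization (to gain perfection).

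The construction I would use replaces the cyclic group $\langle t\rangle$ by a topologically perfect acting group. First I would fix a compactly generated, topologically perfect, elementary permutation group $(K,Y)$ of decomposition rank $2$ (so $K$ is residually discrete), acting transitively on the countable set $Y$ with compact open point stabilizers, and containing a ``hyperbolic'' element $\tau$ with infinite orbits that lies in $\overline{[K,K]}=K$. I would then attach $L$ to $K$ by an imprimitive, \emph{twisted} wreath-type construction $E:=\bigl(\bigoplus_{Y}(L,U)\bigr)\rtimes K$ in the spirit of Definition~\ref{defi:wr product}, in which the action of $K$ on the $L$-fibres along the $\tau$-orbit is twisted by the conjugation automorphism $\mathrm{conj}_t$ of $L$, so that the assignment $t\mapsto\tau$ reproduces inside $E$ precisely the $t$-sweeping that exhausts $L$ in $G$. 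In effect $E$ is built to carry the data of $G=L\rtimes\langle t\rangle$ but with the translation rerouted through the perfect group $K$.

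Granting such an $E$, the four required properties should follow from the tools already available. Topological perfection is automatic: $\overline{[E,E]}$ contains $\overline{[\bigoplus_Y(L,U),\bigoplus_Y(L,U)]}=\bigoplus_Y(L,U)$ (as $L$ is topologically perfect) and contains $\overline{[K,K]}=K$, hence equals $E$. For the decomposition rank I would use that $\bigoplus_Y(L,U)$ is a closed normal subgroup with quotient $K$: by Lemma~\ref{lemma:infinite restricted prod rank} it has rank $\xi(L)$, and since $K$ is residually discrete one checks (via the normal-closure argument of Proposition~\ref{prop:gluing result remaining}, using $\langle\!\langle U\rangle\!\rangle_L=L$ and transitivity) that $\textup{Res}(E)=\bigoplus_Y(L,U)$, so that the compactly generated group $E$ has $\xi(E)=\xi(\textup{Res}(E))+1=\xi(L)+1$ by Definition~\ref{defi:decomposition rank}; combining $\xi(G)=\xi(\textup{Res}(G))+1$ with $\textup{Res}(G)\leqslant L$ and Proposition~\ref{prop:decomposition rank closed subgroup} together with Theorem~\ref{prop:quotient elementary 101} pins this down to $\xi(E)=\xi(G)$. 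A suitable compact open $V=\prod_Y U\cdot K_0$, with $K_0\in\mathcal{U}(K)$ chosen so that $\langle\!\langle K_0\rangle\!\rangle_K=K$, then satisfies $\langle\!\langle V\rangle\!\rangle_E=E$ by the same transitivity-plus-$\langle\!\langle U\rangle\!\rangle_L=L$ argument of Proposition~\ref{prop:gluing result remaining}. The main obstacle — and the part demanding real care — is the construction of $(K,Y)$ and of the twist so that compact generation actually survives: one must arrange that the single sweeping element $\tau$, now a commutator in $K$, still reconstructs the non-compactly-generated group $L$ fibrewise, i.e. one must reconcile the compact generation inherited from the $t$-sweeping with the perfection gained by routing $t$ through $\overline{[K,K]}$.
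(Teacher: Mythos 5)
Your diagnosis of the obstruction is correct ($\overline{[G,G]}=L$, and $t$ is simultaneously the source of compact generation and the sole obstruction to perfection), but the construction you propose has a genuine gap at its core, and I do not think it can be repaired in the form described. The problem is the twist. To recover compact generation you need an element of $E$ that \emph{normalizes} a single fibre copy of $L$ and acts on it as $\mathrm{conj}_t$, so that conjugation sweeps a compact subset of that fibre out to all of the non-compactly-generated group $L$, as $t$ does in $G$. Your sweeping element $\tau$ does not fix the base point $y_0$, so conjugation by $\tau$ carries the fibre $L^{\{y_0\}}$ to the fibre $L^{\{\tau y_0\}}$ rather than back to itself; the only elements that could induce $\mathrm{conj}_t$ \emph{within} the fibre at $y_0$ are images of the stabilizer $K_{(y_0)}$ under the twisting cocycle. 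But $K_{(y_0)}$ is compact and the cocycle restricted to it is a continuous homomorphism into $\textup{Aut}(L)$, hence has compact image; in the intended applications ($L=P_X(\cdot,\cdot)$, $t$ the tree translation) $\mathrm{conj}_t$ normalizes no compact open subgroup of $L$ and generates a subgroup of $\textup{Aut}(L)$ with non-compact closure, so no such cocycle exists. You flag exactly this point as ``demanding real care,'' but it is not a technicality: it is where the construction breaks. A second, independent gap is the rank computation: your argument yields $\xi(E)=\xi(L)+1$, and equating this with $\xi(G)$ requires $\xi(G)=\xi(L)+1$, which does not follow from the hypotheses of the lemma (nothing stated forces $\textup{Res}(G)$ to have rank as large as $\xi(L)$; if $t$ acted trivially one would even have $\xi(G)=\xi(L)$).

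The paper's proof sidesteps both problems with a finite construction. It forms $M=G^{[0,4]}\rtimes A_5$, five copies of the \emph{whole} group $G$ (translations included) permuted by $A_5$, and sets $H=\langle\!\langle A_5\rangle\!\rangle_{\mathbb{Z}^{[0,4]}\rtimes A_5}$, a finitely generated perfect group containing the difference element $r=(1,-1,0,0,0)$. Conjugation by $nr$ sends the coordinate-$0$ copy $f_g$ of a generator $g$ to $f_{t^ngt^{-n}}$, so the sweeping is performed by an element of a perfect group, and no external twist is needed because each fibre already contains $t$. The group $E=\langle H,\ U^{[0,4]},\ f_g\ (g\in Y)\rangle$ is then compactly generated and topologically perfect, and its rank is pinned down by sandwiching: $E\cap G^{[0,4]}$ projects onto $G$, giving $\xi(E)\geqslant\xi(G)$, while $E\leqslant M$ and $\xi(M)=\xi(G)$ by Lemma \ref{lemma:cocompact has same rank as group} and Lemma \ref{prop:product same rank}, giving $\xi(E)\leqslant\xi(G)$. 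This avoids any appeal to $\xi(G)=\xi(L)+1$.
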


\begin{proof}
First,\ we find a nice generating set for $L$. Fix $K\subseteq G$ a compact generating set for $G$. Because $L$ is open,\ $G= L\rtimes \langle t\rangle$ and $K$ is compact,\ there is $n\geqslant 0$ such that $\cup_{i=-n}^n t^iL$ is a finite covering of $K$. Defining $K_i:=t^iL\cap L$ we get that
$$K=K_{-n}\cup K_{-(n-1)}\cup\hdots\cup K_{n-1}\cup K_n.$$
Setting $Y:=\cup_{i=-n}^n t^{-i}K_i$,\ the group $G$ is generated by $Y\cup \{t\}$,\ and $Y$ is a compact subset of $L$. In particular $L=\langle t^iYt^{-i}|\ i\in \mathbb{Z} \rangle$.\\
\indent We now define the group $E$. For this proof,\ we can use any simple non-commutative group that acts transitively on a finite set,\ but for the sake of simplicity,\ we will take the alternating group $A_5$ acting on the set $[0,\ 4]=\{0,\ 1,\ 2,\ 3,\ 4\}$. Let $A_5$ act on $G^{[0,4]}:=\prod_{i\in [0,4]}G$ by shifting the domain. Define the following semidirect product
$$M:=G^{[0,4]}\rtimes A_5\cong L^{[0,4]}\rtimes (\mathbb{Z}^{[0,4]}\rtimes A_5).$$
Let $H:=\langle\!\langle A_5 \rangle\!\rangle_{\mathbb{Z}^{[0,4]}\rtimes A_5}=\langle fgf^{-1} | \ f\in \mathbb{Z}^{[0,4]},\ g\in A_5 \rangle$. Notice that $H\cap \mathbb{Z}^{[0,4]}$ has finite index in $H$, as $(H\cap \mathbb{Z}^{[0,4]})A_5=H$. Hence, as $H\cap \mathbb{Z}^{[0,4]}$ is finitely generated,\ the group $H$ is also finitely generated. Note that because $A_5$ is perfect,\ then $A_5\leqslant [H,\ H]$. Since $[H,\ H]$ is the commutator of a normal subgroup of $\mathbb{Z}^{[0,4]}\rtimes A_5$,\ it is also normal in $\mathbb{Z}^{[0,4]}\rtimes A_5$. By the construction of $H$,\ it follows that $[H,H]=H$,\ that is,\ $H$ is perfect.\\
\indent For each $g\in L$,\ let $f_g\in L^{[0,4]}$ be the element of $M$ which takes value $g$ on $0$ and the identity on $\{1,\ 2,\ 3,\ 4\}$. Fix $U\in\mathcal{U}(L)$ such that $\langle\!\langle U\rangle\!\rangle_L=L$. Define $E:=\langle H,\ U^{[0,4]},\ f_g|\ g\in Y \rangle $. The group $E$ is elementary since it is an open subgroup of the elementary group $M$,\ and is compactly generated by construction. \\
\indent To prove $E$ is topologically perfect,\ define $r\in \mathbb{Z}^{[0,4]}$ as
$$r(i):=\left\{
 \begin{array}{ll}
    1 & \text{if }i=0,\ \\
    -1 & \text{if }i=1,\\
    0 & \text{otherwise}.
\end{array}
\right.$$
Note that $r\in H$,\ hence $r\in E$. For each $g\in Y$ and $n\in\mathbb{Z}$,\ the element 
\begin{equation}\label{eq:number 10}
(nr)f_g(-nr)=f_{t^ngt^{-n}}    
\end{equation}
is also in $E$. By our choice of $Y$,\ we conclude that $L^0$,\ the set of functions in $L^{[0,4]}$ supported on $0$,\ is contained in $E$,\ and since $A_5$ acts transitively on $[0,4]$,\ it is indeed the case that $L^{[0,4]}\leqslant E$. The groups $H$ and $L^{[0,4]}$ are topologically perfect,\ and moreover,\ $E=\langle H,\ L \rangle$. We thus deduce that $E$ is topologically perfect.\\
\indent Set $V=U^{[0,4]}\rtimes A_5$. The group $V$ is a compact open subgroup of $E$. It is also the case that for every $g\in L$, $(nr)f_g(-nr)=f_{t^ngt^{-n}}$ (Equation \ref{eq:number 10}) is an element of $\langle\!\langle V\rangle\!\rangle_E$. The group $\langle\!\langle V\rangle\!\rangle_E$ thus contains $f_g$ for all $g\in\langle\!\langle U\rangle\!\rangle_L=L$. Since $A_5\leqslant V$,\ we conclude that $H,\ L^{[0,4]}\leqslant \langle\!\langle V\rangle\!\rangle_E$,\ and thus $\langle\!\langle V\rangle\!\rangle_E=E$.\\
\indent To prove $\xi(E)=\xi(G)$,\ let $\pi_0:G^{[0,4]}\rightarrow G$ be the projection on the coordinate indexed by $0$. Note that $L\leqslant \pi_0(E\cap G^{[0,4]})$,\ and since $r$ as above is an element of $E$,\ the generator $t$ is also an element of $\pi_0(E\cap G^{[0,4]})$. Hence,\ the image equals $G$. From Proposition \ref{prop:quotient elementary 101},\ it follows that $\xi(E)\geqslant \xi(G)$. On the other hand,\ Lemma \ref{lemma:cocompact has same rank as group} and Proposition \ref{prop:product same rank} imply $\xi(G)=\xi(M)\geqslant \xi(L)$. We thus deduce that $\xi(E)=\xi(G)$.
\end{proof}

\begin{prop}\label{prop:extra condition for semidirect product z and collection witnessing}
Suppose that $G$ is a compactly generated elementary group with an open $L\trianglelefteq G$,\ such that $G=L\rtimes
\langle t \rangle$ where $|t|=\infty$. If $G$ and  $L$ satisfy the conditions of Lemma \ref{lemma:how to iterate tree construction for rank} and $G$ has a collection witnessing its rank that is contained in $L$,\ then the new group $E$ generated by Lemma \ref{lemma:how to iterate tree construction for rank} also has a collection witnessing its rank.
\end{prop}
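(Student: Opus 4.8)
The plan is to realise the group $E$ produced by Lemma \ref{lemma:how to iterate tree construction for rank} as an open subgroup of $M=G^{[0,4]}\rtimes A_5$ and to transport the given collection for $G$ into the coordinate-$0$ copy of $L$, where it lives simultaneously inside $E$ and inside $G^{[0,4]}$. Recall from that construction that $L^{[0,4]}\leqslant E$, that $E$ is open in $M$, and that $\xi(E)=\xi(G)$; since every group in sight is elementary, Theorem \ref{thrm:for elementary groups ranks are the same} gives $\textphnc{f}(E)=\textphnc{f}(G)=\lambda+1$, where $\lambda=\textphnc{y}_G(G)$ and $\{G_{[\beta+1]}\}_{\beta<\lambda}$ is the given collection witnessing the rank of $G$, each member contained in $L$. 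For $g\in L$ write $f_g\in L^{[0,4]}$ for the element equal to $g$ on coordinate $0$ and trivial elsewhere, and for $K\leqslant L$ set $\widehat{K}:=\{f_g:g\in K\}$. Because $G_{[\beta+1]}\leqslant L$, each $\widehat{G_{[\beta+1]}}$ lies in $L^{[0,4]}\leqslant E$ and also in $G^{[0,4]}\leqslant M$; this is precisely where the hypothesis that the collection is contained in $L$ is used, since otherwise the copies might involve the translation $t$ and fail to land in $E$.

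Next I would compute the residual height of $\widehat{G_{[\beta+1]}}$ inside $G^{[0,4]}\cong G^{5}$. Using the product behaviour of the discrete residual, $\textup{Res}_{G^{5}}(\widehat{K})=\textup{Res}_G(K)\times\{1\}^{4}$, so the entire $\prec$-downward closure of $\widehat{K}$ stays inside the first-coordinate copy $G\times\{1\}^{4}\cong G$, on which $\prec_{G^{5}}$ and $\prec_G$ coincide. Hence $\textphnc{y}_{G^{5}}(\widehat{K})=\textphnc{y}_G(K)$ and $\widehat{K_1}\prec_{G^{5}}\widehat{K_2}$ if and only if $K_1\prec_G K_2$. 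In particular $\textphnc{y}_{G^{5}}(\widehat{G_{[\beta+1]}})=\beta+1$ and $\{\widehat{G_{[\beta+1]}}\}_{\beta<\lambda}$ is a $\prec_{G^{5}}$-chain with $\widehat{G_{[1]}}\neq\{1\}$.

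The final step is to transfer these heights to $E$ through the common ambient group $M$. Since $G^{[0,4]}$ is open in $M$ (of index $|A_5|$) and $\widehat{G_{[\beta+1]}}\leqslant G^{[0,4]}$, Proposition \ref{prop:easier residual height} gives $\textphnc{y}_M(\widehat{G_{[\beta+1]}})=\textphnc{y}_{G^{[0,4]}}(\widehat{G_{[\beta+1]}})=\beta+1$; and since $E$ is open in $M$ with $\widehat{G_{[\beta+1]}}\leqslant E$, the same proposition gives $\textphnc{y}_E(\widehat{G_{[\beta+1]}})=\textphnc{y}_M(\widehat{G_{[\beta+1]}})=\beta+1$. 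Moreover, as each $\widehat{G_{[\beta+1]}}$ normalises any open subgroup containing it, Lemma \ref{lemma:properties residue that will be reused}(3) yields $\textup{Res}_{G^{[0,4]}}(\widehat{G_{[\beta+1]}})=\textup{Res}_M(\widehat{G_{[\beta+1]}})=\textup{Res}_E(\widehat{G_{[\beta+1]}})$, so the order $\prec$ agrees in all three groups and the $\prec_{G^{5}}$-chain is a $\prec_E$-chain. Together with $\textphnc{f}(E)=\lambda+1$, this shows $\{\widehat{G_{[\beta+1]}}\}_{\beta<\lambda}$ witnesses the rank of $E$. The case distinction of Lemma \ref{lemma:how to iterate tree construction for rank} on whether $L$ has the appropriate generating set plays no further role here, as only the structural facts $L^{[0,4]}\leqslant E\leqslant_{o}M$ and $\xi(E)=\xi(G)$ are used.

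The main obstacle is the upper bound $\textphnc{y}_E(\widehat{G_{[\beta+1]}})\leqslant\beta+1$. The monotonicity and surjection tools (Lemma \ref{lemma:residual rank closed under subgroups} and Proposition \ref{coro:height decreases on surject}) only produce \emph{lower} bounds for heights in $E$, so one cannot pin the height down by working from an embedding $L\hookrightarrow E$ or a projection $E\twoheadrightarrow G$ alone. Routing through $M$, in which both $G^{[0,4]}$ and $E$ are open, is exactly what forces equality of heights in both directions via Proposition \ref{prop:easier residual height}, thereby tying the height of $\widehat{G_{[\beta+1]}}$ in $E$ to the value $\beta+1$ that is computable in the finite product $G^{[0,4]}$.
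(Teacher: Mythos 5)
Your proposal is correct and follows essentially the same route as the paper's (very terse) proof: transport the given collection, which lies in $L$, into the coordinate-$0$ copy of $L$ inside $E\leqslant M=G^{[0,4]}\rtimes A_5$, and conclude from $\xi(E)=\xi(G)$. You supply details the paper glosses over — the computation $\textup{Res}_{G^{[0,4]}}(\widehat{K})=\textup{Res}_G(K)\times\{1\}^{4}$ and the transfer of heights through the open inclusions $G^{[0,4]},E\leqslant_{o}M$ via Proposition \ref{prop:easier residual height} — which is a welcome tightening rather than a different argument.
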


\begin{proof}
Because $G$ and $E$ are elementary and contains copies of $L$ it follows,\ from Corollary \ref{coro:height well behaved on elementary} and the assumption,\ that the collection witnessing the rank of $G$ can be used to witness that the rank of $E$ is at least $\alpha+2$. Because $\xi(G)=\xi(E)=\alpha+2$,\ the result follows.
\end{proof}

We now almost have what is necessary to build groups with higher decomposition ranks. The construction of $E_X(G,\ U)$ and Proposition \ref{prop:collection witnessing on ex(g,u)} gives us conditions for creating groups with a higher rank. Lemma \ref{lemma:how to iterate tree construction for rank} allows us to build a group with almost all necessary conditions necessary to apply Proposition \ref{prop:collection witnessing on ex(g,u)}, except an action on a countable set $X$ with compact, open point stabilizers and a collection witnessing its rank.\\
\indent To simplify the notation and the recursion on the next theorem, we denote $\omega^0:=0$.

\begin{theorem}\label{thrm:result for higher rank by wesolek}
Let $G$ be a compactly generated topologically perfect elementary group with $\xi(G)=2$,\ and $U\in \mathcal{U}(G)$ a proper subgroup such that $\langle\!\langle U\rangle\!\rangle_G=G$. Let $X$ be a countable set such that $(G,\ X)$ is a transitive permutation t.d.l.c.s.c.~group. For each $n\geqslant 0$,\ there exist a topologically perfect compactly generated elementary group $F_n(G)$ and a countable set $X_n$ such that $\xi(F_n(G))=\omega^n+2$,\ there exist $U_n(G)\in\mathcal{U}(F_n(G))$ with $\langle\!\langle U_n(G)\rangle\!\rangle_{F_n(G)}=F_n(G)$, $(F_n(G),X_n)$ is a transitive t.d.l.c.s.c.~permutation group and $F_n(G)$ has a chain witnessing its rank.
\end{theorem}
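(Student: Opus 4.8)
The plan is to argue by induction on $n$, interleaving the two engines already built: the construction $E_{X}(-,-)$, which by Proposition \ref{prop:collection witnessing on ex(g,u)} turns a topologically perfect elementary group of residual height with leading exponent $\theta$ into a group of rank $\omega^{\theta+1}+2$, and Lemma \ref{lemma:how to iterate tree construction for rank}, which restores topological perfectness and the normal-generation property $\langle\!\langle V\rangle\!\rangle=\cdot$ while preserving the decomposition rank. For the base case $n=0$ I set $F_0(G):=G$, $X_0:=X$ and $U_0(G):=U$; all the required properties are among the hypotheses, and since $\xi(G)=2=\omega^0+2$ we have $\textphnc{y}_G(G)=1$ by Theorem \ref{thrm:for elementary groups ranks are the same}, so the singleton $\{G\}$ witnesses the rank of $G$.

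For the inductive step, suppose $(F_n(G),X_n)$ and $U_n(G)$ have been produced with all the listed properties. First I would apply Proposition \ref{prop:collection witnessing on ex(g,u)} with $L:=F_n(G)$: the action on $X_n$ has compact open point stabilizers, $\langle\!\langle U_n(G)\rangle\!\rangle_{F_n(G)}=F_n(G)$, and $F_n(G)$ carries a collection witnessing its rank. As $\textphnc{y}_{F_n(G)}(F_n(G))=\omega^n+1$ has leading exponent $n$, the proposition gives $\xi(E_{X_n}(F_n(G),U_n(G)))=\omega^{n+1}+2$ together with a collection witnessing this rank; and since $F_n(G)$ is topologically perfect it identifies $P:=P_{X_n}(F_n(G),U_n(G))$ as the monolith, of rank $\omega^{n+1}+1$. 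The infinite-orbit hypothesis on the height-one member holds for $n=0$ because $G$ acts transitively on the infinite set $X$, and is unnecessary for $n\geqslant 1$, where $\textphnc{f}(F_n(G))>\omega$, exactly as in the final clause of Theorem \ref{thrm:making big ranks better with residue order}.

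Next I would prepare $E_{X_n}(F_n(G),U_n(G))$ for Lemma \ref{lemma:how to iterate tree construction for rank}. By Proposition \ref{prop:writing ex(g,U) as semidirect product} it decomposes as $P\rtimes\langle t\rangle$ with $P$ open normal and $|t|=\infty$; Proposition \ref{prop: monolith tree} shows $P$ is its own monolith and hence topologically perfect; and Proposition \ref{prop:gluing result remaining}, applied with the transitive action $(F_n(G),X_n)$ and $\langle\!\langle U_n(G)\rangle\!\rangle_{F_n(G)}=F_n(G)$, guarantees $\langle\!\langle V'\rangle\!\rangle_P=P$ for every $V'\in\mathcal{U}(P)$. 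These are exactly the hypotheses of Lemma \ref{lemma:how to iterate tree construction for rank}, which therefore yields a compactly generated, topologically perfect, elementary group $E$ with $\xi(E)=\xi(E_{X_n}(F_n(G),U_n(G)))=\omega^{n+1}+2$ and a compact open $V$ with $\langle\!\langle V\rangle\!\rangle_E=E$. Moreover every member except the top one of the collection witnessing the rank of $E_{X_n}(F_n(G),U_n(G))$ lies in $P$, since those members sit inside the iterated wreath products $L_m\leqslant P$ of Lemma \ref{lemma:Gn 101}; so Proposition \ref{prop:extra condition for semidirect product z and collection witnessing} applies and endows $E$ with a collection witnessing its rank.

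It remains to equip the new group with a transitive faithful action on a countable set with compact open point stabilizers, and this is the step demanding the most care, since Lemma \ref{lemma:how to iterate tree construction for rank} presents $E$ only as an open subgroup of a product-by-$A_5$ construction whose obvious permutation action has non-compact stabilizers. Instead I would pass to the coset action: the action of $E$ on $E/V$ is transitive with compact open stabilizers, and its kernel is the core $N:=\bigcap_{e\in E}eVe^{-1}$, a compact normal subgroup of $E$. Setting $F_{n+1}(G):=E/N$ and $X_{n+1}:=E/V$, the induced coset action is faithful. Because $N$ is compact and $\xi(E)=\omega^{n+1}+2>\omega$, Lemma \ref{lemma:short exact sequence decomposition rank} with Theorem \ref{prop:quotient elementary 101} gives $\xi(F_{n+1}(G))=\omega^{n+1}+2$; the quotient stays compactly generated, elementary and topologically perfect; the image $U_{n+1}(G):=V/N$ satisfies $\langle\!\langle U_{n+1}(G)\rangle\!\rangle_{F_{n+1}(G)}=F_{n+1}(G)$; and Proposition \ref{prop:collection witnessing rank well after quotient}, valid because $N$ is compact normal and $\textphnc{f}(E)>\omega$, transports the witnessing collection to $F_{n+1}(G)$. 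This supplies every datum required of $F_{n+1}(G)$ and $X_{n+1}$ and closes the induction. The principal obstacle is exactly this orchestration: each rank-raising application of $E_{X}(-,-)$ destroys both topological perfectness and the normal-generation property and must be repaired by Lemma \ref{lemma:how to iterate tree construction for rank} and then by passage to a faithful coset representation, and one must verify that none of these repairs disturbs the decomposition rank or the witnessing collection — which works only because the rank remains above $\omega$ at every stage, letting Propositions \ref{prop:extra condition for semidirect product z and collection witnessing} and \ref{prop:collection witnessing rank well after quotient} preserve the collections.
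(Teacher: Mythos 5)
Your proposal is correct and follows essentially the same route as the paper: the same three-stage induction of raising the rank with $E_{X_n}(F_n(G),U_n(G))$ via Proposition \ref{prop:collection witnessing on ex(g,u)}, repairing topological perfectness and the normal-generation property with Lemma \ref{lemma:how to iterate tree construction for rank} applied to $P_{X_n}(F_n(G),U_n(G))\rtimes\langle t\rangle$, and then passing to the faithful coset action by quotienting out the compact normal core, with Propositions \ref{prop:extra condition for semidirect product z and collection witnessing} and \ref{prop:collection witnessing rank well after quotient} carrying the witnessing collection through each repair. The only (harmless) divergence is cosmetic: you take the singleton $\{G\}$ as the witnessing collection in the base case where the paper invokes the empty collection.
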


\begin{proof}
For the proof first define $F_0(G):=G$,\ $U_0(G):=U$ and $X_0:=X$. By assumption,\ they satisfy the conditions on the statement, as the empty collection witness the rank of $F_0(G)$.\\
\indent We will now prove the result by induction which will be divided in three parts. First,\ we build a topologically perfect group with an open compact subgroup satisfying the desired properties. After that,\ we construct a countable set for the group to act on transitively and with open,\ compact point stabilizers. As the last step,\ we trim up the topologically perfect group so that the action is also faithful and the group still satisfies all the other properties. We reiterate the construction with this new permutation group.\\
\indent Given $n\geqslant 0$,\ assume the permutation group $(F_n(G),\ X_n)$ and $U_n(G)$ its subgroup were defined, and $F_n(G)$ has a collection witnessing its rank. Define $E_{n+1}:=E_{X_n}(F_n(G),U_n(G))$. By Proposition \ref{prop:collection witnessing on ex(g,u)},\ $\xi(E_{n+1})=\omega^{n+1}+2$ and $M_{n+1}:=P_{X_n}(F_n(G),\ U_n(G))$ is its monolith. By Proposition \ref{prop:new grp is comp gener},\ $E_{n+1}$ is compactly generated. By Proposition \ref{prop:collection witnessing on ex(g,u)} the group $E_{n+1}$ has a chain witnessing that its rank is $\omega^{n+1}+2$. Because $M_{n+1}$ is the monolith of $E_{n+1}$,\ $M_{n+1}$ is topologically perfect. More than that,\ given $V_{n+1}\in\mathcal{U}(M_{n+1})$ we have that $\langle\!\langle V_{n+1}\rangle\!\rangle_{M_{n+1}}=M_{n+1}$ (Proposition \ref{prop:gluing result remaining}). Fix such a $V_{n+1}$. By Proposition \ref{prop:writing ex(g,U) as semidirect product} we have that $E_{n+1}=M_{n+1}\rtimes \mathbb{Z}$. Therefore, we can apply Lemma \ref{lemma:how to iterate tree construction for rank} to build the group $F_{n+1}'(G)$ and its subgroup $U_{n+1}'(G)$ (as the groups $E$ and $V$ from the lemma, respectively), and these will satisfy $\xi(F_{n+1}'(G))=\xi(E_{n+1}(G))=\omega^{n+1}+2$, $\langle\!\langle U_{n+1}'(G)\rangle\!\rangle_{F_{n+1}'(G)}=F_{n+1}'(G)$ and $F_{n+1}'(G)$ is both topologically perfect and compactly generated. By applying Proposition \ref{prop:extra condition for semidirect product z and collection witnessing} it also follows that the group $F_{n+1}'(G)$ has a collection witnessing that its rank is $\omega^{n+1}+2$.\\
\indent Assume now that the group $F_{n+1}'(G)$ and its subgroup $U_{n+1}'(G)$ have been defined,\ for $n\geqslant 0$. Define $X_{n+1}:=F_{n+1}'(G)/U_{n+1}'(G)$,\ a set of cosets,\ and let $F_{n+1}'(G)$ act on it by left multiplication. Define the group $F_{n+1}(G):=F_{n+1}'(G)/F_{n+1}'(G)_{(X_{n+1})}$,\ the quotient of $F_{n+1}'(G)$ under the normal core of $U_{n+1}'(G)$. It is clear that $(F_{n+1}(G),X_{n+1})$ is a transitive t.d.l.c.s.c.~permutation group. It is also true that $F_{n+1}(G)$ is compactly generated,\ topologically perfect and $U_{n+1}(G)$,\ the image of $U_{n+1}'(G)$ under the quotient,\ is such that $\langle\!\langle U_{n+1}(G)\rangle\!\rangle_{F_{n+1}(G)}=F_{n+1}(G)$. We need to prove that $\xi(F_{n+1}(G))=\omega^n+2$.\\
\indent Notice that as $F_{n+1}'(G)_{(X_{n+1})}$ is the normal core of $U_{n+1}'(G)$, it is compact. By Proposition \ref{prop:collection witnessing rank well after quotient}, as $F_{n+1}'(G)$ has a collection witnessing its rank is $\omega^{n+1}+2$, then so does $F_{n+1}(G)$. Then $\xi(F_{n+1}'(G)_{(X_{n+1})})=2$. By Propositions \ref{prop:quotient elementary 101},\ \ref{lemma:short exact sequence decomposition rank} and the exact sequence
$$\begin{tikzcd}
\{1\} \arrow[r] & F_{n+1}'(G)_{(X_{n+1})} \arrow[r] & F_{n+1}'(G) \arrow[r] & F_{n+1}(G) \arrow[r] & \{1\}
\end{tikzcd}$$
we get that $\xi(F_{n+1}(G))\leqslant\xi(F_{n+1}'(G))=\omega^{n+1}+2\leqslant 2+\xi(F_{n+1}(G))$. Hence,\ by the properties of infinite ordinals,\ it follows that $\xi(F_{n+1}(G))=\omega^{n+1}+2$.
\end{proof}

For the sake of concreteness,\ one could take $G$ on the theorem to be any infinite,\ finitely generated simple group with a non-trivial finite subgroup $U$. It then follows that $\langle\!\langle U\rangle\!\rangle_G=G$. Examples of groups satisfying these properties are countable products of simple groups with any proper non-trivial open subgroup, Thompson's group $V$ with $U=A_5$, and Tarski monster groups with $U$ any non-trivial $1$-generated subgroup. On the case of countable, discrete groups, one can define $X:=G/U$,\ and it will follow that $(G,\ X)$ is a transitive t.d.l.c.s.c.~permutation group. Because there exist continuum many non-isomorphic Tarski monster groups and continuum many products of finite, simple groups,\ we have at least continuum many ways to build the groups $F_n(G)$.

\subsection{Building elementary groups with given rank}

Our main goal in defining the residual height and rank is using them with the construction from Theorem \ref{thrm:result for higher rank by wesolek} to build groups with any given rank below $\omega^\omega$. 



\begin{remark}\label{prop:making other properties work addition on rank}
Let $L,\ K$ be non-trivial compactly generated groups and $X_L,\ X_K$ countable sets. If $L$ acts on $X_L$ and $K$ acts on $X_K$ with compact open point stabilizers (the action does not need to be faithful) and there exist $U_L\in\mathcal{U}(L)$,\ $U_K\in \mathcal{U}(K)$ such that $\langle\!\langle U_L\rangle\!\rangle_L=L$,\ $\langle\!\langle U_K\rangle\!\rangle_K=K$ then $G:=K\wr_{U_L}(L,X_K)$ acts on $X:=X_L\times X_K$ by the imprimitive action (Definition \ref{defi:wr product}) with compact open point stabilizers, and $U:=\prod_{x\in X_K}U_L\rtimes U_K$ is a compact open subgroup of $G$ such that $\langle\!\langle U\rangle\!\rangle_G=G$.
\end{remark}

With this remark and Theorem \ref{thrm:making big ranks better with residue order}  we have all that is necessary to build a group with a given decomposition rank below $\omega^\omega+1$. For the sake of concreteness, we will simplify a notation to build them. 

\begin{notation}[Iterated product]\index{iterated product|ndx}
Let $(G,\ X)$,\ $(G',X')$ and $(G'',X'')$ be permutation t.d.l.c.s.c.~groups and $U\in\mathcal{U}(G),\ \ U'\in\mathcal{U}(G')$ fixed open,\ compact subgroups. We denote by 
$$G\wr_{U}G'\wr_{U'}G'':=G\wr_{U}(G'\wr_{U'}(G'',X''),(X'\times X''))$$
where the action of the groups is given by the imprimitive action as defined in Definition \ref{defi:wr product}. This notation is used for sequences of groups longer than three by taking the wreath products from left to right, and actions given by the imprimitive action (Definition \ref{defi:wr product}).\\
\indent For $(G,\ X)$ a permutation group and $U\in\mathcal{U}(G)$ we will denote 
$$(G)_0:=\{1\}, \ \ (U)_0:=\{1\},$$
$$(G)_1:=G,\ \ (U)_1:= U,$$
and for $n>1$
$$(G)_n:=G\wr_{U}((G)_{n-1},X^{n-1}),\ \ (U)_{n}:=\left(\prod_{x\in X^{n-1}}U\right)\rtimes (U)_{n-1}.$$
\end{notation}

Now,\ for the construction,\ let $\alpha< \omega^\omega+1$ be a non limit ordinal. Using the Cantor normal form we can write it as
$$\alpha=\sum_{i=1}^n \omega^i a_i+(a_0+1)$$
where $a_i\in \mathbb{N}$. Pick a group $G$ satisfying the conditions from Theorem \ref{thrm:result for higher rank by wesolek} and build the permutation groups $(F_n(G),X_n)$ and its subgroups $U_n(G)$ as in Theorem \ref{thrm:result for higher rank by wesolek}. If $a_0>0$ then we build the compactly generated topologically perfect groups
$$(F_n(G))_{a_n}\wr_{(U_n(G))_{a_n}}(F_{n-1}(G))_{a_{n-1}}\wr_{(U_{n-1}(G))_{a_{n-1}}}\ldots\wr_{(U_{1}(G))_{a_1}} (F_0(G))_{a_0},$$
where $a_i$ is the coefficient next to $\omega^i$ in the Cantor normal form of $\alpha$. If $a_0=0$,\ we build the group
$$\textup{Res}\left((F_n(G))_{a_n}\wr_{(U_n(G))_{a_n}}(F_{n-1}(G))_{a_{n-1}}\wr_{(U_{n-1}(G))_{a_{n-1}}}\ldots\wr_{(U_{1}(G))_{a_1}} (F_0(G))_{1}\right).$$
By definition of the decomposition rank,\ Theorem \ref{coro:condition for the rank to be additive} and Corollary \ref{coro:adding 1 to the rank} imply that these groups have rank $\alpha$.\\
\indent One can also build a group with rank $\omega^\omega+1$ as follows:
$$\bigoplus_{n\in\mathbb{N}}(F_n(G),U_n(G)).$$
The rank of this group follows from Proposition \ref{prop:product same rank},\ the definition of local direct product (Definition \ref{defi:local direct product}),\ and the definition of decomposition rank (Definition \ref{defi:decomposition rank}).

\subsection{A short stroll on chief factors}

In this subsection,\ we will show the existence of chief factors of stacking type with decomposition rank $\omega^n+1$,\ for $n>0$. These results where first conjectured at \cite[Subsection 9.2]{elementary_groups_second}.\\
\indent Given $\Tilde{\delta}=\{v_n\}_{n\in\mathbb{Z}}$ the monochromatic line,\ we define the function $\eta:V\vec{T_{\chi}}\rightarrow \mathbb{Z}$ as $\eta(v_0)=0$ and for each $e=(v,w)\in E\vec{T_{\chi}}$ oriented edge,\ $\eta(w)=\eta(v)+1$. The horoballs of $\vec{T_{\chi}}$ in relation to $\Tilde{\delta}$ are defined as $\mathcal{H}_n:=\{v\in V\vec{T_{\chi}};\ \eta(v)\geqslant n\}$,\ for $n\in\mathbb{Z}$. We define $H_n:=\textup{Fix}_{P_X(G,\ U)}(\mathcal{H}_n)\leqslant P_X(G,\ \ U)$. These are normal subgroups of $P_X(G,\ U)$,\ and for $n\leqslant m$ we have $H_n\leqslant H_m$. We thus obtain normal factors $H_{n+1}/H_n$ of $P_X(G,\ U)$,\ which we call horoball factors.



\begin{lemma}\label{lemma:chief factor on EXGU}
Suppose that $(G,\ X)$ is a transitive t.d.l.c.s.c.~permutation group,\ $G$ is topologically perfect,\ and $U \in\mathcal{U}(G)$ is non-trivial. Form $E_X (G,\ U )$ and let $\Tilde{\delta}=\{v_n\}_{n\in\mathbb{Z}}$ be the bi-infinite monochromatic ray in $\vec{T_{\chi}}$ . Then,
\begin{enumerate}
    \item If $N\trianglelefteq P_X(G,\ U)$ is a proper,\ closed,\ non-trivial,\ normal subgroup,\ then there exist $n\in\mathbb{Z}$ such that $H_n\leqslant N\leqslant H_{n+1}$.   
    \item If $G$ is simple then every chief factor of $P_X(G,\ U)$ has the form $H_{n+1} /H_n$ for some $n \in\mathbb{Z}$,\ where $H_n$ is the pointwise stabilizer of $\mathcal{H}_n$ in $P_X (G,\ U )$.
    \item If $G$ is a countable discrete simple group or $\xi(G)\geqslant\omega+2$,\ then $P_X(G,\ U)$ is a stacking type chief factor (Definition \ref{defi:chief fac types}) of $E_X(G,\ U)$.
\end{enumerate}
\end{lemma}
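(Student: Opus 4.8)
The plan is to read off the entire normal-subgroup structure of $P:=P_X(G,\ U)$ from the horoball filtration $\{H_n\}_{n\in\mathbb{Z}}$ and then feed this into the chief-block machinery of Subsection 2.3. Write $L_w:=E_X(G,\ U)_{(T^w)}$ for the subgroup supported on the descending subtree below $w$, as in Proposition \ref{prop: monolith tree}. Since $P$ fixes $\chi$ and each of its elements fixes a vertex, $P$ preserves every level set of $\eta$, so each $H_n$ is normal, conjugation by the translation $t$ shifts the filtration ($tH_nt^{-1}=H_{n\pm1}$), and $\overline{\bigcup_n H_n}=P$ by Lemma \ref{lemma:Gn 101}(3). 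For \textit{(1)} I would start from a nontrivial $g\in N$ and a vertex $w$ with $g.w\neq w$. As $w$ and $g.w$ lie at the same $\eta$-level, their descending subtrees are disjoint, so the commutator identity $[[x,g],y]=[x,y]$ for $x,y\in L_w$ from Proposition \ref{prop: monolith tree} gives $\overline{[L_w,L_w]}\leqslant N$; since $G$ is topologically perfect, $L_w$ is the closure of the increasing union $\bigcup_n G_n$ of topologically perfect wreath products (Lemma \ref{lemma:Gn 101}) and is therefore topologically perfect, whence $L_w\leqslant N$. Transitivity of $P$ on each level set (built as in Lemma \ref{lemma: element t and G} while keeping a branch vertex fixed) together with normality spreads this to all $L_{w'}$ with $\eta(w')=\eta(w)$, and since $H_k=\overline{\langle L_{w'};\ \eta(w')=k\rangle}$ we get $H_{\eta(w)}\leqslant N$. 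If $N$ moved vertices of arbitrarily high level we would obtain $H_n\leqslant N$ cofinally, hence $P=\overline{\bigcup_n H_n}\leqslant N$, contradicting properness; thus $n:=\max\{\eta(w);\ \exists g\in N,\ g.w\neq w\}$ is finite and yields simultaneously $N\leqslant H_{n+1}$ and $H_n\leqslant N$.

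For \textit{(2)}, given a chief factor $K/L$ I would apply \textit{(1)} to sandwich both $L$ and $K$ between consecutive horoball fixators. The key point is that there is no closed normal subgroup of $P$ strictly between $H_n$ and $H_{n+1}$: the factor $H_{n+1}/H_n$ is the restricted product $\bigoplus(G,\ U)$ over the level-$(n+1)$ vertices, on which $P$ acts by transitively permuting coordinates and by conjugation within each coordinate, so any $P$-invariant closed normal subgroup projects to a closed normal subgroup of $G$, which is $\{1\}$ or $G$ by simplicity, and normality inside the restricted product then forces it to be trivial or everything. A short comparison of the possibilities for $L<K$ forces $L=H_n$ and $K=H_{n+1}$.

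For \textit{(3)}, Proposition \ref{prop: monolith tree} identifies $P$ as the monolith of $E_X(G,\ U)$, so $P$ is a chief factor of $E_X(G,\ U)$ and, being characteristic, its chief blocks carry an action of $\textup{Aut}(E_X(G,\ U))$. In the discrete simple case $\xi(P)=\omega+1$ and, by \textit{(2)}, the chief factors of $P$ are exactly the non-abelian factors $H_{n+1}/H_n$; each yields a minimally covered block $\mathfrak{a}_n$ with minimal covering subgroup $H_{n+1}$, so $\mathfrak{B}^{min}_P\neq\emptyset$, while conjugation by $t$ realises $\mathfrak{a}_n\mapsto\mathfrak{a}_{n\pm1}$, giving the transitivity required for stacking type. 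In the case $\xi(G)\geqslant\omega+2$ we have $\xi(P)>\omega+1$, so $P$ is a chief factor of the elementary group $E_X(G,\ U)$ of rank $>\omega+1$, and Theorem \ref{thrm:chief factors for elementary} leaves only semisimple or stacking type. Semisimple type is impossible: it would require a topologically simple non-abelian $N\trianglelefteq P$, but by \textit{(1)} such an $N$ contains some $H_m$, and then $H_{m-1}$ is a nontrivial proper closed normal subgroup of $N$, contradicting simplicity. Hence $P$ is of stacking type.

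The main obstacle I anticipate is the upper bound in \textit{(1)}: unlike in the monolith argument, $N$ is only normal in $P$, and an element of $P$ may move vertices arbitrarily far toward the end, so one must genuinely invoke $\overline{\bigcup_n H_n}=P$ (via Lemma \ref{lemma:Gn 101}(3)) to exclude this and make the maximum defining $n$ meaningful. The secondary care points are verifying that conjugation by $t$ really shifts the filtration by exactly one level and that the blocks $\mathfrak{a}_n$ are distinct and minimally covered; keeping the orientation conventions for $\eta$, $t$, and the horoballs $\mathcal{H}_n$ mutually consistent throughout is where most of the bookkeeping lies.
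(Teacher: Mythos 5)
Your proposal is correct and follows essentially the same route as the paper: part (1) via the commutator trick of Proposition \ref{prop: monolith tree} plus topological perfectness of the $L_w$, level-transitivity, and density of $\bigcup_n H_n$ in $P_X(G,U)$; part (2) by identifying $H_{n+1}/H_n$ with a restricted product of copies of the simple group $G$ permuted transitively; and part (3) by excluding semisimple type using (1) and, in the discrete simple case, exhibiting the minimally covered blocks $H_{n+1}/H_n$ shifted by conjugation with $t$. Your explicit verification of the transitivity condition in the definition of stacking type for the rank-$(\omega+1)$ case is, if anything, slightly more careful than the paper's appeal to the trichotomy theorem.
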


\begin{proof}
$1.$ Assume $h\in N\backslash\{1\}$ and let $v\in V\vec{T}_\delta$ be such that $h.v\neq v$. For $n\in\mathbb{Z}$,\ $P_X(G,\ U)$ acts transitively on $\mathcal{H}_n\backslash\mathcal{H}_{n+1}$. Then for every $w\in V\vec{T}_{\delta}$ such that $\eta(v)=\eta(w)=n$ there exist $g\in P_X(G,\ U)$ such that $ghg^{-1}.w=gh.v\neq w$. Hence there exist no point fixed by $N$ in $\mathcal{H}_n\backslash\mathcal{H}_{n+1}$.\\
\indent Given $u\in V\vec{T}_{\delta}$,\ define $T^u\subset V\vec{T}_{\delta}$ to be the complement of $\cup_{n\in\mathbb{N}}\textup{inn}^n(u)$ and the group $L_u:=E_X(G,\ U)_{(T^{u})}$ (as in Proposition \ref{prop: monolith tree}).\\
\indent Let $w\in V\vec{T}_\delta$ with $\eta(w)=n$ and let $u\in inn(w)$. As shown above,\ there exist $g_w\in P_X(G,\ U)$ such that $g_w. w\neq w$. Fix such $g_w$ and let $x,\ y\in L_w$. Notice that $\textup{supp}(g_{w}xg_{w}^{-1})=g_{w}\textup{supp}(x)\subset \vec{T}_{\chi}\backslash T^{g_w.u}$, hence both $x$ and $y$ commute with $g_{w}xg_{w}^{-1}$. This implies $[[x,g_w],y]=[x,y]$. Since $N$ is normal, it is also the case that $[[x,h],y]\in N$. We then conclude that $\overline{[L_u,\ L_u]}\leqslant N$.\\
\indent As $G$ is topologically perfect then for every $n\in\mathbb{N}$ the groups $G_n$ as denoted in Lemma \ref{lemma:Gn 101} are all topologically perfect. In the view of the same lemma,\ these groups can be embedded in $\overline{[L_w,\ L_w]}$ for every $w\in V\vec{T}_{\delta}$. Hence $\overline{[L_w,\ L_w]}=L_w$. With the claim above,\ it then follows that for every $w\in  \mathcal{H}_n\backslash \mathcal{H}_{n+1}$ and $u\in inn(w)$,\ we have $L_{u}\leqslant N$. As $H_n\leqslant\overline{\langle L_{u}; \eta(w)=n, \ u\in inn(w)\rangle}$ it then follows that $H_n\leqslant  N$. We then proved that if $v\in V\vec{T}_{\chi}$ is not fixed by $N$ is such that $\eta(v)=n$, then $H_n\leqslant  N$.\\
\indent As $\overline{\bigcup_{n\in\mathbb{Z}}H_n}=P_X(G,\ U)$,\ the fact $N$ is a proper, closed subgroup there must exist $v\in V\vec{T}_{\chi}$ such that $N$ fixes $v$, otherwise $N=P_X(G,\ U)$. It is then the case that there exists a minimal $n\in\mathbb{Z}$ such that $H_n< N\leqslant H_{n+1}$. \\
\indent $2.$ Let $N$ be a proper normal subgroup of $P_X(G,\ U)$. As seen in $1.$ it follows that there exist a minimal $n\in\mathbb{Z}$ such that $H_n< N\leqslant H_{n+1}$. Notice that $N/H_n$ can be seen as a closed normal subgroup of $\bigoplus_{x\in X}(G,\ U)$. Because $G$ is simple it then follows that $N/H_n$ is normal in $\bigoplus_{x\in X}(G,\ U)$ if,\ and only if,\ there exist a subset $I\subset X$ such that $N/H_n\cong \bigoplus_{x\in X\backslash I}(G,\ U) \times \prod_{i\in I}\{1\}$. As the action of $P_X(G,\ U)$ is transitive on the horoballs it then follows that $N/H_n\cong \bigoplus_{x\in X}(G,\ U)$,\ hence $N=H_{n+1}$.\\
\indent $3.$ As shown above,\ both cases implies that $P_X(G,\ U)$ has no simple non-trivial normal subgroup. Theorem \ref{thrm:chief factors for elementary} then implies or $P_X(G,\ U)$ is of weak type with rank $\omega+1$ or is of stacking type of rank greater or equal to $\omega+1$. If $\xi(G)\geqslant \omega+2$ it then follows that $\xi(P_X(G,\ U))>\omega+2$ and hence $P_X(G,\ U)$ is of stacking type.\\
\indent Assume now $\xi(G)=2$ and $G$ is simple. As seen on $2.$,\ it follows that every non-abelian chief factor $\mathfrak{a}$ has $H_{n+1}/H_n$ as the representative of $\mathfrak{a}$. Hence,\ as the centralizer of a chief block is always a normal closed subgroup,\ $C_{P_X(G,\ U)}(\mathfrak{a})=\{g\in P_X(G,\ U); \ [g,\ H_n]\subset H_{n+1}\}=H_{n+1}$. It then follows that $H_{n+2}/H_{n+1}$ is a minimal normal factor covering $\mathfrak{a}$,\ hence $\mathfrak{B}_{P_X(G,\ U)}^{min}\neq \emptyset$,\ that is,\ $P_X(G,\ U)$ is not of weak type. As shown above,\ $P_X(G,\ U)$ cannot be of semisimple type. Hence Theorem \ref{thrm:chief factors for elementary} implies $P_X(G,\ U)$ is of stacking type.
\end{proof}

Given $n\in\mathbb{N}$,\ $n>0$,\ because the groups $F_n(G)$ built at Theorem \ref{thrm:result for higher rank by wesolek} are topologically perfect with decomposition rank greater than $\omega+2$,\ Lemma \ref{lemma:chief factor on EXGU} implies that the group $P_{X_{n}}(F_n(G),\ U_n(G))$ is a stacking type chief factor of $E_{X_n}(F_n(G),\ U_n(G))$ with decomposition rank $\omega^{n+1}+1$ (Proposition \ref{prop:collection witnessing on ex(g,u)}). If $G$ is a simple discrete group it also follows that $P_{X_0}(F_0(G),U_0(G))$ is a stacking type chief factor of $E_{X_0}(F_0(G),U_0(G))$ of rank $\omega+1$.

Let $D$ be the infinite dihedral group and $A=C_2\leqslant D$ be the orientation flip. Set $X:=D/A$. Then $(D,\ X)$ is a t.d.l.c.s.c.~permutation group. The following proposition is a construction of a chief factor of weak type and decomposition rank $\omega+1$ given on \cite[Proposition 9.8]{chief_series}. A proof was given in the same article assuming Proposition \ref{prop: monolith tree} and Corollary \ref{coro:group omega+2}.

\begin{prop}{\cite[Proposition 9.8]{chief_series}}
The monolith of $E_X(D,\ A)$ is a chief factor of weak type (Definition \ref{defi:chief fac types}) with rank $\omega+1$.
\end{prop}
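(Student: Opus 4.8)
The plan is to realise $M$ as the monolith of $E_X(D,A)$, to compute $\xi(M)=\omega+1$, and then to determine its chief-factor type; almost all of the work lies in the last step.

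\emph{Setup and the monolith.} First I would verify the hypotheses of the structural results already available. The group $D=\langle t\rangle\rtimes\langle s\rangle$ is finitely generated, hence compactly generated as a discrete group, and $\xi(D)=2$. Taking $X=D/A$, the $D$-action is transitive, the stabiliser of a coset $gA$ is the finite (so compact open) subgroup $gAg^{-1}$, and the action is faithful because $A$ has trivial normal core; thus $(D,X)$ is a transitive t.d.l.c.s.c.\ permutation group and $A\in\mathcal U(D)$ is non-trivial. Corollary \ref{coro:group omega+2} then gives $\xi(E_X(D,A))=\omega+2$, and Proposition \ref{prop: monolith tree} shows $E_X(D,A)$ is monolithic with monolith
$$M=\overline{\langle[P,P]\rangle},\qquad P:=P_X(D,A).$$
Since $D^{\mathrm{ab}}\cong C_2\times C_2\neq\{1\}$, the group $D$ is \emph{not} topologically perfect, so the "topologically perfect" clause of Proposition \ref{prop: monolith tree} does not apply and in general $M\subsetneq P$. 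Being the unique minimal non-trivial closed normal subgroup, $M=M/\{1\}$ is automatically a chief factor of $E_X(D,A)$, so only the rank and the type remain.

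\emph{The rank.} I would squeeze $\xi(M)$ between $\omega$ and $\omega+1$. As $E_X(D,A)$ is compactly generated (Proposition \ref{prop:new grp is comp gener}), $\xi(\operatorname{Res}(E_X(D,A)))=\omega+1$; the monolith lies inside every non-trivial closed normal subgroup, in particular inside $\operatorname{Res}(E_X(D,A))$, so $\xi(M)\leqslant\omega+1$ by Proposition \ref{prop:decomposition rank closed subgroup}. For the lower bound, embed the iterated wreath products $D_n$ as closed subgroups of $P$ as in Lemma \ref{lemma:Gn 101}; then $\overline{[D_n,D_n]}\leqslant\overline{[P,P]}=M$. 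Each $D_n^{\mathrm{ab}}$ is a compactly generated abelian group, hence $\xi(D_n^{\mathrm{ab}})\leqslant 2$, and applying Lemma \ref{lemma:short exact sequence decomposition rank} to $\{1\}\to\overline{[D_n,D_n]}\to D_n\to D_n^{\mathrm{ab}}\to\{1\}$ gives $\xi(\overline{[D_n,D_n]})\geqslant\xi(D_n)-1$. The ranks $\xi(D_n)$ are unbounded below $\omega$ — this is precisely the computation $\xi(D_{2+n})\geqslant\xi(D)+n$ driving Lemma \ref{lemma: bad bound} and Proposition \ref{prop: bad bound rank} — so $\xi(M)\geqslant\sup_n\xi(\overline{[D_n,D_n]})\geqslant\omega$. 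Since a decomposition rank is always a successor (Remark \ref{remark:decomposition rank}) and $\omega\leqslant\xi(M)\leqslant\omega+1$, I conclude $\xi(M)=\omega+1$.

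\emph{Weak type, and the main obstacle.} It remains to show $\mathfrak{B}_M^{min}=\emptyset$ (Definition \ref{defi:chief fac types}), and this is where the real difficulty lies. One cannot read off the type from the rank: the simple case of Lemma \ref{lemma:chief factor on EXGU}(3) produces a \emph{stacking}-type chief factor also of rank $\omega+1$, so rank $\omega+1$ is compatible with both weak and stacking type. The task is therefore to prove that no non-abelian chief block of $M$ is minimally covered. Semisimple-type behaviour is easy to exclude structurally: a semisimple factor would contain a non-abelian topologically simple normal subgroup, which (being elementary) must be discrete and non-abelian simple, yet every discrete subquotient arising in the tree/wreath construction has only cyclic composition factors, since $D$ is metabelian — so $M$ has no non-abelian simple subquotient. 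The crux is to rule out minimally covered non-abelian blocks, i.e.\ to show that no covering closed normal subgroup is smallest. Here the contrast with Lemma \ref{lemma:chief factor on EXGU}(3) is the guiding principle: when the acting group is \emph{simple} the horoball factors $\bigoplus_X(G,U)$ are "atomic" and furnish a genuinely minimally covered non-abelian block, whereas the proper descending abelian layers $D\trianglerighteq[D,D]=\langle t^2\rangle\trianglerighteq\{1\}$ propagate through $M=\overline{[P,P]}$ and destroy any such atom, so that every candidate covering subgroup can be strictly shrunk. I expect the hardest part of the argument to be making this precise: namely, giving an explicit enough description of the lattice of closed normal subgroups of $M$ (most naturally via a horoball-type filtration adapted to the non-perfect $D$, since Lemma \ref{lemma:chief factor on EXGU}(1)--(2) require $G$ perfect or simple and so do not apply directly) to verify the failure of minimal covering for each non-abelian block, thereby concluding $\mathfrak{B}_M^{min}=\emptyset$ and that $M$ is of weak type.
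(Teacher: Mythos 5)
There is a genuine gap: the proposal never actually proves the defining property of the statement, namely that the monolith $M$ is of \emph{weak} type. You correctly observe that the rank alone cannot decide the type (Lemma \ref{lemma:chief factor on EXGU}(3) yields \emph{stacking}-type chief factors of rank $\omega+1$ when the base group is discrete simple), which means the whole content of the proposition lives in the verification that $\mathfrak{B}_M^{\min}=\emptyset$. But at exactly that point the argument stops: you give a heuristic (``the proper descending abelian layers $D\trianglerighteq[D,D]\trianglerighteq\{1\}$ propagate through $M$ and destroy any such atom'') and then explicitly defer the work (``I expect the hardest part of the argument to be making this precise''). No description of the closed normal subgroup lattice of $M$ is actually produced, and as you note yourself, Lemma \ref{lemma:chief factor on EXGU}(1)--(2) cannot be invoked because $D$ is neither topologically perfect nor simple. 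Without an adapted horoball-type analysis showing that every non-abelian chief block of $M$ fails to be minimally covered, the claim ``weak type'' is unproven. For what it is worth, the paper itself does not reprove this proposition either: it cites \cite[Proposition 9.8]{chief_series}, remarking only that the proof there uses Proposition \ref{prop: monolith tree} and Corollary \ref{coro:group omega+2}, so the missing lattice analysis is precisely what is outsourced to that reference.

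The parts you do carry out are essentially sound and go beyond what the paper records. The setup ($(D,X)$ a transitive t.d.l.c.s.c.\ permutation group, $A$ non-trivial compact open, Corollary \ref{coro:group omega+2} and Proposition \ref{prop: monolith tree} applicable) is correct, and the rank computation is a reasonable sandwich: $\xi(M)\leqslant\xi(\textup{Res}(E_X(D,A)))=\omega+1$ from above, and from below $\overline{[D_n,D_n]}\leqslant M$ with $\xi(\overline{[D_n,D_n]})\geqslant\xi(D_n)-1$ via Lemma \ref{lemma:short exact_sequence decomposition rank} applied to the abelianization (note you should justify $\xi(D_n^{\mathrm{ab}})\leqslant 2$, which holds because compactly generated abelian t.d.l.c.s.c.\ groups are residually discrete), together with the unboundedness $\xi(D_{2+n})\geqslant 2+n$ from Proposition \ref{prop: bad bound rank}. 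But a correct rank computation plus a correct identification of the monolith does not amount to a proof of the proposition; the type determination is the theorem, and it is absent.
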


\subsection{Further questions}
To construct groups with higher decomposition rank, a path that seems promising is adapting the pre-wreath structure work from \cite{first_article_about_pre_wreath} into the context of elementary groups. With general results about pre-wreath structures on the context of t.d.l.c. groups we might then be able to adapt the work from \cite{second_article_about_pre_wreath} and get all elementary groups with rank up to $\varepsilon_0:=\omega^{\omega^{\omega^{.^{.^.}}}}$. The group $E_X(G,\ U)$ already follows a similar construction to the Brin-Navas group defined on \cite{first_article_about_pre_wreath}.\\
\indent It is still not known yet if the residual height is well behaved when taking subgroups in the sense that given $H<G$ and $K\in \mathcal{K}(H)\subset \mathcal{K}(G)$, then $\textphnc{y}_G(K)=\textphnc{y}_H(K)$.
\begin{question}
Is there a t.d.l.c.s.c.~group $G$ such that there exist $K\in\mathcal{K}(G)$ and $ H\leqslant G$ a closed subgroup with $\textphnc{y}_H(K)< \textphnc{y}_G(K)$?
\end{question}

\indent It is also easy to see that,\ by Proposition \ref{prop:res do what i want in elementary},\ that the residual order on $\mathcal{K}(G)$,\ for $G$ an elementary group,\ is always a partial order. Not only that,\ any group that satisfies the equivalent properties in this proposition are such that the residual order is a partial order.

\begin{question}
Is there a t.d.l.c.s.c.~group $G$ such that the residual order on $\mathcal{K}(G)$ is not a partial order and the residual rank of $G$ is well-defined?
\end{question}

\bibliographystyle{alpha}
\bibliography{bib.bib}

\end{document}